 \theoremstyle{plain}
 \newtheorem{thm}{Theorem}[section]
 \newtheorem{lem}[thm]{Lemma}
 \newtheorem{prop}[thm]{Proposition}
 \newtheorem{cor}[thm]{Corollary}
 \theoremstyle{definition}
 \newtheorem{defn}[thm]{Definition}
 \theoremstyle{remark}
 \newtheorem{rem}[thm]{Remark}
 \numberwithin{equation}{section}
\mathchardef\emptyset="001F
\newcommand{\hs}{{\mathcal H}}
\newcommand{\ds}{\displaystyle}
\newcommand{\G}{{\mathcal G}}
\newcommand{\dx}{\,dx}
\newcommand{\dy}{\,dy}
\newcommand{\B}{\mathscr{B}}
\newcommand{\F}{\mathcal{F}}
\newcommand{\T}{\mathcal{T}}
\newcommand{\mI}{\mathcal{I}}
\newcommand{\R}{{\mathbb R}}
\newcommand{\Z}{{\mathbb Z}}
\newcommand{\Q}{{\mathbb Q}}
\newcommand{\N}{{\mathbb N}}
\newcommand{\Sph}{{\mathbb S}}
\newcommand{\Om}{\Omega}
\newcommand{\e}{\varepsilon}
\newcommand{\om}{\omega}
\newcommand{\ie}{{; \it i.e., }}
\newcommand{\A}{\mathscr{A}}
\newcommand{\LM}[2]{\hbox{\vrule width.4pt \vbox to#1pt{\vfill
\hrule width#2pt height.4pt}}}
\newcommand{\LLL}{{\mathchoice {\>\LM{7}{5}\>}{\>\LM{7}{5}\>}{\,\LM{5}{3.5}\,}{\,\LM{3.35}
{2.5}\,}}}
\newcommand{\EEE}{\color{black}}
\title[]
{
A global method for deterministic and\\ stochastic homogenisation in $BV$ \EEE
}
\author[]{FILIPPO CAGNETTI}
\address[]{Department of Mathematics,
University of Sussex, Brighton, United Kingdom}
\email[]{F.Cagnetti@sussex.ac.uk}
\author[]{GIANNI DAL MASO}
\address[]{SISSA, Trieste, Italy}
\email[]{dalmaso@sissa.it}
\author[]{LUCIA SCARDIA}
\address[]{Department of Mathematics, Heriot-Watt University, United Kingdom}
\email[]{L.Scardia@hw.ac.uk}
\author[]{CATERINA IDA ZEPPIERI}
\address[]{Angewandte Mathematik,
WWU M\"unster, Germany}
\email[]{caterina.zeppieri@uni-muenster.de}
\begin{document}


\begin{abstract}

In this paper we study the deterministic and stochastic homogenisation of free-discontinuity functionals under \emph{linear} growth and coercivity conditions.  
The main novelty of our deterministic result is that we work under very general assumptions on the integrands which, in particular, are not required to be periodic in the space variable. Combining this result with the pointwise Subadditive Ergodic Theorem by Akcoglu and Krengel, we prove a stochastic homogenisation result, in the case of  stationary random integrands. In particular, we characterise the limit integrands in terms of asymptotic cell formulas, as in the classical case of periodic homogenisation.


\end{abstract}

\maketitle

{\small
\keywords{\textbf{Keywords:} Free-discontinuity problems, $\Gamma$-convergence, stochastic homogenisation, blow-up method. 

\medskip

\subjclass{\textbf{MSC 2010:} 
49J45, 
49Q20,  
74Q05,  
74E30, 
60K35. 
}}

\bigskip

\section{Introduction}

In this paper we derive deterministic and stochastic homogenisation results for \emph{free-discontinuity functionals}  in the space of functions of bounded variation. 

We consider families of free-discontinuity functionals of the form 
\begin{equation}\label{Intro:funct-e}
E_\e(\omega)(u,A)= \int_A f(\omega,\tfrac{x}{\e},\nabla u)\dx+\int_{S_u\cap A}g(\omega,\tfrac{x}{\e},[u],\nu_u)\,d\mathcal H^{n-1},
\end{equation}
where $\omega$ belongs to the sample space of a given probability space $(\Om, \T, P)$ and labels the realisations of the random integrands $f$ and $g$, while
$\e>0$ is a parameter of either geometrical or physical nature, and sets the scale of the problem. In \eqref{Intro:funct-e} the set $A$ belongs to the class $\A$ of 
bounded open subsets of $\R^n$ and the function $u$ belongs to the space $SBV(A, \R^m)$ of special
 $\R^m$-valued  functions of bounded variation  on  $A$ (see \cite{DGA}  and \cite[Section 4.5]{AFP}). Moreover, $\nabla u$ denotes the approximate  gradient  of $u$, 
$[u]$ stands for the difference $u^+-u^-$ between the approximate limits of $u$ on both sides of the discontinuity set $S_u$,  
$\nu_u$ denotes the (generalised) normal to $S_u$, and $\mathcal{H}^{n-1}$ is the $(n-1)$-dimensional Hausdorff measure in $\R^n$.

Functionals as in \eqref{Intro:funct-e} are commonly used in applications in which the physical quantity described by $u$ can exhibit discontinuities, e.g. in variational models in fracture mechanics, in the theory of computer vision and image segmentation, and in  problems involving phase transformations.
 
We are interested in determining the \emph{almost sure} limit behaviour of $E_\e$ as $\e \to 0^+$, when $f$ and $g$ satisfy \textit{linear} growth and coercivity conditions in the gradient and in the jump, respectively. The linear growth of the volume energy sets the limit problem naturally in the space $BV$ of functions with bounded variation. Indeed, 
in this setting, limits of  sequences of displacements with bounded energy can develop a Cantor component in the distributional gradient. 

This is in contrast with our previous work \cite{CDMSZ} (in the deterministic case) and \cite{CDMSZ-stoc} (in the stochastic case), where, under the assumption of superlinear growth for $f$, the limit problem was naturally set in the space $SBV$ of special functions with bounded variation.

The two main results of this paper are a deterministic homogenisation result for functionals of the type \eqref{Intro:funct-e}, when $\om$ is fixed and $f$ and $g$ are not necessarily periodic, and a stochastic homogenisation result, obtained for $P$-a.e.\ $\om \in \Om$, under the additional assumption of \emph{stationarity} of $f$ and $g$.

\smallskip

\subsection{The deterministic result} For the deterministic result we consider $\om$ as fixed in \eqref{Intro:funct-e} and write $E_\e(u,A)$ instead of $E_\e(\om)(u,A)$. 
We study the limit behaviour of the functionals $E_\e(\cdot, A)$, for every $A\in \A$, as $\e\to 0^+$, under the assumption that the energy densities $f$ and $g$ belong to suitable classes $\mathcal{F}$ and $\mathcal G$ of admissible volume and surface densities (see Definition \ref{def:FG}). As announced above, a key requirement for the class $\mathcal{F}$ is that $f$ satisfies \textit{linear} upper and lower bounds  in the gradient variable. Additionally, we require that the recession function $f^\infty$ of $f$ is defined at every point. For the class $\mathcal{G}$, we require that $g$ is bounded from below and from above by the amplitude of the jump, and that its directional derivative $g_0$ in the jump variable at $[u]=0$ exists and is finite. The functions $f^\infty$ and $g_0$ will play an important role in determining the limiting densities.

We stress here that we \textit{do not} require any periodicity in the spatial variable $x$ for the volume and surface densities; moreover, we \textit{do not} require any continuity in the spatial variable either, since it would be unnatural for applications. 

Under these general assumptions, using  the so-called localisation method of $\Gamma$-convergence \cite{DM93}, we can prove that there exists a subsequence $(\e_k)$ such that, 
for every $A\in \A$,  $(E_{\e_k}(\cdot,A))$ $\Gamma$-converges to an abstract functional $\widehat E(\cdot,A)$, that $\widehat E(\cdot,A)$ is finite only in $BV$, and that, for every $u\in BV_{\rm loc}(\R^n,\R^m)$, the set function $\widehat E(u,\cdot)$ is the restriction to $\A$ of a Borel measure (see Theorem \ref{thm:Gamma-conv}). 

Note that, without any additional assumptions, one cannot expect that $\widehat{E}(\cdot,A)$ can be written in an integral form. In particular,  since there is no guarantee that $z\mapsto \widehat E(u(\cdot-z),A+z)$ is continuous (which is instead automatically satisfied in the periodic case), we cannot directly apply the integral representation result in $BV$ \cite{BFM}. 
Our integral representation result is hence obtained under some additional assumptions, which are though more general than periodicity. We require that the limits of some rescaled minimisation problems, defined in terms of $f$, $g$, $f^\infty$, and $g_0$, exist and are independent of the spatial variable. These limits will then define the densities of $\widehat E$. 

More precisely, for $A\in \mathscr A$, $w\in SBV(A,\R^m)$, $f\in \mathcal{F}$, and $g\in \mathcal{G}$ we set
\begin{equation}\label{minminmin}
m^{f,g}(w,A):=\inf\Big\{\int_A  f (x,\nabla u)\dx+\int_{S_u\cap A}\!\!\!\!\!\! g (x,[u],\nu_{u})\,d\mathcal H^{n-1} : 
u\in SBV(A,\R^m),u=w \textrm{ near } \partial A\Big\},
\end{equation}
and 
we \textit{assume} that for every $\xi \in \R^{m\times n}$ the limit 
\begin{equation}\label{f-hom-000}
\lim_{r\to +\infty} \frac{
m^{f,g_0}(\ell_\xi,Q_r(rx))
}{ r^{n}}=:f_{\rm hom}(\xi)
\end{equation}
exists and is independent of $x\in \R^n$, and that for every $\zeta \in \R^{m}$ and $\nu\in \Sph^{n-1}$ the limit  
\begin{equation}\label{g-hom-000}
\lim_{r\to +\infty} \frac{m^{f^\infty,g}(u_{r x,\zeta,\nu},Q^\nu_r(rx))}{r^{n-1}}=:g_{\rm hom}(\zeta,\nu)
\end{equation} 
exists and is independent of $x\in \R^n$, where In \eqref{f-hom-000}, $\ell_\xi$ denotes the linear function with gradient $\xi$; 
in \eqref{g-hom-000}, $Q^\nu_r(rx):= 
R_\nu \big((-\tfrac{r}{2},\tfrac{r}{2})^{n}\big) + rx$, where $R_\nu$ is an orthogonal $n{\times} n$ matrix such that $R_\nu e_n=\nu$, and 
\begin{equation*}
u_{rx,\zeta,\nu}(y):=\begin{cases}
\zeta \quad & \mbox{if} \,\, (y-rx)\cdot \nu \geq 0,\\
0 \quad & \mbox{if} \,\, (y-rx)\cdot \nu < 0.
\end{cases}
\end{equation*}

The limits \eqref{f-hom-000} and \eqref{g-hom-000} are the counterpart of the asymptotic cell-formulas in the classical periodic homogenisation  \cite{BFM}. In that case, periodicity in the spatial variable $x$ ensures the existence of the limits and  their homogeneity in $x$, that here we have to 
postulate. Our assumptions are however weaker than periodicity: notably, they are fulfilled in the case of \textit{stationary} integrands, as we show in the present work. 

In line with the result in \cite{BFM}, the functional being minimised in \eqref{f-hom-000} (respectively in \eqref{g-hom-000}) has densities $f$ and $g_0$ (respectively $f^\infty$ and $g$) rather than $f$ and $g$. We note that, if the density $f$ satisfied a superlinear growth, then $f^\infty(\cdot,\xi)=+\infty$ for $\xi\neq0$. Since one always has $f^\infty(\cdot,0)=0$, in the superlinear case there would be the following changes in  \eqref{g-hom-000}: on the one hand the minimisation would be done over functions with $\nabla u=0$; on the other hand the functional to be minimised would reduce to just the surface term. This is indeed the situation in \cite{CDMSZ}, where we assume a superlinear growth for $f$. Moreover, in \cite{CDMSZ} we work under different growth conditions for $g$ as well,  which in particular satisfies $g\geq c$, for a given fixed positive constant. In that case $g_0\equiv +\infty$ (see \eqref{ach}), and hence formally the minimisation in \eqref{f-hom-000} would be over Sobolev functions, and the functional to be minimised would reduce to just the bulk term. Again, this is exactly what happens in \cite{CDMSZ} (see also \cite{BDfV, GP}).

 In Lemmas \ref{l:f-hom} and \ref{l:g-hom} we show that $f_{\rm hom}\in \mathcal F$  and  $g_{\rm hom}\in \G$; the fact that $f_{\rm hom}\in \mathcal F$ guarantees in particular the existence of its recession function $f^\infty_{\rm hom}$. In Propositions \ref{p:homo-vol}, \ref{p:homo-sur}, and \ref{p:homo-Can}, we show that  the functions $f_{\rm hom}$, $g_{\rm hom}$ and $f^\infty_{\rm hom}$ are the densities of the volume, surface, and Cantor terms of $\widehat E$, respectively. To do so 
we use the blow-up technique of Fonseca and M\"uller \cite{FMue} (see also \cite{BMSig}), extended to the $BV$-setting by Bouchitt\'e, Fonseca, and Mascarenhas \cite{BFM}. More precisely,  thanks to \eqref{f-hom-000} and \eqref{g-hom-000}, we prove that the following identities hold true for every $A \in \A$ and for every $u\in L^1_{\rm loc}(\R^n,\R^m)$ with $u|_A\in BV(A,\R^m)$: 

\begin{equation*} 
\frac{d\widehat E(u,\cdot)}{d \mathcal L^n}(x)=f_{\rm hom}(\nabla u(x))\qquad\hbox{for }\mathcal L^n\hbox{-a.e.\ }x\in A, 
\end{equation*}
\begin{equation*} 
\frac{d \widehat E(u,\cdot)}{d\mathcal H^{n-1}\LLL{S_u}}(x) = g_{\rm hom}([u](x),\nu_{u}(x))\qquad \hbox{for }\mathcal H^{n-1}\hbox{-a.e.\ } x\in S_u \cap A,
\end{equation*} 
and 
\begin{equation*} 
\frac{d\widehat E(u,\cdot)}{d|C(u)|}(x)= f_{\rm hom}^\infty\Big(\frac{dC(u)}{d|C(u)|}(x)\Big)\qquad\hbox{for }|C(u)|\hbox{-a.e.\ }x\in A,
\end{equation*}
where $\mathcal{L}^n$ is the Lebesgue measure in $\R^n$, $\mathcal H^{n-1}\LLL{S_u}$ is the measure defined by $(\mathcal H^{n-1}\LLL{S_u})(B):=\mathcal H^{n-1}(S_u\cap B)$ for every Borel set $B\subset\R^n$, and $C(u)$ is the Cantor part of the distributional derivative of~$u$.

In particular, since the right-hand sides of the previous equalities do not depend on $(\e_k)$, the homogenisation result holds true for the whole sequence $(E_\e)$. Moreover, the fact that $f_{\rm hom}\in \mathcal F$  and  $g_{\rm hom}\in \G$ implies that the classes $\F$ and $\G$ are closed under homogenisation, and that on $SBV$ the functionals $E_\e$ and their $\Gamma$-limit $\widehat E$ are free-discontinuity functionals of the same type. 

As observed before,  $f_{\rm hom}$ and $g_{\rm hom}$ depend on both the volume and the surface densities of the functional $E_\e$. Indeed, the minimisation problems in \eqref{f-hom-000} and \eqref{g-hom-000} involve both $f$ (or $f^\infty$) and $g$ (or $g_0$). In other words, volume and surface term \textit{do interact} in the limit, which is a typical feature of the linear-growth setting. This is in contrast with the case of  superlinear  growth considered in \cite{CDMSZ}, in which the limit volume density only depends on the volume density of $E_\e$, and similarly the  limit surface density only depends on the surface density of $E_\e$. The volume-surface decoupling is typical of the $SBV$-setting in presence of superlinear growth conditions on $f$ \cite{BDfV, CDMSZ, GP}. 
 
Note however that, even in the superlinear case, if $f$ and $g$ satisfy ``degenerate'' coercivity conditions, due for instance to the presence of perforations or ``weak'' inclusions in the  domain, the situation is more involved. Indeed, while in \cite{BF, BS, CS, FGP, PSZ1} the volume and surface terms do not interact in the homogenised limit, in \cite{BDM, BLZ, DMZ, PSZ, LS1, LS2} they do interact and produce rather complex limit effects. 

\subsection{The stochastic result} In Section \ref{section:stoc-hom} we prove  the almost sure $\Gamma$-convergence of the sequence of random functionals $E_\e(\om)$ in \eqref{Intro:funct-e} to a random homogenised integral functional, under the assumption that the volume and surface integrands $f$ and $g$ are stationary (see Definition \ref{def:stationary}  and Remark~\ref{rem:contg}). 
 In the random setting stationarity is the natural counterpart of periodicity, since it implies that $f$ and $g$ are ``statistically'' translation-invariant, or ``periodic in law''. 

The application of the deterministic result Theorem \ref{T:det-hom},  at $\om$ fixed, ensures that $E_\e(\om)$ $\Gamma$-converges to the free-discontinuity functional 
\begin{equation*}
E_{\mathrm{hom}}(\om)(u, A):= \int_A f_{\mathrm{hom}}(\om,\nabla u)\dx + \int_{S_u\cap A}g_{\mathrm{hom}}(\om,[u],\nu_u)\,d \mathcal{H}^{n-1}+\int_A f^\infty_{\rm hom}\Big(\om,\frac{dC(u)}{d|C(u)|}\Big)\,d|C(u)|,
\end{equation*}
with
\begin{equation}\label{f-hom-om}
f_{\rm hom}(\om,\xi):=\lim_{r\to +\infty} \frac{
m^{f(\om),g_0(\om)}(\ell_\xi,Q_r(rx))
}{r^{n}},
\end{equation}
for every $\xi \in \R^{m\times n}$, and 
\begin{equation}\label{g-hom-om}
g_{\rm hom}(\om,\zeta,\nu):=\lim_{r\to +\infty} \frac{m^{f^\infty(\om),g(\om)}(u_{r x,\zeta,\nu},Q^\nu_r(rx))}{r^{n-1}},
\end{equation} 
for every $\zeta \in \R^{m}$ and $\nu\in \Sph^{n-1}$, provided the limits in \eqref{f-hom-om} and \eqref{g-hom-om} (which are the same as \eqref{f-hom-000} and  \eqref{g-hom-000}, modulo the additional dependence on the random parameter $\om$) exist and are independent of $x\in \R^n$.
Therefore, to show that the $\Gamma$-convergence of $E_\e(\om)$ towards $E_{\rm hom}(\om)$ actually holds true for $P$-a.e.\ $\om \in \Om$
it is necessary to show that the limits in \eqref{f-hom-om} and \eqref{g-hom-om} exist and are independent of $x\in \R^n$ for $P$-almost every realisation $\om \in \Om$. 
To do so, we follow the general strategy firstly introduced in \cite{DMM2} in the Sobolev setting, and then extended to the $SBV$-setting in \cite{CDMSZ-stoc} (see also \cite{Alciru}). 

This strategy relies on the Subadditive Ergodic Theorem by Akcoglu and Krengel \cite{AK} (see Theorem~\ref{ergodic}) and requires, among other things,  to show that the minimisation problems in \eqref{f-hom-om} and \eqref{g-hom-om} define two \textit{subadditive stochastic processes} (see Definition \ref{Def:subadditive}). 

This task however poses a challenge even at the very first step: proving that $\om\mapsto m^{f(\om),g_0(\om)}$ and  
 $\om\mapsto m^{f^\infty(\om),g(\om)}$ are \textit{measurable}.  Indeed, while both $m^{f(\om),g_0(\om)}$ and $m^{f^\infty(\om),g(\om)}$, by \eqref{minminmin},  involve the minimisation of measurable functionals in the random variable $\om$, such minimisation is performed over the space $SBV$, which is not separable. Since  the infimum in \eqref{minminmin} cannot be reduced to a countable set, the measurability of $\om\mapsto m^{f(\om),g_0(\om)}$ and $\om\mapsto m^{f^\infty(\om),g(\om)}$ cannot be inferred directly from the measurability in $\om$ of $f$, $f^\infty$, $g$ and $g_0$ (see the Appendix).
Let us also observe that the situation here is substantially different from  that  treated in \cite{CDMSZ, CDMSZ-stoc}, for a number of reasons. Indeed in \cite{CDMSZ, CDMSZ-stoc}, as observed before, due to the different assumptions on $f$ and $g$, the $\Gamma$-limit exhibits a ``separation'' of the volume and surface term. In particular, the limit volume density is obtained as the limit of some minimisation problems similar to \eqref{minminmin}, but where the minimisation is done over the space of Sobolev functions. Hence in that case the limit volume density is $\om$-measurable, due to the separability of the space (see also \cite{DMM2}). On the other hand, the measurability of the minimisation problems defining the limit surface density was delicate also in \cite{CDMSZ-stoc}, since the minimum was taken over Caccioppoli partitions. However, in \cite{CDMSZ-stoc} the minimisation involved only the surface term of the functional, which makes the proof much simpler than the one required now. 

Once the measurability in $\om$ is established (see Proposition \ref{measurability}), we have to face  yet  another difficulty: determining the \textit{dimension} of the stochastic processes. Indeed, using the competitors $\ell_\xi$ and $u_{rx,\zeta,\nu}$ in the minimisation problems $m^{f(\om),g_0(\om)}$ and $m^{f^\infty(\om),g(\om)}$, respectively, suggests the rescalings in \eqref{f-hom-om} and \eqref{g-hom-om}. Hence it suggests that $m^{f(\om),g_0(\om)}$ should define an $n$-dimensional process, while $m^{f^\infty(\om),g(\om)}$ should define an $(n-1)$-dimensional process. On the other hand, in both cases the functionals appearing in the minimisation problems, if seen as set functions, are defined on $n$-dimensional sets. 

To solve this problem we proceed as in \cite{Alciru, BP} and \cite{CDMSZ-stoc}, where similar issues arise in the study of pure surface energies of spin systems, and in the case of free-discontinuity problems with superlinear growth, respectively. 

Finally, the last difficulty consists in showing that, as in \cite{Alciru, CDMSZ-stoc}, the limits in  \eqref{f-hom-om} and \eqref{g-hom-om} do not depend on $x$. This is particularly delicate for \eqref{g-hom-om}, due to the presence of an $x$-dependent boundary condition. 

We conclude by observing that our analysis also shows that, if $f$ and $g$ are ergodic, then the homogenised integrands $f_{\rm hom}$ and $g_{\rm hom}$ are $\om$-independent, and hence the limit $E_{\rm hom}$ is deterministic.  

\subsection{Outline of the paper.} This paper is organised as follows. In Section \ref{Notation} we introduce some notation. Section \ref{S:setting} consists of two parts: in Section \ref{Sub:setting} we introduce the stochastic free-discontinuity functionals and recall the Ergodic Subadditive Theorem; in Section \ref{sub:main_results} we state the main results of the paper.

Sections \ref{determ-hom}-\ref{Sect:Cantor} focus on the deterministic results. More in detail, in Section \ref{determ-hom} we state the deterministic $\Gamma$-convergence results and prove some properties of the limit densities; Section \ref{Sect:abstract} is devoted to the abstract $\Gamma$-convergence result; the volume, surface and Cantor terms of the abstract $\Gamma$-limit are then identified in Sections \ref{Sect:volume}, \ref{Sect:surface} and \ref{Sect:Cantor}, respectively.

Finally, Section \ref{section:stoc-hom} focuses on the stochastic homogenisation result, while the proof of the measurability of $\om\mapsto m^{f(\om),g_0(\om)}$ and $\om\mapsto m^{f^\infty(\om),g(\om)}$ is postponed to the Appendix.

\section{Notation}\label{Notation}

We introduce now some notation that will be used throughout the paper. 

\begin{itemize}
\item[(a)] $m$ and $n$ are fixed positive integers, with $n\ge 2$, $\R$ is the set of real numbers, 
while $\Q$ is the set of rational numbers.   The canonical basis of $\R^n$  is denoted by $e_1,\dots, e_n$. 
For $a, b \in \R^n$, $a \cdot b$ denotes the Euclidean scalar product between $a$ and $b$,
and $|\cdot|$ denotes the absolute value in $\R$ or the Euclidean norm in $\R^n$, $\R^m$, 
or $\R^{m{\times}n}$ (the space of $m \times n$ matrices with real entries), depending on the context. 
If $v \in \R^m$ and $w \in \R^n$, the symbol 
$v \otimes w$ stands for the matrix in $\R^{m \times n}$ 
whose entries are $(v \otimes w)_{ij} = v_i w_j$, for $i = 1, \ldots, m$
and $j = 1, \ldots, n$.

\item[(b)] $\Sph^{m-1}:=\{\zeta=(\zeta_1,\dots,\zeta_m)\in \mathbb{R}^{m}: \zeta_1^2+\dots+\zeta_m^2=1\}$, $\Sph^{n-1}:=\{x=(x_1,\dots,x_n)\in \mathbb{R}^{n}: x_1^2+\dots+x_n^2=1\}$, and $\widehat\Sph^{n-1}_\pm:=\{x\in \Sph^{n-1}: \pm x_{i(x)}>0\}$,
where $i(x)$ is the largest $i\in\{1,\dots,n\}$ such that $x_i\neq 0$. Note that $\Sph^{n-1} = \widehat\Sph^{n-1}_+ \cup \widehat\Sph^{n-1}_-$.
\item[(c)] $\mathcal{L}^n$ denotes the Lebesgue measure on $\R^n$ and $\mathcal{H}^{n-1}$ the $(n-1)$-dimensional Hausdorff measure on $\R^n$. 
\item[(d)] $\A$ denotes the collection of all bounded open subsets of $\R^n$; if $A$, $B\in \A$, by
$A\subset\subset B$ we mean that $A$ is relatively compact in $B$.
\smallskip
\item[(e)] For $u\in BV(A,\R^m)$, with  $A \in \A$, the jump of $u$ across the jump set $S_u$ is defined by $[u]:=u^+-u^-$, while $\nu_u$ denotes the (generalised) normal to $S_u$  (see \cite[Definition 3.67]{AFP}).
\item[(f)] 
For every $u\in BV(A,\R^m)$, with $A \in \A$, the distributional gradient, denoted by $Du$, is an $\R^{m{\times}n}$-valued Radon measure on $A$, whose absolutely continuous part with respect to $\mathcal{L}^n$, denoted by $D^au$, has a density $\nabla u\in L^1(A,\R^{m{\times}n})$ (which coincides with the approximate gradient of $u$), while the singular part $D^su$ can be decomposed as $D^su = D^ju + C(u)$, where the jump part $D^ju$ is given by 
$$D^ju (B)=\int_{B\cap S_u}[u]\otimes\nu_u d\hs^{n-1}
\quad \hbox{for every Borel set }B\subset A,$$
and the Cantor part $C(u)$ is an $\R^{m{\times}n}$-valued Radon measure on $A$ which vanishes on all Borel sets $B\subset A$ with $\hs^{n-1}(B)<+\infty$.
\item[(g)] For $x\in \mathbb{R}^n$ and $\rho>0$ we define 
\begin{eqnarray*}
& B_\rho(x):=\{y\in \R^n: \,|y-x|<\rho\},
\\
& Q_\rho(x):= \{y\in \R^n: \,|(y-x)\cdot e_i | <\tfrac\rho2 \quad\hbox{for } i=1,\dots,n \}.
\end{eqnarray*}
\item[(h)] For every $\nu\in \Sph^{n-1}$ let $R_\nu$ be an orthogonal $n{\times} n$ matrix such that $R_\nu e_n=\nu$; we assume that the restrictions of the function $\nu\mapsto R_\nu$ to the sets $\widehat\Sph^{n-1}_\pm$ defined in (b) are continuous and that $R_{-\nu}Q(0)=R_\nu Q(0)$ for every $\nu\in \Sph^{n-1}$;  moreover, we assume that 
$R_\nu \in O(n) \cap \mathbb{Q}^{n \times n}$ 
for every $\nu \in \mathbb{Q}^n \cap \Sph^{n-1}$.  
A map $\nu\mapsto R_\nu$ satisfying these properties is provided in \cite[Example A.1  and Remark A.2]{CDMSZ}.

\item[(i)] For $x\in \mathbb{R}^n$, $\rho>0$, and $\nu\in \Sph^{n-1}$ we set
$$
Q^\nu_\rho(x):= 
R_\nu Q_\rho(0) + x.
$$
For $k\in \R$, with $k>0$, we also define the  rectangle  
$$
Q^{\nu,k}_\rho(x):=Q^{\nu,k}_\rho(0)+x
$$ 
where $Q^{\nu,k}_\rho(0)$ is obtained from $Q^\nu_\rho(0)$ by a dilation of amplitude $k$ in the directions orthogonal to $\nu$\ie
$$
Q^{\nu,k}_\rho(0):=R_\nu \big((-\tfrac{k\rho}{2},\tfrac{k\rho}{2})^{n-1}\times (-\tfrac{\rho}{2},\tfrac{\rho}{2})\big). 
$$
We set
\begin{gather*}
\partial^\perp_\nu Q^{\nu,k}_\rho(x):= \partial Q^{\nu,k}_\rho(x)\cap R_\nu \big((-\tfrac{k\rho}{2},\tfrac{k\rho}{2})^{n-1}\times \R\big),
\\
\partial^\parallel_\nu Q^{\nu,k}_\rho(x):= \partial Q^{\nu,k}_\rho(x)\cap R_\nu \big(\R^{n-1}\times (-\tfrac{\rho}{2},\tfrac{\rho}{2})\big),
\end{gather*}
namely the union of the faces of $Q^{\nu,k}_\rho(x)$ 
that are orthogonal and parallel to $\nu$, respectively. 

\item[(j)] Let $\mu$ and $\lambda$ be two Radon measures on $A\in\A$, with values in a finite dimensional Hilbert space $X$ and in $[0,+\infty]$, respectively; for every $x\in A$ the Radon-Nikodym derivative  of  $\mu$ with respect to $\lambda$ is defined as 
$$
\frac{d\mu}{d\lambda}(x):=\lim_{r\to 0+}\frac{\mu(x+rC)}{\lambda(x+rC)}
$$
whenever the limit exists in $X$ and is independent of the choice of the bounded, closed set $C$ containing the origin in its interior (see \cite[Definition 1.156]{FonLeo}); according to the Besicovich differentiation theorem $\frac{d\mu}{d\lambda}(x)$ exists for $\lambda$-a.e.\ $x\in A$ and $\mu=\frac{d\mu}{d\lambda}\lambda +\mu^s$, where $\mu^s$ is the singular part of $\mu$ with respect to $\lambda$ (see \cite[Theorem 1.155]{FonLeo}). 
\item[(k)] For $\xi\in \R^{m{\times}n}$, the linear function from $\R^n$ to $\R^m$ with gradient $\xi$ is denoted by $\ell_\xi$\ie $\ell_\xi(x):=\xi x$, where $x$ is considered as an $n{\times}1$ matrix.
\item[(l)] For $x\in \mathbb{R}^n$, $\zeta \in \R^m$, and $\nu \in \Sph^{n-1}$ we define the function $u_{x,\zeta,\nu}$ as
\begin{equation*}
u_{x,\zeta,\nu}(y):=\begin{cases}
\zeta \quad & \mbox{if} \,\, (y-x)\cdot \nu \geq 0,\\
0 \quad & \mbox{if} \,\, (y-x)\cdot \nu < 0.
\end{cases}
\end{equation*}
\item[(m)] For  $x\in \mathbb{R}^n$ and $\nu \in \Sph^{n-1}$, we set
$$
\Pi^{\nu}_0:= \{y\in \R^n: y\cdot \nu = 0\}\quad\text{and}\quad\Pi^{\nu}_{x}:= \{y\in \R^n: (y-x)\cdot \nu = 0\}.
$$ 
\item[(n)] For a given topological space $X$, $\B(X)$ denotes its Borel $\sigma$-algebra. 
 For  every integer $ k \geq 1$, $\B^k$ is
the Borel $\sigma$-algebra of $\R^k$, while $\B^n_S$ 
stands for the Borel $\sigma$-algebra of $\mathbb{S}^{n-1}$. 

\end{itemize}

\section{Setting of the problem and statements of the main results}\label{S:setting}
This section consists of two parts: in Section \ref{Sub:setting} we introduce the stochastic free-discontinuity functionals and recall the Ergodic Subadditive Theorem; in Section \ref{sub:main_results} we state the main results of the paper.

\subsection{Setting of the problem}\label{Sub:setting}
\noindent Throughout the paper we fix the following constants: $c_1$,  $c_2$, $c_3$, $c_4$, $c_5\in [0,+\infty)$, with $0<c_2\leq c_3$,  and $\alpha\in (0,1)$. 
Moreover, we fix two nondecreasing continuous functions $\sigma_1$, $\sigma_2\colon  [0,+\infty) \to [0,+\infty)$ such that $\sigma_1(0)=\sigma_2(0)=0$.

\begin{defn}[Volume and surface  integrands]\label{def:FG} Let $\mathcal{F}=\mathcal{F}( c_1,c_2,c_3,c_4,c_5, \alpha, \sigma_1)$ be the collection of all functions $f\colon \R^n{\times} \R^{m{\times}n}\to [0,+\infty)$ satisfying the following conditions:
\begin{itemize}
\item[$(f1)$] (measurability) $f$ is Borel measurable on $\R^n{\times} \R^{m{\times}n}$;
\item[$(f2)$] (continuity in $\xi$) for every $x \in \R^n$ we have
\begin{equation*}
|f(x,\xi_1)-f(x,\xi_2)| \leq   \sigma_1(|\xi_1-\xi_2|)\big(f(x,\xi_1)+f(x,\xi_2)\big) + c_1|\xi_1-\xi_2| 
\end{equation*}
for every $\xi_1$, $\xi_2 \in \R^{m{\times}n}$;
\item[$(f3)$] (lower bound) for every $x \in \R^n$ and every $\xi \in \R^{m{\times}n}$
$$
c_2 |\xi| \leq f(x,\xi);
$$
\item[$(f4)$] (upper bound) for every $x \in \R^n$ and every $\xi \in \R^{m{\times}n}$
$$
f(x,\xi) \leq c_3|\xi|+c_4;
$$
\item[$(f5)$]  (recession function) for every $x \in \R^n$ and every $\xi \in \R^{m{\times}n}$ the limit
\begin{equation}\label{abe}
f^\infty(x,\xi):=\lim_{t\to +\infty}  \tfrac{1}{t}f(x,t\xi),
\end{equation}
which defines the recession function of $f$, exists and is finite; moreover,
  $f^\infty$ satisfies the inequality 
\begin{equation}\label{abf}
|f^\infty(x,\xi)- \tfrac{1}{t}f(x,t\xi) |\leq \tfrac{c_5}{t}+\tfrac{c_5}{t}f(x,t\xi)^{1-\alpha}
\end{equation}
for every $x \in \R^n$, every $\xi \in \R^{m{\times}n}$, and every $t>0$.
\end{itemize}

Let $\mathcal{G}=\mathcal{G}(c_2,c_3,\sigma_2)$ be the collection of all functions 
$g\colon \R^n{\times}\R^m{\times} \Sph^{n-1} \to [0,+\infty)$ satisfying the following conditions:
\begin{itemize}
\item[$(g1)$] (measurability) $g$ is Borel measurable on $\R^n{\times}\R^m{\times} \Sph^{n-1}$;
\item[$(g2)$] (continuity in $\zeta$) for every $x\in \R^n$ and every $\nu \in \Sph^{n-1}$ we have
\begin{equation*}
|g(x,\zeta_2,\nu)-g(x,\zeta_1,\nu)|\leq \sigma_2(|\zeta_1-\zeta_2|)\big(g(x,\zeta_1,\nu)+g(x,\zeta_2,\nu)\big)
\end{equation*}
for every $\zeta_1$, $\zeta_2\in \R^m$;
\item[$(g3)$] (lower bound) for every $x\in \R^n$, $\zeta\in \R^m$, and $\nu \in \Sph^{n-1}$
\begin{equation*}
c_2 |\zeta| \leq g(x,\zeta,\nu);
\end{equation*}
\item[$(g4)$] (upper bound) for every $x\in \R^n$, $\zeta\in \R^m$, and $\nu \in \Sph^{n-1}$
\begin{equation*}
g(x,\zeta,\nu) \leq c_3 |\zeta|;
\end{equation*}
\item [$(g5)$]   (directional derivative at $0$)  for every $x\in \R^n$, $\zeta\in \R^m$, and $\nu \in \Sph^{n-1}$ the limit
\begin{equation}\label{ach} 
g_0(x,\zeta,\nu):=\lim_{t\to 0+} \tfrac{1}{t} g(x,t\,\zeta,\nu)
\end{equation} 
exists, is finite, and is uniform with respect to $x\in \R^n$, $\zeta\in \Sph^{m-1}$, and $\nu \in \Sph^{n-1}$.
\item[$(g6)$] (symmetry) for every $x\in \R^n$, $\zeta\in \R^m$, and $\nu \in \Sph^{n-1}$
$$g(x,\zeta,\nu) = g(x,-\zeta,-\nu).$$ 
\end{itemize}
\end{defn}

For every $f \in \mathcal F$, $ g \in \G$, $A\in\A$, and $u\in SBV(A,\R^m)$ we set 
\begin{equation*} 
E^{ f,g}(u,A):= \int_A  f (x,\nabla u)\dx+\int_{S_u\cap A} g (x,[u],\nu_{u})\,d\mathcal H^{n-1} ,
\end{equation*}
 and for every 
 $w\in SBV(A,\R^m)$ we set
\begin{equation}\label{m-phipsi}
m^{ f,g}(w,A):=  \inf\{E^{ f,g}(u,A): u\in SBV(A,\R^m),\ u=w \textrm{ near } \partial A\}.
\end{equation}
The  expression ``$u=w \textrm{ near }\partial A$''
 means  that there exists a neighbourhood $U$ of $\partial A$ such that
$u=w$ $\mathcal{L}^n$-a.e.\ in $U\cap A$. 
More in general, if $\Lambda \subset \partial A$ is a relatively open subset of $\partial A$, the  expression ``$u=w \textrm{ near }\Lambda$'' means  that there exists a neighbourhood $U$ of $\Lambda$ in $\R^n$ such that $u=w$ $\mathcal{L}^n$-a.e.\ in $U \cap A$.

For technical reasons, related to the details of the statement of the Subadditive Ergodic Theorem, it is convenient to extend this definition to  an arbitrary bounded subset $A$  of $\R^n$,  by setting   
$m^{f,g}(w,A):=m^{f,g}(w,\mathrm{int}A)$, where $\mathrm{int}A$ denotes the interior of $A$.

\begin{rem}\label{BV estimate}
If $f\in \mathcal F$ and $g\in \mathcal G$, then $f^\infty\in \mathcal F$ and $g_0\in \mathcal G$. Moreover, the lower bounds $(f3)$ and $(g3)$ imply that
\begin{equation*}
 c_2 |Du|(A)\le \min\{E^{f,g}(u,A), E^{f,g_0}(u,A),E^{f^\infty,g}(u,A), E^{f^\infty,g_0}(u,A) \}
\end{equation*}
for every $A\in \A$ and $u \in L^1_{\rm loc}(\R^n,\R^m)$, with  $u|_A\in SBV(A,\R^m)$.
\end{rem}

\begin{rem} 
From $(f4)$ and $(f5)$ it follows that for every $L>0$ there exists $M>0$, depending on $c_3$, $c_4$, $c_5$, and $L$, such that
\begin{equation}\label{abb}
|f^\infty(x,\xi)-\tfrac{1}{t} f(x,t\xi)|\leq \tfrac{M}{t^\alpha}\quad\hbox{for every }x \in \R^n,\  \xi \in \R^{m{\times}n}\hbox{  with }|\xi|=1,\hbox{ and }t\ge L.
\end{equation} 
Conversely, if the limit in \eqref{abe} exists and  $f$ satisfies  $(f3)$, $(f4)$, and \eqref{abb} for some $L>0$  and $M>0$, then for every $x \in \R^n$,  $\xi \in \R^{m{\times}n}$,  and $t>0$ we have
$$
\begin{cases}
\medskip
|f^\infty(x,\xi)-\tfrac{1}{t} f(x,t\xi)|\leq \tfrac{M}{t^\alpha}|\xi|^{1-\alpha}= \tfrac{M}{t}|t\xi|^{1-\alpha}\le  
\tfrac{M}{t c_2^{1-\alpha}}f(x,t\xi)^{1-\alpha} &\quad\hbox{if }t|\xi|\ge L,
\\
|f^\infty(x,\xi)-\tfrac{1}{t} f(x,t\xi)|\leq\tfrac{2c_3L}{t}+\tfrac{c_4}{t}&\quad\hbox{if }t|\xi|< L,
\end{cases}
$$
where in the last inequality we used the fact that
that $f^\infty(x,\xi) \leq c_3 | \xi|$ for every $x\in\R^n$ and $\xi \in \R^{m \times n}$.
This implies that $f$ satisfies \eqref{abf} for a suitable constant $c_5$, depending only on $c_2, c_3$, $c_4$, $L$, and $M$.

\end{rem}

\begin{rem}\label{abc} If $(f4)$ holds, then $(f5)$ is equivalent to the fact that
\begin{equation}\label{abd}
| \tfrac{1}{s}f(x,s\xi) - \tfrac{1}{t}f(x,t\xi) |\leq  \tfrac{c_5}{s}+\tfrac{c_5}{s}f(x,s\xi)^{1-\alpha} + \tfrac{c_5}{t}+\tfrac{c_5}{t}f(x,t\xi)^{1-\alpha}
\end{equation} 
for every $x \in \R^n$, every $\xi \in \R^{m{\times}n}$, and every $s,\,t>0$. Indeed, using the triangle inequality we obtain
\eqref{abd} from \eqref{abf} for $s$ and $t$. Conversely, if $(f4)$ holds, then 
$\tfrac{c_5}{t}+\tfrac{c_5}{t}f(x,t\xi)^{1-\alpha}\to 0$ as $t\to+\infty$. Therefore \eqref{abd} implies that 
$t\mapsto \tfrac{1}{t}f(x,t\xi)$ satisfies the Cauchy condition as $t\to +\infty$, hence the limit in \eqref{abe} exists, while \eqref{abf} follows from \eqref{abd} by taking the limit as $s\to+\infty$.
\end{rem}

\begin{rem} 
Assume that $g\colon \R^n{\times}\R^m{\times} \Sph^{n-1} \to [0,+\infty)$ satisfies $(g5)$ and let $\lambda\colon [0,+\infty)\to [0,+\infty)$ be defined by
\begin{equation}\label{acb}
\lambda(t):=\sup\{|g_0(x,\zeta,\nu)- \tfrac{1}{\tau} g(x, \tau\,\zeta,\nu)|: x\in \R^n,\ \zeta\in \Sph^{m-1},\ \nu \in \Sph^{n-1}, \tau\in (0,t ] \}.
\end{equation}
Then $\lambda$ is nondecreasing and 
\begin{eqnarray}
&\ds\lim_{t\to0+}\lambda(t)=0,
\label{acc}
\\
&\ds|g_0(x,\zeta,\nu)-\tfrac{1}{t} g(x,t\,\zeta,\nu)|\le |\zeta|\lambda(t|\zeta|),
\label{acd}
\end{eqnarray}
for every $x\in \R^n$, $\zeta\in \R^m$, and $\nu \in \Sph^{n-1}$. If $g$ satisfies also $(g3)$, then
\eqref{acd} gives
\begin{equation}\label{ace}
|g_0(x,\zeta,\nu)-\tfrac{1}{t} g(x,t\,\zeta,\nu)|\le \tfrac{1}{c_2}\lambda(t|\zeta|)\tfrac{1}{t} g(x,t\,\zeta,\nu).
\end{equation}
Conversely, if the limit in \eqref{ach} exists (even with no uniformity assumptions) and $g$ satisfies $(g4)$ and \eqref{ace}, then it satisfies \eqref{acd} with $\lambda$ replaced by $\frac{c_3}{c_2}\lambda$, which implies that $g$ satisfies $(g5)$.
\end{rem}

\begin{rem} 
If $g\colon \R^n{\times}\R^m{\times} \Sph^{n-1} \to [0,+\infty)$ satisfies $(g3)$, $(g4)$, and $(g5)$, then by \eqref{ace} and by the triangle inequality we get
\begin{equation}\label{acg}
|\tfrac{1}{s} g(x,s\,\zeta,\nu)-\tfrac{1}{t} g(x,t\,\zeta,\nu)|\le \tfrac{1}{c_2}\lambda(s|\zeta|)\tfrac{1}{s} g(x,s\,\zeta,\nu)+\tfrac{1}{c_2}\lambda(t|\zeta|)\tfrac{1}{t} g(x,t\,\zeta,\nu)
\end{equation}
for every $s, t>0$, $x\in \R^n$, $\zeta\in \R^m$, and $\nu \in \Sph^{n-1}$. Conversely, if $(g4)$ and \eqref{acg} hold, with some function $\lambda$ satisfying \eqref{acc}, then 
$\lambda(t|\zeta|)\tfrac{1}{t} g(x,t\,\zeta,\nu)\to0$ as $t\to0+$ and hence, using \eqref{acg}, we deduce that the function $t\mapsto \tfrac{1}{t} g(x,t\,\zeta,\nu)$ satisfies the Cauchy condition as $t\to0+$. This implies  that the limit in \eqref{ach} exists and is finite. Moreover, passing to the limit as $s\to0+$ from \eqref{acg}   we obtain \eqref{ace}, which, in turn, yields $(g5)$.
\end{rem}

\bigskip

We are now ready to introduce the probabilistic setting of our problem.
In what follows $(\Om,\T,P)$ denotes a fixed probability space.

\begin{defn}[Random integrands]\label{ri}
A function $f \colon \Om \times \R^n \times \R^{m \times n} \to [0,+\infty)$ is called a \textit{random volume integrand} if
\begin{itemize}
\item[(a1)]  $f$ is $\T\otimes \B^n\otimes \B^{m\times n}$-measurable; 
\item[(b1)]  $f(\om,\cdot,\cdot)\in \mathcal{F}$ for every $\om\in \Om$. 
\end{itemize}
A function $g \colon \Om \times \R^n\times \R^m \times \Sph^{n-1}  \to [0,+\infty)$ is called a \textit{random surface integrand} if
\begin{itemize}
\item[(a2)]  $g$ is $\T\otimes \B^n\otimes \B^m\otimes \B^{n}_S$-measurable; 
\item[(b2)] $g(\om,\cdot,\cdot,\cdot)\in \mathcal{G}$ for every $\om\in \Om$.
\end{itemize}
\end{defn}

Let $f$ be a random volume integrand and let $g$ be a random surface integrand. For every $\om \in \Om$ and every $\e>0$ we consider the free-discontinuity functional $E_\e(\om)\colon L^{1}_{\rm loc}(\R^n,\R^m)\times \A \longrightarrow [0,+\infty]$ defined by
\begin{equation}\label{Eeps}
E_\e(\om)(u,A) :=\begin{cases}
\ds\int_A\!\! f(\om,\tfrac{x}{\e}, \nabla u) \dx + \!\! \int_{S_u\cap A} \!\!\!\!\!  g(\om,\tfrac{x}\e,[u],\nu_u)\,d \mathcal{H}^{n-1}& \!\!\text{if} \; u|_A\!\in SBV(A,\R^m),\cr
+\infty & \!\! \text{otherwise\! in}\,L^{1}_{\rm loc}(\R^n,\R^m).
\end{cases}
\end{equation}

\begin{defn}
Il $f$ is a random volume integrand, we define 
$f^\infty \colon \Om \times \R^n \times \R^{m \times n} \to [0,+\infty)$ by
\begin{equation}\label{DM-abe}
f^\infty(\om,x,\xi):=\lim_{t\to +\infty}  \tfrac{1}{t}f(\om,x,t\xi).
\end{equation}
If $g$ is a random surface integrand, we define
$g_0 \colon \Om \times \R^n\times \R^m \times \Sph^{n-1}  \to [0,+\infty)$ by
\begin{equation}\label{DM-ach} 
g_0(\om,x,\zeta,\nu):=\lim_{t\to 0+} \tfrac{1}{t} g(\om,x,t\,\zeta,\nu).
\end{equation} 
\end{defn}

\begin{rem}\label{DM-rem}
The existence of the limit in \eqref{DM-abe} follows from (b1) in Definition~\ref{ri} and from 
$(f5)$. Since for every $t>0$ the functions $(\om,x,\xi)\mapsto\tfrac{1}{t}f(\om,x,t\xi)$ 
are $\T\otimes \B^n\otimes \B^{m\times n}$-measurable by (a1), the same property holds for $f^\infty$.
Moreover, from Remark~\ref{BV estimate} and from (b1) we deduce that $f^\infty(\om,\cdot,\cdot)\in \mathcal{F}$ 
for every $\om\in \Om$. We conclude that $f^\infty$ is a random volume integrand.

The existence of the limit in \eqref{DM-ach} follows from (b2) in Definition~\ref{ri} and from 
$(g5)$. Since for every $t>0$ the functions $(\om,x,\zeta,\nu)\mapsto\tfrac{1}{t} g(\om,x,t\,\zeta,\nu)$ 
are $\T\otimes \B^n\otimes \B^m\otimes \B^{n}_S$-measurable by (a2), the same property holds for $g_0$.
Moreover, from Remark~\ref{BV estimate} and from (b2) we deduce that $g_0(\om,\cdot,\cdot,\cdot)\in \mathcal{G}$ 
for every $\om\in \Om$. We conclude that $g_0$ is a random surface integrand.
\end{rem}

\noindent In the study of stochastic homogenisation an important role is played by the notions introduced by the following definitions.

\begin{defn}[$P$-preserving transformation]
A $P$-preserving transformation on $(\Om,\T,P)$ is a map $T \colon \Om \to \Om$ satisfying the following properties:
\begin{itemize}
\item[(a)] (measurability) $T$ is $\T$-measurable;
\item[(b)] (bijectivity) $T$ is bijective;
\item[(c)]  (invariance) $P(T(E))=P(E)$, for every $E\in \T$.
\end{itemize}
\noindent If, in addition, every set $E\in \T$ which satisfies 
$T(E)=E$ (called $T$-invariant set) has probability $0$ or $1$, then $T$ is called {\it ergodic}.
\end{defn}

\begin{defn}[Group of $P$-preserving transformations]\label{P-preserving} 
Let $d$ be a positive integer.
A group of $P$-preserving transformations on $(\Om,\T,P)$ is a family $(\tau_z)_{z\in \Z^d}$ of mappings $\tau_z \colon \Om \to \Om$ satisfying the following properties:
\begin{itemize}
\item[(a)] (measurability) $\tau_z$ is $\T$-measurable for every $z\in \Z^d$;
\item[(b)] (bijectivity) $\tau_z$ is bijective for every $z\in \Z^d$; 
\item[(c)]  (invariance) $P(\tau_z(E))=P(E)$,\, for every $E\in \T$ and every $z\in \Z^d$;
\item[(d)] (group property) $\tau_{0}={\mathrm id}_\Om$ (the identity map on $\Om$) and $\tau_{z+z'}=\tau_{z} \circ \tau_{z'}$ for every $z, z'\in \Z^d$;
\end{itemize}
\noindent If, in addition, every set $E\in \T$ which satisfies 
$\tau_z (E)=E$ for every $z\in \Z^d$
has probability $0$ or $1$, then $(\tau_z)_{z\in\Z^d}$ is called {\it ergodic}.
\end{defn}

We are now in a position to define the notion of stationary random integrand.
\begin{defn}[Stationary random integrand]\label{def:stationary}
A random volume integrand $f$ is \textit{stationary} with respect to a group $(\tau_z)_{z\in \Z^n}$ of $P$-preserving transformations on $(\Om,\T,P)$ if
$$
f(\om,x+z,\xi)=f(\tau_z(\om),x,\xi)
$$
for every $\om\in \Om$, $x\in \R^n$, $z\in \Z^n$, and $\xi\in \R^{m \times n}$.

Similarly, a random surface integrand $g$ is \textit{stationary} with respect to $(\tau_z)_{z\in \Z^{n}}$ if 
\begin{equation*} 
g(\om,x+z,\zeta,\nu)=g(\tau_z(\om),x,\zeta,\nu)
\end{equation*}
for every $\om\in \Om$, $x\in \R^n$, $z\in \Z^{n}$, $\zeta\in \R^m$, and $\nu \in \Sph^{n-1}$.
\end{defn} 

\medskip

We now recall the notion of subadditive stochastic  process  as well as the Subadditive Ergodic Theorem by Akcoglu and Krengel \cite[Theorem 2.7]{AK}.  

\medskip

Let $d$ be a positive integer. For every $a,b\in \R^d$, with $a_i< b_i$ for $i=1,\dots,d$, we define
$$
[a, b):= \{x\in \R^{ d}: a_i\leq x_i<b_i\text{ for } i=1,\dots,d\},
$$
and we set 
\begin{equation}\label{int:int}
\mathcal{I}_d:= \{[a, b): a,b\in \R^{ d}, a_i< b_i\text{ for }i=1,\dots,d\}.
\end{equation}

\begin{defn}[Subadditive process]\label{Def:subadditive} 
A  $d$-dimensional {\em subadditive process} with respect to a group $(\tau_z)_{z\in \Z^{ d}}$, $d\ge 1$, of $P$-preserving transformations on $(\Om,\T,P)$ is  a function $\mu\colon\Om\times \mI_d\to \R$ satisfying the following properties: 
\begin{itemize}
\item[(a)] (measurability) for every $A\in \mI_d$ the function $\om\mapsto \mu(\om,A)$ is $\T$-measurable;
\item[(b)] (covariance) for every $\om\in\Om$, $A\in \mI_d$, and $z\in \Z^d$ we have
$
\mu(\om, A+z)=\mu(\tau_z(\om),A)$;
\item[(c)] (subadditivity) for every $A\in \mI_d$ and for every \emph{finite} family $(A_i)_{i\in I} \subset \mI_d$ of pairwise disjoint sets such that $A= \cup_{i\in I} A_i$, we have
$$
\mu(\om,A)\leq \sum_{i\in I} \mu(\om,A_i)\quad\hbox{for every } \om\in\Om; 
$$
 \item[(d)] (boundedness) there exists $c>0$ such that $0\leq \mu(\om, A) \leq c \mathcal L^d(A)$ for every $\om\in\Om$ and every $A\in \mI_d$. 
\end{itemize}
\end{defn}

\begin{defn}[Regular family of sets] \label{reg-fam}
A family of sets $(A_t)_{t>0}$ in $\mI_d$ is called \emph{regular} (with constant $C>0$) if there exists another family of sets $(A'_t)_{t>0}\subset \mI_d$ such that:
\begin{itemize}
\item[(a)] $A_t \subset A'_t \; \text{for every} \; t>0$;
\item[(b)] $A'_{ s} \subset A'_{ t} \; \text{whenever}\  0<s<t$; 
\item[(c)] $0< \mathcal L^d(A'_t) \leq C \mathcal L^d(A_t) \; \text{for every} \; t>0$.
\end{itemize} 
If the family $(A'_t)_{t>0}$ can be chosen in a way such that $\R^d=\bigcup_{t>0} A'_t$, then we write $\ds\lim_{t\to +\infty}A_t=\R^d$. 
\end{defn}

We now state a variant of the pointwise ergodic Theorem \cite[Theorem 2.7 and Remark p. 59]{AK} which is suitable for our purposes. This variant can be found in \cite[Theorem 4.1]{LiMi}.  

\begin{thm}[Subadditive Ergodic Theorem]\label{ergodic} Let $d\in \{n-1,n\}$ and let $(\tau_z)_{z\in \Z^d}$ be a group of $P$-preserving transformations on $(\Om,\T,P)$. Let $\mu \colon \Om\times \mI_d\to \R$ be a subadditive process  with respect to $(\tau_z)_{z\in \Z^d}$. 
Then there exist a $\T$-measurable function $\varphi \colon \Om \to [0,+\infty)$ and a set $\Om'\in\T$ with $P(\Om')$=1 such that
\begin{equation*}
\lim_{t \to +\infty} \frac{\mu(\om,A_t)}{\mathcal L^d(A_t)}=\varphi(\om), 
\end{equation*}
for every regular family of sets $(A_t)_{t>0} \subset \mI_d$ with $\ds\lim_{t\to +\infty}A_t=\R^d$ and for every $\om\in \Om'$. If in addition $(\tau_z)_{z\in \Z^d}$ is ergodic, then $\varphi$ is constant $P$-a.e. 
\end{thm}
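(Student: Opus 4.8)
The plan is to reduce the statement to the classical multiparameter pointwise subadditive ergodic theorem of Akcoglu and Krengel \cite{AK} (in the form recorded in \cite{LiMi}) by a discretisation, and then to upgrade the resulting almost sure convergence from a fixed sequence of integer cubes to \emph{every} regular family.

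\emph{Step 1: discretisation.} Restricting $\mu$ to those boxes of $\mI_d$ with integer vertices produces a discrete $d$-dimensional subadditive process over $\Z^d$: measurability is~(a), covariance~(b) is already postulated for $z\in\Z^d$, subadditivity~(c) is inherited, and boundedness~(d) gives $0\le\mu(\om,[0,k)^d)\le c\,k^d$, in particular integrability of $\om\mapsto\mu(\om,[0,1)^d)$. The discrete multiparameter subadditive ergodic theorem \cite{AK} then provides a $\T$-measurable $\varphi\colon\Om\to[0,+\infty)$, with $0\le\varphi\le c$ by~(d), and a set $\Om'\in\T$ with $P(\Om')=1$ such that $\mu(\om,B_k)/\mathcal L^d(B_k)\to\varphi(\om)$ for every $\om\in\Om'$ and every regular sequence $(B_k)$ of integer-vertex boxes (all of whose sidelengths diverge while their volumes stay comparable). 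When $(\tau_z)_{z\in\Z^d}$ is ergodic, covariance~(b) forces $\varphi\circ\tau_z=\varphi$ for every $z$, so $\varphi$ is $P$-a.e.\ constant.

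\emph{Step 2: geometry of regular families.} Let $(A_t)_{t>0}\subset\mI_d$ be regular with constant $C$ and let $(A'_t)$ be an associated nested family with $\bigcup_{t>0}A'_t=\R^d$; write $A_t=\prod_{i=1}^d[a_i(t),b_i(t))$ and $A'_t=\prod_{i=1}^d[a'_i(t),b'_i(t))$. Nestedness together with $\bigcup_t A'_t=\R^d$ forces $b'_i(t)-a'_i(t)\to+\infty$ for each $i$; since $A_t\subset A'_t$ and $\mathcal L^d(A_t)\ge C^{-1}\mathcal L^d(A'_t)$, dividing $\prod_j(b_j(t)-a_j(t))\ge C^{-1}\prod_j(b'_j(t)-a'_j(t))$ by $\prod_{j\ne i}(b_j(t)-a_j(t))\le\prod_{j\ne i}(b'_j(t)-a'_j(t))$ yields $b_i(t)-a_i(t)\ge C^{-1}\big(b'_i(t)-a'_i(t)\big)\to+\infty$. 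Hence \emph{every} sidelength of $A_t$ — not merely its volume — diverges. For $t$ large let $\underline B_t$ be the largest integer-vertex box contained in $A_t$ and $\overline B_t$ the smallest one containing it; then these are nonempty, their sidelengths differ from those of $A_t$ by at most $2$, so $\mathcal L^d(\underline B_t)/\mathcal L^d(A_t)\to1$ and $\mathcal L^d(\overline B_t)/\mathcal L^d(A_t)\to1$, and $(\underline B_t)$, $(\overline B_t)$ are regular sequences of integer-vertex boxes.

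\emph{Step 3: sandwiching and conclusion.} Decomposing the two collars $A_t\setminus\underline B_t$ and $\overline B_t\setminus A_t$ into finitely many disjoint boxes of $\mI_d$ and using subadditivity~(c) and boundedness~(d), for every $\om\in\Om'$ we get
\[
\mu(\om,A_t)\le\mu(\om,\underline B_t)+c\,\mathcal L^d(A_t\setminus\underline B_t),\qquad \mu(\om,\overline B_t)\le\mu(\om,A_t)+c\,\mathcal L^d(\overline B_t\setminus A_t).
\]
Dividing by $\mathcal L^d(A_t)$ and letting $t\to+\infty$, the collar volumes are $o(\mathcal L^d(A_t))$ by Step~2, while $\mu(\om,\underline B_t)/\mathcal L^d(A_t)$ and $\mu(\om,\overline B_t)/\mathcal L^d(A_t)$ both converge to $\varphi(\om)$ by Step~1 (combined, if needed, with covariance~(b) to handle translated boxes); hence $\mu(\om,A_t)/\mathcal L^d(A_t)\to\varphi(\om)$ for every $\om\in\Om'$ and every regular family with $\lim_{t\to+\infty}A_t=\R^d$. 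The real analytic content lies in Step~1, which we invoke rather than reprove; within the reduction the one point needing care is the observation in Step~2 that regularity — the volume comparison plus the exhaustion $\bigcup_t A'_t=\R^d$ — forces each individual sidelength of $A_t$ to diverge, and this is precisely what makes the collars negligible and lets the single $P$-null exceptional set produced by Akcoglu–Krengel serve simultaneously for all regular families.
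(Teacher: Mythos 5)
A preliminary remark on the comparison you ask for: the paper does not prove this statement at all — Theorem \ref{ergodic} is quoted as a known result, with the proof delegated to Akcoglu--Krengel \cite{AK} (Theorem 2.7 and the Remark on p.~59) in the refined form of Licht--Michaille \cite{LiMi}, Theorem 4.1. The relevant question is therefore whether your reduction really derives the stated variant from the classical discrete theorem, and here there is a genuine gap, located in Step 1. What the statement requires — and what the applications in Section \ref{section:stoc-hom} genuinely use, since the regular families $Q^{\nu,k}_{\rho r}(rx)$ are indexed by uncountably many $x$ and $\rho$ — is a \emph{single} set $\Om'$ of full probability working simultaneously for \emph{every} regular family. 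The Akcoglu--Krengel theorem, as you invoke it, gives: for each \emph{fixed} regular sequence of integer-vertex boxes, an almost sure limit, with an exceptional null set that a priori depends on the sequence. The collection of regular sequences of integer-vertex boxes is uncountable (even though the boxes themselves form a countable set), so the uniform statement you assert in Step 1 cannot be obtained by intersecting countably many null sets; this sequence-independent exceptional set is precisely the ``global--local'' refinement proved by Licht and Michaille, and it is the reason the paper cites \cite{LiMi} rather than \cite{AK} alone. As written, your argument transfers the real content of the theorem into an over-strong reading of the citation: to close the gap you would either have to cite the Licht--Michaille variant directly (which makes the discretisation redundant) or reprove the uniformity for integer boxes, e.g.\ by first obtaining convergence along the cubes $[0,k)^d$ and their integer translates and then running the maximal-inequality/covering argument in both directions — which is the nontrivial part of the published proofs, not a consequence of the statement you quote.

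By contrast, the deterministic part of your proposal is sound and worth keeping. The observation that $A_t\subset A'_t$, $\mathcal L^d(A'_t)\le C\,\mathcal L^d(A_t)$ and the exhaustion $\bigcup_{t>0}A'_t=\R^d$ force every individual sidelength of $A_t$ to diverge is correct, and it is exactly what makes the inner and outer integer-vertex boxes have asymptotically the same volume as $A_t$; the collar estimates via subadditivity (c) and the bound (d) are the right way to pass from integer boxes to general elements of $\mI_d$ (modulo the harmless restriction to $t$ large so that the inner box is nonempty and the regularity constants are uniform). The ergodicity claim is also fine, since for the invariance $\varphi\circ\tau_z=\varphi$ one only needs the countably many translated sequences $([0,k)^d+z)_{k}$, $z\in\Z^d$, whose null sets can be intersected. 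The single missing — but essential — ingredient is the uniform exceptional set in Step 1.
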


\begin{rem}[Covariance with respect to a continuous group $(\tau_z)_{z\in \R^d}$]\label{rem:contg}
Definitions~\ref{P-preserving}, \ref{def:stationary}, \ref{Def:subadditive} and Theorem~\ref{ergodic} can be adapted also to the case of a continuous group $(\tau_z)_{z\in \R^d}$, see for instance \cite[Section~3.1]{CDMSZ-stoc}.
\end{rem}

\subsection{Statement of the main results}\label{sub:main_results}
In this section we state the main result of the paper, Theorem \ref{G-convE}, which provides a  $\Gamma$-convergence result  for the random functionals $(E_\e(\om))_{\e > 0}$ introduced in \eqref{Eeps}, under the assumption that the volume and surface integrands $f$ and $g$ are stationary. 

The next theorem proves the existence of the  limits in the asymptotic  cell formulas that will be used in the statement of the main result. 

\medskip

When $f$ and $g$ are random integrands it is convenient to introduce the following shorthand notation
\begin{equation} \label{inf for fixed omega}
m^{f,g_0}_\omega:=m^{f(\omega,\cdot,\cdot),g_0(\omega, \cdot, \cdot,\cdot)}\,, \qquad m_\om^{f^\infty,g}:=m^{f^\infty(\omega,\cdot,\cdot),g(\omega, \cdot, \cdot,\cdot)}\,, \qquad
m^{f^\infty,g_0}_\omega:=m^{f^\infty(\omega,\cdot,\cdot),g_0(\omega, \cdot, \cdot,\cdot)},
\end{equation} 
where $m^{f,g_0}$, $m^{f^\infty,g}$, and $m^{f^\infty,g_0}$ are defined as in \eqref{m-phipsi}, with $(f,g)$ replaced by $(f,g_0)$, $(f^\infty,g)$, and $(f^\infty,g_0)$, respectively.

\medskip

\begin{thm}[Homogenisation formulas]\label{en-density_vs}
Let $f$ be a stationary random volume integrand and let $g$ be a stationary random surface integrand  with respect to a group $(\tau_z)_{z\in \Z^n}$ of $P$-preserving transformations on $(\Om,\T,P)$.
Then there exists $\Om'\in \T$, with $P(\Om')=1$,
such that
\begin{itemize}
\item[(a)] for every $\om\in \Om'$, $x\in \R^{n}$, $\xi \in \R^{m\times n}$, $\nu\in \Sph^{n-1}$, and $k\in \N$ the limit  
\begin{equation*}
\lim_{r\to +\infty} \frac{m^{f,g_0}_{\om}(\ell_\xi,Q^{\nu,k}_r(rx))}{k^{n-1}r^{n}}
\end{equation*}
exists and is independent of $x$, $\nu$, and $k$; 
\item[(b)] for every $\om\in \Om'$, $x\in \R^{n}$, $\zeta\in \R^m$, $\nu\in \Sph^{n-1}$ the limit
$$
\lim_{r\to +\infty} \frac{m^{f^\infty,g}_{\om}(u_{r x,\zeta,\nu},Q^\nu_r(rx))}{r^{n-1}}
$$
exists and is independent of $x$. 
\end{itemize}
More precisely, there exist a random volume integrand $f_{\mathrm{hom}} \colon \Om\times \R^{m\times n} \to [0,+\infty)$, and a random surface integrand $g_{\mathrm{hom}} \colon \Om\times \R^m\times \Sph^{n-1} \to [0,+\infty)$ such that for every 
$\om\in\Om'$, $x \in \mathbb{R}^n$,
$\xi \in \mathbb{R}^{m\times n}, \zeta \in \mathbb{R}^m$, and $\nu \in \mathbb{S}^{n-1}$
\begin{gather}\label{psi0}
f_{\mathrm{hom}}(\om,\xi)=\lim_{r\to +\infty} \frac{m^{f,g_0}_{\om}(\ell_\xi,Q^{\nu,k}_r(rx))}{k^{n-1}r^{n}}=  \lim_{ r \to +\infty} \frac{m^{f,g_0}_{\om}(\ell_\xi,  Q_r)}{r^n},
\\
\label{phi0}
g_{\mathrm{hom}}(\om,\zeta,\nu)=\lim_{r\to +\infty} \frac{m^{f^\infty,g}_{\om}(u_{r x,\zeta,\nu},Q^\nu_r(rx))}{r^{n-1}}= \lim_{r\to +\infty} \frac{m^{f^\infty,g}_{\om}(u_{0,\zeta,\nu}, Q^\nu_r)}{r^{n-1}},
\end{gather}
 where $Q_r:=Q_r(0)$ and $Q^\nu_r=Q^\nu_r(0)$.

For  every $\om\in \Om'$ and $\xi \in \R^{m\times n}$  let 
$$
f^\infty_{\rm hom}(\omega,\xi):=\lim_{t\to+\infty}\frac{f_{\rm hom}(\omega,t\xi)}{t}
$$
(since $f_{\mathrm{hom}}(\om,\cdot)\in \mathcal{F}$, the existence of the limit is guaranteed by $(f5)$). Then   for every 
$\om\in \Om'$, $x\in \R^{n}$, $\xi \in \R^{m\times n}$, $\nu\in \Sph^{n-1}$, and $k\in \N$ we have 
\begin{equation}\label{hhhrrr}
f^\infty_{\rm hom}(\omega,\xi)=\lim_{r\to +\infty} \frac{m^{f^\infty,g_0}_{\om}(\ell_\xi,Q^{\nu,k}_r(rx))}{k^{n-1}r^{n}}=\lim_{r\to +\infty} \frac{m^{f^\infty,g_0}_{\om}(\ell_\xi, Q_r)}{r^{n}}.
\end{equation}
If, in addition, $(\tau_z)_{z\in \Z^n}$ is ergodic, then $f_{\mathrm{hom}}$ and $g_{\mathrm{hom}}$ are independent of $\om$ and
\begin{align}
f_{\mathrm{hom}}(\xi)&=\lim_{r\to +\infty}\, \frac{1}{r^n} {\int_\Om m^{f,g_0}_{\om}(\ell_\xi,  Q_r)\,dP(\om)}, 
\label{DM-8401}
\\
g_{\mathrm{hom}}(\zeta,\nu)&= \lim_{r\to +\infty} \frac{1}{r^{n-1}} 
\int_\Om m^{f^\infty,g}_{\om}(u_{0,\zeta,\nu}, Q^\nu_r) \, dP(\om),
\label{DM-8402}
\\
f^\infty_{\mathrm{hom}}(\xi)&=\lim_{r\to +\infty}\frac{1}{r^{n}} \int_\Om {m^{f^\infty,g_0}_{\om}(\ell_\xi, Q_r)\,dP(\om)}.
\label{DM-8403}
\end{align}
\end{thm}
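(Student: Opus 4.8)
The plan is to realise each of the three families of rescaled minimum problems in \eqref{psi0}, \eqref{phi0} and \eqref{hhhrrr} as a subadditive stochastic process in the sense of Definition~\ref{Def:subadditive} and then to apply the Subadditive Ergodic Theorem~\ref{ergodic}; the one ingredient whose proof is genuinely delicate, the $\T$-measurability of the minimum problems $\om\mapsto m^{f,g_0}_\om(w,A)$ and $\om\mapsto m^{f^\infty,g}_\om(w,A)$ (the infimum being over the non-separable space $SBV$), is isolated in Proposition~\ref{measurability} and I would invoke it here as a black box. For the volume density I would fix $\xi\in\R^{m\times n}$ and let $\mu_\xi(\om,A):=m^{f,g_0}_\om(\ell_\xi,A)$ for $A\in\mI_n$; note that, by Remark~\ref{DM-rem}, $(f,g_0)$ is again a pair of a random volume and a random surface integrand. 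The four axioms of Definition~\ref{Def:subadditive} are checked as follows: measurability is Proposition~\ref{measurability}; covariance $\mu_\xi(\om,A+z)=\mu_\xi(\tau_z\om,A)$ follows from the change of variables $y=x-z$ and the stationarity of $f$ and $g_0$, translating $\ell_\xi$ only adding a constant that affects neither gradients nor jumps; subadditivity follows by gluing near-optimal competitors along a finite partition of $A$, all of them agreeing with $\ell_\xi$ near the $\mathcal L^n$-negligible interfaces so that no extra surface energy arises; and boundedness $0\le\mu_\xi(\om,A)\le(c_3|\xi|+c_4)\mathcal L^n(A)$ comes from testing with $\ell_\xi$ and $(f4)$. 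Theorem~\ref{ergodic} then provides a $\T$-measurable $\varphi_\xi$ and a full-probability set on which $\mu_\xi(\om,A_t)/\mathcal L^n(A_t)\to\varphi_\xi(\om)$ along every regular family with $\lim_tA_t=\R^n$. Since for each $x$ the families $\{Q_r(rx)\}_{r>0}$ and $\{Q^{e_n,k}_r(rx)\}_{r>0}$ are regular (a large axis-parallel cube centred at the origin serving as enclosing family) and exhaust $\R^n$, the limits in \eqref{psi0} exist for $\nu=e_n$, are independent of $x$ and $k$, and equal $\varphi_\xi(\om)=:f_{\rm hom}(\om,\xi)$. For rational $\nu$ I would run the same scheme with the rotated process $A\mapsto m^{f,g_0}_\om(\ell_\xi,R_\nu A)$ — a subadditive process with respect to a finite-index subgroup of $(\tau_z)_z$, since $R_\nu\in O(n)\cap\Q^{n\times n}$ by the choice of $\nu\mapsto R_\nu$ — and identify its limit with $f_{\rm hom}(\om,\xi)$ by covering $Q^\nu_r$ with axis-parallel cubes of fixed side together with the elementary estimate that enlarging a set raises $m^{f,g_0}_\om(\ell_\xi,\cdot)$ by at most $(c_3|\xi|+c_4)$ times the added volume, no new jump appearing because the optimal competitor already equals $\ell_\xi$ near the old boundary. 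Arbitrary $\nu$, arbitrary $k>0$ and arbitrary $\xi$ are then reached by density, using $(f2)$, the continuity of $\nu\mapsto R_\nu$ on $\widehat\Sph^{n-1}_\pm$ together with $(g6)$ to reduce to one hemisphere, and the enlargement estimate in $k$. That $f_{\rm hom}(\om,\cdot)\in\mathcal F$ — hence that $f^\infty_{\rm hom}(\om,\cdot)$ exists — follows $\om$ by $\om$ as in the deterministic Lemma~\ref{l:f-hom}.

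For the surface density I would cope with the dimension mismatch as in \cite{Alciru,CDMSZ-stoc}. Fixing $\zeta\in\R^m$ and rational $\nu$, set $\mu_{\zeta,\nu}(\om,I):=m^{f^\infty,g}_\om\big(u_{0,\zeta,\nu},R_\nu(I\times(-\tfrac12,\tfrac12))\big)$ for $I\in\mI_{n-1}$, with the ``missing'' $n$-th direction fixed to the unit interval along $\nu$. This is an $(n-1)$-dimensional subadditive process: measurability is Proposition~\ref{measurability} applied to $(f^\infty,g)$; covariance holds because $u_{0,\zeta,\nu}$ depends only on $y\cdot\nu$ and is hence invariant under translations in $\Pi^\nu_0$, while $R_\nu$ rational maps a suitable sublattice of $\Z^{n-1}\times\{0\}$ into $\Z^n$; subadditivity is again gluing, the ``vertical'' interfaces meeting the jump hyperplane $\Pi^\nu_0$ only in an $\mathcal H^{n-1}$-null set; boundedness $\le c_3|\zeta|\mathcal L^{n-1}(I)$ comes from testing with $u_{0,\zeta,\nu}$, using $(g4)$ and $f^\infty(\cdot,0)=0$. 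Theorem~\ref{ergodic} gives a limit $\psi_{\zeta,\nu}(\om)$ for the unit-thickness slabs $R_\nu(Q_r\times(-\tfrac12,\tfrac12))$; to pass to the genuine cube $Q^\nu_r$ I would use that one inequality is immediate (extend an optimal slab competitor by the constants $\zeta$ and $0$, which it already equals near the two faces orthogonal to $\nu$), while the other follows from a pigeonhole argument selecting, among the $O(r)$ parallel unit slabs inside $Q^\nu_r$, one on which an optimal competitor carries energy $O(r^{n-2})$, and correcting across it. This yields the second equality in \eqref{phi0} with $g_{\rm hom}(\om,\zeta,\nu):=\psi_{\zeta,\nu}(\om)$, and $g_{\rm hom}(\om,\cdot,\cdot)\in\G$ follows as in Lemma~\ref{l:g-hom}.

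The hardest step is the independence of the limit in \eqref{phi0} of the centre $x$, since here, unlike in the volume case, the boundary datum $u_{rx,\zeta,\nu}$ moves with $x$. I would translate by $\lfloor rx\rfloor$, reducing via stationarity to comparing $m^{f^\infty,g}_{\om'}(u_{t\nu,\zeta,\nu},Q^\nu_r(t\nu))$ with $m^{f^\infty,g}_{\om'}(u_{0,\zeta,\nu},Q^\nu_r(0))$ for $|t|$ bounded and $\om'$ arbitrary, and then, as in \cite{Alciru,CDMSZ-stoc}, bound the difference of these two minima by $o(r^{n-1})$: take an optimal competitor on a slightly enlarged cube and correct it in an $O(1)$-wide collar of the smaller cube, where the two boundary data differ only on the bounded-width slab between $\Pi^\nu_0$ and $\Pi^\nu_{t\nu}$, so that, after a De Giorgi--type averaging selecting a collar on which the competitor is close to its trace, the correction costs only $O(r^{n-2})$; this gives the first equality in \eqref{phi0}. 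For $f^\infty_{\rm hom}$ I would apply the volume result already obtained to the pair $(f^\infty,g_0)$ (again a random volume/surface pair, by Remark~\ref{DM-rem}), producing the limit $\lim_r m^{f^\infty,g_0}_\om(\ell_\xi,Q^{\nu,k}_r(rx))/(k^{n-1}r^n)$, and identify it with $f^\infty_{\rm hom}(\om,\xi)$ by combining the positive $1$-homogeneity of $g_0$ in $\zeta$ (immediate from \eqref{ach}) with the uniform recession estimate \eqref{abb}: testing $m^{f,g_0}_\om(\ell_{t\xi},Q_r)$ with $t\,v$, for $v$ near-optimal for $m^{f^\infty,g_0}_\om(\ell_\xi,Q_r)$, gives $m^{f,g_0}_\om(\ell_{t\xi},Q_r)=t\,m^{f^\infty,g_0}_\om(\ell_\xi,Q_r)+O(t^{1-\alpha}r^n)$ uniformly in $\om$ and $r$, whence \eqref{hhhrrr} on dividing by $t\,r^n$ and letting first $r\to+\infty$, then $t\to+\infty$.

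Finally, in the ergodic case Theorem~\ref{ergodic} forces $\varphi_\xi$, $\psi_{\zeta,\nu}$ and the recession limit to be $P$-a.e.\ constant, so $f_{\rm hom}$, $g_{\rm hom}$ and $f^\infty_{\rm hom}$ are deterministic and \eqref{DM-8401}--\eqref{DM-8403} follow from the pointwise convergence, the uniform bounds $(f4)$, $(g4)$ and dominated convergence; intersecting the countably many full-probability sets obtained for $\xi$, $\zeta$, $\nu$, $k$ rational and passing to all parameters by continuity produces the single exceptional set $\Om'$. In short, the genuine obstacles are the measurability of the minimum problems (handled separately in the Appendix), the identification of the correct dimension for the surface process, and above all the $x$-independence in \eqref{phi0}, where the $x$-dependent boundary condition forces the quantitative boundary-layer estimates that are the technical heart of the argument.
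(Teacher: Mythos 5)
Your treatment of the volume formula \eqref{psi0} (an $n$-dimensional subadditive process, rational directions via $M_\nu R_\nu\in\Z^{n\times n}$, the Akcoglu--Krengel theorem along regular families for the independence of $x$, $k$, $\rho$, continuity in $\xi$ and $\nu$ plus a covering argument for the independence of $\nu$) and your identification \eqref{hhhrrr} of $f^\infty_{\rm hom}$ through the scaling/recession estimate are essentially the arguments of Propositions~\ref{random-f-hom} and~\ref{p:h}, and the ergodic-case formulas by dominated convergence are fine. The surface part, however, contains two genuine gaps, and they sit exactly at the points the paper singles out as delicate. First, you define the $(n-1)$-dimensional process on slabs of \emph{unit} thickness, $R_\nu\big(I\times(-\tfrac12,\tfrac12)\big)$, whereas the paper uses the sets $T_\nu(A')$ of \eqref{An}, whose thickness is proportional to the largest side of $A'$, precisely so that the Ergodic Theorem applied to $A'=rQ'$ produces the cubes $Q^\nu_{M_\nu r}$ directly and no slab-to-cube comparison is ever needed. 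With your choice the theorem only controls $m^{f^\infty,g}_\om\big(u_{0,\zeta,\nu},R_\nu((-\tfrac r2,\tfrac r2)^{n-1}\times(-\tfrac12,\tfrac12))\big)$, and the identification with $\lim_r m^{f^\infty,g}_\om(u_{0,\zeta,\nu},Q^\nu_r)/r^{n-1}$ is a real obstruction, not a pigeonhole exercise: extending a slab competitor by $u_{0,\zeta,\nu}$ gives the inequality ``cube $\le$ slab'', but in the opposite direction a near-minimiser on $Q^\nu_r$ may perform its transition at height of order $r$, so restricting it to the central unit slab and forcing the constants $\zeta$, $0$ on the cut planes costs order $r^{n-1}$; selecting instead a cheap slab at a height $j=j(r,\om)$ puts you in the problem with datum jumping on $\Pi^\nu_{j\nu}$, i.e.\ in an environment translated by an $r$-dependent amount, for which the almost sure convergence at fixed translates gives no information. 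Indeed the fixed-width constraint genuinely restricts the wandering of the optimal interface, and there is no soft reason why the constrained and unconstrained limits should coincide; the growing thickness built into $T_\nu(A')$ is what removes this issue.

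Second, your argument for the $x$-independence in \eqref{phi0} is circular. Translating by $z_r:=\lfloor rx\rfloor$ and using your (correct) deterministic boundary-layer comparison for bounded shifts only yields $m^{f^\infty,g}_\om(u_{rx,\zeta,\nu},Q^\nu_r(rx))=m^{f^\infty,g}_{\tau_{z_r}(\om)}(u_{0,\zeta,\nu},Q^\nu_r)+O(r^{n-2})$. Since $z_r$ varies with $r$ and $|z_r|\to+\infty$, the convergence of the right-hand side divided by $r^{n-1}$ does not follow from the almost sure limit at $x=0$, not even after intersecting the full-measure sets over all integer translates: that intersection gives, for each \emph{fixed} $z$, the limit for $\tau_z(\om)$ (and the invariance $g_{\rm hom}(\tau_z(\om),\zeta,\nu)=g_{\rm hom}(\om,\zeta,\nu)$, which is what the estimate analogous to \eqref{c:nun} is used for in the paper), but says nothing about a sequence of translates chosen as a function of $r$. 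By covariance the quantity $m^{f^\infty,g}_{\tau_{z_r}(\om)}(u_{0,\zeta,\nu},Q^\nu_r)$ is just $m^{f^\infty,g}_\om(u_{z_r,\zeta,\nu},Q^\nu_r(z_r))$ again, so the reduction returns you to the statement you set out to prove. This is why the paper does not dispose of the $x$-dependence by a bounded-shift comparison, but invokes the separate, substantially longer argument of \cite[Theorem~6.1]{CDMSZ-stoc} in Proposition~\ref{random-g-hom-x} (for rational $\nu$, then a two-sided cube comparison for general $\nu$). As it stands, your proposal establishes \eqref{psi0} and \eqref{hhhrrr} but not \eqref{phi0}, neither the existence of the limit along the cubes nor its independence of $x$.
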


We are now ready to state the main result of this paper, namely the almost sure $\Gamma$-convergence of the sequence of random functionals $(E_\e(\om))_{\e>0}$ introduced in \eqref{Eeps}.

\begin{thm}[Almost sure $\Gamma$-convergence]\label{G-convE}
Let $f$ and $g$ be stationary random  volume and surface integrands with respect to a group $(\tau_z)_{z\in \Z^n}$ of $P$-preserving transformations on $(\Om,\T,P)$, and for every $\e>0$ and $\om\in \Omega$ let $E_\e(\om)$ be as in \eqref{Eeps}. Let $\Om'\in \T$ (with $P(\Om')=1$), $f_{\mathrm{hom}}$,  $f^\infty_{\rm hom}$,  and $g_{\mathrm{hom}}$ be as in Theorem \ref{en-density_vs}, and for every $\om\in\Omega$ let $E_{\mathrm{hom}}(\om) \colon L^1_{\rm loc}(\R^n, \R^m)\times \A  \longrightarrow [0,+\infty]$ be the functional defined by
$$
E_{\mathrm{hom}}(\om)(u, A):= \int_A f_{\mathrm{hom}}(\om,\nabla u)\dx + \int_{S_u\cap A}g_{\mathrm{hom}}(\om,[u],\nu_u)\,d \mathcal{H}^{n-1}+\int_A f^\infty_{\rm hom}\Big(\om,\frac{dC(u)}{d|C(u)|}\Big)\,d|C(u)|,
$$
if $u|_A\in BV(A,\R^m)$, and by $E_{\mathrm{hom}}(\om)(u, A):= +\infty$, if $u|_A\notin BV(A,\R^m)$. 
Then for every $\om\in \Om'$ and every $A\in \A$ the functionals $E_\e(\om)(\cdot,A)$ $\Gamma$-converge  to $E_{\mathrm{hom}}(\om)(\cdot,A)$ in $L^1_{\rm loc}(\R^n,\R^m)$, as $\e\to 0+$. 

If, in addition, $(\tau_z)_{z\in \Z^n}$ is ergodic, then $E_{\mathrm{hom}}$ is a deterministic functional\ie it does not depend on~$\om$.
\end{thm}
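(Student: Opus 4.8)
The plan is to reduce the almost sure $\Gamma$-convergence statement to the deterministic $\Gamma$-convergence machinery already in place, using Theorem~\ref{en-density_vs} to supply the missing hypotheses. First I would fix $\om$ in the full-probability set $\Om'$ provided by Theorem~\ref{en-density_vs}. For such $\om$, the functions $f(\om,\cdot,\cdot)\in\mathcal F$ and $g(\om,\cdot,\cdot)\in\mathcal G$ by (b1)--(b2) in Definition~\ref{ri}, so the family $E_\e(\om)(\cdot,A)$ is exactly of the form treated by the deterministic theory. The key point is that Theorem~\ref{en-density_vs} guarantees that the asymptotic cell-formula limits \eqref{psi0} and \eqref{phi0} exist and are \emph{independent of $x$} (that is precisely assumptions \eqref{f-hom-000} and \eqref{g-hom-000}, with the extra harmless dependence on $\om$ that is now frozen). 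Hence the deterministic homogenisation result Theorem~\ref{T:det-hom} applies verbatim and yields that $E_\e(\om)(\cdot,A)$ $\Gamma$-converges in $L^1_{\rm loc}(\R^n,\R^m)$, as $\e\to0^+$, to the integral functional with volume density $f_{\rm hom}(\om,\cdot)$, surface density $g_{\rm hom}(\om,\cdot,\cdot)$, and Cantor density $f^\infty_{\rm hom}(\om,\cdot)$. Since $f_{\rm hom}(\om,\cdot)\in\mathcal F$ (Lemma~\ref{l:f-hom}), the recession function $f^\infty_{\rm hom}(\om,\cdot)$ used in $E_{\rm hom}(\om)$ is well defined and coincides with the Cantor density identified by the blow-up argument of Proposition~\ref{p:homo-Can}. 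This identifies the $\Gamma$-limit with $E_{\rm hom}(\om)(\cdot,A)$ for every $\om\in\Om'$ and every $A\in\A$, which is the first assertion.

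For the second assertion, when $(\tau_z)_{z\in\Z^n}$ is ergodic, Theorem~\ref{en-density_vs} already states that $f_{\rm hom}$ and $g_{\rm hom}$ do not depend on $\om$ (they are given by the averaged formulas \eqref{DM-8401}--\eqref{DM-8402}), and consequently $f^\infty_{\rm hom}$, being the recession function of the $\om$-independent $f_{\rm hom}$, is $\om$-independent as well (equivalently by \eqref{DM-8403}). Therefore the functional $E_{\rm hom}(\om)$ is the same for every $\om$, i.e.\ it is deterministic. This part is immediate once Theorem~\ref{en-density_vs} is granted.

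The genuine content is thus entirely in Theorem~\ref{en-density_vs}, whose proof is the real obstacle and which I expect to occupy the bulk of Section~\ref{section:stoc-hom}. The strategy there is the one pioneered in \cite{DMM2} and adapted in \cite{CDMSZ-stoc}: one must show that $\om\mapsto m^{f,g_0}_\om(\ell_\xi,\cdot)$ and $\om\mapsto m^{f^\infty,g}_\om(u_{\cdot,\zeta,\nu},\cdot)$, suitably interpreted as set functions on $\mathcal I_n$ respectively $\mathcal I_{n-1}$, define subadditive stochastic processes in the sense of Definition~\ref{Def:subadditive}, and then invoke the Subadditive Ergodic Theorem~\ref{ergodic} to obtain the limits and their independence of the shape of the averaging sets; the $x$-independence then requires an additional comparison argument, delicate for the surface formula because of the $x$-dependent boundary datum $u_{rx,\zeta,\nu}$. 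Three difficulties must be overcome along the way, all flagged in the introduction: (i) the \emph{measurability} of $\om\mapsto m^{f(\om),g_0(\om)}$ and $\om\mapsto m^{f^\infty(\om),g(\om)}$, which is nontrivial since the infimum in \eqref{m-phipsi} is over the nonseparable space $SBV$ and cannot be reduced to a countable set (this is deferred to the Appendix via Proposition~\ref{measurability}); (ii) assigning the correct \emph{dimension} to the two processes — $n$-dimensional for the volume quantity, $(n-1)$-dimensional for the surface quantity — even though both functionals live naturally on $n$-dimensional sets, handled as in \cite{Alciru,BP,CDMSZ-stoc} by working with the rescaled rectangles $Q^{\nu,k}_\rho$ and controlling the dependence on the transverse scale $k$; and (iii) verifying the covariance property (b) of Definition~\ref{Def:subadditive}, which is where stationarity of $f$ and $g$ (Definition~\ref{def:stationary}) enters, together with the boundedness property (d), which follows from the growth bounds $(f3)$--$(f4)$, $(g3)$--$(g4)$ in Definition~\ref{def:FG} by using affine, respectively one-jump, competitors. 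Once these are in place, the equality of the two expressions in \eqref{psi0} (and likewise in \eqref{phi0}) follows by choosing, in the Subadditive Ergodic Theorem, the two regular families $Q^{\nu,k}_r(rx)$ and $Q_r$, both of which exhaust $\R^n$ and have comparable volumes, so that they must share the same limit $\varphi(\om)=f_{\rm hom}(\om,\xi)$.
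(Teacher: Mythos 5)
Your proposal is correct and is essentially the paper's own argument: fix $\om\in\Om'$, observe that Theorem~\ref{en-density_vs} supplies exactly hypotheses (a) and (b) of Theorem~\ref{T:det-hom} for the frozen integrands $f(\om,\cdot,\cdot)\in\mathcal F$, $g(\om,\cdot,\cdot)\in\G$, and apply the deterministic Theorem~\ref{T:det-hom}, with the ergodic case following from the $\om$-independence of $f_{\rm hom}$, $g_{\rm hom}$ (hence of $f^\infty_{\rm hom}$) stated in Theorem~\ref{en-density_vs}. The extended discussion of how Theorem~\ref{en-density_vs} itself is proved is accurate but not needed for this statement, since that theorem may be invoked as given.
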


Thanks to Theorem \ref{G-convE} we can also characterise the asymptotic behaviour of some minimisation problems 
involving $E_\e(\om)$. An example is shown in the corollary below.  Since for every $A\in\A$ the values of $E_\e(\om)(u,A)$ and $E_{\mathrm{hom}}(\om)(u,A)$ depend only on the restriction of $u$ to $A$, in the corollary we regard $E_\e(\om)(u,\cdot)$ and $E_{\mathrm{hom}}(\om)(u,\cdot)$ as functionals defined on $L^1(A,\R^m)$. 

\begin{cor}[Convergence of  minima and mininisers] 
Let $f$ and $g$ be stationary random  volume and surface 
 integrands with respect to a group $(\tau_z)_{z\in \Z^n}$ of $P$-preserving transformations on $(\Om,\T,P)$, and for every $\e>0$ and $\om\in \Omega$  let $E_\e(\om)$ be as in \eqref{Eeps}. Let $\Om'\in \T$ (with $P(\Om')=1$) be as in Theorem \ref{en-density_vs},  and let $E_{\mathrm{hom}}(\om)$ be as in Theorem \ref{G-convE}.
Given  $\om\in\Om'$, $A\in \A$,  and  $h\in L^1(A,\R^m)$, we have 
\begin{equation}\label{arb0}
\inf_{u\in SBV(A,\R^m)} \big(E_\e(\om)(u,A)+ \|u-h\|_{L^1(A,\R^m)}\big) \longrightarrow \hskip-10pt \min_{u\in BV(A,\R^m)}\big(E_{\rm hom}(\om)(u,A)+ \|u-h\|_{L^1(A,\R^m)}\big)
\end{equation}
as $\e\to 0+$.
Moreover, if $(u_\e) \subset SBV(A,\R^m)$ is a sequence such that
\begin{equation}\label{diag-min-seq}
E_\e(\om)(u_\e,A)+ \|u_\e-h\|_{L^1(A,\R^m)}
\le \inf_{u\in SBV(A,\R^m)} \big(E_\e(\om)(u,A)+ \|u-h\|_{L^1(A,\R^m)}\big)+\eta_\e
\end{equation}
for some $\eta_\e\to 0+$, then  there exists a  sequence  $\e_j\to 0+$ such that $(u_{\e_j})_{j \in \mathbb{N}}$ converges in $L^1(A,\R^m)$, as $j\to +\infty$, to a solution of the  minimisation  problem
\begin{equation}\label{arf}
\min_{u\in BV(A,\R^m)}\Big(E_{\rm hom}(\om)(u,A)+ \|u-h\|_{L^1(A,\R^m)}\Big).
\end{equation}
\end{cor}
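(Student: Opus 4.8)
The plan is to deduce \eqref{arb0} and the convergence of minimisers from the $\Gamma$-convergence statement of Theorem~\ref{G-convE} together with the standard machinery of $\Gamma$-convergence (see \cite{DM93}). First I would fix $\om\in\Om'$ and $A\in\A$, and work in the metric space $L^1(A,\R^m)$; note that since $A$ is bounded and all competitors we shall encounter have equibounded $BV$-norms, the relevant $L^1(A,\R^m)$-topology and $L^1_{\rm loc}(\R^n,\R^m)$-topology agree on the sequences at hand. The perturbation $u\mapsto \|u-h\|_{L^1(A,\R^m)}$ is continuous with respect to $L^1(A,\R^m)$-convergence, hence by the stability of $\Gamma$-convergence under continuous perturbations, the functionals $E_\e(\om)(\cdot,A)+\|\cdot-h\|_{L^1(A,\R^m)}$ $\Gamma$-converge in $L^1(A,\R^m)$ to $E_{\rm hom}(\om)(\cdot,A)+\|\cdot-h\|_{L^1(A,\R^m)}$, where we regard the former functionals as $+\infty$ outside $SBV(A,\R^m)$ and the latter as $+\infty$ outside $BV(A,\R^m)$.

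Next I would establish equicoercivity. By $(f3)$ and $(g3)$ (Remark~\ref{BV estimate}) one has $c_2\,|Du|(A)\le E_\e(\om)(u,A)$ for $u|_A\in SBV(A,\R^m)$, so any sequence $(u_\e)$ with $\sup_\e\big(E_\e(\om)(u_\e,A)+\|u_\e-h\|_{L^1(A,\R^m)}\big)<+\infty$ satisfies $\sup_\e\|u_\e\|_{BV(A,\R^m)}<+\infty$; by the compact embedding $BV(A,\R^m)\hookrightarrow\hookrightarrow L^1(A,\R^m)$ (applicable after, if needed, restricting to a slightly smaller set or invoking the embedding on $A$ when $A$ has sufficiently regular boundary — alternatively one extracts a subsequence converging in $L^1_{\rm loc}$ and then upgrades using the $L^1$-bound and a truncation/tightness argument) the sequence is precompact in $L^1(A,\R^m)$. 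Combining equicoercivity with $\Gamma$-convergence, the fundamental theorem of $\Gamma$-convergence yields that the infima in \eqref{arb0} converge to $\min_{u\in BV(A,\R^m)}\big(E_{\rm hom}(\om)(u,A)+\|u-h\|_{L^1(A,\R^m)}\big)$, that this minimum is attained (the limit functional being $L^1$-lower semicontinuous and coercive, with the sublevel sets nonempty since $h$ itself, or a smooth approximation, is admissible and has finite energy by $(f4)$, $(g4)$), and hence \eqref{arb0} holds.

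For the convergence of minimisers: given a sequence $(u_\e)\subset SBV(A,\R^m)$ satisfying the almost-minimality condition \eqref{diag-min-seq} with $\eta_\e\to0^+$, the right-hand side is bounded uniformly in $\e$ by \eqref{arb0}, so $\sup_\e\big(E_\e(\om)(u_\e,A)+\|u_\e-h\|_{L^1(A,\R^m)}\big)<+\infty$; by the equicoercivity just proved there is a subsequence $\e_j\to0^+$ with $u_{\e_j}\to u_*$ in $L^1(A,\R^m)$ for some $u_*\in BV(A,\R^m)$. By the $\liminf$-inequality of $\Gamma$-convergence (applied to the perturbed functionals) and \eqref{diag-min-seq}, $E_{\rm hom}(\om)(u_*,A)+\|u_*-h\|_{L^1(A,\R^m)}\le\liminf_j\big(E_{\e_j}(\om)(u_{\e_j},A)+\|u_{\e_j}-h\|_{L^1(A,\R^m)}\big)\le\lim_j\big(\inf_{u}(\cdots)+\eta_{\e_j}\big)=\min_{u\in BV(A,\R^m)}\big(E_{\rm hom}(\om)(u,A)+\|u-h\|_{L^1(A,\R^m)}\big)$, so $u_*$ is a solution of \eqref{arf}.

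The main obstacle I anticipate is purely a matter of care rather than depth: reconciling the $L^1_{\rm loc}(\R^n,\R^m)$-topology in which Theorem~\ref{G-convE} is phrased with the $L^1(A,\R^m)$-topology used in the corollary, and in particular justifying equicoercivity in $L^1(A,\R^m)$ when $\partial A$ is merely that of an arbitrary bounded open set. This is handled by the remark preceding the corollary (that $E_\e(\om)(u,\cdot)$ and $E_{\rm hom}(\om)(u,\cdot)$ depend only on $u|_A$), allowing one to freely modify competitors outside $A$, together with the $BV$-bound from Remark~\ref{BV estimate}; once the sequences are known to be $BV(A,\R^m)$-bounded and $L^1_{\rm loc}$-convergent, the limit automatically lies in $L^1(A,\R^m)$ and the convergence is in $L^1(A,\R^m)$. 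Everything else is a direct invocation of standard $\Gamma$-convergence theorems.
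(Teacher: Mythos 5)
Your argument is exactly the paper's first paragraph when $A$ has Lipschitz boundary: equicoercivity from Remark~\ref{BV estimate} plus the compact embedding, continuity of the $L^1$-perturbation, and the fundamental property of $\Gamma$-convergence. The gap is the general case $A\in\A$, where $\A$ consists of \emph{arbitrary} bounded open sets: there the compact embedding of $BV(A,\R^m)$ into $L^1(A,\R^m)$ can fail, and your proposed repairs do not close the hole. A bound on $\|u\|_{L^1(A,\R^m)}+|Du|(A)$ gives precompactness only in $L^1_{\rm loc}(A,\R^m)$; it does \emph{not} give tightness near an irregular $\partial A$ (take $A$ a union of countably many disjoint balls accumulating at the boundary and $u_j$ a normalised constant on the $j$-th ball: $Du_j=0$, $\|u_j\|_{L^1(A)}=1$, yet $u_j\to 0$ only in $L^1_{\rm loc}(A)$). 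So ``restricting to a slightly smaller set'' or ``a truncation/tightness argument'' cannot by themselves deliver either the equicoercivity in $L^1(A,\R^m)$ you invoke for the fundamental theorem, or the $L^1(A,\R^m)$ convergence of the almost minimisers asserted in the statement. Note also that the $\Gamma$-convergence of Theorem~\ref{G-convE} is in $L^1_{\rm loc}(\R^n,\R^m)$, so to use the liminf inequality for competitors defined only on $A$ one must first cut to $A'\subset\subset A$ and extend by zero, which only yields bounds on $E_{\rm hom}(\om)(u_*,A')$ and requires a supremum over $A'$ at the end.

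The paper handles this with two ingredients you are missing. First, for the convergence of infima it does not use equicoercivity in $L^1(A,\R^m)$ at all: it proves existence of a minimiser $u_0$ of the limit problem directly (lower semicontinuity of $E_{\rm hom}(\om)(\cdot,A)$ with respect to $L^1_{\rm loc}(A,\R^m)$ via the representation as $\sup_{A'\subset\subset A}E_{\rm hom}(\om)(\cdot,A')$, plus coercivity in $BV$), obtains the limsup bound \eqref{arc} from a recovery sequence for $u_0$, and obtains the liminf bound by passing through subdomains $A'\subset\subset A$ as above. Second, and crucially for the last claim of the corollary, the subsequence $u_{\e_j}$ converges a priori only in $L^1_{\rm loc}(A,\R^m)$ to $u_*$; the chain of inequalities \eqref{arb0}, \eqref{ard0}, \eqref{arg} then forces $\|u_{\e_j}-h\|_{L^1(A,\R^m)}\to\|u_*-h\|_{L^1(A,\R^m)}$, and only at that point does a generalised Dominated Convergence (Scheff\'e-type) argument upgrade $L^1_{\rm loc}$ to genuine $L^1(A,\R^m)$ convergence. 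This squeezing-plus-Scheff\'e step is the idea your plan lacks, and without it the convergence in $L^1(A,\R^m)$ claimed in the statement is not justified for general $A$.
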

\begin{proof}
 If $A$ has a Lipschitz boundary, then the functionals $E_\e(\om)(\cdot, A)+\|\cdot-h\|_{L^1(A,\R^m)}$ are equi-coercive in $L^1(A,\R^m)$ thanks to Remark \ref{BV estimate}.
Since we have $\Gamma$-convergence in $L^1(A,\R^m)$ by virtue of Theorem \ref{G-convE},   the proof readily follows from the fundamental property of $\Gamma$-convergence (see,  \textit{e.g.,} \cite[Corollary 7.20]{DM93}).  

 We now show that the convergence of minimum values and of minimisers can be obtained even if $\partial A$ is not regular. 
Let us fix $\om\in\Om'$, $A\in \A$, and $h\in L^1(A,\R^m)$.
By Theorem \ref{G-convE} for every $A'\in\A$ the functional $E_{\rm hom}(\om)(\cdot,A')$ is a $\Gamma$-limit in $L^1_{\rm loc}(\R^n,\R^m)$, hence it is lower semicontinuous in $L^1_{\rm loc}(\R^n,\R^m)$ (see \cite[Proposition 6.8]{DM93}). This implies that $E_{\rm hom}(\om)(\cdot,A')$, considered as a functional on $L^1(A',\R^m)$, is  lower semicontinuous. Since 
$$
E_{\rm hom}(\om)(\cdot,A)=\sup \{E_{\rm hom}(\om)(\cdot,A'): A'\in\A,\ A'\subset\subset A\}, 
$$
the functional $E_{\rm hom}(\om)(\cdot,A)$, defined on $L^1(A,\R^m)$, is lower semicontinuous with respect to the convergence in $L^1_{\rm loc}(A,\R^m)$. 

Since $f_{\mathrm{hom}}(\om,\cdot)$ and $f_{\mathrm{hom}}^\infty(\om,\cdot)$ belong to $\mathcal{F}$, while $g_{\mathrm{hom}}(\om,\cdot,\cdot)$ belongs to $\G$, it follows from the definition  of $E_{\rm hom}(\om)(\cdot,A)$ that
$c_2|Du|(A) + \|u\|_{L^1(A,\R^m)}\le E_{\rm hom}(\om)(u,A)+ \|u-h\|_{L^1(A,\R^m)}+\|h\|_{L^1(A,\R^m)}$ for every $u\in BV(A,\R^m)$. This shows that the functional $u\mapsto  E_{\rm hom}(\om)(u,A)+ \|u-h\|_{L^1(A,\R^m)}$ is coercive in $BV(A,\R^m)$ with respect to the convergence in $L^1_{\rm loc}(A,\R^m)$. Therefore it attains a minimum value in $BV(A,\R^m)$, which we denote by $\mu_0$.

Let $u_0$ be a minimum point in $BV(A,\R^m)$. We extend $u_0$ to a function of $L^1_{\rm loc}(\R^n,\R^m)$, still denoted by $u_0$. By $\Gamma$-convergence, for every sequence $(\e_j)$ of positive numbers converging to $0$ there exists a sequence $u_j$ converging to $u_0$ in  $L^1_{\rm loc}(\R^n,\R^m)$ such that
$E_{\e_j}(\om)(u_j,A)\to E_{\rm hom}(\om)(u_0,A)<+\infty$. By the definition of $E_{\e_j}(\om)$ we have $u_j\in SBV(A,\R^m)$ for $j$ large enough, hence
$$
\inf_{u\in SBV(A,\R^m)} \big(E_{\e_{j}}(\om)(u,A)+ \|u-h\|_{L^1(A,\R^m)}\big) \le E_{\e_j}(\om)(u_j,A)+ \|u_j-h\|_{L^1(A,\R^m)}.
$$
This implies that
$$
\limsup_{j\to+\infty}\inf_{u\in SBV(A,\R^m)} \big(E_{\e_j}(\om)(u,A)+ \|u-h\|_{L^1(A,\R^m)}\big) \le \mu_0.
$$
Since the sequence $\e_j\to0$ is arbitrary, we obtain
\begin{equation}\label{arc}
\limsup_{\e\to0+}\inf_{u\in SBV(A,\R^m)} \big(E_\e(\om)(u,A)+ \|u-h\|_{L^1(A,\R^m)}\big) \le \mu_0.
\end{equation}

To prove the opposite inequality for the liminf, as well as the last statement of the corollary, we fix a sequence $(u_\e) \subset SBV(A,\R^m)$ satisfying \eqref{diag-min-seq}. For every sequence $(\e_j)$ of positive numbers converging to $0$, by  Remark \ref{BV estimate} and by \eqref{arc} the sequence $(u_{\e_j})$ is bounded in $BV(A,\R^m)$. Therefore a subsequence, not relabelled, converges in $L^1_{\rm loc}(A,\R^m)$ to a function $u_*\in BV (A,\R^m)$.

Given $A'\in\A$, with $A'\subset\subset A$, we can consider the functions $v_j$, defined by $v_j:=u_{\e_j}$ in $A'$ and 
$v_j:=0$ in $\R^n\setminus A'$, which converge in $L^1(\R^n,\R^m)$ to the function $v_*$, defined by $v_*:=u_*$ in $A'$ and 
$v_*:=0$ in $\R^n\setminus A'$. Since $E_{\e_j}(\om)(\cdot,A')$ $\Gamma$-converges to $E_{\rm hom}(\om)(\cdot,A')$ in  $L^1_{\rm loc}(\R^n,\R^m)$, we have
$$
E_{\rm hom}(\om)(u_*,A')=E_{\rm hom}(\om)(v_*,A')\le \liminf_{j\to+\infty}E_{\e_j}(\om)(v_j,A')\le  \liminf_{j\to+\infty}E_{\e_j}(\om)(u_{ \e_j} ,A),
$$
which implies that
$$
E_{\rm hom}(\om)(u_*,A')+\|u_*-h\|_{L^1(A',\R^m)}\le \liminf_{j\to+\infty}\big(E_{\e_j}(\om)(u_{\e_j},A) + \|u_{\e_j}-h\|_{L^1(A,\R^m)}\big).
$$
Taking the supremum for $A'\subset\subset A$ in the previous inequalities we obtain
\begin{align}
E_{ \rm hom}(\om)(u_*,A)\le  \liminf_{j\to+\infty}E_{\e_j}(\om)(u_{\e_j} ,A),
\label{ard0}
\end{align}
and 
\begin{align}
\mu_0&\le E_{\rm hom}(\om)(u_*,A)+\|u_*-h\|_{L^1(A,\R^m)}\le  \liminf_{j\to+\infty}\big(E_{\e_j}(\om)(u_{\e_j},A) + \|u_{\e_j}-h\|_{L^1(A,\R^m)}\big)
\nonumber
\\
&\le
\liminf_{j\to+\infty}\inf_{u\in SBV(A,\R^m)} \big(E_{\e_j}(\om)(u,A)+ \|u-h\|_{L^1(A,\R^m)}\big).
\label{arg}
\end{align}
By the arbitrariness of the sequence $\e_j\to0$, this chain of inequalities, together with \eqref{arc}, gives \eqref{arb0}  and shows that $u_*$ is a solution of the minimisation problem \eqref{arf}. 

In turn, \eqref{arb0}, \eqref{ard0}, and \eqref{arg} imply that  $\|u_{\e_j}-h\|_{L^1(A,\R^m)}\to \|u_*-h\|_{L^1(A,\R^m)}$. Since $u_{\e_j}$ converges to $u_*$ in $L^1_{\rm loc}(A,\R^m)$, from the general version of the Dominated Convergence Theorem we obtain that $u_{\e_j}$ converges to $u_*$ in $L^1(A,\R^m)$. This concludes the proof of the last statement of the corollary.
\end{proof}


\section{Deterministic homogenisation: properties of the homogenised integrands}\label{determ-hom}

Let $f\in \mathcal F$ and $g\in \G$. For $\e>0$ consider the functionals $E_\e \colon L^1_{\rm loc}(\R^n,\R^m)\times \A \longrightarrow [0,+\infty]$ defined by
\begin{equation}\label{E-det}
E_\e(u,A) :=\begin{cases}
\ds\int_A\!\! f(\tfrac{x}{\e}, \nabla u)\dx + \!\! \int_{S_u\cap A} \!\!\!\!\!  g(\tfrac{x}\e,[u],\nu_u)\,d \mathcal{H}^{n-1}& \!\!\text{if} \; u|_A\!\in SBV(A,\R^m),\cr
+\infty & \!\! \text{otherwise\! in}\,L^{1}_{\rm loc}(\R^n,\R^m).
\end{cases}
\end{equation}
In this section we prove the $\Gamma$-convergence of $E_\e$  under suitable assumptions on  $f$ and $g$, which are more general than the periodicity with respect to~$x$.

The main result of this section is  the following theorem.

\begin{thm}[Homogenisation]\label{T:det-hom}  
Let $f\in \mathcal F$, $g\in \G$, and let $m^{f,g_0}$  and  $m^{f^\infty,g}$ be defined as in \eqref{m-phipsi}  with $(f,g)$ replaced by  $(f,g_0)$  and  $(f^\infty,g)$, respectively. Assume that
\begin{itemize}
\item[(a)] for every $x\in \R^{n}$, $\xi \in \R^{m\times n}$, $\nu\in \Sph^{n-1}$, and $k\in \N$  the limit  
\begin{equation}\label{f-hom}
\lim_{r\to +\infty} \frac{m^{f,g_0}(\ell_\xi,Q_r^{\nu,k}(rx))}{ k^{n-1} r^{n}}=:f_{\rm hom}(\xi)
\end{equation}
exists and is independent of $x$,  $\nu$, and $k$; 
\item[(b)] for every $x\in \R^{n}$, $\zeta\in \R^m$, and $\nu\in \Sph^{n-1}$ the limit  
\begin{equation}\label{g-hom}
\lim_{r\to +\infty} \frac{m^{f^\infty,g}(u_{r x,\zeta,\nu},Q^\nu_r(rx))}{r^{n-1}}=:g_{\rm hom}(\zeta,\nu)
\end{equation} 
exists and is independent of $x$.
\end{itemize}
Then $f_{\rm hom}\in \mathcal F$  and  $g_{\rm hom}\in \G$.  Let $f_{\rm hom}^\infty$ be the recession function of $f_{\rm hom}$ and let  $E_{\rm hom}\colon L^1_{\rm loc}(\R^n,\R^m) \times \A \longrightarrow [0,+\infty]$  be the functional defined by
\begin{equation}\label{det-Glim}
E_{\mathrm{hom}}(u, A):= \int_A f_{\mathrm{hom}}(\nabla u)\dx + \int_{S_u\cap A}g_{\mathrm{hom}}([u],\nu_u)d \mathcal{H}^{n-1}+\int_A  f_{\rm hom}^\infty\Big(\frac{dC(u)}{d|C(u)|}\Big)\,d|C(u)|
\end{equation}
if $u|_A\in BV(A,\R^m)$, while
$E_{\mathrm{hom}}(u, A):=+\infty$ if $u|_A\notin BV(A,\R^m)$.
 Then, for every $A\in \A$ the functionals $E_\e(\cdot, A)$ defined as in \eqref{E-det} 
 $\Gamma$-converge to $E_{\rm hom}(\cdot, A)$ in $L^1_{\rm loc}(\R^n,\R^m)$,  as $\e\to 0+$, meaning that  for  every  sequence $(\e_j)$ of positive numbers converging to zero the sequence $(E_{\e_j}(\cdot, A))$ $\Gamma$-converges to $E_{\rm hom}(\cdot, A)$ in $L^1_{\rm loc}(\R^n,\R^m)$.
\end{thm}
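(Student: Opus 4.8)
The plan is to prove Theorem~\ref{T:det-hom} via the classical localisation method of $\Gamma$-convergence, following the strategy of \cite{BFM} and \cite{DM93}. The first step is an abstract compactness result: for any sequence $(\e_j)$ converging to zero, one extracts a further subsequence (not relabelled) such that $E_{\e_j}(\cdot,A)$ $\Gamma$-converges for all $A$ in a countable dense subclass of $\A$ to some limit functional $\widehat E(\cdot,A)$; using the growth bounds $(f3)$, $(f4)$, $(g3)$, $(g4)$ together with the De Giorgi--Ambrosio--type lower-semicontinuity and compactness theorems in $BV$, one checks that $\widehat E(\cdot,A)$ is finite precisely on $BV(A,\R^m)$ (with the linear coercivity of Remark~\ref{BV estimate} controlling $|Du|(A)$), is $L^1_{\rm loc}$-lower semicontinuous, and that $\widehat E(u,\cdot)$ extends to a Borel measure on $A$; this is where the ``fundamental estimate'' for sequences in $SBV$ under linear growth enters, and it is the structural heart of the abstract part. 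This corresponds to Theorem~\ref{thm:Gamma-conv} referenced in the introduction.

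The second step is the integral representation: once $\widehat E(u,\cdot)$ is a measure, one computes its Radon--Nikod\'ym derivatives with respect to $\mathcal L^n$, $\mathcal H^{n-1}\LLL S_u$, and $|C(u)|$ by the blow-up method of Fonseca--M\"uller, in the form adapted to $BV$ by Bouchitt\'e--Fonseca--Mascarenhas. For the bulk term one blows up around a Lebesgue point $x_0$ of $\nabla u$, compares $\widehat E$ with the minimisation problems $m^{f,g_0}$ on small cubes (the surface density $g_0$, rather than $g$, appears because on the blow-up scale the jump $[u]$ is infinitesimal, so only the directional derivative at $0$ survives), and uses assumption~(a), i.e. the existence and $x$-independence of the limit defining $f_{\rm hom}(\xi)$, to identify the density as $f_{\rm hom}(\nabla u(x_0))$. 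For the surface term one blows up around an $\mathcal H^{n-1}$-a.e. jump point, where $u$ looks like $u_{x_0,[u](x_0),\nu_u(x_0)}$; here $f$ gets replaced by its recession function $f^\infty$ because the gradient of the rescaled function is concentrated and large, and assumption~(b) gives the density $g_{\rm hom}([u](x_0),\nu_u(x_0))$. For the Cantor term one argues similarly at $|C(u)|$-a.e. point, where the rescaled function again has a large, purely singular gradient and no jump, so that the relevant cell problem has densities $f^\infty$ and $g_0$; by \eqref{hhhrrr}-type reasoning (or directly, since $g_0$ contributes nothing to a Cantor blow-up) this limit equals $f^\infty_{\rm hom}$ evaluated at the Radon--Nikod\'ym derivative $dC(u)/d|C(u)|$. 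These three identifications are Propositions~\ref{p:homo-vol}, \ref{p:homo-sur}, \ref{p:homo-Can}.

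Before that, one must verify that $f_{\rm hom}\in\mathcal F$ and $g_{\rm hom}\in\mathcal G$ (Lemmas~\ref{l:f-hom}, \ref{l:g-hom}): convexity-type estimates, the bounds $c_2|\xi|\le f_{\rm hom}(\xi)\le c_3|\xi|+c_4$ and $c_2|\zeta|\le g_{\rm hom}(\zeta,\nu)\le c_3|\zeta|$ pass to the limit from the corresponding bounds on $m^{f,g_0}$ and $m^{f^\infty,g}$ via suitable test competitors, the continuity estimates $(f2)$, $(g2)$ are inherited, and the existence of the recession function $f^\infty_{\rm hom}$ follows from $(f5)$; the symmetry $(g6)$ for $g_{\rm hom}$ comes from the symmetry of the cell problem under $\nu\mapsto-\nu$ combined with the normalisation $R_{-\nu}Q(0)=R_\nu Q(0)$ in (h). One also needs the equivalence of the various normalisations in \eqref{psi0}--\eqref{phi0}, i.e. that the rectangles $Q^{\nu,k}_r$ and the cubes $Q_r$ give the same limit and that the limit is $x$-independent; under assumptions~(a)--(b) this is essentially postulated, but one still has to pass between $m^{f,g_0}(\ell_\xi,\cdot)$ with boundary datum $\ell_\xi$ at a translated centre $rx$ and at the origin, which is handled by an affine change of variables together with the continuity and growth bounds (a ``near $\partial A$'' boundary-layer construction to glue competitors).

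Finally, one assembles the pieces: since $\widehat E(u,\cdot)=\frac{d\widehat E}{d\mathcal L^n}\mathcal L^n+\frac{d\widehat E}{d\mathcal H^{n-1}\LLL S_u}\mathcal H^{n-1}\LLL S_u+\frac{d\widehat E}{d|C(u)|}|C(u)|$ (the three mutually singular parts exhaust the measure because $\widehat E(u,\cdot)\ll|Du|=\mathcal L^n\mres|\nabla u|+\mathcal H^{n-1}\LLL S_u\,|[u]\otimes\nu_u|+|C(u)|$ by the lower bound together with the upper bound, and one rules out extra singular parts), the three identified densities give exactly $E_{\rm hom}(u,A)$ as in \eqref{det-Glim}. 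Crucially, the right-hand sides $f_{\rm hom}$, $g_{\rm hom}$, $f^\infty_{\rm hom}$ do not depend on the chosen subsequence $(\e_j)$, so by the Urysohn property of $\Gamma$-convergence the whole sequence $E_\e(\cdot,A)$ $\Gamma$-converges to $E_{\rm hom}(\cdot,A)$ for every $A\in\A$. The main obstacle I expect is the blow-up analysis of the surface term: keeping the $x$-dependent boundary datum $u_{rx,\zeta,\nu}$ under control while rescaling, showing that the bulk contribution of the rescaled competitors does not blow up faster than $r^{n-1}$ (this is precisely where the recession bound \eqref{abb}/\eqref{abf} and the linear growth are used, and where the superlinear case would instead collapse the bulk term entirely), and matching boundary values across the blow-up cubes without creating spurious jump energy of order $r^{n-1}$.
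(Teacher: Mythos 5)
Your plan is correct and follows essentially the same route as the paper: abstract $\Gamma$-compactness plus the De Giorgi--Letta measure property (Theorem \ref{thm:Gamma-conv}), closure of the classes $\mathcal F$ and $\G$ under homogenisation (Lemmas \ref{l:f-hom} and \ref{l:g-hom}), blow-up identification of the volume, surface and Cantor densities through the cell problems $m^{f,g_0}$, $m^{f^\infty,g}$ and $m^{f^\infty,g_0}$ (Propositions \ref{p:homo-vol}, \ref{p:homo-sur}, \ref{p:homo-Can}), and the Urysohn property to pass from the subsequence to the full family. The only imprecision is your parenthetical shortcut for the Cantor term: it is not that $g_0$ ``contributes nothing'', but rather that one must prove, as in Proposition \ref{p:h} and the partial-boundary-datum lemma leading to \eqref{h-hom-2}, that the rescaled minimum values $m^{f^\infty,g_0}$ converge to $f^\infty_{\rm hom}$ — which is exactly the \eqref{hhhrrr}-type argument you indicate as the primary option.
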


The proof of the homogenisation result Theorem \ref{T:det-hom} will be carried out in three main steps. In the first step (Lemmas \ref{l:f-hom} and \ref{l:g-hom})  we show that $f_{\rm hom}\in \mathcal F$  and  $g_{\rm hom}\in \G$. In the second step (Theorem \ref{thm:Gamma-conv})  we prove that, up to subsequences, for every $A\in \A$ the functionals $E_\e(\cdot,A)$ $\Gamma$-converge to some functional $\widehat E(\cdot,A)$, whose domain is $BV(A,\R^m)$. Further, we prove that $\widehat E$ satisfies some suitable properties both as a functional and as a set-function.  In particular $\widehat E(u,\cdot)$  is the restriction to $\A$ of a Borel measure. 

In the third and last step we show that \eqref{f-hom}  and  \eqref{g-hom} imply, respectively, that the following identities hold true for every $A \in \A$ and for every $u\in L^1_{\rm loc}(\R^n,\R^m)$ with $u|_A\in BV(A,\R^m)$: 
\begin{gather}\label{derivata-lebesgue}
\frac{d\widehat E(u,\cdot)}{d \mathcal L^n}(x)=f_{\rm hom}(\nabla u(x))\qquad\hbox{for }\mathcal L^n\hbox{-a.e.\ }x\in A, 
\\
\label{derivata-hausdorff}
\frac{d \widehat E(u,\cdot)}{d\mathcal H^{n-1}\LLL{S_u}}(x) = g_{\rm hom}([u](x),\nu_{u}(x))\qquad \hbox{for }\mathcal H^{n-1}\hbox{-a.e.\ } x\in S_u \cap A,
\\
\label{derivata-cantor}
\frac{d\widehat E(u,\cdot)}{d|C(u)|}(x)= f_{\rm hom}^\infty\Big(\frac{dC(u)}{d|C(u)|}(x)\Big)\qquad\hbox{for }|C(u)|\hbox{-a.e.\ }x\in A
\end{gather}
(see Propositions \ref{p:homo-vol}, \ref{p:homo-sur}, and \ref{p:homo-Can}). 
Moreover,  thanks to \eqref{derivata-lebesgue}-\eqref{derivata-cantor} we deduce that $\widehat E$ coincides with the functional $E_{\rm hom}$ defined in \eqref{det-Glim}; as a consequence, the $\Gamma$-convergence result  proved in the second step actually holds true for the whole sequence $(E_\e)$.

 In the next lemmas  we prove that the homogenised integrands $f_{\rm hom}$  and  $g_{\rm hom}$ belong to the classes $\mathcal F$ and $\G$, respectively.

\begin{lem}\label{l:f-hom}
Let $f \in \F$ and $g\in\G$. Assume that hypothesis (a) of Theorem \ref{T:det-hom} is satisfied and let $f_{\rm hom}$ be defined as in \eqref{f-hom}. Then $f_{\rm hom} \in \mathcal F$. 
\end{lem}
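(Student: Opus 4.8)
The plan is to verify each of the defining conditions $(f1)$--$(f5)$ for $f_{\rm hom}$ in turn, using the fact that $f_{\rm hom}$ is a pointwise limit of the normalised minimisation problems $m^{f,g_0}(\ell_\xi,Q_r^{\nu,k}(rx))/(k^{n-1}r^n)$ and that $f\in\F$, $g_0\in\G$ (the latter by Remark \ref{BV estimate}). Condition $(f1)$ (Borel measurability in $\xi$, with no $x$-dependence) should follow from the fact that $f_{\rm hom}$ is a pointwise limit of a sequence of functions of $\xi$; the main point is to check that for fixed $r$ the map $\xi\mapsto m^{f,g_0}(\ell_\xi,Q_r^{\nu,k}(rx))$ is continuous (or at least Borel), which one gets from the continuity estimate $(f2)$ for $f$ together with an admissible-competitor comparison: plugging a near-optimal competitor for $\ell_{\xi_1}$, suitably modified by the affine function $\ell_{\xi_2-\xi_1}$, into the problem for $\ell_{\xi_2}$.

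Next I would establish $(f2)$--$(f4)$ for $f_{\rm hom}$ by passing the corresponding bounds on $f$ through the minimisation and then through the limit $r\to\infty$. For the lower bound $(f3)$: by Remark \ref{BV estimate}, $c_2|Du|(Q_r^{\nu,k}(rx))\le E^{f,g_0}(u,Q_r^{\nu,k}(rx))$ for every admissible $u$, and since such $u$ equals $\ell_\xi$ near the boundary, $|Du|(Q_r^{\nu,k}(rx))\ge |\xi|\,\mathcal L^n(Q_r^{\nu,k}(rx)) = |\xi|k^{n-1}r^n$ (by e.g. testing $Du$ against a constant matrix field, or by the divergence theorem / a standard lower-bound-for-BV-with-prescribed-affine-boundary-datum argument); dividing by $k^{n-1}r^n$ and taking the limit gives $c_2|\xi|\le f_{\rm hom}(\xi)$. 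For the upper bound $(f4)$: use $u=\ell_\xi$ itself as a competitor; it lies in $SBV$ with $\nabla u\equiv\xi$ and $S_u=\emptyset$, so $m^{f,g_0}(\ell_\xi,Q_r^{\nu,k}(rx))\le \int_{Q_r^{\nu,k}(rx)} f(y,\xi)\,dy\le (c_3|\xi|+c_4)k^{n-1}r^n$ by $(f4)$ for $f$; dividing and passing to the limit yields $f_{\rm hom}(\xi)\le c_3|\xi|+c_4$. For $(f2)$: take a near-optimal competitor $u$ for $m^{f,g_0}(\ell_{\xi_1},\cdot)$, set $v:=u+\ell_{\xi_2-\xi_1}$ which is admissible for $m^{f,g_0}(\ell_{\xi_2},\cdot)$ and satisfies $\nabla v=\nabla u+(\xi_2-\xi_1)$, $S_v=S_u$, $[v]=[u]$; apply $(f2)$ for $f$ pointwise, $|f(y,\nabla v)-f(y,\nabla u)|\le \sigma_1(|\xi_1-\xi_2|)(f(y,\nabla v)+f(y,\nabla u))+c_1|\xi_1-\xi_2|$, integrate, absorb, divide by $k^{n-1}r^n$ and pass to the limit; symmetrising in $\xi_1,\xi_2$ gives $(f2)$ for $f_{\rm hom}$ (possibly after rewriting the inequality in the equivalent absorbed form, using that the $\sigma_1$-term can be controlled since $\sigma_1$ is bounded on bounded sets and the energies are bounded via $(f4)$).

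Finally, for $(f5)$ I would verify the equivalent characterisation from Remark \ref{abc}, i.e. that $t\mapsto \frac1t f_{\rm hom}(t\xi)$ satisfies the Cauchy condition \eqref{abd} as $t\to\infty$ with a uniform constant; the cleanest route is to prove inequality \eqref{abb} (or directly \eqref{abf}) for $f_{\rm hom}$ by a scaling argument relating $m^{f,g_0}(\ell_{t\xi},Q_r^{\nu,k}(rx))$ to $m^{f,g_0}(\ell_\xi,\cdot)$ after the change of variables $y\mapsto ty$, combined with the estimate \eqref{abb} available for $f$ itself (with the same $L$, $M$) and for $g_0$, then passing to the limit $r\to\infty$. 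Concretely: given a competitor $u$ for the $\ell_{t\xi}$-problem on a cube, $\tilde u(y):=\frac1t u(ty)$ is (up to rescaling the cube) a competitor for the $\ell_\xi$-problem, and $E^{f,g_0}$ transforms in a controlled way; the rescaling spreads the ``$\frac1t f(y,t\xi)$'' structure across the minimisation, and the uniform-in-$x$ bound \eqref{abb} for $f$ survives the average. I expect \textbf{this last step, verifying $(f5)$ for $f_{\rm hom}$, to be the main obstacle}, because it requires carefully tracking how the recession-function estimate interacts with the minimisation over $SBV$ and with the jump term carrying the density $g_0$ (which is itself only linearly bounded, cf. $(g4)$ for $g_0$: $g_0(x,\zeta,\nu)\le c_3|\zeta|$), and one must ensure the error terms are uniform in $r$ so they vanish in the limit; the continuity step $(f2)$ is the second most delicate point, mainly in handling the $\sigma_1$-absorption uniformly. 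The remaining conditions are routine comparisons.
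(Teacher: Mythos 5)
Your verification of $(f1)$--$(f4)$ follows the paper's proof in substance: $(f4)$ by testing with $\ell_\xi$, $(f3)$ via the lower bound of Remark \ref{BV estimate} together with the fact that admissible competitors have $|Du|(Q)\ge|\xi|\mathcal L^n(Q)$ (the paper uses Jensen's inequality), $(f2)$ by adding $\ell_{\xi_2-\xi_1}$ to a near-minimiser and rearranging exactly as in \eqref{aea}, and $(f1)$ as a consequence of $(f2)$ since $f_{\rm hom}$ has no $x$-dependence. The genuine problem is your argument for $(f5)$, which you yourself flag as the main obstacle. First, the scaling $\tilde u(y):=\tfrac1t u(ty)$ does not do what you claim: it preserves gradients, so if $u=\ell_{t\xi}$ near the boundary then $\tilde u$ still equals $\ell_{t\xi}$ (not $\ell_\xi$) near the boundary of the rescaled cube; moreover the change of variables $y\mapsto ty$ replaces the integrands by their spatial dilations $f(t\cdot,\cdot)$ and $g_0(t\cdot,\cdot,\cdot)$, about whose minimisation problems hypothesis (a) says nothing (undoing the dilation simply brings you back to the original problem, so the manoeuvre is circular). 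The effective move, used in the paper, scales only the \emph{values}: for a single competitor $u_r$ with $u_r=\ell_\xi$ near $\partial Q_r$, chosen so that $tu_r$ is near-optimal for $m^{f,g_0}(\ell_{t\xi},Q_r)$, one compares the energies of $su_r$ and $tu_r$. The surface term is then handled not by any analogue of \eqref{abb} for $g_0$, but by the positive $1$-homogeneity of $g_0(x,\cdot,\nu)$, which gives $\tfrac1s g_0(x,s[u_r],\nu_{u_r})=\tfrac1t g_0(x,t[u_r],\nu_{u_r})$ exactly; your sketch never invokes this and instead worries about the linear bound $(g4)$, which is not the relevant property here.

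Second, you leave open precisely the point where the work lies: how the error term in \eqref{abf}/\eqref{abd}, which involves $f(x,t\nabla u_r)^{1-\alpha}$ evaluated along the competitor, passes through the minimisation and the limit $r\to+\infty$; it is not ``uniform in $r$'' on its own. In the paper this is done by integrating the pointwise inequality \eqref{abd} over $Q_r$, applying H\"older's inequality to convert $\tfrac1{r^n}\int_{Q_r} f(x,\cdot)^{1-\alpha}\dx$ into $\big(\tfrac1{r^n}\int_{Q_r} f(x,\cdot)\dx\big)^{1-\alpha}$, and then using the monotonicity of $\tau\mapsto\tau-c_5\tau^{1-\alpha}$ on the half-line where it is positive to replace the energy of $su_r$ by $m^{f,g_0}(\ell_{s\xi},Q_r)$ on the left-hand side (the case where that expression is negative being trivial). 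Dividing by $r^n$, letting $r\to+\infty$ and $\eta\to0+$, and exchanging $s$ and $t$ yields \eqref{abd} for $f_{\rm hom}$, and Remark \ref{abc} (together with the already established $(f4)$ for $f_{\rm hom}$) gives $(f5)$. Without the homogeneity of $g_0$ and this H\"older/monotonicity step, your verification of $(f5)$ does not go through as written.
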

 
\begin{proof}
To prove $(f2)$ we fix $\xi_1$, $\xi_2 \in \R^{m{\times}n}$ and set $\xi:=\xi_2-\xi_1$. 
We claim that  for every $r>0$ 
\begin{equation}\label{aea}
|m^{f,g_0}(\ell_{\xi_1}, Q_r)- m^{f,g_0}(\ell_{\xi_2}, Q_r)| \leq \sigma_1(|\xi|)(m^{f,g_0}(\ell_{\xi_1}, Q_r)+m^{f,g_0}(\ell_{\xi_2}, Q_r)) +  c_1 |\xi|r^n,
\end{equation} 
 where $Q_r:=Q_r(0)$. 
Indeed, by $(f2)$, for every $u\in SBV(Q_r,\R^m)$  we have 
$$
E^{f,g_0}(u+\ell_\xi,Q_r)\le E^{f,g_0}(u,Q_r)+\sigma_1(|\xi|)\big(E^{f,g_0}(u+\ell_\xi,Q_r)+E^{f,g_0}(u,Q_r)\big)+ c_1|\xi|r^n. 
$$
By rearranging the terms we get 
\begin{equation*}
(1-\sigma_1(|\xi|))E^{f,g_0}(u+\ell_\xi,Q_r)\le (1+\sigma_1(|\xi|))E^{f,g_0}(u,Q_r)+c_1|\xi|r^n.
\end{equation*}
If $\sigma_1(|\xi|)< 1$, we minimise over all functions $ u\in SBV(Q_r,\R^m)$  such that  $u=\ell_{\xi_1}$ near $\partial Q_r$  and, using \eqref{m-phipsi}, we  obtain 
$$
(1-\sigma_1(|\xi|)) m^{f,g_0}(\ell_{\xi_2}, Q_r)\le (1+\sigma_1(|\xi|))m^{f,g_0}(\ell_{\xi_1}, Q_r)+ 
c_1|\xi|r^n.
$$
This inequality is trivial if $\sigma_1(|\xi|)\ge 1$.
Exchanging the roles of $\xi_1$ and $\xi_2$ we obtain \eqref{aea}. We now divide both sides of this inequality by $r^n$ and, passing to the limit as $r\to +\infty$, from \eqref{f-hom} we obtain that $f_{\rm hom}$ satisfies $(f2)$.

Property $(f1)$ for  $f_{\rm hom}$ follows from the continuity estimate $(f2)$, since $f_{\rm hom}$ does not depend on~$x$. The lower bound $(f3)$ for  $f_{\rm hom}$ follows from the lower bound in Remark \ref{BV estimate}, which gives
$$
c_2\inf |Du|(Q_r)\le m^{f,g_0}(\ell_{\xi}, Q_r)
$$
for every $\xi \in \R^{m{\times}n}$, where the infimum is over all functions $u\in SBV(Q_r,\R^m)$, and such that  $u=\ell_\xi$ near $\partial Q_r$. By Jensen's inequality the left-hand side is equal to $ c_2|\xi|r^n$. Using \eqref{f-hom} we conclude that that $f_{\rm hom}$ satisfies $(f3)$.

Property $(f4)$ for  $f_{\rm hom}$ follows from the fact that for every $\xi \in \R^{m{\times}n}$ we have
$$
\frac{1}{r^n} m^{f,g_0}(\ell_\xi, Q_r)\leq \frac{1}{r^n} E^{f,g_0}(\ell_\xi,Q_r) =\frac{1}{r^n}\int_{Q_r} f( x ,\xi)\dx  \leq c_3|\xi|+c_4.
$$
Passing to the limit as $r\to+\infty$, from \eqref{f-hom} we obtain that $f_{\rm hom}$ satisfies $(f4)$.
 
We now prove that $f_{\rm hom}$  satisfies $(f5)$. Fix $\xi \in \R^{m\times n}$, $s>0$, $t>0$,  and $\eta\in(0,1)$. By \eqref{m-phipsi} for every $r>0$ there exists
$u_r\in SBV(Q_r,\R^m)$,  with $u_r=\ell_\xi$ near $\partial Q_r$, such that
\begin{equation}\label{abj-0}
 \int_{Q_r}f( x ,t\nabla u_r)\dx +\int_{S_{u_r}\cap Q_r}g_0( x ,t[u_r],\nu_{u_r})\,d\mathcal H^{n-1}\le m^{f,g_0}(\ell_{t\xi},Q_r)+\eta r^n.
\end{equation} 
By \eqref{abd} for every $ x \in \R^n$ and $\xi \in \R^{m\times n}$ we have
\begin{equation*}
 \tfrac{1}{s}f( x ,s\xi) - \tfrac{c_5}{s} -  \tfrac{c_5}{s}f( x ,s\xi)^{1-\alpha} \leq   \tfrac{1}{t}f( x ,t\xi) + \tfrac{c_5}{t}+\tfrac{c_5}{t}f( x ,t\xi)^{1-\alpha},
\end{equation*} 
hence, using the positive $1$-homogeneity of $g_0$,
\begin{eqnarray*}
&&  \frac{1}{s} \frac{1}{r^n} \int_{Q_r}f( x ,s\nabla u_r)\dx - \frac{c_5}{s}- \frac{c_5}{s}\frac{1}{r^n}\int_{Q_r}f( x ,s\nabla u_r)^{1-\alpha} dx  + \frac{1}{s} \frac{1}{r^n} \int_{S_{u_r}\cap Q_r}g_0( x ,s[u_r],\nu_{u_r})\,d\mathcal H^{n-1}
\nonumber
\\
&& \le  \frac{1}{t}\frac{1}{r^n}\int_{Q_r}f( x ,t\nabla u_r)\dx  + \frac{c_5}{t}+ \frac{c_5}{t}\frac{1}{r^n}\int_{Q_r}f( x ,t\nabla u_r)^{1-\alpha} dx  +  \frac{1}{t} \frac{1}{r^n} \int_{S_{u_r}\cap Q_r}g_0( x ,t[u_r],\nu_{u_r})\,d\mathcal H^{n-1}.
\end{eqnarray*}
By H\"older's inequality we obtain
\begin{eqnarray*}
\hskip-12pt&&  \frac{1}{s} \frac{1}{r^n} \int_{Q_r}f( x ,s\nabla u_r)\dx  + \frac{1}{s} \frac{1}{r^n} \int_{S_{u_r}\cap Q_r}g_0( x ,s[u_r],\nu_{u_r})\,d\mathcal H^{n-1} - 
\frac{c_5}{s}- \frac{c_5}{s}\Big(\frac{1}{r^n}\int_{Q_r}f( x ,s\nabla u_r)\dx \Big)^{1-\alpha} 
\nonumber
\\
\hskip-12pt&& \le  \frac{1}{t}\frac{1}{r^n}\int_{Q_r}f( x ,t\nabla u_r)\dx   +  \frac{1}{t} \frac{1}{r^n} \int_{S_{u_r}\cap Q_r}g_0( x ,t[u_r],\nu_{u_r})\,d\mathcal H^{n-1} + \frac{c_5}{t}+ \frac{c_5}{t}\Big(\frac{1}{r^n}\int_{Q_r}f( x ,t\nabla u_r)\dx \Big)^{1-\alpha}.
\end{eqnarray*}
By \eqref{abj-0}  this inequality implies that
\begin{eqnarray}
&&  \frac{1}{s} \Big( \frac{1}{r^n} \int_{Q_r}f( x ,s\nabla u_r)\dx  + \frac{1}{r^n} \int_{S_{u_r}\cap Q_r}g_0( x ,s[u_r],\nu_{u_r})\,d\mathcal H^{n-1} \Big) - 
\frac{c_5}{s}
\nonumber
\\
&&- \frac{c_5}{s}\Big(\frac{1}{r^n}\int_{Q_r}f( x ,s\nabla u_r)\dx +
\frac{1}{r^n} \int_{S_{u_r}\cap Q_r}g_0( x, s[u_r],\nu_{u_r})\,d\mathcal H^{n-1}\Big)^{1-\alpha} 
\nonumber
\\
&& \le  \frac{1}{t}\Big(\frac{1}{r^n}m^{f,g_0}(\ell_{t\xi},Q_r) +\eta\Big) + \frac{c_5}{t}+ \frac{c_5}{t}\Big(\frac{1}{r^n}m^{f,g_0}(\ell_{t\xi},Q_r) +\eta\Big)^{1-\alpha}.
\label{abp}
\end{eqnarray}
If 
\begin{equation}\label{abl}
 \frac{1}{r^n} m^{f,g_0}(\ell_{s\xi},Q_r)-c_5\Big( \frac{1}{r^n} m^{f,g_0}(\ell_{s\xi},Q_r)\Big)^{1-\alpha} \le 0,
 \end{equation}
 then we have 
 \begin{eqnarray}
 && \frac{1}{s} \frac{1}{r^n} m^{f,g_0}(\ell_{s\xi},Q_r)- \frac{c_5}{s} - \frac{c_5}{s} \Big( \frac{1}{r^n} m^{f,g_0}(\ell_{s\xi},Q_r)\Big)^{1-\alpha}
  \nonumber
  \\
&& \le  \frac{1}{t}\Big(\frac{1}{r^n}m^{f,g_0}(\ell_{t\xi},Q_r) +\eta\Big) + \frac{c_5}{t}+ \frac{c_5}{t}\Big(\frac{1}{r^n}m^{f,g_0}(\ell_{t\xi},Q_r) +\eta\Big)^{1-\alpha},
\label{abq}
 \end{eqnarray}
 just because the left-hand side is negative and the right-hand side is positive. Since the function $\tau\mapsto \tau-c_5\tau^{1-\alpha}$,
 defined for $\tau>0$, is increasing in the half-line where it is positive, from the inequality 
 $$
 m^{f,g_0}(\ell_{s\xi},Q_r)\le  \int_{Q_r}f( x ,s\nabla u_r)\dx  +\int_{S_{u_r}\cap Q_r}g_0( x ,s[u_r],\nu_{u_r})\,d\mathcal H^{n-1} 
 $$
and from \eqref{abp} we deduce that \eqref{abq} is  satisfied even if \eqref{abl} is not.

Passing to the limit first as $r\to+\infty$ and then as $\eta\to0+$, from \eqref{f-hom} and \eqref{abq} we obtain
$$
 \frac{1}{s} f_{\rm hom}(s\xi) - \frac{c_5}{s} - \frac{c_5}{s} f_{\rm hom}(s\xi)^{1-\alpha}
  \le  \frac{1}{t} f_{\rm hom}(t\xi) + \frac{c_5}{t}+ \frac{c_5}{t} f_{\rm hom}(t\xi) ^{1-\alpha}.
$$
By exchanging the roles of $s$ and $t$ we obtain \eqref{abd}. 
Recalling that $ f_{\rm hom}$ satisfies $(f4)$, we can apply Remark \ref{abc} and we obtain that  $ f_{\rm hom}$ satisfies $(f5)$. 
\end{proof}

To prove that $g_{\rm hom} \in \mathcal G$ we need the truncation result given by the following lemma, which will be used
several times in this paper. The proof is given in \cite[Lemma 3.7]{BBBF} (see also \cite[Lemma 2.8]{BFM}).

\begin{lem}\label{truncation}
Let $C_1>0$, $C_2>0$, and $\eta>0$. Then there exists a constant $M=M(C_1,C_2,\eta)>0$ such that for every $f\in\F$ and $g\in\G$,  for every $A\in\A$, for every $w\in SBV(A,\R^m)\cap L^\infty(A,\R^m)$, with $\|w\|_{L^\infty(A,\R^m)}\le C_1$, and for every $u\in BV(A,\R^m)$, with
$\|u\|_{L^1(A,\R^m)}+|Du|(A)\le C_2$ and $u=w$ near $\partial A$, there exists $\tilde u\in SBV(A,\R^m)\cap L^\infty(A,\R^m)$ such that
\begin{itemize}
\item[(a)]$\|\tilde u\|_{L^\infty(A,\R^m)}\le M$,
\item[(b)]$E^{f,g}(\tilde u,A)\le E^{f,g}(u,A)+\eta$,
\item[(c)]$\|\tilde u-w\|_{L^1(A,\R^m)}\le \|u-w\|_{L^1(A,\R^m)}$,
\item[(d)]$\tilde u=w$ near $\partial A$.
\end{itemize}
\end{lem}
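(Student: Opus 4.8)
The plan is to realise $\tilde u$ as a radial truncation of $u$ at a level chosen, via a pigeonhole argument, from a fast‑growing finite sequence of levels. First, if $u\notin SBV(A,\R^m)$ the right‑hand side of (b) is $+\infty$ and $\tilde u:=w$ trivially satisfies (a)--(d); so I would assume $u\in SBV(A,\R^m)$. For $k>C_1$ let $\pi_k\colon\R^m\to\overline{B}_k(0)$ be the nearest‑point projection onto the closed ball of radius $k$: it is $1$‑Lipschitz, equals the identity on $\overline{B}_k(0)$, and $\pi_k\circ w=w$ since $|w|\le C_1<k$. I would fix a large integer $N$ and levels $C_1<K_0<K_1<\dots<K_N$ with $K_{j+1}\ge\rho K_j$ for a large ratio $\rho$, and set $u^j:=\pi_{K_j}\circ u$ for $j=0,\dots,N$; the parameters $N$, $\rho$, $K_0$, and hence $M:=K_N$, will be fixed at the very end in terms of $C_1,C_2,\eta$ (and the structural constants $c_2,c_3,c_4$). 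By the chain rule for compositions of $BV$ functions with Lipschitz maps (see \cite{AFP}), each $u^j\in SBV(A,\R^m)\cap L^\infty(A,\R^m)$ with $\|u^j\|_{L^\infty}\le K_j\le M$, $\nabla u^j=\nabla\pi_{K_j}(u)\,\nabla u$ (so $|\nabla u^j|\le|\nabla u|$ a.e.\ and $\nabla u^j=\nabla u$ a.e.\ on $\{|u|\le K_j\}$), $S_{u^j}\subseteq S_u$ and $[u^j]=\pi_{K_j}(u^+)-\pi_{K_j}(u^-)$ (so $|[u^j]|\le|[u]|$, and $[u^j]=[u]$ wherever $\max\{|u^+|,|u^-|\}\le K_j$). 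Since $\pi_{K_j}\circ w=w$ we also get $u^j=w$ near $\partial A$ and $\|u^j-w\|_{L^1}\le\|u-w\|_{L^1}$. Thus every $u^j$ already satisfies (a), (c) and (d), and it only remains to select $j=j^*$ so that (b) also holds.

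Writing $M_u:=\max\{|u^+|,|u^-|\}$ on $S_u$, I would split $E^{f,g}(u^j,A)-E^{f,g}(u,A)=I_j+II_j$, where $I_j:=\int_{\{|u|>K_j\}}\bigl(f(x,\nabla u^j)-f(x,\nabla u)\bigr)\dx$ and $II_j:=\int_{S_u\cap\{M_u>K_j\}}\bigl(g(x,[u^j],\nu_u)-g(x,[u],\nu_u)\bigr)\,d\mathcal H^{n-1}$ (using $u^j=u$, $[u^j]=[u]$ off these sets and $g(x,0,\nu)=0$). For $I_j$, discarding $-f(x,\nabla u)\le0$, using $(f4)$, the bound $|\nabla u^j|\le\tfrac{K_j}{|u|}|\nabla u|$ on $\{|u|>K_j\}$ (hence $\le|\nabla u|$ on $\{K_j<|u|<K_{j+1}\}$ and $\le\rho^{-1}|\nabla u|$ on $\{|u|\ge K_{j+1}\}$), and the estimates $\int_A|\nabla u|\dx\le C_2$, $\mathcal L^n(\{|u|>t\})\le C_2/t$, I get
\[
I_j\ \le\ c_3\int_{\{K_j<|u|<K_{j+1}\}}|\nabla u|\dx\ +\ \frac{c_3C_2}{\rho}\ +\ \frac{c_4C_2}{K_0}\,,
\]
whose last two terms vanish uniformly as $\rho,K_0\to+\infty$. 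For $II_j$ the key observation is that the truncation can only \emph{increase} the surface energy where the jump has size comparable to $K_j$: with $\Lambda:=1+2c_3/c_2$, distinguishing the cases $\min\{|u^+|,|u^-|\}>K_j$ and $\min\{|u^+|,|u^-|\}\le K_j$ and using $(g3)$, $(g4)$ together with the elementary bound $\bigl|u^+/|u^+|-u^-/|u^-|\bigr|\le 2|u^+-u^-|/\max\{|u^+|,|u^-|\}$, one checks that on $S_u\cap\{M_u>\Lambda K_j\}$ one has $g(x,[u^j],\nu_u)\le c_3|[u^j]|\le c_2|[u]|\le g(x,[u],\nu_u)$, so the integrand of $II_j$ is $\le0$ there; hence
\[
II_j\ \le\ c_3\int_{S_u\cap\{K_j<M_u\le\Lambda K_j\}}|[u]|\,d\mathcal H^{n-1}\,.
\]

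I would then choose $\rho>\Lambda$, so that the slabs $\{K_j<|u|<K_{j+1}\}$ and the annuli $\{K_j<M_u\le\Lambda K_j\}$ are pairwise disjoint as $j$ ranges over $\{0,\dots,N-1\}$. Since $\int_A|\nabla u|\dx\le C_2$ and $\int_{S_u}|[u]|\,d\mathcal H^{n-1}\le|Du|(A)\le C_2$, summing the two families of integrals over $j$ yields
\[
\sum_{j=0}^{N-1}\Bigl(\int_{\{K_j<|u|<K_{j+1}\}}|\nabla u|\dx+\int_{S_u\cap\{K_j<M_u\le\Lambda K_j\}}|[u]|\,d\mathcal H^{n-1}\Bigr)\ \le\ 2C_2\,,
\]
so there is $j^*\in\{0,\dots,N-1\}$ for which the $j^*$‑th summand is $\le 2C_2/N$. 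Combining the bounds on $I_{j^*}$ and $II_{j^*}$ gives $E^{f,g}(u^{j^*},A)-E^{f,g}(u,A)\le c\,C_2/N+c'\,C_2/\rho+c''\,C_2/K_0$ with $c,c',c''$ depending only on $c_2,c_3,c_4$. Finally I would fix $N$, then $\rho$, then $K_0$ (hence the remaining levels and $M=K_N$) large enough, in terms of $C_1,C_2,\eta$, that the right‑hand side is $\le\eta$, and take $\tilde u:=u^{j^*}$.

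The step I expect to be the main obstacle is the surface estimate. Truncating at a single fixed level does not work: a jump of $u$ crossing many levels contributes to the affected set $S_u\cap\{M_u>K_j\}$ an $\mathcal H^{n-1}$‑mass not controlled by $C_2$, and after truncation this can carry surface energy of order $C_2$ no matter how large $K_j$ is. The resolution is the twofold observation above — that on jumps much larger than $K_j$ the truncation in fact \emph{lowers} the surface energy, so that only jumps of size $\sim K_j$ are relevant, and that these, for different $j$, sit in pairwise disjoint portions of $S_u$ — which is exactly what makes the pigeonhole selection of $j^*$ succeed. A minor but essential point is that $u\in SBV$, not merely $BV$, is needed so that $u^j=\pi_{K_j}\circ u$ remains in $SBV$; the case $u\in BV\setminus SBV$ is dispatched at the outset.
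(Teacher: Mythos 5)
Your argument is correct, but note that the paper does not actually prove this lemma: it is quoted from the literature (``The proof is given in [BBBF, Lemma 3.7] (see also [BFM, Lemma 2.8])''), so there is no internal proof to match. What you give is a correct, self-contained proof in the same classical spirit as those references: a De Giorgi--type scheme of truncations at a finite family of widely spaced levels followed by an averaging/pigeonhole selection. I checked the key points: the projections $\pi_{K_j}$ preserve $SBV$, the boundary datum, and properties (a), (c), (d); the bulk estimate via $(f4)$, Chebyshev, and the bound $|\nabla u^j|\le \min\{1,K_j/|u|\}\,|\nabla u|$ is correct; and the crucial surface observation is also correct: with $\Lambda=1+2c_3/c_2$, on $S_u\cap\{\max(|u^+|,|u^-|)>\Lambda K_j\}$ one indeed has $c_3|[u^j]|\le c_2|[u]|$ (both in the case $\min(|u^+|,|u^-|)\le K_j$ and in the case where both sides are projected, via the elementary bound on $|\hat a-\hat b|$), so the truncation does not increase the surface energy there, and the genuinely ``bad'' jumps for level $K_j$ sit in the annulus $\{K_j<\max(|u^+|,|u^-|)\le\Lambda K_j\}$, which for $\rho>\Lambda$ are pairwise disjoint in $j$; this is exactly what makes the pigeonhole give a level with total error $O(C_2/N)+O(C_2/\rho)+O(C_2/K_0)$, hence $\le\eta$ with $M=K_N$ depending only on $C_1$, $C_2$, $\eta$ (and the fixed structural constants), as required. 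Your diagnosis that a single fixed truncation level fails (because of the $c_3/c_2$ gap on jumps of size comparable to the level) is also accurate.

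Two minor points you should make explicit. First, the dispatch of $u\in BV\setminus SBV$: the paper defines $E^{f,g}$ only on $SBV$, and in all its applications the lemma is invoked for $SBV$ competitors; your convention $E^{f,g}(u,A)=+\infty$ off $SBV$ is the only reading under which the statement is true (with the ``bulk plus jump only'' reading one can cook up a one-dimensional $u$ with a Cantor staircase and zero bulk/jump energy for which no $SBV$ competitor with the same boundary values can have energy $\le\eta$), so say this in one line rather than asserting it as obvious. Second, $\pi_{K_j}$ is Lipschitz but not $C^1$ across $\{|y|=K_j\}$, so the $C^1$ chain rule of [AFP] does not literally apply; what you need — $\pi_{K_j}\circ u\in SBV$ (because $|D(\pi_{K_j}\circ u)|\le|Du|$ and the singular part is dominated by $|[u]|\,\mathcal H^{n-1}\llcorner S_u$), the identity $\nabla u^j=\nabla u$ a.e.\ on $\{|u|\le K_j\}$ (locality of approximate gradients), and $\nabla u^j=\nabla\pi_{K_j}(u)\nabla u$ a.e.\ on $\{|u|>K_j\}$ (where $\pi_{K_j}$ is smooth near the values of $u$) — is standard but should be justified along these lines, or avoided altogether by replacing $\pi_{K_j}$ with a smooth radial truncation with Lipschitz constant $1$, as in the cited proofs.
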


\begin{rem}\label{rem:truncation0}
A careful inspection of the proof of \cite[Lemma 3.7]{BBBF} shows that 
the lemma also applies if $u$ only attains the boundary conditions on a subset of $\partial A$, as defined 
after \eqref{m-phipsi}. 
More precisely, if $\Lambda \subset \partial A$ is a relatively open subset of $\partial A$,  $U$ is a neighbourhood of $\Lambda$ in $\R^n$,
$w\in SBV(A,\R^m)\cap L^\infty(U \cap A,\R^m)$,
and $u=w$ $\mathcal{L}^n$-a.e.\ in $U \cap A$, then
the conclusion still holds true, with (d) replaced by
\begin{itemize}
\item[(d$'$)] $\tilde{u}=w$ $\mathcal{L}^n$-a.e.\ in $U \cap A$,
\end{itemize}
and in this case $M=M(\widetilde{C}_1,C_2,\eta)>0$, where $\| w \|_{L^\infty(U \cap A,\R^m)} \leq \tilde{C}_1.$
\end{rem}

%
%

We are now ready to prove that $g_{\rm hom} \in \mathcal G$.

\begin{lem}\label{l:g-hom}
Let $f \in \F$ and $g \in \G$. Assume that hypothesis (b) of Theorem \ref{T:det-hom} is satisfied and let $g_{\rm hom}$ be defined as in \eqref{g-hom}. Then $g_{\rm hom} \in \mathcal G$. 
\end{lem}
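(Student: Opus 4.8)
The plan is to verify conditions $(g1)$--$(g6)$ of Definition~\ref{def:FG} for $g_{\rm hom}$, in the spirit of Lemma~\ref{l:f-hom}. Three of them follow rather directly from hypothesis~(b) of Theorem~\ref{T:det-hom}. For $(g4)$ one tests the infimum \eqref{m-phipsi} defining $m^{f^\infty,g}(u_{rx,\zeta,\nu},Q^\nu_r(rx))$ with the competitor $u_{rx,\zeta,\nu}$ itself: since $\nabla u_{rx,\zeta,\nu}=0$ and $f^\infty(y,0)=0$, while $S_{u_{rx,\zeta,\nu}}\cap Q^\nu_r(rx)=\Pi^\nu_{rx}\cap Q^\nu_r(rx)$ has $\mathcal H^{n-1}$-measure $r^{n-1}$, one gets $m^{f^\infty,g}(u_{rx,\zeta,\nu},Q^\nu_r(rx))\le c_3|\zeta|\,r^{n-1}$, and \eqref{g-hom} yields $g_{\rm hom}(\zeta,\nu)\le c_3|\zeta|$. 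For $(g3)$ one combines the lower bound of Remark~\ref{BV estimate}, $c_2|Dv|(Q^\nu_r(rx))\le E^{f^\infty,g}(v,Q^\nu_r(rx))$, valid for every competitor $v$, with a one-dimensional slicing in the direction $\nu$: since $v=u_{rx,\zeta,\nu}$ near $\partial Q^\nu_r(rx)$, for $\mathcal H^{n-1}$-a.e.\ segment of $Q^\nu_r(rx)$ parallel to $\nu$ the restriction of $v$ joins the values $0$ and $\zeta$, whence $|Dv|(Q^\nu_r(rx))\ge|\zeta|\,r^{n-1}$ and therefore $g_{\rm hom}(\zeta,\nu)\ge c_2|\zeta|$. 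For $(g6)$ one uses the $\mathcal L^n$-a.e.\ identity $u_{rx,-\zeta,-\nu}=u_{rx,\zeta,\nu}-\zeta$ together with $Q^{-\nu}_r(rx)=Q^\nu_r(rx)$ (a consequence of $R_{-\nu}Q(0)=R_\nu Q(0)$): the map $v\mapsto v-\zeta$ is then an energy-preserving bijection between the competitors for $m^{f^\infty,g}(u_{rx,\zeta,\nu},Q^\nu_r(rx))$ and those for $m^{f^\infty,g}(u_{rx,-\zeta,-\nu},Q^{-\nu}_r(rx))$, so $g_{\rm hom}(\zeta,\nu)=g_{\rm hom}(-\zeta,-\nu)$.

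For the continuity estimate $(g2)$ the plan is to follow the scheme used for $(f2)$ in Lemma~\ref{l:f-hom}, but with a \emph{jump} perturbation in place of the affine one. Fix $\zeta_1,\zeta_2\in\R^m$, $\nu\in\Sph^{n-1}$, and given a competitor $v$ for $m^{f^\infty,g}(u_{rx,\zeta_1,\nu},Q^\nu_r(rx))$ set $\tilde v:=v+u_{rx,\zeta_2-\zeta_1,\nu}$, a competitor for $m^{f^\infty,g}(u_{rx,\zeta_2,\nu},Q^\nu_r(rx))$. Since $u_{rx,\zeta_2-\zeta_1,\nu}$ has zero gradient and jump exactly $\zeta_2-\zeta_1$ on $\Pi^\nu_{rx}$, one has $\nabla\tilde v=\nabla v$, and $E^{f^\infty,g}(\tilde v,Q^\nu_r(rx))$ and $E^{f^\infty,g}(v,Q^\nu_r(rx))$ differ only through the surface contributions on $\Pi^\nu_{rx}\cap Q^\nu_r(rx)$, where the jump of $v$ is replaced by the same jump shifted by $\zeta_2-\zeta_1$. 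Applying $(g2)$ for $g$ pointwise there (and $(g6)$ for $g$ to read off those contributions regardless of the orientation of $\nu_v$), and bounding the two resulting integrals by the full energies $E^{f^\infty,g}(v,Q^\nu_r(rx))$ and $E^{f^\infty,g}(\tilde v,Q^\nu_r(rx))$, gives
\[
(1-\sigma_2(|\zeta_1-\zeta_2|))\,E^{f^\infty,g}(\tilde v,Q^\nu_r(rx))\le(1+\sigma_2(|\zeta_1-\zeta_2|))\,E^{f^\infty,g}(v,Q^\nu_r(rx));
\]
minimising over $v$, exchanging $\zeta_1\leftrightarrow\zeta_2$, dividing by $r^{n-1}$ and letting $r\to+\infty$ via \eqref{g-hom} then produces $(g2)$ for $g_{\rm hom}$. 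Note that, in contrast with the volume case, no truncation is needed here, because the perturbation of the jump is exactly $\zeta_2-\zeta_1$. Property $(g1)$ then follows: $g_{\rm hom}$ is $x$-independent, continuous in $\zeta$ by $(g2)$, and Borel in $\nu$ because $g_{\rm hom}(\zeta,\cdot)$ is a pointwise limit in $r$ of functions of $\nu$ that are Borel thanks to the continuity of $\nu\mapsto R_\nu$ on $\widehat\Sph^{n-1}_\pm$.

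The remaining, and most delicate, property is $(g5)$: the existence, finiteness and uniformity (in $\zeta\in\Sph^{m-1}$, $\nu\in\Sph^{n-1}$) of $g_{{\rm hom},0}(\zeta,\nu):=\lim_{t\to0+}\tfrac1t g_{\rm hom}(t\zeta,\nu)$. Arguing as in the Remark containing \eqref{acg}, it is enough — since $(g4)$ is already established and forces $\tfrac1t g_{\rm hom}(t\zeta,\nu)\le c_3|\zeta|$ — to show that $t\mapsto\tfrac1t g_{\rm hom}(t\zeta,\nu)$ satisfies the Cauchy condition as $t\to0+$, uniformly in $\zeta\in\Sph^{m-1}$ and $\nu$. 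The plan is to rescale: if $u$ is a competitor for $m^{f^\infty,g}(u_{rx,t\zeta,\nu},Q^\nu_r(rx))$, then $v:=u/t$ is a competitor for the \emph{same} datum $u_{rx,\zeta,\nu}$, and by $1$-homogeneity of $f^\infty$,
\[
\tfrac1t E^{f^\infty,g}(u,Q^\nu_r(rx))=\int_{Q^\nu_r(rx)}f^\infty(y,\nabla v)\,dy+\int_{S_v}\tfrac1t g(y,t[v],\nu_v)\,d\mathcal H^{n-1}.
\]
Thus $\tfrac1t m^{f^\infty,g}(u_{rx,t\zeta,\nu},Q^\nu_r(rx))$ is the infimum, over a fixed class of competitors, of a functional whose surface density $\tfrac1t g(y,t\,\cdot\,,\nu)$ tends to $g_0(y,\cdot,\nu)$ as $t\to0+$, with the error controlled — via \eqref{acd} and \eqref{acg} for $g$ — by the amplitude of the jump. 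Comparing two scales $s,t$ and using Remark~\ref{BV estimate} to bound $|Dv|(Q^\nu_r(rx))$, hence the total jump mass of near-optimal competitors, by $Cr^{n-1}$, one expects these rescaled infima, divided by $r^{n-1}$, to be Cauchy in $t$ uniformly in $r$, $\zeta$, $\nu$; passing to the limit in $r$ then gives the required Cauchy condition for $\tfrac1t g_{\rm hom}(t\zeta,\nu)$.

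The hard part is precisely here: the errors produced by \eqref{acd}--\eqref{acg} contain $\lambda(s|[v]|)$ and $\lambda(t|[v]|)$, which are small only where the jump amplitude $|[v]|$ is moderate, whereas a priori a near-optimal competitor may carry a fixed proportion of its jump mass on sets where $|[v]|$ is large. To neutralise this one must first reduce to competitors that are equibounded in $L^\infty$, so that $|[v]|$ is uniformly bounded and $\lambda(s|[v]|)\to0$ uniformly; this is where Lemma~\ref{truncation}, in the form of Remark~\ref{rem:truncation0} applied to the density $\tfrac1t g(y,t\,\cdot\,,\nu)$ (which still belongs to $\mathcal G$ for $t\le1$, with the same truncation constant), comes in. The subtle point is to perform this truncation with an $L^\infty$-bound — and hence a modulus $\lambda$ — that is independent of the scale $r$, exploiting that the boundary datum $u_{rx,\zeta,\nu}$ is itself bounded by $|\zeta|$, so that the uniformity required in $(g5)$ is genuinely recovered.
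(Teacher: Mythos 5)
Most of your plan coincides with the paper's proof: the competitor $u_{rx,\zeta,\nu}$ for $(g4)$, the slicing argument for $(g3)$, the translation $v\mapsto v-\zeta$ for $(g6)$, the perturbation by $u_{rx,\zeta_2-\zeta_1,\nu}$ for $(g2)$, and the overall scheme for $(g5)$ (divide out the jump amplitude, compare two scales $s,t$ through the modulus $\lambda$ of \eqref{acg}, and truncate so that $\lambda$ is evaluated at a uniformly small argument) are exactly the arguments used there. Two steps, however, contain genuine gaps. For $(g1)$ you claim that $\nu\mapsto m^{f^\infty,g}(u_{rx,\zeta,\nu},Q^\nu_r(rx))$ is Borel ``thanks to the continuity of $\nu\mapsto R_\nu$''. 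This does not follow: rotating to a fixed cube, the energy of a fixed competitor depends on $\nu$ through $f^\infty(R_\nu y,\cdot)$ and $g(R_\nu y,\cdot,\cdot)$, and $f^\infty,g$ are only Borel in the space variable, so there is no continuity (nor any evident measurability) in $\nu$ at fixed competitor; moreover the infimum runs over a non-separable class, so measurability does not pass to the infimum -- this is precisely the kind of obstruction the paper's Appendix has to overcome (in $\om$) with the projection theorem. The paper handles $(g1)$ by proving that $\nu\mapsto g_{\rm hom}(\zeta,\nu)$ is \emph{continuous} on $\widehat\Sph^{n-1}_\pm$: for $\nu_j\to\nu$ it extends a near-minimiser on $Q^\nu_r$ by $u_{0,\zeta,\nu_j}$ to the slightly larger cube $Q^{\nu_j}_{(1+\delta)r}$, estimates the extra jump set $\Sigma$ through $(g4)$, and passes to the limit in $r$, $j$, $\delta$; an argument of this type (applied to the limit, not to the prelimit) is needed and is absent from your proposal.

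For $(g5)$ your plan is the right one, but the uniformity you flag as ``the subtle point'' is not secured by the boundedness of the boundary datum alone. In Lemma~\ref{truncation} the truncation level $M$ depends on $C_1$ \emph{and} on $C_2$, the bound for $\|u\|_{L^1}+|Du|$, and on the large cube $Q^\nu_r(rx)$ the total variation of a near-minimiser grows like $r^{n-1}$, so a direct application gives an $M$ depending on $r$ and the errors $\lambda(sM)$, $\lambda(tM)$ do not tend to $0$ uniformly. The paper removes this dependence by first rescaling the near-minimiser to the unit cube $Q^\nu$, where Remark~\ref{BV estimate}, $(g4)$ and Poincar\'e give $\|w_r\|_{L^1(Q^\nu,\R^m)}+|Dw_r|(Q^\nu)\le C$ uniformly in $r$, $t$, $\zeta\in\Sph^{m-1}$, $\nu$; the truncation is then applied on $Q^\nu$ to the rescaled integrands $f^\infty(r\cdot,\cdot)\in\F$ and $g_{r,t}=\tfrac1t g(r\cdot,t\,\cdot,\cdot)\in\G$ -- and here the fact that $M$ in Lemma~\ref{truncation} is uniform over the whole classes $\F$, $\G$ is essential, since these integrands change with $r$ and $t$ -- and only afterwards one scales back to $Q^\nu_r$. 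This normalisation is what makes $M_\eta$ independent of $r$, $t$, $\zeta$, $\nu$, hence what gives both the Cauchy property and the uniformity in $\zeta\in\Sph^{m-1}$, $\nu\in\Sph^{n-1}$ demanded by $(g5)$; without it your estimate does not close.
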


\begin{proof}
To prove $(g2)$ we fix $\zeta_1$, $\zeta_2 \in \R^m$ and $\nu\in\Sph^{n-1}$, and we set $\zeta:=\zeta_2-\zeta_1$. 
We claim that  for every $r>0$ 
\begin{equation}\label{aek}
|m^{f^\infty,g}(u_{0,\zeta_1,\nu}, Q^\nu_r)- m^{f^\infty,g}(u_{0,\zeta_2,\nu}, Q^\nu_r)| \leq \sigma_2(|\zeta|)(m^{f^\infty,g}(u_{0,\zeta_1,\nu}, Q^\nu_r)+m^{f^\infty,g}(u_{0,\zeta_2,\nu}, Q^\nu_r)),
\end{equation} 
 where $Q^\nu_r:=Q^\nu_r(0)$. Indeed,  for every $u\in SBV(Q^\nu_r,\R^m)$, by $(g2)$  we have
$$
E^{f^\infty,g}(u+u_{0,\zeta,\nu},Q^\nu_r)\le E^{f^\infty,g}(u,Q^\nu_r)+ \sigma_2(|\zeta|)\big(E^{f^\infty,g}(u+u_{0,\zeta,\nu},Q^\nu_r)+E^{f^\infty,g}(u,Q^\nu_r)\big).
$$
By rearranging the terms we get 
\begin{equation*}
(1-\sigma_2(|\zeta|))E^{f^\infty,g}(u+u_{0,\zeta,\nu},Q^\nu_r)\le (1+\sigma_2(|\zeta|))E^{f^\infty,g}(u,Q^\nu_r).
\end{equation*}
If  $\sigma_2(|\zeta|)< 1$, we minimise over all functions  $u\in SBV(Q^\nu_r,\R^m)$  such that  $u=u_{0,\zeta_1,\nu}$ near $\partial Q^\nu_r$  and by \eqref{m-phipsi} we  obtain 
$$
(1-\sigma_2(|\zeta|)) m^{f^\infty,g}(u_{0,\zeta_2,\nu}, Q^\nu_r)\le (1+\sigma_2(|\zeta|))m^{f^\infty,g}(u_{0,\zeta_1,\nu}, Q^\nu_r).
$$
This inequality is trivial if $\sigma_2(|\zeta|)\ge 1$.
Exchanging the roles of $\zeta_1$ and $\zeta_2$ we obtain \eqref{aek}. We now divide both sides of this inequality by $r^{n-1}$ and, passing to the limit as $r\to+\infty$, from \eqref{g-hom} we obtain that $g_{\rm hom}$ satisfies $(g2)$.

In view of  $(g2)$, to prove  $(g1)$ for  $g_{\rm hom}$ it is enough to show that for every $\zeta\in\R^m$ the restriction of the function $\nu\mapsto g_{\rm hom}(\zeta,\nu)$
 to the sets $\widehat\Sph^{n-1}_+$ and $\widehat\Sph^{n-1}_-$ is continuous.
We only prove this property for $\widehat\Sph^{n-1}_+$, 
the other case being analogous.
To this end, let us fix $\zeta\in\R^m$, $\nu\in \widehat\Sph^{n-1}_+$, 
and  a sequence  $(\nu_j) \subset \widehat\Sph^{n-1}_+$  such that $\nu_j \to \nu$ as $j\to +\infty$. 
Since the function $\nu\mapsto R_\nu$ is continuous on 
$\widehat\Sph^{n-1}_+$, for every $\delta \in(0,\frac12)$ there exists an integer $j_\delta$ such that
\begin{equation}\label{afa}
|\nu_j-\nu|<\delta\quad\hbox{and}\quad Q^{\nu_j}_{(1-\delta)r}  \subset\subset  Q^\nu_{r}  \subset\subset  Q^{\nu_j}_{(1+\delta)r},
\end{equation}
for every $j\geq j_\delta$ and every $r>0$.
Fix $j\geq j_\delta$, $r>0$, and $\eta>0$. 
 By \eqref{m-phipsi}  there exists  $u\in  SBV(Q^\nu_r,\R^m)$, with $u=u_{0,\zeta,\nu}$ near 
$\partial Q^\nu_r$, such that
\begin{equation}\label{minmin}
\int_{Q^\nu_r}f^{\infty}( x ,\nabla u)\dx + \int_{S_{u}\cap Q^\nu_r}
g( x ,[u],\nu_{u})\,d\mathcal H^{n-1} \leq m^{f^{\infty},g}(u_{ 0 , \zeta, \nu},Q^\nu_r) + \eta.  
\end{equation}
We define $v \in SBV_{\rm loc} (Q^{\nu_j}_{(1+\delta)r}, \R^m)$ as
$$
v( x ) := 
\begin{cases}
u( x ) \,\, &\textrm{if} \;  x \in Q^\nu_r,\\
u_{0,\zeta,\nu_j}( x ) \,\, &\textrm{if} \;  x \in Q^{\nu_j}_{(1+\delta)r}\setminus Q^\nu_r.
\end{cases} 
$$
Then $v=u_{0,\zeta,\nu_j}$  near  $\partial Q^{\nu_j}_{(1+\delta)r}$
and $S_{v}\subset S_{u}\cup\Sigma$, where 
$$
 \Sigma:=\big\{ x \in \partial Q^\nu_r: ( x  \cdot \nu) ( x  \cdot \nu_j) < 0
  \big\}  \cup \big( \Pi^{\nu_j}_0 \cap(Q^{\nu_j}_{(1+\delta)r}\setminus Q^\nu_r)\big).
$$
By \eqref{afa} there exists $\varsigma(\delta)>0$, independent of $j$ and $r$, with $\varsigma(\delta)\to 0$  as $\delta \to 0+$, such that $\hs^{n-1}(\Sigma)\le  \varsigma(\delta)r^{n-1}$. Thanks to $(g4)$, \eqref{m-phipsi} and \eqref{minmin} we then have 
\begin{align*}
&m^{f^{\infty},g} (u_{ 0 , \zeta, \nu_j},Q^{\nu_j}_{(1 + \delta) r} )
\leq \int_{Q^{\nu_j}_{(1 + \delta) r} }f^{\infty}( x ,\nabla v)\dx 
+ \int_{S_{v}\cap Q^{\nu_j}_{(1 + \delta) r} }
g(  x ,[v],\nu_{v})\,d\mathcal H^{n-1} \\
&\quad\leq \int_{Q^\nu_r}f^{\infty}(  x ,\nabla u)\dx + \int_{S_{u}\cap Q^\nu_r}
g(  x ,[u],\nu_{u})\,d\mathcal H^{n-1} 
+ c_3  |\zeta|  \varsigma(\delta)  r^{n-1} \\
&\quad \leq m^{f^{\infty},g} (u_{ 0 , \zeta, \nu},Q^\nu_r) + \eta
+ c_3  |\zeta|  \varsigma(\delta)  r^{n-1},
\end{align*}
where we used the fact that $f^{\infty} ( x , 0) = 0$ for every $ x  \in \R^n$.
Dividing by $r^{n-1}$ and passing to the limit as $r\to +\infty$, recalling \eqref{g-hom} we obtain
\begin{align*}
g_{\rm hom}( \zeta,\nu_j)(1+\delta)^{n-1} \leq g_{\rm hom}( \zeta,\nu) + c_3  |\zeta|  \varsigma(\delta).
\end{align*}
Letting $j\to +\infty$ and then $\delta \to 0+$  we  deduce that 
\begin{equation*}
\limsup_{j \to +\infty }g_{\rm hom}( \zeta,\nu_j) \leq g_{\rm hom}( \zeta,\nu).
\end{equation*}
An analogous argument,  now using the cubes $Q^{\nu_j}_{(1-\delta)r}$, implies that  
\begin{equation*}
g_{\rm hom}( \zeta,\nu) \leq \liminf_{j \to +\infty} g_{\rm hom}( \zeta,\nu_j),
\end{equation*}
hence the restriction of the function $\nu\mapsto g_{\rm hom}(\zeta,\nu)$
 to $\widehat\Sph^{n-1}_+$ is continuous. This concludes the proof of $(g1)$ for  $g_{\rm hom}$. 

The lower bound $(g3)$ for  $g_{\rm hom}$ can be obtained from the lower bound in Remark \ref{BV estimate}, which gives
$$
c_2\inf |Du|(Q^\nu_r)\le m^{f^\infty,g}(u_{0,\zeta,\nu}, Q^\nu_r)
$$
for every $\zeta \in \R^m$ and $\nu\in\Sph^{n-1}$, where the infimum is over all functions $u\in SBV(Q^\nu_r,\R^m)$ and 
 such that  $u=u_{0,\zeta,\nu}$ near $\partial Q^\nu_r$.  In turn, this infimum is larger than or equal to
\begin{equation}\label{arb}
c_2\inf |Dv|(Q^\nu_r),
\end{equation}
where the infimum is now over all scalar functions $v\in SBV(Q^\nu_r)$ and such that $v=u_{0,|\zeta|,\nu}$ near $\partial Q^\nu_r$.
Using \eqref{g-hom}, property $(g3)$ for  $g_{\rm hom}$ follows from these inequalities and from the fact that the value of \eqref{arb} is  $c_2|\zeta|r^{n-1}$. This is a well kown fact, which can be proved, for instance, using a slicing argument based on \cite[Theorem 3.103]{AFP}. 

Property $(g4)$ for  $g_{\rm hom}$ follows from the fact that for every $\zeta \in \R^m$ and $\nu\in \Sph^{n-1}$ we have
$$
\frac{1}{r^{n-1}} m^{f^\infty,g}(u_{0,\zeta,\nu}, Q^\nu_r)\leq \frac{1}{r^{n-1}} E^{f^\infty,g}(u_{0,\zeta,\nu},Q^\nu_r) =\frac{1}{r^{n-1}}\int_{\Pi^\nu_0\cap Q^\nu_r} g( x ,\zeta,\nu)\dx  \leq c_3|\zeta|.
$$
Passing to the limit as $r\to+\infty$, from \eqref{g-hom} we obtain that $g_{\rm hom}$ satisfies $(g4)$.

We now prove that $g_{\rm hom}$ satisfies $(g5)$. Fix $\zeta\in \Sph^{m-1}$, $\nu\in \Sph^{n-1}$, $s>0$, $t>0$, and $\eta\in (0,1)$. By \eqref{m-phipsi}  for every $r>0$ there exists
$v_r\in SBV(Q^\nu_r,\R^m)$, with $v_r=u_{0,\zeta,\nu}$ near $\partial Q^\nu_r$, such that
\begin{equation*}
 \int_{Q^\nu_r}f^\infty( x ,t\nabla v_r)\dx +\int_{S_{v_r}\cap Q^\nu_r}g( x ,t[v_r],\nu_{v_r})\,d\mathcal H^{n-1}\le m^{f^\infty,g}(u_{0,t\zeta,\nu},Q^\nu_r)+\eta t r^{n-1},
\end{equation*} 
hence
\begin{equation*}
 \int_{Q^\nu_r}f^\infty( x ,\nabla v_r)\dx +\frac1t\int_{S_{v_r}\cap Q^\nu_r}g( x ,t[v_r],\nu_{v_r})\,d\mathcal H^{n-1}\le \frac1t m^{f^\infty,g}(u_{0,t\zeta,\nu},Q^\nu_r)+\eta r^{n-1},
\end{equation*} 
where we used the positive $1$-homogeneity of $f^\infty(x,\cdot)$.  

Let $Q^\nu:=Q^\nu_1(0)$ and let $w_r\in SBV(Q^\nu,\R^m)$ be the rescaled function, defined by $w_r( x ):=v_r(r x )$ for every $ x \in Q^\nu$. Then $w_r=u_{0,\zeta,\nu}$ near $\partial Q^\nu$ and, by a change of variables,
\begin{equation}\label{abj}
 \int_{Q^\nu}f^\infty(r x ,\nabla w_r)\dx +\frac1t\int_{S_{w_r}\cap Q^\nu}g(r x ,t[w_r],\nu_{w_r})\,d\mathcal H^{n-1}\le \frac1t \frac{1}{r^{n-1}}m^{f^\infty,g}(u_{0,t\zeta,\nu},Q^\nu_r)+\eta,
\end{equation} 
where we used the positive $1$-homogeneity of $f^\infty(x,\cdot)$.  Since the function $g_{r,t}$ defined by $g_{r,t}( x ,\zeta,\nu):=\frac1t g(r x , t\zeta,\nu)$ satisfies $(g3)$ with the constant $c_2$ independent of $r$ and $t$, and by $(g4)$
$$
\frac{1}{tr^{n-1}}m^{f^\infty,g}(u_{0,t\zeta,\nu},Q^\nu_r)\le\frac{1}{tr^{n-1}} E^{f^\infty,g}(u_{0,t\zeta,\nu},Q^\nu_r)=
\frac{1}{tr^{n-1}}\int_{\Pi_0^\nu\cap Q^\nu_r}g( x ,t\zeta,\nu)\,d\mathcal H^{n-1}\le c_3|\zeta|=c_3,
$$
from  Remark \ref{BV estimate} and \eqref{abj} we deduce that  there exists a constant $C$ such that $|Dw_r|(Q^\nu)\le C$, for every $r>0$, $t>0$, and $\eta\in (0,1)$. In addition, since $ w_r$ coincides with $u_{0,\zeta,\nu}$ near $\partial Q^\nu$, we can apply Poincar\'e's inequality and from the bound on its total variation we deduce that $w_r$ is  bounded in $BV(Q^\nu,\R^m)$, uniformly with respect to $r$, by a constant that we still denote with $C$; namely,
$\|w_r\|_{L^1(Q^\nu,\R^m)}+|Dw_r|(Q^\nu)\le C$. 

By Lemma \ref{truncation} for every $\eta\in(0,1)$ there exists a constant $M_\eta>0$, depending on $C$ but not on $t>0$, $r>0$, $\zeta\in \Sph^{m-1}$, and $\nu\in \Sph^{n-1}$, such that for every $r>0$ there exists a function $\tilde w_r\in SBV(Q^\nu,\R^m) \cap  L^\infty(Q^\nu,\R^m)$ with the following properties: $\tilde w_r=u_{0,\zeta,\nu}$ near $\partial Q^\nu$,  $ \|\tilde w_r\|_{L^\infty(Q^\nu,\R^m)}\le M_\eta$, and
\begin{eqnarray}
 &&\ds\int_{Q^\nu}f^\infty(r x ,\nabla \tilde w_r)\dx +\frac1t\int_{S_{\tilde w_r}\cap Q^\nu}g(r x ,t[\tilde w_r],\nu_{\tilde w_r})\,d\mathcal H^{n-1}
 \nonumber
 \\
&&\ds\le \int_{Q^\nu}f^\infty(r x ,\nabla w_r)\dx +\frac1t\int_{S_{w_r}\cap Q^\nu}g(r x ,t[w_r],\nu_{w_r})\,d\mathcal H^{n-1}+\eta,
 \nonumber
\end{eqnarray}
where we used the fact that $f^\infty\in\F$ and  $g_{r,t}\in\G$.  
By \eqref{abj} this implies that
\begin{equation}\label{ade}
\int_{Q^\nu}f^\infty(r x ,\nabla \tilde w_r)\dx +\frac1t\int_{S_{\tilde w_r}\cap Q^\nu}g(r x ,t[\tilde w_r],\nu_{\tilde w_r})\,d\mathcal H^{n-1}\le \frac1t \frac{1}{r^{n-1}}m^{f^\infty,g}(u_{0,t\zeta,\nu},Q^\nu_r)+2\eta.
\end{equation}

Let $\tilde v_r\in SBV(Q^\nu_r,\R^m)\cap L^\infty( Q^\nu_r,\R^m)$ be the function defined by $\tilde v_r( x ):=\tilde w_r(\frac{ x}{r})$ for every $ x \in Q^\nu_r$. Then $\tilde v_r=u_{0,\zeta,\nu}$ near $\partial Q^\nu_r$, $ \|\tilde v_r\|_{L^\infty(Q^\nu_r,\R^m)}\le M_{\eta}$, and, by a change of variables,
\begin{equation}\label{adf}
 \int_{Q^\nu_r}f^\infty( x ,\nabla \tilde v_r)\dx +\frac1t\int_{S_{\tilde v_r}\cap Q^\nu_r}g( x ,t[\tilde v_r],\nu_{\tilde v_r})\,d\mathcal H^{n-1}\le \frac1t m^{f^\infty,g}(u_{0,t\zeta,\nu},Q^\nu_r)+2\eta r^{n-1},
\end{equation} 
where we used \eqref{ade} and the positive $1$-homogeneity of $f^\infty(x,\cdot)$. 
Since $ \|\tilde v_r\|_{L^\infty(Q^\nu_r,\R^m)}\le M_{\eta}$, by \eqref{acg} we have
\begin{eqnarray}
&&\Big(1-\frac1{c_2}\lambda( 2 sM_{\eta})\Big)\Big(\int_{Q^\nu_r}f^\infty( x ,\nabla \tilde v_r)\dx +\frac1s\int_{S_{\tilde v_r}\cap Q^\nu_r}g( x ,s[\tilde v_r],\nu_{\tilde v_r})\,d\mathcal H^{n-1}\Big)
\nonumber
\\
&&\le \Big(1+\frac1{c_2}\lambda(2 tM_{\eta})\Big)\Big(\int_{Q^\nu_r}f^\infty( x ,\nabla \tilde v_r)\dx +\frac1t\int_{S_{\tilde v_r}\cap Q^\nu_r}g( x ,t[\tilde v_r],\nu_{\tilde v_r})\,d\mathcal H^{n-1}\Big).
\label{adg}
 \end{eqnarray}
 
 Assume that
  \begin{equation}\label{adi}
 1-\tfrac1{c_2}\lambda( 2 sM_{\eta})>0.
 \end{equation} 
 Since $s\tilde v_r=u_{0,s\zeta,\nu}$ near $\partial Q^\nu_r$, using the positive $1$-homogeneity of $f^\infty$ we obtain that
 \begin{equation*}
 \frac1s m^{f^\infty,g}(u_{0,s\zeta,\nu},Q^\nu_r)\le \int_{Q^\nu_r}f^\infty( x ,\nabla \tilde v_r)\dx +\frac1s\int_{S_{\tilde v_r}\cap Q^\nu_r}g( x ,s[\tilde v_r],\nu_{\tilde v_r})\,d\mathcal H^{n-1}.
\end{equation*} 
Hence from \eqref{adf} and \eqref{adg} we have
\begin{equation}\label{adh}
\big(1-\tfrac1{c_2}\lambda( 2  sM_{\eta})\big) \tfrac1s m^{f^\infty,g}(u_{0,s\zeta,\nu},Q^\nu_r)\le
 \big(1+\tfrac1{c_2}\lambda( 2  tM_{\eta})\big)\big( \tfrac1t m^{f^\infty,g}(u_{0,t\zeta,\nu},Q^\nu_r)+2\eta r^{n-1}\big).
\end{equation} 
This inequality holds also when \eqref{adi} is not satisfied, since in that case the left-hand side is nonpositive.

Since $M_{\eta}$ does not depend on $t$, $s$, and $r$, we can divide \eqref{adh} by $r^{n-1}$ and, passing to the limit as
$r\to +\infty$, by \eqref{g-hom} we obtain
$$
\big(1-\tfrac1{c_2}\lambda( 2 sM_{\eta})\big) \tfrac1s g_{\rm hom}(s\zeta,\nu)\le  \big(1+\tfrac1{c_2}\lambda( 2 tM_{\eta})\big)\big(\tfrac1t g_{\rm hom}(t\zeta,\nu)+2\eta\big),
$$
which gives
$$
 \tfrac1s g_{\rm hom}(s\zeta,\nu) - \tfrac1t g_{\rm hom}(t\zeta,\nu)
 \le \tfrac1{c_2}\lambda( 2  sM_{\eta})\tfrac1s g_{\rm hom}(s\zeta,\nu)
 +
\tfrac1{c_2}\lambda( 2 tM_{\eta})\tfrac1t g_{\rm hom}(t\zeta,\nu)
+2\eta \big(1+\tfrac1{c_2}\lambda( 2  tM_{\eta})\big).
$$
Since $g_{\rm hom}$ satisfies $(g4)$ and $|\zeta|=1$, from the previous inequality we deduce that
$$
 \tfrac1s g_{\rm hom}(s\zeta,\nu) - \tfrac1t g_{\rm hom}(t\zeta,\nu)
 \le \tfrac{c_3}{c_2}\lambda( 2  sM_{\eta})
 +
\tfrac{c_3}{c_2}\lambda( 2  tM_{\eta})
+2\eta \big(1+\tfrac1{c_2}\lambda( 2  tM_{\eta})\big).
$$
Exchanging the roles of $s$ and $t$ we obtain
\begin{equation}\label{adl}
| \tfrac1s g_{\rm hom}(s\zeta,\nu) - \tfrac1t g_{\rm hom}(t\zeta,\nu)|
\le \tfrac{c_3}{c_2}\lambda( 2 sM_{\eta})
 +
 \tfrac{c_3}{c_2}\lambda( 2  t M_{\eta})
+2\eta \big(2+ \tfrac1{c_2}\lambda( 2 sM_{\eta}) +\tfrac1{c_2}\lambda( 2  tM_{\eta})\big)
\end{equation}
for every $s>0$, $t>0$, $\zeta\in\Sph^{m-1}$, and $\nu\in\Sph^{n-1}$.

Given $\tau>0$, we fix $\eta>0$ such that $4\eta<\frac\tau5$. Then, using \eqref{acc}, we find $\delta>0$ such that for every $t\in (0,\delta)$ we have $\frac{c_3}{c_2}\lambda( 2  tM_{\eta})<\frac\tau5$ and $2\eta\frac1{c_2}\lambda( 2  tM_{\eta})<\frac\tau5$. From \eqref{adl} we obtain
\begin{equation}\label{adk}
| \tfrac1s g_{\rm hom}(s\zeta,\nu) - \tfrac1t g_{\rm hom}(t\zeta,\nu)|
\le \tau
\end{equation}
for every $s,\, t\in(0,\delta)$, $\zeta\in\Sph^{m-1}$, and $\nu\in\Sph^{n-1}$. This shows that the function $t\mapsto  \tfrac1t g_{\rm hom}(t\zeta,\nu)$ satisfies the Cauchy condition for $t\to0+$, hence the limit
$$
g_{\rm hom, 0}(\zeta,\nu):=\lim_{t\to0+} \tfrac1t g_{\rm hom}(t\zeta,\nu)
$$
exists and is finite.
This limit is uniform with respect to $\zeta\in\Sph^{m-1}$ and $\nu\in\Sph^{n-1}$ thanks to \eqref{adk}.
This concludes the proof of $(g5)$.

Property $(g6)$ for $g_{\rm hom}$ follows from \eqref{g-hom} and from the fact that 
 $u_{0,-\zeta,-\nu}=u_{0,\zeta,\nu}-\zeta$ and $Q^\nu_r=Q^{-\nu}_r$ (see (h) in Section~\ref{Notation}). 
\end{proof}


\section{$\Gamma$-convergence of a subsequence of $(E_\e)$}\label{Sect:abstract}

In this short section we show that, up to a subsequence, the functionals $E_\e$ defined in \eqref{E-det} $\Gamma$-converge to some functional $\widehat E$ as $\e\to 0+$, and study the main properties of this functional.  

\begin{thm}[Properties of the $\Gamma$-limit]\label{thm:Gamma-conv}
Let $f\in\F$, let $g\in\G$, and for $\e>0$ let $E_\e \colon L^{1}_{\rm loc}(\R^n,\R^m)\times \mathscr{A} \longrightarrow [0,+\infty]$ be the functionals defined in \eqref{E-det}. Then, for  every  sequence  of positive numbers converging to zero, there exist a subsequence $(\e_{j})$ and a functional $\widehat E \colon L^{1}_{\rm loc}(\R^n,\R^m)\times \mathscr{A} \longrightarrow [0,+\infty]$
such that for every $A\in\A$ the functionals $E_{\e_{j}}(\cdot,A)$ $\Gamma$-converge to $\widehat E(\cdot,A)$ in $L^1_{\rm loc}(\R^n,\R^m)$, as $j\to +\infty$. Moreover, $\widehat E$ satisfies the following properties:
\begin{itemize}
\item[(a)] {\rm (locality)} $\widehat E$ is local\ie $\widehat E(u, A)=\widehat E(v, A)$ for every $A\in \A$ and every $u$, $v\in L^1_{\rm loc}(\R^n,\R^m)$ such that $u=v$ $\mathcal{L}^n$-a.e.\ in $A$; 
\item[(b)] {\rm (semicontinuity)} for every $A\in \A$ the functional $\widehat E(\cdot, A)$ is lower semicontinuous in $L^1_{\rm loc}(\R^n,\R^m)$;  
\item[(c)]{\rm (bounds)} for every $A\in \A$ and every $u\in L^1_{\rm loc}(\R^n, \R^m)$ we have
 \begin{gather*}
 c_2  |Du|(A)\le \widehat E(u,A)\le  c_3 |Du|(A) + c_4 \mathcal L^n(A) \quad\hbox{if }u_{|_A}\in BV(A,\R^m),
\\
 \widehat E(u,A)=+\infty \quad\hbox{otherwise};
\end{gather*}
\item[(d)]{\rm (measure property)} for every $u\in L^1_{\rm loc}(\R^n, \R^m)$ the set function $\widehat E(u,\cdot)$ is the restriction to $\A$ of a Borel measure defined  on $\B^n$, which we still denote by $\widehat E(u,\cdot)$; 
\item[(e)]{\rm (translation invariance in $u$)} for every $A\in \A$ and every $u\in L^1_{\rm loc}(\R^n, \R^m)$ we have 
$$
\widehat E(u+s,A)= \widehat E(u,A)\quad \text{for every}\; s\in \R^m.
$$
\end{itemize}
\end{thm}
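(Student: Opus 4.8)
The plan is to run the localisation method of $\Gamma$-convergence, following \cite{DM93} and its $BV$-adaptation in \cite{BFM} (the argument is structurally the same as in \cite{CDMSZ}). Fix a sequence of positive numbers converging to zero. Let $\mathcal{R}\subset\A$ be the countable family of finite unions of open rectangles of $\R^n$ with rational vertices, which has the property that every $A\in\A$ is the union of the sets $A'\in\mathcal{R}$ with $A'\subset\subset A$. By the compactness of $\Gamma$-convergence with respect to the metrisable topology of $L^1_{\rm loc}(\R^n,\R^m)$ (see \cite[Theorem~8.5]{DM93}) together with a diagonal argument over $\mathcal{R}$, I would extract a subsequence $(\e_j)$ such that for every $A\in\mathcal{R}$ the functionals $E_{\e_j}(\cdot,A)$ $\Gamma$-converge in $L^1_{\rm loc}(\R^n,\R^m)$ to some functional $E'(\cdot,A)$, and then \emph{define}
$$
\widehat E(u,A):=\sup\{E'(u,A'):A'\in\mathcal{R},\ A'\subset\subset A\},
$$
the inner-regular envelope of $E'$. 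It then remains to prove that $\widehat E(\cdot,A)$ is the $\Gamma$-limit of $E_{\e_j}(\cdot,A)$ for \emph{every} $A\in\A$ and to check (a)--(e).

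Several of the listed properties are essentially free. Property (b) holds because every $\Gamma$-limit is $L^1_{\rm loc}$-lower semicontinuous; property (e) is inherited from the identity $E_\e(u+s,A)=E_\e(u,A)$, valid for every constant $s\in\R^m$ since $\nabla(u+s)=\nabla u$, $S_{u+s}=S_u$, $[u+s]=[u]$, $\nu_{u+s}=\nu_u$, and this passes to $\Gamma$-limits; property (a) follows from the locality of each $E_\e$ and from the definition of $\widehat E$. For the bounds (c), the lower bound $c_2|Du|(A)\le\widehat E(u,A)$ and the fact that $\widehat E(u,A)=+\infty$ when $u|_A\notin BV(A,\R^m)$ follow from the coercivity estimate in Remark~\ref{BV estimate} together with the $L^1_{\rm loc}$-lower semicontinuity of the total variation, since any $u_j\to u$ in $L^1_{\rm loc}(\R^n,\R^m)$ with $\liminf_j E_{\e_j}(u_j,A)<+\infty$ is bounded in $BV(A,\R^m)$; for the upper bound one takes smooth $u_k\to u$ in $L^1(A,\R^m)$ with $|Du_k|(A)\to|Du|(A)$, so that $E_{\e_j}(u_k,A)\le c_3|Du_k|(A)+c_4\mathcal L^n(A)$ by $(f4)$, and uses the $L^1_{\rm loc}$-lower semicontinuity of the $\Gamma$-$\limsup$ functional to let $k\to\infty$ (this last step uses as input that $\widehat E(\cdot,A)$ is the $\Gamma$-limit).

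The technical core is a \emph{fundamental estimate} (joining lemma): for $A',A'',B\in\A$ with $A'\subset\subset B\subset\subset A''$, for $u',u''\in L^1_{\rm loc}(\R^n,\R^m)$ with $u'|_{A''},u''|_{A''}\in SBV(A'',\R^m)$, and for $\eta>0$, there exists $v\in SBV$ with $v=u'$ on $A'$ and $v=u''$ on $A''\setminus B$, such that
$$
E_\e(v,A'\cup A'')\le(1+\eta)\big(E_\e(u',A'')+E_\e(u'',A'')\big)+\eta+C_\eta\,\|u'-u''\|_{L^1((B\setminus\overline{A'})\cap A'',\R^m)}.
$$
To prove it I would first invoke the truncation Lemma~\ref{truncation} (in the localised form of Remark~\ref{rem:truncation0}) to reduce to the case of $u',u''$ equibounded in $L^\infty$, and then interpolate by $v:=\varphi u'+(1-\varphi)u''$, where $\varphi$ is a cut-off supported in one of $N$ disjoint layers filling $B\setminus\overline{A'}$, the layer being chosen by a pigeonhole argument so that the energies of $u'$, $u''$ and the $L^1$-norm of $u'-u''$ on it are at most a fraction $1/N$ of the respective totals. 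The bulk term is controlled from $\nabla v=\varphi\nabla u'+(1-\varphi)\nabla u''+\nabla\varphi\otimes(u'-u'')$ via $(f4)$ and the bound $|\nabla\varphi|\le CN$ acting on the small $L^1$-mass of the chosen layer; the surface term is controlled via $(g4)$, the $L^\infty$-bound, and the fact that the interface created by $\varphi$ is contained in that single layer. \textbf{I expect this to be the main obstacle}: in contrast with the superlinear-growth setting of \cite{CDMSZ}, the gluing term $\nabla\varphi\otimes(u'-u'')$ must be absorbed using only the linear upper bound $(f4)$, and this is what forces both the preliminary $L^\infty$-truncation and the pigeonhole choice of the transition layer.

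Granted the fundamental estimate, the conclusion is routine. It yields $\Gamma$-$\limsup_j E_{\e_j}(u,A)\le\widehat E(u,A)$ for every $A\in\A$, by exhausting $A$ from inside with sets of $\mathcal{R}$ and gluing the corresponding recovery sequences, while $\widehat E(u,A)\le\Gamma$-$\liminf_j E_{\e_j}(u,A)$ is immediate from the definition of $\widehat E$ as a supremum of inner $\Gamma$-limits together with the superadditivity of the $\Gamma$-$\liminf$ on disjoint open sets; hence the functionals $E_{\e_j}(\cdot,A)$ $\Gamma$-converge to $\widehat E(\cdot,A)$ for all $A\in\A$, which in particular completes the proof of the upper bound in (c). Finally, for (d) I would fix $u$ and check that the set function $\widehat E(u,\cdot)$ is increasing and inner regular by construction, superadditive on disjoint open sets automatically, and subadditive as a further consequence of the fundamental estimate; the De Giorgi--Letta measure criterion (see \cite{DM93}) then shows that $\widehat E(u,\cdot)$ is the restriction to $\A$ of a Borel measure on $\B^n$, and the bounds (c) extend from open sets to all Borel sets by standard measure-theoretic arguments.
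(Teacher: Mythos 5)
Your proposal is correct and follows essentially the same route as the paper: the localisation method of $\Gamma$-convergence (compactness of $\Gamma$-limits plus inner-regular envelopes), a fundamental estimate uniform in $\e$, the De Giorgi--Letta criterion for the measure property, and the growth bounds $(f3)$--$(f4)$, $(g3)$--$(g4)$ to identify the $BV$ domain and upgrade the convergence to every $A\in\A$. The only practical difference is that the paper does not reprove the fundamental estimate but simply invokes \cite[Proposition 3.1]{BDfV} with $p=1$ (together with \cite[Propositions 18.4 and 18.6]{DM93}); note moreover that in the linear-growth case the gluing term $\nabla\varphi\otimes(u'-u'')$ is absorbed directly through $(f4)$ into an $L^1$-distance error, and no new jump set is created by a smooth cut-off, so the preliminary $L^\infty$-truncation you anticipate as the main obstacle is not actually needed.
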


\begin{proof}  Given an infinitesimal sequence 
 $(\e_j)$   of positive numbers,  let $\widehat E'$, $\widehat E''\colon L^{1}_\textrm{loc}(\R^n,\R^m)\times \mathscr{A}\longrightarrow [0,+\infty]$ be the functionals defined as 
$$
\widehat E'(\cdot,A) :=\Gamma\hbox{-}\liminf_{j\to +\infty} E_{\e_j}(\cdot,A) \quad\hbox{and}\quad \widehat E''(\cdot,A):=\Gamma\hbox{-}\limsup_{j\to +\infty} E_{\e_j}(\cdot,A).
$$
In view of the  bounds $(f3)$,  $(f4)$, $(g3)$, and $(g4)$  satisfied by $f$ and $g$, it immediately follows that 
\begin{eqnarray}
&  c_2 |Du|(A)\le \widehat E'(u,A)\le \widehat E''(u,A)\le   c_3 |Du|(A) + c_4 \mathcal L^n(A) \qquad\hbox{if  }u|_{A}\in BV(A,\R^m),
\label{4.14bis}
\\
&\widehat E'(u,A)= \widehat E''(u,A)=+\infty\qquad\hbox{if  }u|_{A}\notin BV(A,\R^m).
\label{4.14ter}
\end{eqnarray}

By the definition of $E_{\e_j}$ and the general properties of $\Gamma$-convergence, we can also deduce that the functionals $\widehat E'$ and $\widehat E''$ are local  \cite[Proposition 16.15]{DM93}, lower semicontinuous  (in $u$)  \cite[Proposition 6.8]{DM93}, and increasing  (in $A$)  \cite[Proposition 6.7]{DM93}. Moreover $\widehat E'$ is superadditive  (in $A$)  \cite[Proposition 16.12]{DM93}. 
Since it is not obvious that $\widehat E'$ and $\widehat E''$ are inner regular (in $A$), at this stage of the proof we  consider their inner regular envelopes\ie the functionals $\widehat E'_-, \widehat E''_- \colon L^{1}_\textrm{loc}(\R^n,\R^m)\times \mathscr{A}\longrightarrow [0,+\infty]$ defined as 
$$
\widehat E'_-(u,A) := \sup_{\substack{A'\subset\subset A\\A'\in \mathscr{A}}} \widehat E'(u,A')\quad\hbox{and}\quad \widehat E''_-(u,A) := \sup_{\substack{A'\subset\subset A\\A'\in \mathscr{A} }} \widehat E''(u,A').
$$
Also $\widehat E'_-$ and $\widehat E''_-$ are increasing, lower semicontinuous \cite[Remark 15.10]{DM93}, and local \cite[Remark 15.25]{DM93}. 
Moreover, by \cite[Theorem 16.9]{DM93} we can find a subsequence of $(\e_j)$ (not relabelled) 
such that the corresponding functionals $\widehat E'$ and $\widehat E''$ satisfy
\begin{equation}\label{Def-whE}
\widehat E'_-=\widehat E''_-=:\widehat E.
\end{equation}
The functional $\widehat E$ defined in \eqref{Def-whE} is inner regular \cite[Remark 15.10]{DM93} and superadditive \cite[Proposition~16.12]{DM93}. 

By virtue of \cite[Proposition 3.1]{BDfV} applied with $p=1$ we can immediately deduce that the functionals $E_\e$ satisfy the so-called fundamental estimate uniformly in $\e$. Therefore \cite[ Proposition 18.4]{DM93} yields the subadditivity of $\widehat E(u,\cdot)$. Therefore, invoking the measure-property criterion of De Giorgi and Letta \cite[Theorem 14.23]{DM93}, we can deduce that, for every $u\in L^1_{\rm loc}(\R^n,\R^m)$,   the set function $\widehat E(u,\cdot)$ is the restriction to $\A$ of a Borel measure defined on $\B^n$. 

Moreover \cite[Proposition 18.6]{DM93} and \eqref{4.14bis} imply that $\widehat E(u,A)=\widehat E'(u,A)=\widehat E''(u,A)$ whenever $u|_{A}\in BV(A,\R^m)$. Finally, it follows from \eqref{4.14bis} and \eqref{4.14ter} that $\widehat E(u,A)=\widehat E'(u,A)=\widehat E''(u,A)=+\infty$ if $u{|_A}\notin BV(A,\R^m)$. We then conclude that  $\widehat E=\widehat E'=\widehat E''$ in $L^{1}_\textrm{loc}(\R^n,\R^m)\times \mathscr{A}$, 
hence  that $\widehat E(\cdot,A)$  is the $\Gamma$-limit  of $E_{\e_j}(\cdot,A)$  in $L^{1}_\textrm{loc}(\R^n,\R^m)$ for every $A\in\A$.  

Eventually, the translation invariance in $u$ of $\widehat E(\cdot, A)$ can be easily checked arguing as in \cite[Lemma~3.7]{BDfV}. 
\end{proof}

For later use we need to introduce the following notation.
Let $A\in \A$ and  $w\in BV(A,\R^m)$,  we set
\begin{equation*}
m_{\widehat E}(w; A):=\inf\{\widehat E(u,A) : u\in L^{1}_\textrm{loc}(\R^n,\R^m),\ u|_A\in BV(A,\R^m),\,  u=w\; \text{near}\; \partial A\}.
\end{equation*}

\section{Identification of the volume term}\label{Sect:volume}

In Proposition \ref{p:homo-vol} below we characterise the derivative of $\widehat E$ with respect to the Lebesgue measure $\mathcal L^n$.  In order to prove this result we need the estimate established in the following lemma, whose proof is an immediate consequence of~\eqref{acd}.

\begin{lem}\label{estimate g g0}
Let $g\in \mathcal G$, $A\in\A$, and  $u \in BV(A,\R^m)\cap L^\infty(A,\R^m)$. Then for every $t>0$
$$
\int_{S_{ u}\cap A}\big|g_0(x,[{ u}],\nu_{ u})-\tfrac{1}{t}g(x,t [{ u}],\nu_{ u})\big|\,d\mathcal H^{n-1}
\leq \lambda(t\|[{u}]\|_{L^\infty(S_{ u}\cap A,\R^m)}) \int_{S_{ u}\cap A}|[ u]| \,d\mathcal H^{n-1},
$$
where $\lambda$ is the function defined in \eqref{acb}. 
\end{lem}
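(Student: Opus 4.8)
The plan is to deduce the estimate directly from the pointwise bound \eqref{acd}, applied fibrewise along $S_u$, followed by an integration. First I would observe that the right-hand side is meaningful: since $u\in L^\infty(A,\R^m)$ one has $|[u](x)|\le 2\|u\|_{L^\infty(A,\R^m)}$ for $\mathcal H^{n-1}$-a.e.\ $x\in S_u\cap A$, so $\|[u]\|_{L^\infty(S_u\cap A,\R^m)}<+\infty$ and $\lambda\big(t\|[u]\|_{L^\infty(S_u\cap A,\R^m)}\big)$ is a finite constant. I would also record that the integrand $x\mapsto g_0(x,[u](x),\nu_u(x))-\tfrac1t g(x,t[u](x),\nu_u(x))$ is $\mathcal H^{n-1}$-measurable on $S_u\cap A$: this follows from the Borel measurability of $g$ and $g_0$ (properties $(g1)$, together with Remark~\ref{BV estimate} which gives $g_0\in\mathcal G$) and from the $\mathcal H^{n-1}$-measurability of $x\mapsto([u](x),\nu_u(x))$ on $S_u$.

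Next, for $\mathcal H^{n-1}$-a.e.\ $x\in S_u\cap A$ I would apply \eqref{acd} with $\zeta=[u](x)$ and $\nu=\nu_u(x)$, obtaining
$$
\big|g_0(x,[u](x),\nu_u(x))-\tfrac1t g(x,t[u](x),\nu_u(x))\big|\le |[u](x)|\,\lambda\big(t|[u](x)|\big).
$$
Since $\lambda$ is nondecreasing and $|[u](x)|\le\|[u]\|_{L^\infty(S_u\cap A,\R^m)}$ for $\mathcal H^{n-1}$-a.e.\ $x\in S_u\cap A$, the right-hand side is bounded above by $|[u](x)|\,\lambda\big(t\|[u]\|_{L^\infty(S_u\cap A,\R^m)}\big)$. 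Integrating this inequality over $S_u\cap A$ with respect to $\mathcal H^{n-1}$ and pulling the constant $\lambda\big(t\|[u]\|_{L^\infty(S_u\cap A,\R^m)}\big)$ out of the integral yields the claim.

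There is no genuine obstacle in this argument: the content of the lemma is precisely \eqref{acd} specialised to $\zeta=[u](x)$, combined with the monotonicity of $\lambda$ to replace the pointwise argument $t|[u](x)|$ by the uniform one $t\|[u]\|_{L^\infty(S_u\cap A,\R^m)}$. The only mild technical point worth a sentence is the measurability of the integrand, addressed above.
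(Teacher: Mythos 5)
Your proof is correct and is exactly the argument the paper intends: the paper states the lemma "is an immediate consequence of \eqref{acd}", and your pointwise application of \eqref{acd} with $\zeta=[u](x)$, $\nu=\nu_u(x)$, followed by the monotonicity of $\lambda$ and integration over $S_u\cap A$, is that argument spelled out (with the measurability remark as a harmless extra).
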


\begin{prop}[Homogenised volume integrand]\label{p:homo-vol} Let $f$, $g$, $E_\e$, $(\e_j)$, and 
 $\widehat E$ be as in Theorem \ref{thm:Gamma-conv}. Assume that (a) of Theorem~\ref{T:det-hom} holds, and let $f_{\rm hom}$ be as in \eqref{f-hom}. 
 Then for every $A\in \A$ and every $u\in L^1_{\rm loc}(\R^n,\R^m)$, with $ u|_A\in BV(A,\R^m)$, we have that 
\begin{equation*}
\frac{d\widehat E(u,\cdot)}{d \mathcal L^n}(x)=f_{\rm hom}\big(\nabla u(x)\big)\quad \hbox{for }\mathcal L^n\hbox{-a.e.\ }x\in A.
\end{equation*}    
\end{prop}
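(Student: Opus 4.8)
The plan is to follow the blow-up method of Fonseca and M\"uller in the $BV$-version of \cite{BFM}, combined with hypothesis~(a) of Theorem~\ref{T:det-hom}. By Theorem~\ref{thm:Gamma-conv}(d) the set function $\widehat E(u,\cdot)$ is the restriction of a Radon measure, so the Besicovitch differentiation theorem gives
\begin{equation*}
\frac{d\widehat E(u,\cdot)}{d\mathcal L^n}(x_0)=\lim_{\rho\to0^+}\frac{\widehat E(u,Q_\rho(x_0))}{\rho^n}
\end{equation*}
for $\mathcal L^n$-a.e.\ $x_0\in A$. I would fix $x_0$ to be a \emph{good point}, meaning a point where: $u$ is approximately differentiable, so that $\|u-\ell_{x_0}\|_{L^1(Q_\rho(x_0))}=o(\rho^{n+1})$ with $\ell_{x_0}(x):=u(x_0)+\nabla u(x_0)(x-x_0)$; $x_0$ is a Lebesgue point of $\nabla u$; $|D^su|(Q_\rho(x_0))=o(\rho^n)$; and the above limit exists. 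Every $\mathcal L^n$-a.e.\ point of $A$ is good. Setting $\xi_0:=\nabla u(x_0)$, the claim reduces to proving $\lim_{\rho\to0^+}\rho^{-n}\widehat E(u,Q_\rho(x_0))=f_{\rm hom}(\xi_0)$.

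\emph{Step 1 (reduction to a cell minimum problem).} First I would show that at a good point
\begin{equation*}
\lim_{\rho\to0^+}\frac{\widehat E(u,Q_\rho(x_0))}{\rho^n}=\lim_{\rho\to0^+}\frac{m_{\widehat E}(\ell_{x_0},Q_\rho(x_0))}{\rho^n}.
\end{equation*}
This is the by now classical ``global method'' reduction (cf.\ \cite{BFM} and its adaptations in \cite{BDfV,CDMSZ}): using that $\widehat E(u,\cdot)$ is a measure, local and translation invariant in $u$ (Theorem~\ref{thm:Gamma-conv}(a),(e)), that it satisfies the fundamental estimate (inherited from the uniform-in-$\e$ fundamental estimate for $E_\e$ established in the proof of Theorem~\ref{thm:Gamma-conv}), together with the bounds of Theorem~\ref{thm:Gamma-conv}(c), one replaces the datum $u$ by $\ell_{x_0}$ near the boundary of a slightly smaller (resp.\ larger) cube by a De Giorgi-type slicing; the boundary-layer cost is $o(\rho^n)$ thanks to $\|u-\ell_{x_0}\|_{L^1(Q_\rho(x_0))}=o(\rho^{n+1})$ and $|D^su|(Q_\rho(x_0))=o(\rho^n)$.

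\emph{Step 2 (identification via the homogenisation formula).} For fixed $\rho>0$, the $\Gamma$-convergence of $E_{\e_j}(\cdot,Q_\rho(x_0))$ to $\widehat E(\cdot,Q_\rho(x_0))$, the equi-coercivity of Remark~\ref{BV estimate} and, again, the uniform fundamental estimate yield the convergence of the Dirichlet minima $m_{E_{\e_j}}(\ell_{x_0},Q_\rho(x_0))\to m_{\widehat E}(\ell_{x_0},Q_\rho(x_0))$ as $j\to+\infty$ (possibly after an infinitesimal shrinking of the cube, contributing only an $O(\delta\rho^n)$ error), where $m_{E_{\e_j}}$ is the infimum of $E_{\e_j}(\cdot,Q_\rho(x_0))$ over $SBV$-competitors equal to $\ell_{x_0}$ near $\partial Q_\rho(x_0)$. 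After the change of variables $z=x/\e_j$, setting $r:=\rho/\e_j$ and using translation invariance in $u$ to drop the constant $u(x_0)$,
\begin{equation*}
\frac{m_{E_{\e_j}}(\ell_{x_0},Q_\rho(x_0))}{\rho^n}=\frac1{r^n}\,m^{f,g_{\e_j}}\bigl(\ell_{\xi_0},Q_r(r\,x_0/\rho)\bigr),\qquad g_{\e_j}(x,\zeta,\nu):=\tfrac1{\e_j}\,g(x,\e_j\zeta,\nu),
\end{equation*}
where $g_{\e_j}\in\G$ (for $\e_j\le1$) and $m^{f,g_{\e_j}}$ is defined as in \eqref{m-phipsi}. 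By \eqref{acd} one has $|g_{\e_j}(x,\zeta,\nu)-g_0(x,\zeta,\nu)|\le|\zeta|\,\lambda(\e_j|\zeta|)$, and, restricting the competitors via Lemma~\ref{truncation} to those whose jump amplitude multiplied by $\e_j$ stays of order $\rho$, the difference between $r^{-n}m^{f,g_{\e_j}}(\ell_{\xi_0},Q_r(r\,x_0/\rho))$ and $r^{-n}m^{f,g_0}(\ell_{\xi_0},Q_r(r\,x_0/\rho))$ is $\le C\lambda(C\rho)$, uniformly in $j$. Letting $j\to+\infty$ (so $r\to+\infty$) and invoking hypothesis~(a) of Theorem~\ref{T:det-hom} with $\nu=e_n$, $k=1$ (so that $Q^{\nu,k}_r(0)=Q_r(0)$) and $x=x_0/\rho$ in \eqref{f-hom}, we get $r^{-n}m^{f,g_0}(\ell_{\xi_0},Q_r(r\,x_0/\rho))\to f_{\rm hom}(\xi_0)$; hence
\begin{equation*}
\Bigl|\frac{m_{\widehat E}(\ell_{x_0},Q_\rho(x_0))}{\rho^n}-f_{\rm hom}(\xi_0)\Bigr|\le C\lambda(C\rho).
\end{equation*}
Sending $\rho\to0^+$ and using $\lim_{t\to0^+}\lambda(t)=0$ (see \eqref{acc}), together with Step~1, yields $\frac{d\widehat E(u,\cdot)}{d\mathcal L^n}(x_0)=f_{\rm hom}(\xi_0)=f_{\rm hom}(\nabla u(x_0))$.

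\emph{Main difficulties.} The technical heart is the reduction of Step~1, where all boundary-layer errors produced by the De Giorgi slicing must be kept $o(\rho^n)$ by exploiting the fine structure of $u$ at a good point and the uniform fundamental estimate. A second genuinely delicate point (specific to the linear-growth setting, and absent in the superlinear case of \cite{CDMSZ}) is the uniform-in-$j$ control of the surface contribution when passing from $g$ to its directional derivative $g_0$ at $\zeta=0$: this is where the truncation Lemma~\ref{truncation} and the modulus $\lambda$ attached to $(g5)$ (see \eqref{acb}) enter, and it reflects the fact that here volume and surface densities genuinely interact in the limit, so that the cell formula \eqref{f-hom} carries the mixed pair $(f,g_0)$.
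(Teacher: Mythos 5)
Your proposal is correct and follows essentially the same strategy as the paper's proof: the representation of $\tfrac{d\widehat E(u,\cdot)}{d\mathcal L^n}(x)$ via $\rho^{-n}m_{\widehat E}(\ell_{\nabla u(x)},Q_\rho(x))$ taken from \cite{BFM}, the rescaling identity linking the $\e_j$-level Dirichlet minima on $Q_\rho(x)$ to $m^{f,g_{\e_j}}(\ell_\xi,Q_r(r x/\rho))$ with $r=\rho/\e_j$, the truncation Lemma~\ref{truncation} together with the modulus $\lambda$ of \eqref{acb}--\eqref{acd} to replace the rescaled $g$ by $g_0$ with an error of order $\lambda(C\rho)$ uniform in $j$, and hypothesis (a) of Theorem~\ref{T:det-hom} applied at $x/\rho$. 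The only organisational difference is that the paper proves the two inequalities separately (upper bound by pushing truncated near-minimisers of the cell problem through the $\Gamma$-limit, lower bound by blowing up a recovery sequence for $u$ and cutting off via the fundamental estimate), whereas you package both directions as convergence of the constrained minima $m_{E_{\e_j}}(\ell_{x_0},Q_\rho(x_0))\to m_{\widehat E}(\ell_{x_0},Q_\rho(x_0))$ up to a slight change of cube; the underlying work is the same.
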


\begin{proof} Let us fix $A\in \A$ and $u\in L^1_{\rm loc}(\R^n,\R^m)$, with $ u|_A\in BV(A,\R^m)$.
We divide the proof into two steps.

\medskip

\textit{Step 1:}  We claim that
\begin{equation}\label{aqc}
\frac{d\widehat E(u,\cdot)}{d \mathcal L^n}(x) \leq f_{\rm hom}\big(\nabla u(x)\big)\quad \hbox{for }\mathcal L^n\hbox{-a.e.\ }x\in A.
\end{equation}

By (a)-(e) of Theorem \ref{thm:Gamma-conv} and by \cite[Lemmas 3.1 and 3.5]{BFM}, arguing as in the proof of (3.16) in  \cite[Theorem 3.7]{BFM}, for $\mathcal L^n$-a.e. $x\in A$ we have 
\begin{equation}\label{rep-lebesgue-BFM}
\frac{d\widehat E(u,\cdot)}{d \mathcal L^n}(x)=\lim_{\rho \to 0+} \frac{m_{\widehat E}(\ell_{\xi(x)}, Q_\rho(x))}{\rho^n},
\end{equation}
where $\xi(x):=\nabla u(x)$. Fix $x\in A$ such that \eqref{rep-lebesgue-BFM} holds and  set $\xi:=\xi(x)=\nabla u(x)$. 

For every $\rho>0$ we have 
\begin{equation}\label{ass-f} 
f_{\rm hom}(\xi)=\lim_{r\to +\infty} \frac{m^{f,g_0}(\ell_{\xi},Q_r(\frac{r x}\rho))}{r^{n}},
\end{equation}
 since  the above identity directly follows from \eqref{f-hom} by replacing $ x  $ with $\frac{ x }\rho$.

Let us fix $\eta\in(0,1)$.  By \eqref{m-phipsi} for every $\rho>0$ and $r>0$ there exists $ v^\rho_r\in SBV(Q_r( \tfrac{ r  x}\rho),\R^m)$, with  $ v^\rho_r=\ell_\xi$ near $\partial Q_r( \tfrac{ r  x}\rho)$, such that  
\begin{equation}\label{quasi-min-f}
 E^{f,g_0}( v^\rho_r,Q_r( \tfrac{r  x}\rho))
 \leq m^{f,g_0}(\ell_{\xi},Q_r(\tfrac{r  x}\rho)) +\eta r^n \le \int_{Q_r(\frac{r x}\rho)}f(y,\xi)\dy +\eta r^n  \le  (c_3|\xi| + c_4 +1)r^n,
\end{equation}
where we used that $g_0(\cdot,0,\cdot)=0$ and $(f4)$. We extend $ v^\rho_r$ to $\R^n$ by setting
$ v^\rho_r(y)=\ell_\xi(y)$ for every $y\in \R^n\setminus Q_r(\tfrac{r  x}\rho)$.

 For every $y\in\R^n$  let $ w^\rho_r(y):=\frac{1}{r} v^\rho_r(\frac{rx}{\rho}+ry)-\frac1\rho\ell_\xi(x)$. Clearly $ w^\rho_r\in SBV_{ \rm loc}(\R^n,\R^m)$ and $ w^\rho_r =\ell_\xi$  near $\partial Q$ and in $\R^n\setminus Q$, where $Q:=Q_1(0)$.
Moreover, by a change of variables we obtain
\begin{equation}\label{scaling-f}
\int_{Q}f\big(\tfrac{rx}{\rho}+ry,\nabla w^\rho_r(y)\big)\dy +\int_{S_{ w^\rho_r}\cap Q} g_0\big(\tfrac{rx}{\rho}+ry,[ w^\rho_r](y),\nu_{ w^\rho_r}(y)\big)\,d\mathcal H^{n-1}(y)= \frac1{r^n}E^{f,g_0}(v^\rho_r,Q_r( \tfrac{r  x}\rho)),
\end{equation}
 where we used the $1$-homogeneity of $g_0$ in the second variable. 
By the lower bounds  $(f3)$ and $(g3)$, from \eqref{quasi-min-f} and \eqref{scaling-f} we deduce that  there exists a constant $K$, depending on $|\xi|$, such that $|Dw^\rho_r|(Q)\le K$ for every $\rho>0$ and $r>0$. In addition, since $ w^\rho_r$ coincides with $\ell_\xi$ in $\R^n\setminus Q$, we can apply Poincar\'e's inequality and from the bound on its total variation we deduce that the sequence $(w^\rho_r)$ is bounded in $BV_{\rm loc}(\R^n,\R^m)$. In particular it is  bounded in $BV(Q,\R^m)$, uniformly with respect to $\rho$ and $r$, by a constant that we still denote with $K$.

By  Lemma \ref{truncation} and by \eqref{quasi-min-f} and \eqref{scaling-f} for every $\eta\in(0,1)$ there exists a constant $M_{\eta}$, depending also on $|\xi|$ and $K$, such that for every $\rho>0$ and $r>0$ there exists $\tilde w^\rho_r\in SBV(Q,\R^m)\cap L^\infty(Q,\R^m)$ with the following properties: $\tilde w^\rho_r=\ell_\xi$ near $\partial Q$,  $\|\tilde w^\rho_r\|_{L^\infty(Q,\R^m)}\le M_{\eta}$, and
$$
\int_{Q}f\big(\tfrac{rx}{\rho}+ry,\nabla \tilde w^\rho_r(y)\big)\dy +\int_{S_{\tilde w^\rho_r}\cap Q} g_0\big(\tfrac{rx}{\rho}+ry,[\tilde w^\rho_r](y),\nu_{\tilde w^\rho_r}(y)\big)\,d\mathcal H^{n-1}(y)
\le \frac{m^{f,g_0}(\ell_{\xi},Q_r(\tfrac{r x}\rho))}{r^{n}}+2\eta.
$$

Let $\tilde v^\rho_r\in SBV_{\rm loc}(\R^n,\R^m)$ be defined by 
$\tilde v^\rho_r(y):=r\tilde w^\rho_r(\frac{y}{r}-\frac{x}{\rho})+\frac{r}\rho\ell_\xi(x)$. Then $\tilde v^\rho_r=\ell_\xi$ near $\partial Q_r( \tfrac{ r  x}\rho)$ and, by a change of variables, 
\begin{gather}
 \|[\tilde v^\rho_r]\|_{L^\infty(S_{\tilde v^\rho_r}\cap Q_r(\frac{rx}\rho),\R^m)}\leq 2 M_{\eta} r,
 \label{v bounded r}
 \\
 E^{f,g_0}(\tilde v^\rho_r,Q_r(\tfrac{r  x}\rho)) \leq m^{f,g_0}(\ell_{\xi},Q_r(\tfrac{r  x}\rho)) +2\eta\, r^n.
  \label{quasi-min-f-bis}
\end{gather}
Moreover, by combining \eqref{quasi-min-f} and \eqref{quasi-min-f-bis} with the lower bound $(g3)$ we immediately deduce the existence of a constant $C>0$, depending on $|\xi|$, such that
\begin{equation}\label{f:unif-bd}
\frac{1}{r^n}\int_{S_{\tilde v^\rho_r}\cap Q_r(\frac{r x}{\rho})}|[\tilde v^\rho_r]| \,d\mathcal H^{n-1}\leq C.
\end{equation}

By Lemma \ref{estimate g g0}, applied  with $t=\rho/r$, using \eqref{v bounded r} and \eqref{f:unif-bd} we obtain 
\begin{equation*} 
\frac{1}{r^n}\int_{S_{\tilde v^\rho_r}\cap Q_r(\tfrac{r x}\rho)}\Big|g_0(y,[\tilde v^\rho_r],\nu_{\tilde v^\rho_r})-\tfrac{r}{\rho}g( y,\tfrac{\rho}{r} [\tilde v^\rho_r],\nu_{\tilde v^\rho_r})\Big|\,d\mathcal H^{n-1}
  \leq C \lambda(2\rho M_{\eta}).
\end{equation*}
This inequality, together with  \eqref{ass-f} and \eqref{quasi-min-f-bis}, gives
$$
\limsup_{r \to +\infty}\frac{1}{r^n}\Big(\int_{Q_r( \tfrac{r  x}\rho)}f(y,\nabla \tilde v^\rho_r)\dy +\int_{S_{\tilde v^\rho_r}\cap Q_r( \tfrac{r  x}\rho)}\tfrac{r}{\rho}g(y,\tfrac{\rho}{r} [\tilde v^\rho_r],\nu_{\tilde v^\rho_r})\,d\mathcal H^{n-1}\Big)
\le
f_{\rm hom}(\xi) + 2  \eta  +  C \lambda(2\rho M_{\eta}).
$$
 Given $\e>0$ and $\rho>0$, we choose $r=\rho/\e$ and  for every $y\in\R^n$ we define  $ u^\rho_\e(y):= \e\,  \tilde v^\rho_r (\frac{y}{\e})=\frac{\rho}{r} \tilde v^\rho_r (\frac{r y}{\rho})$. Then $ u^\rho_\e\in SBV_{\rm loc}(\R^n,\R^m)$, $ u^\rho_\e =\ell_\xi$ near $\partial Q_\rho( x  )$ and in $\R^n\setminus Q_\rho( x  )$. By a change of variables, from the previous inequality we get that for every $\rho>0$
\begin{equation} \label{f:stima-1}
\limsup_{\e \to 0+} \frac{ E_\e( u^\rho_\e,Q_\rho(x))}{\rho^n}
 \le 
f_{\rm hom}(\xi) + 2  \eta  +  C \lambda(2\rho M_{\eta}).
\end{equation}

Since the functions $ u^\rho_\e$ coincide with $\ell_\xi$ in $Q_{(1+\eta)\rho}(x)\setminus Q_{(1-\delta_\e)\rho}(x)$ for some $\delta_\e\in(0,1)$, by $(f4)$ we have
$E_\e( u^\rho_\e,Q_{(1+\eta)\rho}( x ))\le E_\e( u^\rho_\e,Q_\rho( x )) + (c_3|\xi|+c_4) 2^n \rho^n\eta$,  
which, together with \eqref{f:stima-1}, gives
\begin{equation}\label{stima 1+sigma}
\limsup_{\e \to 0+} 
\frac{ E_\e( u^\rho_\e,Q_{(1+\eta)\rho}(x))}{\rho^n} \le f_{\rm hom}(\xi) +   C \lambda(2\rho M_{\eta}) +\tilde K \eta,
\end{equation}
 where $\tilde K:=2+ (c_3|\xi|+c_4) 2^n$. 
Since $ u^\rho_\e$ coincides with $\ell_\xi$ in $\R^n\setminus Q_\rho(x)$, using 
Poincar\'e's inequality and the lower bounds  $(f3)$ and $(g3)$ we deduce from \eqref{stima 1+sigma} that for every $\rho>0$  there exists $\e(\rho)>0$ such that  the sequence  
$(u^\rho_\e)$ is bounded in $BV_{\rm loc}(\R^n,\R^m)$. In particular it is  bounded in $BV(Q_\rho(x),\R^m)$ uniformly with respect to $\e\in(0,\e(\rho))$. Note that the bound on the $L^1$-norm can be obtained by e.g. \cite[Theorem 3.47]{AFP}.
Then there exists a subsequence, not relabelled, of the sequence $(\e_j)$ considered in Theorem \ref{thm:Gamma-conv}, such that 
 $( u^\rho_{\e_j})$ converges  in $L^1_{\rm loc}(\R^n,\R^m)$  to  some  $ u^\rho \in BV_{\rm loc}(\R^n,\R^m)$  with  $u^\rho=\ell_\xi$  in   $Q_{(1+\eta)\rho}( x  )\setminus Q_{\rho}( x  )$. As a consequence  of the
$\Gamma$-convergence of $E_{\e_j}(\cdot, Q_{(1+\eta)\rho}(x))$ to  $\widehat E(\cdot, Q_{(1+\eta)\rho}(x))$, from \eqref{stima 1+sigma}  we obtain
$$
\frac{m_{\widehat E}(\ell_\xi,Q_{(1+\eta)\rho}( x  ))}{\rho^n}
\le
 \frac{\widehat E( u^\rho,Q_{(1+\eta)\rho}( x  ))}{\rho^n}
 \le
\limsup_{ j \to +\infty}  \frac{E_{\e_j}( u^\rho_{\e_j},Q_{(1+\eta)\rho}( x  ))}{\rho^n}
\le
f_{\rm hom}(\xi)  +   C \lambda(2\rho M_{\eta}) + \tilde K\eta. 
$$
Finally, passing to the limit  as $\rho \to 0+$, from \eqref{acc} and \eqref{rep-lebesgue-BFM} we get
$$
(1+\eta)^n \frac{d\widehat E(u,\cdot)}{d \mathcal L^n}(x) \le f_{\rm hom}(\xi) + \tilde K\eta.
$$
Since $\xi:= \nabla u(x)$, this gives \eqref{aqc} by the arbitrariness of $\eta>0$.

\medskip

\textit{Step 2:} We claim that 
\begin{equation}\label{aqs}
\frac{d\widehat E(u,\cdot)}{d \mathcal L^n}(x) \geq f_{\rm hom}\big(\nabla u(x)\big)
\quad \hbox{for }\mathcal L^n\hbox{-a.e.\ }x\in A.
\end{equation}
We extend $u$ to $\R^n$ by setting $u=0$ on $\R^n\setminus A$. By  $\Gamma$-convergence  there exists $(u_\e)\subset L^1_{\rm loc}(\R^n,\R^m)$,  with ${u_\e}|_A\in SBV(A,\R^m)$,  such that
\begin{equation}\label{f-hom:rec-seq}
u_\e \to u \quad \text{in} \quad L^1_{\rm loc}(\R^n,\R^m) \quad \text{and}\quad \lim_{\e \to 0+}E_\e(u_\e,A)=\widehat E(u,A),
\end{equation}
along the sequence $(\e_j)$ considered in Theorem \ref{thm:Gamma-conv}. Passing to a further subsequence we have
\begin{equation}\label{leb-00}
\lim_{\e \to 0+}u_\e(x)=u(x)
\end{equation}
 for $\mathcal{L}^n$- a.e.\ $ x  \in A$.
In the rest of the proof $\e$ will always be an element of this sequence. 

By (j) of Section \ref{Notation} and  by \cite[Definition 3.70]{AFP}, for $\mathcal{L}^n$-a.e.\ $ x  \in A$ we have
\begin{gather}
\label{leb-0}
\lim_{\rho \to 0+}\frac{1}{\rho^{n}}|Du|(Q_\rho(x))= |\nabla u(x)|<+\infty, 
\\
\label{leb-1}
\lim_{\rho \to 0+}\frac{1}{\rho^{n+1}}\int_{Q_\rho(x)} |u(y)-u(x) -\nabla u(x)(y-x)|\dx =0, 
\\
\label{leb-2}
\lim_{\rho \to 0+}\frac{\widehat E(u,Q_{\rho}(x))}{\rho^n}=\frac{d\widehat E(u,\cdot)}{d \mathcal L^n}(x).
\end{gather} 
 Let us fix $x\in A$ such that \eqref{leb-00}-\eqref{leb-2}  hold true.

Recalling \eqref{4.14bis} we have that
$
\widehat E(u, Q_{\rho}(x))\leq c_3|Du|(Q_{\rho}(x)) + c_4 \rho^n
$,
hence by \eqref{leb-0} and \eqref{leb-2} there exists $\rho_0\in(0,1)$  such that
\begin{equation}\label{finitezza}
Q_\rho( x  ) \subset\subset A\quad\hbox{and}\quad\frac{\widehat E(u,Q_\rho(x))}{\rho^n} \leq  c_3|\nabla u(x)| 
+ c_4 + 1
\end{equation}
 for every $\rho\in(0,\rho_0)$. 
Since $\widehat E(u,\cdot)$ is a Radon measure, there exists a set $B\subset  (0,\rho_0)$, with $(0,\rho_0)\setminus B$ at most countable, such that $\widehat E(u,\partial Q_\rho( x  ))=0$ for every $\rho\in B$. Then 
we have
$$
\widehat E(u,A)=\widehat E(u,Q_\rho( x  ))+\widehat E(u,A\setminus \overline Q_\rho( x  ))
$$ 
for every $\rho\in B$. By   $\Gamma$-convergence   we also have
$$
\liminf_{\e \to 0+} E_\e(u_\e,Q_\rho( x  ))\geq  \widehat E(u,Q_\rho( x  ))\quad\hbox{and}\quad
\liminf_{\e \to 0+} E_\e(u_\e,A\setminus \overline Q_\rho( x  ))\geq  \widehat E(u,A\setminus \overline Q_\rho( x  )),
$$
so that by \eqref{f-hom:rec-seq} it follows that  for every  $\rho\in B$ 
\begin{equation}\label{4.20bis}
\lim_{\e \to 0+}E_\e(u_\e,Q_{\rho}( x  ))=\widehat E(u,Q_{\rho}( x  )).
\end{equation}
Note that for every  $\rho\in B$ there exists $\e(\rho)>0$  such that for every $\e\in (0,\e(\rho))$
\begin{equation}\label{f-hom:stima1}
 \frac{E_\e(u_\e,Q_\rho( x  ))}{\rho^n}\leq
\frac{\widehat E(u,Q_\rho( x  ))}{\rho^n} + \rho \le c_3|\nabla u(x)| 
+ c_4 + 2,
\end{equation}
where in the last inequality we used \eqref{finitezza}. 

The rest of this proof is devoted to  modifying $u_\e$ in order to construct  a competitor for the minimisation  problems  appearing in  \eqref{f-hom}, which defines  $f_{\rm hom}$. To this end, for every $\rho\in B$ and $\e>0$ we consider the blow-up functions defined for $y\in Q:=Q_1(0)$ by 
$$
w_\e^\rho(y):=\frac{u_\e( x  +\rho y)-u_\e( x  )}{\rho} \quad\hbox{and}\quad  w^\rho(y):=\frac{u( x  +\rho y)-u( x  )}{\rho}. 
$$
Then $ w_\e^\rho\in SBV(Q,\R^m)$ and $w^\rho\in BV(Q,\R^m)$. 
Since $u_\e \to u$ in $L^1(Q_\rho( x  ), \R^m)$ by \eqref{f-hom:rec-seq}, using \eqref{leb-00} for every $\rho\in B$ 
we obtain 
\begin{equation}\label{aqo}
 w_\e^\rho\to w^\rho\quad\hbox{in }L^1(Q,\R^m) \hbox{ as }\e\to0+.
\end{equation}
Moreover, from \eqref{leb-1} we can deduce that 
\begin{equation}\label{aqq}
 w^\rho\to \ell_\xi \quad\hbox{in }L^1(Q,\R^m)\hbox{ as }\rho \to 0+,
\end{equation} 
where we set $\xi:=\nabla u(x)$. Therefore, by possibly reducing the values of $\rho_0$ and $\e(\rho)$, we may assume that
\begin{equation}\label{aqj}
\|w_\e^\rho -\ell_\xi \|_{L^1(Q,\R^m)}\le 1,
\end{equation} 
for every $\rho\in B$ and $\e\in(0,\e(\rho))$.
By the  definition of $w_\e^\rho$, a change of variables gives
\begin{equation}\label{aqd}
\frac{E_\e(u_\e,Q_\rho( x  ))}{\rho^n}= E^\rho_\e( w_\e^\rho,Q),
\end{equation}
where $E^\rho_\e$ is the functional corresponding to the integrands 
$ f_\e^\rho(y,\xi):=f(\tfrac{ x  +\rho y}{\e},\xi)$ and $g_\e^\rho(y,\zeta,\nu):=\tfrac{1}{\rho}g(\tfrac{ x  +\rho y}{\e},\rho \zeta,\nu)$\ie
$$
E^\rho_\e(w,Q):= \int_{Q}f(\tfrac{ x  +\rho y}{\e},\nabla w(y))\dy+\int_{S_{w}\cap Q} \tfrac{1}{\rho}g(\tfrac{ x  +\rho y}{\e},\rho [w](y),\nu_{w}(y))d\mathcal H^{n-1}(y)
$$
for every $w\in SBV(Q,\R^m)$. Note that $f_\e^\rho$ satisfies $(f3)$ and $(f4)$, while $g_\e^\rho$ satisfies $(g3)$ and $(g4)$.

We now modify $w_\e^\rho$ in a way such that it attains the linear boundary datum $\ell_\xi$ near $\partial Q$. To this end we apply the Fundamental Estimate \cite[Proposition 3.1]{BDfV} to the functionals $E^\rho_\e$. Thus for $\eta \in (0,\frac12)$ fixed there exist a constant $L_\eta>0$ with the following property: for every $\rho\in B$ and $\e\in(0,\e(\rho))$ there exists a cut-off function $\varphi^\rho_\e \in C^\infty_c(Q)$, with $0\leq\varphi^\rho_\e \leq 1$ in $Q$, ${\rm supp}(\varphi^\rho_\e)\subset Q_{1-\eta}:=Q_{1-\eta}(0)$, and $\varphi^\rho_\e=1$ in $Q_{1-2\eta}:=Q_{1-2\eta}(0)$, such that, setting
$\hat w^\rho_\e:=\varphi^\rho_\e w^\rho_\e+ (1-\varphi^\rho_\e)\ell_{\xi}$, we have 
\begin{equation}\label{aqh}
E^\rho_\e (\hat w^\rho_\e,Q)\leq (1+\eta)\big(E^\rho_\e ( w_\e^\rho,Q)+ E^\rho_\e (\ell_\xi,Q\setminus \overline Q_{1-2\eta})\big)+ L_\eta\| w_\e^\rho-\ell_\xi\|_{L^1(Q,\R^m)}.
\end{equation}
We note that $\hat w^\rho_\e=\ell_\xi$ in $Q\setminus Q_{1-\eta}$, as desired. Moreover in view of $(f4)$ we have 
\begin{equation}\label{aqn}
E^\rho_\e (\ell_\xi,Q\setminus \overline Q_{1-2\eta}))\leq (c_3|\xi| +c_4) \mathcal L^n(Q\setminus  Q_{1-2\eta}))\leq (c_3|\xi| +c_4) 2n \eta. 
\end{equation}
From \eqref{f-hom:stima1}, \eqref{aqj}, \eqref{aqd}, \eqref{aqh} and \eqref{aqn} we obtain
\begin{equation}\label{aqi}
E^\rho_\e (\hat w^\rho_\e,Q)\leq \tfrac32 (c_3 |\xi|
+ c_4 + 2 + (c_3|\xi| +c_4) n)+ L_\eta
\end{equation}
 for every $\rho\in B$ and $\e\in(0,\e(\rho))$. 
 By the lower bounds $(f3)$ and $(g3)$, we deduce that the total variation of $\hat w_\e^\rho$ is bounded uniformly with respect to $\rho\in B$ and $\e\in(0,\e(\rho))$. Note moreover that also the $L^1$-norm of $\hat w_\e^\rho$ is bounded uniformly, by \eqref{aqj}. In conclusion, the sequence  $(\hat w_\e^\rho)$ is bounded in $BV(Q,\R^m)$ uniformly with respect to $\rho\in B$ and $\e\in(0,\e(\rho))$.

 By Lemma \ref{truncation} there exist a constant $M_{\eta}>0$ with the following property:  for every $\rho\in B$ and $\e\in(0,\e(\rho))$ there exists $\tilde w^\rho_\e\in SBV(Q,\R^m)\cap L^\infty(Q,\R^m)$, with $\tilde w^\rho_\e=\ell_\xi$ near $\partial Q$,  such that 
 \begin{equation}\label{aqk}
 \|\tilde w^\rho_\e\|_{L^\infty(Q,\R^m)}\le M_{\eta}, \quad
 \|\tilde w^\rho_\e-\ell_\xi\|_{L^1(Q,\R^m)}\le  \|\hat w^\rho_\e-\ell_\xi\|_{L^1(Q,\R^m)}, \ \hbox{and }
 E^\rho_\e(\tilde w^\rho_\e,Q)\le E^\rho_\e(\hat w^\rho_\e,Q)+\eta.
 \end{equation}
 We now set $r:=\frac\rho\e$  and
$ v_\e^\rho( y):=
 r \tilde w_\e^\rho (\tfrac{y}{r}- \tfrac{x}{\rho})+\tfrac{r}{\rho}\ell_\xi( x)$;
clearly $v_\e^\rho\in SBV(Q_r(\frac{r x  }{\rho}),\R^m)$, $ v_\e^\rho = \ell_\xi$ near $\partial Q_r(\frac{r x  }{\rho})$, and, by a change of variables
\begin{gather}
 \|[v^\rho_\e]\|_{L^\infty(S_{v^\rho_\e}\cap Q_r(\frac{rx}\rho),\R^m)}\leq 2M_{\eta} r,
 \label{aqf}
 \\
 \frac{1}{r^n}\Big(\int_{Q_r( \tfrac{r  x}\rho)}f(y,\nabla v^\rho_{\e})\dy 
 +\int_{S_{v^\rho_{\e}}\cap Q_r( \tfrac{r  x}\rho)}\tfrac{r}{\rho}g(y,\tfrac{\rho}{r} [v^\rho_{\e}],\nu_{v^\rho_{\e}})\,d\mathcal H^{n-1}\Big)
=E^\rho_\e(\tilde w^\rho_\e,Q).
  \label{aqg}
\end{gather}
Moreover, by combining  \eqref{aqi}, \eqref{aqk},  and  \eqref{aqg} with the lower bound $(g3)$ we immediately deduce the existence of a constant $C>0$, depending on $|\xi|$, such that
\begin{equation}\label{aqm}
\frac{1}{r^n}\int_{S_{v^\rho_\e}\cap Q_r(\frac{r x}{\rho})}|[v^\rho_\e]| \,d\mathcal H^{n-1}\leq C.
\end{equation}

By Lemma \ref{estimate g g0}, applied  with $t=\rho/r$, using \eqref{aqf} and \eqref{aqm} we  deduce that 
\begin{equation*} 
\frac{1}{r^n}\int_{S_{v^\rho_\e}\cap Q_r(\tfrac{r x}\rho)}\Big|g_0(y,[v^\rho_\e],\nu_{v^\rho_\e})-\tfrac{r}{\rho}g(y,\tfrac{\rho}{r} [v^\rho_\e],\nu_{v^\rho_\e})\Big|\,d\mathcal H^{n-1}
  \leq C \lambda(2\rho M_{\eta})
\end{equation*}
for every $\rho\in B$ and $\e\in(0,\e(\rho))$.
 From this inequality and from \eqref{aqd}, \eqref{aqh}, \eqref{aqn}, \eqref{aqk}, and \eqref{aqg} we obtain
 $$
 \frac{E^{f,g_0}(v^\rho_\e,Q_r(\tfrac{r x}\rho))}{r^n} \le (1+\eta) \frac{E_\e(u_\e,Q_\rho(x))}{\rho^n}
 +K\eta +C\lambda(2\rho M_{\eta}) + L_{\eta}\| w_\e^\rho-\ell_\xi\|_{L^1(Q,\R^m)}, 
 $$
where $K:=(c_3|\xi| +c_4)3n +1$. Recalling that $ v_\e^\rho = \ell_\xi$ near $\partial Q_r(\frac{r x  }{\rho})$, we get 
$$
 \frac{m^{f,g_0}(\ell_\xi,Q_r(\tfrac{r x}\rho))}{r^n} \le (1+\eta) \frac{E_\e(u_\e,Q_\rho(x))}{\rho^n}
 +K\eta +C\lambda(2\rho M_{\eta}) + L_{\eta}\| w_\e^\rho-\ell_\xi\|_{L^1(Q,\R^m)}.
 $$
 Since $r=\frac\rho\e$, by \eqref{f-hom} with $x$ replaced by $\frac{x}\rho$, the left-hand side of the previous inequality converges to $ f_{\rm hom}(\xi)$ as $\e\to0+$. By \eqref{4.20bis} and \eqref{aqo} we can pass to the limit in the right-hand side as $\e\to0+$ and we obtain
 $$
 f_{\rm hom}(\xi) \le (1+\eta) \frac{\widehat E(u,Q_\rho(x))}{\rho^n}
 +K\eta + C\lambda(2\rho M_{\eta}) + L_{\eta}\|w^\rho-\ell_\xi\|_{L^1(Q,\R^m)}.
 $$
 By \eqref{acc}, \eqref{leb-2}, and \eqref{aqq}, passing to the limit as $\rho\to0+$ we get
 $$
 f_{\rm hom}(\xi) \le (1+\eta) \frac{d\widehat E(u,\cdot)}{d \mathcal L^n}(x)
 +K\eta.
 $$
 Since $\xi= \nabla u(x)$, this inequality gives \eqref{aqs} by the arbitrariness of $\eta>0$.
\end{proof}

\section{Identification of the surface term}\label{Sect:surface}

In Proposition \ref{p:homo-sur} below we characterise the derivative of $\widehat E(u,\cdot)$ with respect to the measure $\mathcal H^{n-1}\LLL{S_u}$ for a given $BV$-function $u$.  In order to prove this result we need the estimate established in the following lemma.

\begin{lem}\label{estimate f finfty}
Let $f\in \mathcal F$, $A\in\A$, $v\in BV(A,\R^m)$. Then for every $t>0$
\begin{equation*}
\int_A|f^\infty( x ,\nabla v)-\tfrac{1}{t}f( x ,t\nabla v)|\dx\le 
 \frac1t c_5 (1+ c_4^{1-\alpha})\mathcal{L}^n(A)+ \frac{1}{t^\alpha}c_5c_3^{1-\alpha} (\mathcal{L}^n(A))^\alpha \|\nabla v\|^{1-\alpha}_{L^1(A,\R^m)}.
\end{equation*}
\end{lem}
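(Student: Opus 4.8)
The idea is to bound the integrand pointwise by using property $(f5)$ of $\mathcal F$, specifically the inequality \eqref{abf}, and then integrate, handling the nonlinear term $f(x,t\nabla v)^{1-\alpha}$ with H\"older's inequality. First I would fix $t>0$ and, for $\mathcal L^n$-a.e.\ $x\in A$, apply \eqref{abf} with $\xi$ replaced by $\nabla v(x)$ to get
$$
|f^\infty(x,\nabla v(x))-\tfrac1t f(x,t\nabla v(x))|\le \tfrac{c_5}{t}+\tfrac{c_5}{t}f(x,t\nabla v(x))^{1-\alpha}.
$$
Note that \eqref{abf} is stated for every $t>0$ and every $\xi$, so there is no need to split into the cases $t|\nabla v|\ge L$ and $t|\nabla v|<L$; the single estimate \eqref{abf} suffices.

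Next I would integrate this inequality over $A$. The first term on the right contributes $\tfrac{c_5}{t}\mathcal L^n(A)$. For the second term I would use the upper bound $(f4)$, which gives $f(x,t\nabla v(x))\le c_3 t|\nabla v(x)|+c_4$, and then the elementary inequality $(a+b)^{1-\alpha}\le a^{1-\alpha}+b^{1-\alpha}$ valid for $a,b\ge 0$ and $0<\alpha<1$, to write
$$
f(x,t\nabla v(x))^{1-\alpha}\le (c_3 t|\nabla v(x)|)^{1-\alpha}+c_4^{1-\alpha}.
$$
Integrating, the $c_4^{1-\alpha}$ piece yields $\tfrac{c_5}{t}c_4^{1-\alpha}\mathcal L^n(A)$, which combines with the first term to give $\tfrac1t c_5(1+c_4^{1-\alpha})\mathcal L^n(A)$. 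For the remaining piece $\tfrac{c_5}{t}(c_3 t)^{1-\alpha}\int_A|\nabla v|^{1-\alpha}\dx = \tfrac{c_5}{t^\alpha}c_3^{1-\alpha}\int_A|\nabla v|^{1-\alpha}\dx$, I would apply H\"older's inequality with exponents $\tfrac1{1-\alpha}$ and $\tfrac1\alpha$:
$$
\int_A|\nabla v|^{1-\alpha}\dx\le \Big(\int_A|\nabla v|\dx\Big)^{1-\alpha}\big(\mathcal L^n(A)\big)^{\alpha}=\|\nabla v\|_{L^1(A,\R^m)}^{1-\alpha}\,(\mathcal L^n(A))^{\alpha}.
$$
Collecting the three contributions gives exactly the claimed bound.

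There is essentially no serious obstacle here: the only minor points of care are that the elementary subadditivity $(a+b)^{1-\alpha}\le a^{1-\alpha}+b^{1-\alpha}$ is being used (which follows from concavity of $s\mapsto s^{1-\alpha}$ together with $0^{1-\alpha}=0$), and that H\"older's inequality is applied with the correct conjugate exponents; one should also note that $v\in BV(A,\R^m)$ guarantees $\nabla v\in L^1(A,\R^{m\times n})$, so all integrals are finite. The measurability of $x\mapsto f^\infty(x,\nabla v(x))$ and $x\mapsto f(x,t\nabla v(x))$ follows from $(f1)$ together with the Borel measurability of $f^\infty$ (Remark \ref{BV estimate}) and of $\nabla v$.
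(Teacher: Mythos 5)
Your proposal is correct and follows essentially the same argument as the paper: bound the integrand pointwise via \eqref{abf}, control the term $f(x,t\nabla v)^{1-\alpha}$ using $(f4)$ together with H\"older's inequality and the subadditivity of $s\mapsto s^{1-\alpha}$. The only (immaterial) difference is the order of operations — the paper applies H\"older directly to $\int_A f(x,t\nabla v)^{1-\alpha}\dx$ and then invokes $(f4)$ on the resulting integral, while you apply $(f4)$ pointwise first and then H\"older to $\int_A|\nabla v|^{1-\alpha}\dx$ — and both routes give exactly the stated constants.
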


 \begin{proof}
 Let $t>0$. By $(f4)$  and \eqref{abf}, using  H\"older's inequality, we  obtain  that 
 \begin{align*}
 &\int_A|f^\infty( x ,\nabla v)-\tfrac{1}{t}f( x ,t\nabla v)|\dx\le  
 \frac{c_5}t \mathcal{L}^n(A)+ \frac{c_5}{t} \int_A f( x ,t\nabla v)^{1-\alpha} \dx\\
 &\le  \frac{c_5}t \mathcal{L}^n(A)+ \frac{c_5}{t} (\mathcal{L}^n(A))^\alpha\Big(\int_A f( x ,t\nabla v)\dy\Big)^{1-\alpha} \\
 &\le  \frac1t c_5 (1+ c_4^{1-\alpha})\mathcal{L}^n(A)+ \frac{1}{t^\alpha}c_5c_3^{1-\alpha} (\mathcal{L}^n(A))^\alpha \|\nabla v\|^{1-\alpha}_{L^1(A,\R^m)}.
 \end{align*}

 This concludes the proof. \end{proof}

\begin{prop}[Homogenised surface integrand]\label{p:homo-sur} Let $f$, $g$, $E_\e$, $(\e_j)$, and 
 $\widehat E$ be as in Theorem \ref{thm:Gamma-conv}. Assume that (b) of Theorem~\ref{T:det-hom} holds, and let $g_{\rm hom}$ be as in \eqref{g-hom}. 
 Then for every $A\in \A$ and every $u\in L^1_{\rm loc}(\R^n,\R^m)$, with $ u|_A\in BV(A,\R^m)$, we have that 
\begin{equation*}
\frac{d\widehat E(u,\cdot)}{d\mathcal H^{n-1}\LLL{S_u}}(x)=g_{\rm hom}\big([u](x),\nu_{u}(x)\big)\quad \hbox{for }\mathcal H^{n-1}\hbox{-a.e.\ }x\in S_u \cap A.
\end{equation*}    
\end{prop}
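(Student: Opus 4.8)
The plan is to follow the blow-up strategy of Bouchitt\'e--Fonseca--Mascarenhas, exactly as in the proof of the volume term but now localising at a point $x_0\in S_u\cap A$ and rescaling with the surface normalisation $\rho^{n-1}$. First, invoking the measure property and the bounds (c)--(e) of Theorem~\ref{thm:Gamma-conv} together with \cite[Lemmas~3.1, 3.2]{BFM} (the analogue of (3.16) in \cite[Theorem~3.7]{BFM} for the jump part), one obtains that for $\mathcal H^{n-1}$-a.e.\ $x_0\in S_u\cap A$
\begin{equation*}
\frac{d\widehat E(u,\cdot)}{d\mathcal H^{n-1}\LLL S_u}(x_0)=\lim_{\rho\to0+}\frac{m_{\widehat E}(u_{x_0,\zeta_0,\nu_0},Q^{\nu_0}_\rho(x_0))}{\rho^{n-1}},
\end{equation*}
where $\zeta_0:=[u](x_0)$, $\nu_0:=\nu_u(x_0)$, and where one uses that at a jump point the blow-ups of $u$ converge in $L^1_{\rm loc}$ to $u_{0,\zeta_0,\nu_0}$. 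The point of the proposition is to identify this limit with $g_{\rm hom}(\zeta_0,\nu_0)$, i.e.\ to replace $m_{\widehat E}$ (surface minimisation against the abstract $\Gamma$-limit) by the explicit cell formula \eqref{g-hom}, which uses $f^\infty$ and $g$ rather than $f$ and $g_0$.

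I would split the proof into the two inequalities, as in Proposition~\ref{p:homo-vol}. For the $\le$ direction: fix a jump point $x_0$ where the blow-up formula holds; for each $r$ take a near-optimal competitor $v_r$ for $m^{f^\infty,g}(u_{0,\zeta_0,\nu_0},Q^{\nu_0}_r)$, truncate it via Lemma~\ref{truncation} to get an $L^\infty$ bound $M_\eta$ (using that $u_{0,\zeta_0,\nu_0}$ is bounded and the total variation is controlled by Remark~\ref{BV estimate} and $(g4)$), rescale it to the small cube $Q^{\nu_0}_\rho(x_0)$ at scale $\e=\rho/r$, and extend it by $u_{x_0,\zeta_0,\nu_0}$ outside. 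The key technical point is to pass from an $f^\infty$-and-$g$ energy to an $f$-and-$g$ energy: here one applies Lemma~\ref{estimate f finfty} with $t=r/\rho\to\infty$ to control $\int|f^\infty(y,\nabla v)-\tfrac{\rho}{r}f(y,\tfrac r\rho\nabla v)|$ by terms that vanish as $\e\to0+$, using the uniform $L^1$-bound on $\nabla v_r$ coming from $(g3)$. One then compares with $E_\e(\cdot,Q^{\nu_0}_\rho(x_0))$, uses $\Gamma$-convergence along $(\e_j)$ to bound $\widehat E(u,Q^{\nu_0}_\rho(x_0))/\rho^{n-1}$, and lets $\rho\to0+$ and $\eta\to0+$. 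A small enlargement of the cube to absorb the boundary layer (as with $Q_{(1+\eta)\rho}$ in the volume case) and $(g4)$ handle the competitor's boundary mismatch.

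For the $\ge$ direction: take a recovery sequence $u_\e\to u$ with $E_\e(u_\e,A)\to\widehat E(u,A)$, choose a good radius $\rho\in B$ (so that $\widehat E(u,\partial Q^{\nu_0}_\rho(x_0))=0$), blow up $u_\e$ around $x_0$ at scale $\rho$, glue the boundary datum $u_{0,\zeta_0,\nu_0}$ near $\partial Q$ by the Fundamental Estimate \cite[Proposition~3.1]{BDfV}, truncate via Lemma~\ref{truncation}, rescale back to a cube $Q^{\nu_0}_r(\cdot)$ with $r=\rho/\e$, and thus obtain a competitor for $m^{f^\infty,g}$; again Lemma~\ref{estimate f finfty} with $t\to\infty$ converts the $f$-energy into an $f^\infty$-energy up to a vanishing error, and Lemma~\ref{estimate g g0} is not needed here since the surface density is already $g$. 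Passing to the limit $\e\to0+$ and then $\rho\to0+$ gives $g_{\rm hom}(\zeta_0,\nu_0)\le \frac{d\widehat E(u,\cdot)}{d\mathcal H^{n-1}\LLL S_u}(x_0)$.

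The main obstacle I expect is the matching of the blow-up of $u$ at a jump point with the affine-by-pieces datum $u_{0,\zeta_0,\nu_0}$: away from the volume case, where $w^\rho\to\ell_\xi$ strongly in $L^1(Q)$ by approximate differentiability, here one only has that the rescaled functions converge to $u_{0,\zeta_0,\nu_0}$ in $L^1(Q)$ in an $\mathcal H^{n-1}$-a.e.\ sense (via \cite[Theorem~3.78]{AFP} and the definition of approximate jump points), and controlling the Cantor and diffuse parts of $Du$ inside the small cube — showing $|Du|(Q^{\nu_0}_\rho(x_0))/\rho^{n-1}$ and the non-jump contributions are negligible for $\mathcal H^{n-1}$-a.e.\ $x_0$ — requires the fine structure of $BV$ functions at jump points (e.g.\ $\mathcal H^{n-1}$-a.e.\ $x_0\in S_u$ is a point of approximate jump and $|D^ju|$ is the dominating part at scale $\rho^{n-1}$). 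This, together with the careful bookkeeping of the two rescalings (the inner $\e$-rescaling in $E_\e$ and the outer $\rho$-blow-up) and the use of Lemma~\ref{estimate f finfty} to interchange $f$ and $f^\infty$, is where the real work lies; the rest is parallel to the proof of Proposition~\ref{p:homo-vol}.
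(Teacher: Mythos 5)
Your proposal follows essentially the same route as the paper's proof: the Bouchitt\'e--Fonseca--Mascarenhas blow-up formula for the derivative with respect to $\mathcal H^{n-1}\LLL{S_u}$, near-optimal competitors for the cell formula rescaled between $Q^{\nu}_\rho(x_0)$ and the re-centred cube $Q^{\nu}_r(rx_0/\rho)$ (which is where the $x$-independence in hypothesis (b) enters), Lemma~\ref{estimate f finfty} with $t=r/\rho$ to exchange $f$ and $f^\infty$, and, for the lower bound, a recovery sequence, good radii, the Fundamental Estimate to glue the datum $u_{0,\zeta,\nu}$, and the fine properties of $BV$ at $\mathcal H^{n-1}$-a.e.\ jump point. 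The only deviation is that your use of Lemma~\ref{truncation} is superfluous in this proposition: since the surface density is already $g$, no $g_0$/$g$ conversion (hence no $L^\infty$ bound) is needed, and the paper indeed proceeds without truncating.
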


\begin{proof} Let us fix $A\in \A$ and $u\in L^1_{\rm loc}(\R^n,\R^m)$, with $ u|_A\in BV(A,\R^m)$.
We divide the proof into two steps. 

\medskip

\textit{Step 1:} We claim that
\begin{equation}\label{aqcc}
\frac{d \widehat E(u,\cdot)}{d\mathcal H^{n-1}\LLL{S_u}}(x) \leq g_{\rm hom}([u](x),\nu_{u}(x)) \quad \hbox{for }\mathcal H^{n-1}\hbox{-a.e.\ }x\in S_u \cap A.
\end{equation}

By (a)-(e) of Theorem \ref{thm:Gamma-conv} and by \cite[Lemmas 3.1 and 3.5]{BFM}, arguing as in the proof of (3.17) in  \cite[Theorem 3.7]{BFM}, for $\mathcal H^{n-1}$-a.e. $x\in S_u \cap A$ we have 
\begin{equation}\label{rep-hausdorff-BFM}
\frac{d \widehat E(u,\cdot)}{d\mathcal H^{n-1}\LLL{S_u}}(x)=\lim_{\rho \to 0+} \frac{m_{\widehat E}(u_{x,[u](x),\nu_u(x)}, Q^{\nu_u(x)}_\rho(x))}{\rho^{n-1}}.
\end{equation}
Fix $x\in S_u \cap A$ such that \eqref{rep-hausdorff-BFM} holds and set $\zeta:=[u](x)$ and $\nu:=\nu_u(x)$. 

For every $\rho>0$ we have 
\begin{equation}\label{ass-g} 
g_{\rm hom}(\zeta,\nu)=\lim_{r\to +\infty} \frac{ m^{f^\infty,g}\big(u_{\frac{r  x}\rho,\zeta,\nu},Q^\nu_r(\tfrac{r x  }{\rho})\big)}{r^{n-1}},
\end{equation}
since the above identity directly follows from \eqref{g-hom} by replacing $ x  $ with $ \frac{x}{\rho}$. 

Let us fix $\eta\in (0,1)$. By \eqref{m-phipsi} for every $\rho\in (0,1)$ and $r>0$ there exists $ v^\rho_r\in SBV(Q^\nu_r( \tfrac{ r  x}\rho),\R^m)$, with  $ v^\rho_r=u_{\frac{r x  }{\rho},\zeta,\nu}$ near $\partial Q^\nu_r( \tfrac{ r  x}\rho)$, such that 
\begin{align}\label{quasi-min-g}
 E^{f^\infty,g}( v^\rho_r,Q^\nu_r( \tfrac{r  x}\rho))
 &\leq m^{f^\infty,g}\big(u_{\frac{r x  }{\rho},\zeta,\nu},Q^\nu_r(\tfrac{r x  }{\rho})\big) +\eta\, r^{n-1} \nonumber\\
 & \le \int_{\Pi^\nu_{\frac{rx}{\rho}}\cap Q^\nu_r(\frac{r x  }{\rho})}g(y,\zeta,\nu)\,d\mathcal H^{n-1} + \eta r^{n-1} \leq 
 (c_3|\zeta|+1) r^{n-1},
\end{align}
where we used that $f^\infty(\cdot,0)=0$ and $(g4)$. We extend $ v^\rho_r$ to $\R^n$ by setting
$ v^\rho_r(y)= u_{\frac{r x  }{\rho},\zeta,\nu}(y)$ for every $y\in \R^n\setminus Q^\nu_r(\tfrac{r  x}\rho)$.

By combining  \eqref{quasi-min-g} with the lower bound $(f3)$ we immediately deduce the existence of a constant $C>0$, depending on $\zeta$, such that 
\begin{equation}\label{g:unif-bd0}
\frac{1}{r^{n-1}}\int_{Q^\nu_r(\frac{r x  }{\rho})}|\nabla v_r^\rho| \dy\leq C.
\end{equation}
By Lemma \ref{estimate f finfty}, applied with $t=r/\rho$, using \eqref{g:unif-bd0}, we deduce that
\begin{equation*}
\frac{1}{r^{n-1}}\int_{Q^{\nu}_r(\tfrac{r x }{\rho})}\big|f^\infty(x,\nabla v_r^\rho)-\tfrac{\rho}{r}f(x, \tfrac{r}{\rho} \nabla v_r^\rho)\big|\dy
\le  c_5 (1+ c_4^{1-\alpha})\rho+ c_5c_3^{1-\alpha} \rho^\alpha C^{1-\alpha}.
\end{equation*}
This inequality, together with \eqref{ass-g} and \eqref{quasi-min-g}, gives 
\begin{equation*}
\limsup_{r \to +\infty}\frac{1}{r^{n-1}}\Big(\int_{Q^\nu_r(\frac{r x  }{\rho})}\tfrac{\rho}{r}f( y,\tfrac{r}{\rho}\nabla v_r^\rho)\dy +\int_{S_{v_r^\rho}\cap Q^\nu_r(\frac{r x  }{\rho})}g(y,[v_r^\rho],\nu_{v_r^\rho})\,d\mathcal H^{n-1}\Big) \leq g_{\rm hom}(\zeta,\nu) +\eta + K\rho^\alpha,
\end{equation*}
 where $K:= c_5 (1+ c_4^{1-\alpha})+c_5c_3^{1-\alpha} C^{1-\alpha}$. 
 
Given $\e>0$ and $\rho \in (0,1)$, for every $y\in\R^n$ we define  $ u^\rho_\e(y):= v^\rho_r (\frac{y}{\e})=v^\rho_r (\frac{r y}{\rho})$, with $r:=\rho/\e$. Then $ u^\rho_\e\in SBV_{\rm loc}(\R^n,\R^m)$, $ u^\rho_\e =u_{x,\zeta,\nu}$ near $\partial Q^\nu_\rho( x  )$ and in $\R^n\setminus Q^\nu_\rho( x  )$. By a change of variables, from the previous inequality we get that for every $\rho\in (0,1)$
\begin{equation}\label{scaling-g}
\limsup_{\e\to 0+}  \frac{E_\e(u^\rho_\e,Q^\nu_\rho(x))}{\rho^{n-1}} \leq g_{\rm hom}(\zeta,\nu) +\eta + K\rho^\alpha.
 \end{equation}

 Since the functions $ u^\rho_\e$ coincide with $u_{x,\zeta,\nu}$ in $Q^\nu_{(1+\eta)\rho}(x)\setminus Q^\nu_{(1-\delta_\e)\rho}(x)$ for some $\delta_\e\in(0,1)$, by $(g4)$ we have
$E_\e( u^\rho_\e,Q^\nu_{(1+\eta)\rho}( x ))\le E_\e( u^\rho_\e,Q^\nu_\rho( x )) + c_3|\zeta|2^{n-1} \rho^{n-1}\eta$, 
which, together with \eqref{scaling-g}, gives
\begin{equation}\label{scaling-g2}
\limsup_{\e \to 0+} 
\frac{ E_\e( u^\rho_\e,Q^\nu_{(1+\eta)\rho}(x))}{\rho^{n-1}} \le g_{\rm hom}(\zeta,\nu) + K\rho^\alpha +\tilde K\eta,
\end{equation}
 where $\tilde K:=1+ c_3|\zeta|2^{n-1}$. 
Since $ u^\rho_\e$ coincides with $u_{x,\zeta,\nu}$ in $\R^n\setminus Q^\nu_\rho(x)$, using the lower bounds  $(f3)$ and $(g3)$ and Poincar\'e's inequality we deduce from \eqref{scaling-g2} that for every $\rho>0$  there exists $\e(\rho)>0$ such that  the functions 
$u^\rho_\e$ are bounded in $BV_{\rm loc}(\R^n,\R^m)$ uniformly with respect to $\e\in(0,\e(\rho))$. Then there exists a subsequence, not relabelled, of the sequence $(\e_j)$ considered in Theorem \ref{thm:Gamma-conv}, such that 
 $( u^\rho_{\e_k})$ converges  in $L^1_{\rm loc}(\R^n,\R^m)$  to  some  $ u^\rho \in BV_{\rm loc}(\R^n,\R^m)$  with  $u^\rho=u_{x,\zeta,\nu}$  in   $Q^\nu_{(1+\eta)\rho}( x  )\setminus Q^\nu_{\rho}( x  )$. As a consequence  of the
$\Gamma$-convergence of $E_{\e_j}(\cdot, Q^\nu_{(1+\eta)\rho}(x))$ to  $\widehat E(\cdot, Q^\nu_{(1+\eta)\rho}(x))$, from \eqref{scaling-g2} we obtain
$$
\frac{m_{\widehat E}(u_{x,\zeta,\nu},Q^\nu_{(1+\eta)\rho}( x  ))}{\rho^{n-1}} \leq \frac{\widehat E(u^\rho,Q^\nu_{(1+\eta)\rho}( x  ))}{\rho^{n-1}}
\leq \limsup_{j\to +\infty}\frac{ E_{\e_j}( u^\rho_{\e_j},Q^\nu_{(1+\eta)\rho}(x))}{\rho^{n-1}} \le g_{\rm hom}(\zeta,\nu) + K\rho^\alpha +\tilde K\eta.
$$
Finally, passing to the limit  as $\rho \to 0+$, from \eqref{rep-hausdorff-BFM} we get
$$
(1+\eta)^{n-1}\frac{d \widehat E(u,\cdot)}{d\mathcal H^{n-1}\LLL{S_u}}(x) \le g_{\rm hom}(\zeta,\nu) + \tilde K\eta.
$$
Since $\zeta:= [u](x)$ and $\nu = \nu_u(x)$, this gives \eqref{aqcc} by the arbitrariness of $\eta>0$.

\medskip

\textit{Step 2:} We claim that 
\begin{equation}\label{aqs-g}
\frac{d \widehat E(u,\cdot)}{d\mathcal H^{n-1}\LLL{S_u}}(x) \geq g_{\rm hom}([u](x),\nu_{u}(x))  \quad \hbox{for }\mathcal H^{n-1}\hbox{-a.e.\ }x\in S_u \cap A.
\end{equation}
We extend $u$ to $\R^n$ by setting $u=0$ on $\R^n\setminus A$. By  $\Gamma$-convergence  there exists $(u_\e)\subset L^1_{\rm loc}(\R^n,\R^m)$,  with ${u_\e}|_A\in SBV(A,\R^m)$,  such that
\begin{equation}\label{g-hom:rec-seq}
u_\e \to u \quad \text{in} \quad L^1_{\rm loc}(\R^n,\R^m) \quad \text{and}\quad \lim_{\e \to 0+}E_\e(u_\e,A)=\widehat E(u,A),
\end{equation}
along the sequence $(\e_k)$ considered in Theorem \ref{thm:Gamma-conv}.

By \cite[Definition 3.67 and Step 2 in the proof of Theorem~3.77]{AFP} and thanks to a generalised version of the Besicovitch Differentiation Theorem (see \cite{Mor} and \cite[Sections~1.2.1-1.2.2]{FonLeo}), for $\hs^{n-1}$-a.e.\ $x\in S_u\cap A$ we have 
\begin{eqnarray}
& \ds \lim_{\rho \to 0+}\frac{1}{\rho^{n-1}}|Du|(Q^{\nu_u(x)}_\rho(x))=|[u](x)| \neq 0, 
\label{z neq 0}
\\
\label{u:salto}
&\ds 
 \lim_{\rho \to 0+} \frac{1}{\rho^n}\int_{Q_\rho^{\nu_u(x)}( x  )}|u(y)-u_{ x  ,[u](x),\nu_u(x)}(y)|dy = 0 ,
\\
\label{estimate:0}
&\ds \lim_{\rho \to 0+} \frac{\widehat E(u, Q_\rho^{\nu_u(x)}( x  ))}{\rho^{n-1}} = \frac{d \widehat E(u,\cdot)}{d\mathcal H^{n-1}\LLL{S_u}}(x).
\end{eqnarray}
Let us fix $ x  \in S_u\cap A$ such that \eqref{z neq 0}-\eqref{estimate:0} are satisfied, and set $\zeta:=[u]( x  )$ and $\nu:=\nu_u( x  )$. 

Recalling \eqref{4.14bis} we have that
$
\widehat E(u, Q^\nu_{\rho}(x))\leq c_3|Du|(Q^\nu_{\rho}(x)) + c_4 \rho^n
$,
hence by \eqref{z neq 0} 
there exists $\rho_0\in(0,1)$  such that
\begin{equation}\label{finitezza-g}
Q^\nu_\rho( x  ) \subset\subset A\quad\hbox{and}\quad\frac{\widehat E(u,Q^\nu_\rho(x))}{\rho^{n-1}} \leq  c_3|\zeta| + 1
\end{equation}
 for every $\rho\in(0,\rho_0)$. 
Since $\widehat E(u,\cdot)$ is a Radon measure, there exists a set $B\subset  (0,\rho_0)$, with $(0,\rho_0)\setminus B$ at most countable, such that $\widehat E(u,\partial Q^\nu_\rho( x  ))=0$ for every $\rho\in B$. Proceeding as in the proof of \eqref{4.20bis}, by \eqref{g-hom:rec-seq}   we can show that for every  $\rho\in B$
\begin{equation}\label{4.20bis-g}
\lim_{\e \to 0+}E_\e(u_\e,Q^\nu_{\rho}( x  ))=\widehat E(u,Q^\nu_{\rho}( x  )).
\end{equation}
Hence, for every  $\rho\in B$ there exists $\e(\rho)>0$  such that for every $\e\in (0,\e(\rho))$
\begin{equation}\label{g-hom:stima1}
 \frac{E_\e(u_\e,Q^\nu_\rho( x  ))}{\rho^{n-1}}\leq
\frac{\widehat E(u,Q^\nu_\rho( x  ))}{\rho^{n-1}} + \rho \le c_3|\zeta| 
+ 2,
\end{equation}
where in the last inequality we used \eqref{finitezza-g}.

The rest of this proof is devoted to  modifying $u_\e$ in order to construct  a competitor for the minimisation  problems  appearing in  \eqref{g-hom}, which defines  $g_{\rm hom}$. To this end, for every $\rho\in B$ and $\e>0$ we consider the blow-up functions defined for $y\in Q^\nu:=Q^\nu_1(0)$ by 
$$
w_\e^\rho(y):=u_\e( x  +\rho y) \quad\hbox{and}\quad  w^\rho(y):= u( x  +\rho y). 
$$
Then $ w_\e^\rho\in SBV(Q^\nu,\R^m)$ and $w^\rho\in BV(Q^\nu,\R^m)$. 
Since $u_\e \to u$ in $L^1(Q^\nu_\rho( x  ), \R^m)$ by \eqref{g-hom:rec-seq}, for every $\rho\in B$ 
we obtain 
\begin{equation}\label{4.31bis}
w_\e^\rho  \to w^\rho \qquad \text{in} \quad L^1(Q^{\nu},\R^m)\; \text{ as }\; \e \to 0+;
\end{equation}
moreover, from \eqref{u:salto} we have  
\begin{equation}\label{4.31ter}
w^\rho \to u_{0,\zeta,\nu} \quad \text{in} \quad L^1(Q^{\nu},\R^m)\; \text{ as }\; \rho \to 0+. 
\end{equation}
Up to possibly reducing the values of $\rho_0$ and $\e(\rho)$, we may then assume that
\begin{equation}\label{aqj-g}
\|w_\e^\rho -  u_{0,\zeta,\nu}\|_{L^1(Q^\nu,\R^m)}\le 1,
\end{equation} 
for every $\rho\in B$ and $\e\in(0,\e(\rho))$.  By a change of variables, we obtain the relation
\begin{equation}\label{change of variables}
\frac{E_\e(u_\e,Q^{\nu}_\rho( x  ))}{\rho^{n-1}} =  E^\rho_\e( w_\e^\rho,Q^\nu),
\end{equation}
where $E^\rho_\e$ is the functional corresponding to the integrands 
$ f_\e^\rho(y,\xi):=\rho f(\tfrac{ x  +\rho y}{\e},\tfrac{\xi}{\rho})$  and $g_\e^\rho(y,\zeta,\nu):=g(\tfrac{ x  +\rho y}{\e},\zeta,\nu)$\ie
$$
E^\rho_\e(w,Q^\nu):=  \int_{Q^{\nu}}\rho f(\tfrac{ x  +\rho y}{\e},\tfrac{1}{\rho}\,\nabla w(y))\dy+\int_{S_{ w}\cap Q^{\nu}} g(\tfrac{ x  +\rho y}{\e}, [ w](y),\nu_{ w}(y))d\mathcal H^{n-1}(y)
$$
for every $w\in SBV(Q^\nu,\R^m)$. Note that $f_\e^\rho$ satisfies $(f3)$ and $(f4)$ (recall that $\rho<1$), while $g_\e^\rho$ satisfies $(g3)$ and $(g4)$. 

We now modify $ w_\e^\rho$ in a way such that it attains the boundary datum $u_{0,\zeta,\nu}$ near $\partial Q^\nu$. This  will be done by applying the Fundamental Estimate \cite[Proposition 3.1]{BDfV} to the functionals $E^\rho_\e$. Thus for $\eta \in (0,\frac12)$ fixed there exist a constant $L_\eta>0$ with the following property: for every $\rho\in B$ and $\e\in(0,\e(\rho))$ there exists a cut-off function $\varphi^\rho_\e \in C^\infty_c(Q^\nu)$, with $0\leq\varphi^\rho_\e \leq 1$ in $Q^\nu$, ${\rm supp}(\varphi^\rho_\e)\subset Q^\nu_{1-\eta}:=Q^\nu_{1-\eta}(0)$, and $\varphi^\rho_\e=1$ in $Q^\nu_{1-2\eta}:=Q^\nu_{1-2\eta}(0)$, such that, setting
$\hat w^\rho_\e:=\varphi^\rho_\e w^\rho_\e+ (1-\varphi^\rho_\e)u_{0,\zeta,\nu}$, we have 
\begin{equation}\label{stima fondamentale superficie}
E^\rho_\e (\hat w^\rho_\e,Q^\nu)\leq (1+\eta)\big( E^\rho_\e ( w_\e^\rho,Q^\nu)+ E^\rho_\e (u_{0,\zeta,\nu},Q^\nu\setminus \overline Q^\nu_{1-2\eta})\big)+ L_\eta\| w_\e^\rho-u_{0,\zeta,\nu}\|_{L^1(Q^\nu,\R^m)}.
\end{equation}
By definition we clearly have $\hat w^\rho_\e=u_{0,\zeta,\nu}$ in  $Q^\nu \setminus Q^\nu_{1-\eta}$, as desired. Moreover, from $(f4)$ and $(g4)$ we obtain the bound 
\begin{align}\label{aqn-g}
E^\rho_\e (u_{0,\zeta,\nu},Q^\nu\setminus \overline Q^\nu_{1-2\eta})) &\leq 
c_4 \mathcal{L}^n(Q^\nu\setminus  Q^\nu_{1-2\eta})  + c_3 |\zeta|\,\mathcal{H}^{n-1}(\Pi^\nu_0\cap(Q^{\nu}\setminus\overline Q^\nu_{1-2\eta}))\nonumber\\
 &\leq  2 c_4 n \eta + 2c_3 |\zeta| (n-1)\eta,
\end{align}
and hence, from \eqref{stima fondamentale superficie} and \eqref{aqn-g} we have that 
\begin{equation}\label{724}
E^\rho_\e (\hat w^\rho_\e,Q^\nu)\leq (1+\eta)E^\rho_\e ( w_\e^\rho,Q^\nu)+ K \eta + L_\eta\| w_\e^\rho-u_{0,\zeta,\nu}\|_{L^1(Q^\nu,\R^m)},
\end{equation}
for every $\rho\in B$ and every $\e\in (0,\e(\rho))$, where $K = 3(c_4 \rho n + c_3 |\zeta| (n-1))$.
Note that $(f3)$, $(g3)$, \eqref{g-hom:stima1},  \eqref{aqj-g}, \eqref{change of variables} and \eqref{724} imply that 
\begin{equation}\label{g2:unif-bd}
c_2\int_{Q^\nu} |\nabla \hat w_\e^\rho(y)|\dy \leq (1+\eta)(c_3|\zeta|+1) + K \eta + L_\eta,
\end{equation}
for every $\rho\in B$ and $\e\in(0,\e(\rho))$.

We finally set $r:=\frac\rho\e$  and $v_\e^\rho(y):=\hat w_\e^\rho(\tfrac{y}r - \tfrac{x  }{\rho})$; clearly $v_\e^\rho \in SBV(Q^\nu_r( \tfrac{r x}\rho),\R^m)$, $v_\e^\rho = u_{\frac{rx}{\rho},\zeta,\nu}$ near $\partial Q^\nu_r( \tfrac{r x}\rho)$, and via a change of variables, using also \eqref{g2:unif-bd}, we have that 
\begin{gather}\label{g2:unif-bd2}
\frac{1}{r^{n-1}}\int_{Q^\nu_r( \tfrac{r x}\rho)} |\nabla v_\e^\rho(y)|\dy \leq C,\\\label{aqg-g}
\frac{1}{r^{n-1}}\Big(\int_{Q^\nu_r( \tfrac{r  x}\rho)}\tfrac{\rho}r f(y,\tfrac{r}{\rho}\nabla v^\rho_\e)\dy 
 +\int_{S_{v^\rho_\e}\cap Q^\nu_r( \tfrac{r  x}\rho)}g(y, [v^\rho_r],\nu_{v^\rho_\e})\,d\mathcal H^{n-1}\Big)
 = E^\rho_\e (\hat w^\rho_\e,Q^\nu),
\end{gather}
where $C$ depends only on $|\zeta|$. Hence by Lemma \ref{estimate f finfty}, applied  with $t=r/\rho$, using \eqref{g2:unif-bd2},  we have
\begin{align}\label{f-f-infty}
\frac{1}{r^{n-1}}\int_{Q^\nu_r( \tfrac{r  x}\rho)}
\big|f^\infty(y,\nabla v^\rho_\e)-\tfrac{\rho}r f(y,\tfrac{r}{\rho}\nabla v^\rho_\e)\big|\dy 
\leq c_5 (1+ c_4^{1-\alpha})\rho+ c_5c_3^{1-\alpha}\rho^\alpha  C^{1-\alpha}
 \le \tilde K\rho^\alpha
\end{align}
for every $\rho\in B$ and $\e\in(0,\e(\rho))$, where $\tilde K:=   c_5 (1+ c_4^{1-\alpha})+ c_5c_3^{1-\alpha}  C^{1-\alpha}$. From \eqref{change of variables}, \eqref{724}, \eqref{aqg-g}, and \eqref{f-f-infty} we obtain 
$$
 \frac{E^{f^\infty,g}(v^\rho_\e,Q^\nu_r(\tfrac{r x}\rho))}{r^{n-1}} \le (1+\eta) \frac{E_\e(u_\e,Q^{\nu}_\rho( x  ))}{\rho^{n-1}}
 +K\eta + L_{\eta}\| w_\e^\rho-u_{0,\zeta,\nu}\|_{L^1(Q^\nu,\R^m)} + \tilde K\rho^\alpha.
 $$
Since $v_\e^\rho = u_{\frac{rx}{\rho},\zeta,\nu}$ near $\partial Q^\nu_r( \tfrac{r x}\rho)$, we have that 
$$
\frac{m^{f^\infty, g}(u_{\frac{rx}{\rho},\zeta,\nu}, Q^\nu_r( \tfrac{r x}\rho))}{r^{n-1}}\le (1+\eta) \frac{E_\e(u_\e,Q^{\nu}_\rho( x  ))}{\rho^{n-1}}
 +K\eta + L_{\eta}\| w_\e^\rho-u_{0,\zeta,\nu}\|_{L^1(Q^\nu,\R^m)} + \tilde K\rho^\alpha.
$$
Since $r=\frac\rho\e$, by \eqref{g-hom} with $x$ replaced by $\frac{x}\rho$, the left-hand side converges to $ g_{\rm hom}(\xi)$ as $\e\to0+$. By \eqref{4.20bis-g} and \eqref{4.31bis} we can pass to the limit in the right-hand side as $\e\to0+$ and we obtain
$$
g_{\rm hom}(\zeta,\nu) \le (1+\eta) \frac{\widehat E(u,Q^{\nu}_\rho( x  ))}{\rho^{n-1}} +K\eta + L_{\eta}\| w^\rho-u_{0,\zeta,\nu}\|_{L^1(Q^\nu,\R^m)} + \tilde K\rho^\alpha.
$$
 By \eqref{estimate:0} and  \eqref{4.31ter}, passing to the limit as $\rho\to0+$ we get
 $$
g_{\rm hom}(\zeta,\nu) \le (1+\eta) \frac{d \widehat E(u,\cdot)}{d\mathcal H^{n-1}\LLL{S_u}}(x) +K\eta.
$$
Since $\zeta= [u](x)$ and $\nu = \nu_u(x)$, this inequality gives \eqref{aqs-g} by the arbitrariness of $\eta>0$.

\end{proof}

\section{ Identification of the Cantor term}\label{Sect:Cantor}

In Proposition \ref{p:homo-Can} below we characterise the derivative of $\widehat E(u,\cdot)$ with respect to $|C(u)|$, the variaton of the Cantor part of the measure $Du$. At the end of this section we conclude the proof of 
Theorem~\ref{T:det-hom}. 

 The following proposition shows that the recession function of $f_{\rm hom}$, which is defined in terms of the minimum values $m^{f,g_0}$, can be obtained from suitably rescaled limits of the minimum values $m^{f^\infty,g_0}$, involving now the recession function $f^\infty$ of $f$.

\begin{prop}\label{p:h}
Let $f\in \mathcal F$ and $g\in \mathcal G$, and let  $m^{f,g_0}$ and $m^{f^\infty,g_0}$ be as in \eqref{m-phipsi},  with  $(f,g)$ replaced by  $(f,g_0)$ and $(f^\infty,g_0)$, respectively. Assume that (a) of Theorem~\ref{T:det-hom} holds, and let $f_{\rm hom}$ be as in \eqref{f-hom}.  Let $f_{\rm hom}^\infty$ be the recession function of $f_{\rm hom}$ (whose existence is guaranteed by the fact that $f_{\rm hom} \in \mathcal F$). Then
\begin{equation}\label{asa}
f^\infty_{\rm hom}(\xi)=\lim_{r\to +\infty} \frac{m^{f^\infty,g_0}(\ell_\xi,Q^{\nu,k}_r(rx))}{k^{n-1}r^{n}}
\end{equation}
 for every $x\in \R^{n}$, $\xi \in \R^{m\times n}$, $\nu\in \Sph^{n-1}$, and $k\in \N$.
\end{prop}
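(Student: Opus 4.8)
The plan is to compare, at each fixed scale $r$, the quantity $m^{f^\infty,g_0}(\ell_\xi,Q^{\nu,k}_r(rx))$ with $\tfrac1t\,m^{f,g_0}(\ell_{t\xi},Q^{\nu,k}_r(rx))$ for large $t$, then to pass to the limit $r\to+\infty$ using hypothesis~(a) of Theorem~\ref{T:det-hom}, and finally to let $t\to+\infty$ and identify the result through the defining formula $f^\infty_{\rm hom}(\xi)=\lim_{t\to+\infty}\tfrac1t f_{\rm hom}(t\xi)$ (the limit exists since $f_{\rm hom}\in\mathcal F$ satisfies $(f5)$).

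The starting point is that $f^\infty(x,\cdot)$ and $g_0(x,\cdot,\nu)$ are positively $1$-homogeneous, so that for every $v\in SBV(A,\R^m)$ the surface terms cancel in the difference and
\begin{equation*}
E^{f^\infty,g_0}(v,A)-\tfrac1t E^{f,g_0}(tv,A)=\int_A\big(f^\infty(x,\nabla v)-\tfrac1t f(x,t\nabla v)\big)\dx;
\end{equation*}
moreover $v\mapsto tv$ maps competitors for $m^{f^\infty,g_0}(\ell_\xi,A)$ into competitors for $m^{f,g_0}(\ell_{t\xi},A)$, while $w\mapsto w/t$ does the reverse. Taking $v$ (resp.\ $w$) to be an admissible function realising the corresponding infimum up to the additive error $1$, estimating $\int_A|f^\infty(x,\nabla v)-\tfrac1t f(x,t\nabla v)|\dx$ by Lemma~\ref{estimate f finfty} applied on $A=Q^{\nu,k}_r(rx)$ (so that $\mathcal L^n(A)=k^{n-1}r^n$), and combining the two one-sided comparisons, I would obtain, for all $t\ge1$ and all $r$ with $k^{n-1}r^n\ge1$,
\begin{equation*}
\Big|\frac{m^{f^\infty,g_0}(\ell_\xi,Q^{\nu,k}_r(rx))}{k^{n-1}r^{n}}-\frac1t\,\frac{m^{f,g_0}(\ell_{t\xi},Q^{\nu,k}_r(rx))}{k^{n-1}r^{n}}\Big|\le\frac{1}{k^{n-1}r^{n}}+\Theta(t),
\end{equation*}
with $\Theta(t)=\tfrac{c_5(1+c_4^{1-\alpha})}{t}+\tfrac{c_5c_3^{1-\alpha}C^{1-\alpha}}{t^\alpha}\to0$ as $t\to+\infty$ and $C$ a constant independent of $r$, $t$, $x$, $\nu$, $k$.

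For Lemma~\ref{estimate f finfty} to be usable with a constant independent of $r$ and $t$, the point is a uniform bound $\|\nabla v\|_{L^1(A,\R^m)}\le C\,\mathcal L^n(A)$ for these almost-minimisers: for $m^{f^\infty,g_0}(\ell_\xi,\cdot)$ one has $c_2\|\nabla v\|_{L^1(A)}\le c_2|Dv|(A)\le E^{f^\infty,g_0}(v,A)\le E^{f^\infty,g_0}(\ell_\xi,A)+1\le c_3|\xi|\,\mathcal L^n(A)+1$ (Remark~\ref{BV estimate} together with $f^\infty(x,\xi)\le c_3|\xi|$), and for $m^{f,g_0}(\ell_{t\xi},\cdot)$ one gets $c_2\|\nabla w\|_{L^1(A)}\le(c_3t|\xi|+c_4)\mathcal L^n(A)+1$ from $(f4)$, whence $\|\nabla(w/t)\|_{L^1(A)}\le C\,\mathcal L^n(A)$ uniformly in $t\ge1$. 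Then, fixing $t\ge1$ and letting $r\to+\infty$, hypothesis~(a) (applied to the linear datum $t\xi$) gives that the second ratio tends to $f_{\rm hom}(t\xi)$, so the $\limsup$ and the $\liminf$ over $r$ of the first ratio both lie within $\Theta(t)$ of $\tfrac1t f_{\rm hom}(t\xi)$; letting finally $t\to+\infty$ and using $\tfrac1t f_{\rm hom}(t\xi)\to f^\infty_{\rm hom}(\xi)$ together with $\Theta(t)\to0$ yields that the limit in~\eqref{asa} exists and equals $f^\infty_{\rm hom}(\xi)$, hence is independent of $x$, $\nu$, and $k$. The main obstacle is essentially bookkeeping: handling the two one-sided inequalities symmetrically and keeping the $L^1$-gradient bounds uniform in both $r$ and $t$, so that the error term produced by Lemma~\ref{estimate f finfty} has the form $\mathcal L^n(A)\,\Theta(t)$; once the displayed two-scale estimate is in place, the iterated passage to the limit ($r$ first, then $t$) is immediate.
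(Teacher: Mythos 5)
Your proposal is correct and follows essentially the same route as the paper's proof: near-minimisers with a uniform $L^1$-gradient bound coming from $(f3)$, $(f4)$ and Remark \ref{BV estimate}, Lemma \ref{estimate f finfty} to trade $f^\infty$ for $\tfrac1t f(\cdot,t\,\cdot)$ with an error of size $\mathcal L^n(A)\,\Theta(t)$, the rescaling $v\mapsto tv$ together with the $1$-homogeneity of $g_0$ to identify $\tfrac1t m^{f,g_0}(\ell_{t\xi},\cdot)$, and then the limits taken first in $r$ (via hypothesis (a)) and then in $t$ (via $(f5)$ for $f_{\rm hom}$). The only difference is presentational: the paper derives the two one-sided inequalities separately by exchanging the roles of $f_t$ and $f^\infty$, whereas you combine them into a single two-sided estimate at finite $r$ — this is the same argument with different bookkeeping.
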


\begin{proof}
 Let $x\in \R^{n}$, $\xi \in \R^{m\times n}$, $\nu\in \Sph^{n-1}$, $k\in \N$, and $\eta\in(0,1)$ be fixed. By \eqref{m-phipsi} for every $r>0$ there exists $ v_r\in SBV(Q^{\nu,k}_r(rx),\R^m)$, with  $ v_r=\ell_\xi$ near $\partial Q_r^{\nu,k}({ r  x})$, such that  
\begin{equation}
E^{f^\infty,g_0}(v_r, Q^{\nu,k}_r(rx))\leq m^{f^\infty,g_0}(\ell_{\xi},Q^{\nu,k}_r(rx)) +\eta k^{n-1} r^n.
\label{4.6bis}
\end{equation} 
Note that, by $(f3)$ and $(f4)$, this implies that 
\begin{equation}\label{4.6biss}
c_2\int_{Q^{\nu,k}_r(rx)}|\nabla v_r|dy \leq m^{f^\infty,g_0}(\ell_{\xi},Q^{\nu,k}_r(rx)) +\eta k^{n-1} r^n
\leq (c_3|\xi|+ 1) k^{n-1} r^n,
\end{equation}
where we used the fact that $f^\infty$ satisfies  $(f4)$ with $c_4=0$. Let $t>1$; by Lemma \ref{estimate f finfty} and by \eqref{4.6biss}, recalling that $\alpha\in (0,1)$,  we have 
\begin{align*}
&\int_{Q^{\nu,k}_r(rx)}\big|f^\infty(y,\nabla v_r)-\tfrac1t f(y,t\nabla v_r)\big|\dy\\
& \leq  \frac1t c_5 (1+ c_4^{1-\alpha})k^{n-1}r^n+ \frac{1}{t^\alpha}c_5c_3^{1-\alpha} (k^{n-1}r^n)^\alpha \|\nabla v_r\|^{1-\alpha}_{L^1(Q^{\nu,k}_r(rx),\R^m)}
\leq \frac{1}{t^\alpha} K k^{n-1}r^n,
\end{align*}
where  $K=c_5 (1+ c_4^{1-\alpha})+c_5(c_3/c_2)^{1-\alpha}(c_3|\xi|+ 1)^{1-\alpha}$. Hence 
$$
E^{f_t,g_0}(v_r, Q^{\nu,k}_r(rx)) \leq E^{f^\infty,g_0}(v_r, Q^{\nu,k}_r(rx)) +   \frac{1}{t^\alpha} K k^{n-1}r^n,
$$
where $f_t(y,\xi):=\frac1t  f(y,t \xi)$.

Since $v_r\in SBV(Q^{\nu,k}_r(rx),\R^m)$ and $ v_r=\ell_\xi$ near $\partial Q_r^{\nu,k}({ r  x})$, the previous inequality, together with \eqref{m-phipsi} and \eqref{4.6bis}, gives  
\begin{align}\label{mm2}
\frac{m^{f_t,g_0}(\ell_{\xi},Q^{\nu,k}_r(rx))}{k^{n-1}r^n} \leq \frac{m^{f^\infty,g_0}(\ell_{\xi},Q^{\nu,k}_r(rx))}{k^{n-1}r^n} +\eta +   \frac{1}{t^\alpha} K.
\end{align}
We now let $r\to +\infty$ in the previous estimate. For the left-hand side we note that, by the definition of $f_{\rm hom}$, \eqref{m-phipsi}, and the  positive  $1$-homogeneity of $g_0$ with respect to its second variable, by a change of variables we have
\begin{eqnarray}\label{h=f-infty}
\lim_{r\to +\infty} \frac{m^{f_t,g_0}(\ell_{\xi},Q^{\nu,k}_r(rx))}{k^{n-1}r^{n}}
=\lim_{r\to +\infty} \frac{m^{f,g_0}(\ell_{t\xi},Q^{\nu,k}_r(rx))}{t\,k^{n-1}r^{n}} = \frac{f_{\rm hom}(t \xi)}{t}.
\end{eqnarray}
Hence, from \eqref{mm2} and \eqref{h=f-infty} we have that 
\begin{equation*}
\frac{f_{\rm hom}(t \xi)}{t} \leq \liminf_{r\to +\infty}\frac{m^{f^\infty,g_0}(\ell_{\xi},Q^{\nu,k}_r(rx))}{k^{n-1}r^n} +\eta +   \frac{1}{t^\alpha} K.
\end{equation*}
By letting $t\to +\infty$, since $\eta\in (0,1)$ is arbitrary, we obtain the inequality 
$$
f^\infty_{\rm hom}(\xi) \leq \liminf_{r\to +\infty}\frac{m^{f^\infty,g_0}(\ell_{\xi},Q^{\nu,k}_r(rx))}{k^{n-1}r^n}.
$$
Exchanging the roles of $f_t$ and $f^\infty$,  an analogous argument yields the inequality 
$$
\limsup_{r\to +\infty}\frac{m^{f^\infty,g_0}(\ell_{\xi},Q^{\nu,k}_r(rx))}{k^{n-1}r^n}\leq f^\infty_{\rm hom}(\xi) ,
$$
and hence \eqref{asa} follows.
\end{proof}

For later purposes it is convenient to prove that $f^\infty_{\rm hom}$ can be equivalently expressed in terms 
of a (double) limit involving minimisation problems where the Dirichlet conditions are 
prescribed only on a part of the boundary. 
We recall that the definitions of  $\partial^\perp_\nu  Q^{\nu,k}_r(rx)$ and $\partial^\parallel_\nu  Q^{\nu,k}_r(rx)$
are given in (i) in
Section \ref{Notation}, while the meaning of the boundary 
condition on a part of the boundary is explained after \eqref{m-phipsi}.

\begin{lem}
Under the assumptions of Proposition \ref{p:h} we have that 
\begin{equation}\label{h-hom-2}
f_{\rm hom}^\infty(a\otimes \nu)=\lim_{k\to +\infty}\liminf_{r\to +\infty} \frac{\widetilde m^{f^\infty,g_0}(\ell_{a \otimes \nu},Q^{\nu,k}_r(rx))}{k^{n-1}r^{n}}=\lim_{k\to +\infty}\limsup_{r\to +\infty} \frac{\widetilde m^{f^\infty,g_0}(\ell_{a \otimes \nu},Q^{\nu,k}_r(rx))}{k^{n-1}r^{n}}
\end{equation}
for every $x\in \R^{n}$, $a \in \R^{m}$, and $\nu\in \Sph^{n-1}$, where 
\begin{equation}\label{min-neumann}
\widetilde m^{f^\infty,g_0}(\ell_{a \otimes \nu},Q^{\nu,k}_r(rx)):=\inf\{ E^{f^\infty,g_0}(v, Q^{\nu,k}_r(rx)): 
 v \in \mathcal U^{\nu,k}_{a,r}(x)\},
\end{equation}
with
$
\mathcal U^{\nu,k}_{a,r}(x):=\{v \in SBV(Q^{\nu,k}_r(rx),\R^m) \colon
v=\ell_{a\otimes \nu} \textrm{ near }\partial^\perp_\nu  Q^{\nu,k}_r(rx)\}
$.
\end{lem}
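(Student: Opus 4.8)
The plan is to prove \eqref{h-hom-2} by squeezing all the iterated $\liminf$'s and $\limsup$'s between $f^\infty_{\rm hom}(a\otimes\nu)$ and itself. Fix $x\in\R^n$, $a\in\R^m$, $\nu\in\Sph^{n-1}$ and set $\xi:=a\otimes\nu$ (the case $a=0$ being trivial, since then both sides vanish). One inequality is immediate: every competitor for $m^{f^\infty,g_0}(\ell_\xi,Q^{\nu,k}_r(rx))$ lies in $\mathcal U^{\nu,k}_{a,r}(x)$, hence $\widetilde m^{f^\infty,g_0}(\ell_\xi,Q^{\nu,k}_r(rx))\le m^{f^\infty,g_0}(\ell_\xi,Q^{\nu,k}_r(rx))$; dividing by $k^{n-1}r^n$, sending $r\to+\infty$, and using \eqref{asa} of Proposition~\ref{p:h}, one gets $\limsup_{r\to+\infty}\widetilde m^{f^\infty,g_0}(\ell_\xi,Q^{\nu,k}_r(rx))/(k^{n-1}r^n)\le f^\infty_{\rm hom}(\xi)$ for every $k$, and therefore $\limsup_k\limsup_r(\cdots)\le f^\infty_{\rm hom}(\xi)$. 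So the real content is the lower bound $\liminf_k\liminf_r\widetilde m^{f^\infty,g_0}(\ell_\xi,Q^{\nu,k}_r(rx))/(k^{n-1}r^n)\ge f^\infty_{\rm hom}(\xi)$; once this is established, the two chains force equalities throughout, which is exactly \eqref{h-hom-2}.

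For the lower bound, fix $\eta\in(0,1)$, $\epsilon\in(0,\tfrac12)$, $k\in\N$, $r>0$, and choose $v\in\mathcal U^{\nu,k}_{a,r}(x)$ with $E^{f^\infty,g_0}(v,Q^{\nu,k}_r(rx))\le\widetilde m^{f^\infty,g_0}(\ell_\xi,Q^{\nu,k}_r(rx))+\eta k^{n-1}r^n$. By Remark~\ref{BV estimate} and the trivial bound $\widetilde m^{f^\infty,g_0}(\ell_\xi,\cdot)\le E^{f^\infty,g_0}(\ell_\xi,\cdot)\le c_3|a|k^{n-1}r^n$, the total variation of $v$ on $Q^{\nu,k}_r(rx)$ is at most $Ck^{n-1}r^n$, with $C=C(c_2,c_3,|a|)$. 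The first key step is an $L^1$-estimate on $w:=v-\ell_\xi$: since $v=\ell_\xi$ near the faces orthogonal to $\nu$, every $\nu$-directed slice of $w$ is a one-dimensional $BV$ function on a segment of length $r$ that vanishes near both endpoints, so the one-dimensional Poincaré inequality together with the $BV$ slicing theorem gives $\|w\|_{L^1(Q^{\nu,k}_r(rx),\R^m)}\le C_n\,r\,|Dw|(Q^{\nu,k}_r(rx))\le C_n' k^{n-1}r^{n+1}$. The decisive gain here is that the Poincaré constant is the \emph{small} side length $r$ of the thin box, not the large one $kr$.

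The second key step is a gluing-and-averaging argument. Consider the $N:=\lfloor\epsilon k\rfloor$ pairwise disjoint shells $A_j:=Q^{\nu,k-2(j-1)}_r(rx)\setminus\overline{Q^{\nu,k-2j}_r(rx)}$, $j=1,\dots,N$, each of width $r$ in the directions orthogonal to $\nu$ and hence of $\mathcal L^n$-measure of order $k^{n-2}r^n$, all contained in $Q^{\nu,k}_r(rx)$. Since $\sum_j|Dv|(A_j)\le Ck^{n-1}r^n$ and $\sum_j\int_{A_j}|w|\le C_n'k^{n-1}r^{n+1}$, a counting argument produces an index $j^\ast\le N$ for which both $|Dv|(A_{j^\ast})\le(C_*/\epsilon)k^{n-2}r^n$ and $\int_{A_{j^\ast}}|w|\le(C_*/\epsilon)k^{n-2}r^{n+1}$. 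On $A_{j^\ast}$ one picks a cut-off $\varphi\in C^\infty_c$ depending only on the coordinates orthogonal to $\nu$, equal to $1$ on $Q^{\nu,k-2j^\ast}_r(rx)$, equal to $0$ outside $Q^{\nu,k'}_r(rx)$ with $k':=k-2(j^\ast-1)\ge(1-2\epsilon)k$, with $|\nabla\varphi|\le C/r$, and (choosing the box levels in the full-measure set where $S_v$ is $\mathcal H^{n-1}$-negligible on the corresponding faces) with $S_v$ negligible on $\partial\{\varphi=1\}$. Then $\hat v:=\varphi v+(1-\varphi)\ell_\xi$ equals $\ell_\xi$ near $\partial Q^{\nu,k'}_r(rx)$ — near the lateral faces because $\varphi=0$, near the faces orthogonal to $\nu$ because $v=\ell_\xi$ there — so $\hat v$ is admissible for $m^{f^\infty,g_0}(\ell_\xi,Q^{\nu,k'}_r(rx))$. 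Using $S_{\hat v}\subseteq S_v$, $[\hat v]=\varphi[v]$, the positive $1$-homogeneity of $f^\infty$ and $g_0$, the upper bounds $(f4)$ and $(g4)$, and $|\nabla\hat v|\le|\nabla v|+|\nabla\varphi||w|+|a|$, the extra cost carried by $\hat v$ on $A_{j^\ast}$ is at most $(C_{**}/\epsilon)k^{n-2}r^n$, so
$$m^{f^\infty,g_0}(\ell_\xi,Q^{\nu,k'}_r(rx))\le\widetilde m^{f^\infty,g_0}(\ell_\xi,Q^{\nu,k}_r(rx))+\eta k^{n-1}r^n+\frac{C_{**}}{\epsilon}k^{n-2}r^n,$$
where $C_{**}=C_{**}(c_2,c_3,|a|,n)$ and $k'\ge(1-2\epsilon)k$.

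To conclude, divide by $k^{n-1}r^n$; fixing $k,\epsilon,\eta$, pick $r_i\to+\infty$ along which $\widetilde m^{f^\infty,g_0}(\ell_\xi,Q^{\nu,k}_{r_i}(r_ix))/(k^{n-1}r_i^n)$ converges to its $\liminf$ and, passing to a further subsequence, along which the integer $k'=k'(r_i)$ is constant equal to some $\bar k\ge(1-2\epsilon)k$; then \eqref{asa} applied with $k$ replaced by $\bar k$ gives $m^{f^\infty,g_0}(\ell_\xi,Q^{\nu,\bar k}_{r_i}(r_ix))/(\bar k^{n-1}r_i^n)\to f^\infty_{\rm hom}(\xi)$, whence
$$(1-2\epsilon)^{n-1}f^\infty_{\rm hom}(\xi)\le\frac{\bar k^{n-1}}{k^{n-1}}f^\infty_{\rm hom}(\xi)\le\liminf_{r\to+\infty}\frac{\widetilde m^{f^\infty,g_0}(\ell_\xi,Q^{\nu,k}_r(rx))}{k^{n-1}r^n}+\eta+\frac{C_{**}}{\epsilon k}.$$
Letting $k\to+\infty$, then $\eta\to0^+$, then $\epsilon\to0^+$, yields $\liminf_k\liminf_r\widetilde m^{f^\infty,g_0}(\ell_\xi,Q^{\nu,k}_r(rx))/(k^{n-1}r^n)\ge f^\infty_{\rm hom}(\xi)$, completing the proof. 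The step I expect to be the main obstacle is precisely the uniform $L^1$-control of $v-\ell_\xi$ on the gluing shell: with the naive Poincaré inequality on the whole box one would only obtain $\|v-\ell_\xi\|_{L^1}\lesssim kr\,|Dv|$, too large by a factor $k$ to absorb the cut-off error $\tfrac1r\int_{A_{j^\ast}}|v-\ell_\xi|$ into a term negligible compared with $k^{n-1}r^n$; the resolution — slicing in the $\nu$-direction to exploit that $v=\ell_\xi$ exactly near the faces orthogonal to $\nu$ — is what makes the whole scheme work, and the remaining points (keeping the shrunk radius $k'$ comparable to $k$, and freezing $k'$ along a subsequence in $r$) are bookkeeping.
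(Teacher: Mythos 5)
Your proof is correct, and its two load-bearing ideas are exactly the ones the paper uses: (i) the Poincar\'e inequality applied in the $\nu$-direction only, where the box has thickness $r$ and the datum $\ell_{a\otimes\nu}$ is attained, giving $\|v-\ell_{a\otimes\nu}\|_{L^1}\lesssim r\,k^{n-1}r^{n}$ rather than the useless $kr\,k^{n-1}r^n$ (the paper calls this ``Poincar\'e's inequality on strips''), and (ii) an averaged choice of the transition layer among $O(k)$ candidates so that the lateral correction costs only $o(k^{n-1}r^n)$. Where you differ is the implementation of the gluing: the paper uses Fubini over a continuum of levels $\lambda\in(k-h,k)$ to pick one interface $\partial^\parallel_\nu Q^{\nu,\lambda}_r(rx)$ on which $\mathcal H^{n-1}(S_v\cap\cdot)=0$ and the trace of $|v-\ell_{a\otimes\nu}|$ is small, and then glues hard, extending by $\ell_{a\otimes\nu}$ up to the \emph{original} box $Q^{\nu,k}_r(rx)$; the error is a jump cost $c_3\int|v-\ell_{a\otimes\nu}|\,d\mathcal H^{n-1}$ on the interface plus a volume term $O(k^{n-2}hr^n)$, and one sends $r\to\infty$, $k\to\infty$, $h\to\infty$ and applies \eqref{asa} at the same $k$. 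You instead use a pigeonhole over $\lfloor\epsilon k\rfloor$ discrete shells and a smooth cutoff, paying a bulk cost $\tfrac{C}{r}\int_{\rm shell}|v-\ell_{a\otimes\nu}|+|Dv|({\rm shell})$, which lands you on a \emph{shrunk} box $Q^{\nu,k'}_r(rx)$ and forces the extra bookkeeping you describe (freezing $k'=\bar k$ along a subsequence in $r$, and using $\bar k\ge(1-2\epsilon)k$ before invoking \eqref{asa} at level $\bar k$). Both routes work; the paper's hard-gluing version buys a slightly cleaner conclusion (no shrunk box, no subsequence in $k'$, no need for the $|Dv|$ bound on the shell), at the price of having to check that $S_v$ is $\mathcal H^{n-1}$-negligible on the chosen interface, which your cutoff construction renders unnecessary since $S_{\hat v}\subset S_v$ automatically.
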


\begin{proof} Let $x\in\R^n$, $a\in \R^m$, and $\nu\in \Sph^{n-1}$ be fixed, and for every $r>0$ and $k>0$ let
$Q^{\nu,k}_{x,r}:=Q^{\nu,k}_r(rx)$. Since 
$\partial^\perp_\nu Q^{\nu,k}_{x,r}\subset \partial Q^{\nu,k}_{x,r}$, 
we have 
$\widetilde m^{f^\infty,g_0}(\ell_{a \otimes \nu}, Q^{\nu,k}_{x,r})\le m^{f^\infty,g_0}(\ell_{a \otimes \nu}, Q^{\nu,k}_{x,r})$.
Due to \eqref{asa},  to obtain \eqref{h-hom-2} we only need to prove the inequality
\begin{equation}\label{h-hom-2-proof}
f^\infty_{\rm hom}(a\otimes \nu)\leq \liminf_{k\to +\infty}\liminf_{r\to +\infty}  \frac{\widetilde m^{f^\infty,g_0}(\ell_{a \otimes \nu}, Q^{\nu,k}_{x,r})}{k^{n-1}r^{n}}.
\end{equation}
To this aim, let us fix $h\in\N$. For every  $r\ge 1$ and $k\in \N$, with $k\ge h$, there exists  $v_r^k\in \mathcal U^{\nu,k}_{a,r}(x)$ such that 
\begin{equation}\label{tildem}
E^{f^\infty, g_0}(v_r^k, Q^{\nu,k}_{x,r}) \leq 
\widetilde m^{f^\infty,g_0}(\ell_{a \otimes \nu}, Q^{\nu,k}_{x,r}) + 1  \le  (c_3|a|+1)k^{n-1} r^n,
\end{equation}
where we used the fact that $f^{\infty} (y,\xi) \leq c_3 |\xi|$ and $1\le k^{n-1} r^n$.
By  $(f3)$ and $(g3)$, inequality \eqref{tildem} implies that
\begin{equation}\label{8.100}
c_2 |Dv_r^k|(Q^{\nu,k}_{x,r})\le (c_3|a|+1) k^{n-1} r^n.
\end{equation}
Changing $v_r^k$ in an $\mathcal L^n$-negligible set, we may assume that $v_r^k(y)$ coincides with the approximate limit of $v_r^k$ at $y$ for every $y\in Q^{\nu,k}_{x,r}\setminus S_{v_r^k}$ (see \cite[Definition~3.63]{AFP}). By Fubini's theorem we have
\begin{equation*}
\int_{k-h}^k\int_{\partial^\parallel_\nu Q^{\nu,\lambda}_{x,r}}|v_r^k-\ell_{a \otimes \nu}|d\mathcal H^{n-1}d\lambda=
\frac{2}{r} \int_{Q^{\nu,k}_{x,r}\setminus Q^{\nu,k-h}_{x,r}}|v_r^k-\ell_{a \otimes \nu}|\dy
\le \frac{2}{r} \int_{Q^{\nu,k}_{x,r}}|v_r^k-\ell_{a \otimes \nu}|\dy.
\end{equation*}

Since $v_r^k-\ell_{a \otimes \nu}=0$ near $\partial^\perp_\nu Q^{\nu,k}_{x,r}$, by Poincar\'e's inequality on strips we have
\begin{equation}\label{8.103}
\frac1r \int_{Q^{\nu,k}_{x,r}}|v_r^k-\ell_{a \otimes \nu}|\dy
\le  |Dv_r^k-{a \otimes \nu}|(Q^{\nu,k}_{x,r})
\le |Dv_r^k|(Q^{\nu,k}_{x,r})+|a|k^{n-1}r^n.
\end{equation}
Since $\mathcal H^{n-1}$ is $\sigma$-finite on $S_{v_r^k}$, we have 
$\mathcal H^{n-1}(S_{v_r^k}\cap\partial^\parallel_\nu Q^{\nu,\lambda}_{x,r})=0$ for $\mathcal L^1$-a.e.\ $\lambda\in (k-h,k)$.
From \eqref{8.100}-\eqref{8.103} we deduce that there exists $\lambda^k_r\in(k-h,k)$ such that
$\mathcal H^{n-1}(S_{v_r^k}\cap\partial^\parallel_\nu Q^{\nu,\lambda^k_r}_{x,r})=0$ and
\begin{equation}\label{8.104}
\int_{\partial^\parallel_\nu Q^{\nu,\lambda^k_r}_{x,r}}|v_r^k-\ell_{a \otimes \nu}|d\mathcal H^{n-1}
\le\frac{2}{h} |Dv_r^k|(Q^{\nu,k}_{x,r})+\frac{2|a|}h k^{n-1}r^n 
\le
\frac{C}{h}k^{n-1}r^n.
\end{equation}
where $C:= 2(c_3|a|+1)/c_2 +2 |a|$. 

To prove \eqref{h-hom-2-proof} we need to modify $v_r^k$ so that it attains the affine boundary datum $\ell_{a\otimes \nu}$ near the whole boundary $\partial Q^{\nu,k}_{x,r}$, and hence is a competitor for the minimisation problem in the definition of $f^{\infty}_{\rm hom}$. 
The modified function is defined by 
$$
\hat{v}_r^k := 
\begin{cases}
\smash{v_r^k} \quad & \text{in } \smash{Q^{\nu,\lambda^k_r}_{x,r}},\\
\ell_{a\otimes \nu} \quad & \text{in } \R^n\setminus  Q^{\nu,\lambda^k_r}_{x,r}.
\end{cases}
$$
Then $\hat{v}_r^k  \in SBV(Q^{\nu,k}_{x,r}, \R^m)$ and $\hat{v}_r^k = \ell_{a\otimes \nu}$ near $\partial Q^{\nu,k}_{x,r}$. Moreover,  since $\mathcal H^{n-1}(S_{v_r^k}\cap\partial^\parallel_\nu Q^{\nu,\lambda^k_r}_{x,r})=0$, by $(f4)$ and $(g4)$ we have
\begin{equation}\label{8.105}
E^{f^\infty,g_0}(\hat{v}_r^k, Q^{\nu,k}_{x,r}) \leq E^{f^\infty,g_0}(v_r^k, Q^{\nu,k}_{x,r})
+ c_3|a|\mathcal L^n(Q^{\nu,k}_{x,r}\setminus Q^{\nu,\lambda^k_r}_{x,r}) + 
 c_3 \int_{\partial^\parallel_\nu Q^{\nu,\lambda^k_r}_{x,r}}|v_r^k-\ell_{a \otimes \nu}|d\mathcal H^{n-1}.
 \end{equation}
 Since $k^{n-1}-(\lambda^k_r)^{n-1}\le k^{n-1}-(k-h)^{n-1}\le (n-1)k^{n-2}h$, from \eqref{tildem}, \eqref{8.104}, and  \eqref{8.105} we obtain
 \begin{equation*}
m^{f^\infty,g_0}(\ell_{a \otimes \nu},Q^{\nu,k}_{x,r})\le E^{f^\infty,g_0}(\hat{v}_r^k, Q^{\nu,k}_{x,r}) \leq \widetilde m^{f^\infty,g_0}(\ell_{a \otimes \nu}, Q^{\nu,k}_{x,r}) + (n-1)c_3 |a|k^{n-2}hr^n+ 
\frac{c_3C}{h}k^{n-1}r^n+1.
\end{equation*}
We then divide both sides of the previous inequality by $k^{n-1} r^n$, to obtain 
$$
\frac{m^{f^\infty,g_0}(\ell_{a \otimes \nu}, Q^{\nu,k}_{x,r})}{k^{n-1}r^n} \leq \frac{\widetilde m^{f^\infty,g_0}(
\ell_{a \otimes \nu}, Q^{\nu,k}_{x,r})}{k^{n-1}r^n}+  
(n-1)c_3 |a|\frac{h}{k}+\frac{c_3C}{h}+\frac1{k^{n-1}r^n}.
$$
Taking the limit first as $r\to +\infty$, then as $k\to +\infty$, and finally as $h\to +\infty$, from \eqref{asa} we obtain \eqref{h-hom-2-proof}, and hence  \eqref{h-hom-2}.
\end{proof}

In the next proposition we characterise the derivative of $\widehat E(u,\cdot)$ with respect to $|C(u)|$, for any $BV$-function $u$.

\begin{prop}[Homogenised Cantor integrand]\label{p:homo-Can}  Let $f$, $g$, $E_\e$, $(\e_k)$, and 
 $\widehat E$ be as in Theorem \ref{thm:Gamma-conv}. Assume that (a) of Theorem~\ref{T:det-hom} holds, let $f_{\rm hom}$ be as in \eqref{f-hom}, and let $f_{\rm hom}^\infty$ denote its recession function (whose existence is guaranteed by the fact that $f_{\rm hom} \in \mathcal F$).  
 Then for every $A\in \A$ and every $u\in L^1_{\rm loc}(\R^n,\R^m)$, with $ u|_A\in BV(A,\R^m)$, we have that 
\begin{equation*}
\frac{d\widehat E(u,\cdot)}{d|C(u)|}(x)=f^\infty_{\rm hom}\bigg(\frac{dC(u)}{d|C(u)|}(x)\bigg)
\end{equation*} 
for $|C(u)|$-a.e. $x\in A$.    
\end{prop}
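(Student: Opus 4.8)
The plan is to follow the same blow-up scheme used in Propositions~\ref{p:homo-vol} and \ref{p:homo-sur}, but now localising at a point $x$ where $|C(u)|$ has positive density and $|Du|$ is ``purely Cantor'' there. First I would recall from \cite[Lemmas 3.1 and 3.5]{BFM} and the measure/semicontinuity/bounds properties established in Theorem~\ref{thm:Gamma-conv} that, arguing as in the proof of (3.18) in \cite[Theorem 3.7]{BFM}, for $|C(u)|$-a.e.\ $x\in A$ one has the representation
\begin{equation*}
\frac{d\widehat E(u,\cdot)}{d|C(u)|}(x)=\lim_{\rho\to 0+}\frac{m_{\widehat E}(\ell_{A(x)}(\cdot-x),Q_\rho(x))}{|C(u)|(Q_\rho(x))},
\end{equation*}
where $A(x):=\frac{dC(u)}{d|C(u)|}(x)$. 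A classical structure theorem for the Cantor part (Alberti's rank-one theorem, or the weaker fact that for $|C(u)|$-a.e.\ $x$ the matrix $A(x)$ has rank one, $A(x)=a(x)\otimes\nu(x)$) reduces the question to computing this limit when the linear blow-up profile is $\ell_{a\otimes\nu}$. This is exactly why the previous lemma was proved: it gives $f^\infty_{\rm hom}(a\otimes\nu)$ as the double limit of the Neumann-type minimisation problems $\widetilde m^{f^\infty,g_0}(\ell_{a\otimes\nu},Q^{\nu,k}_r(rx))/(k^{n-1}r^n)$.

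For the upper bound $\frac{d\widehat E(u,\cdot)}{d|C(u)|}(x)\le f^\infty_{\rm hom}(A(x))$ I would, as in Step~1 of Propositions~\ref{p:homo-vol}--\ref{p:homo-sur}, take near-optimal competitors $v_r$ for $m^{f^\infty,g_0}(\ell_{a\otimes\nu},Q_r)$, rescale them to scale $\e$ on the small cube $Q_\rho(x)$, use Lemma~\ref{estimate g g0} (together with the $L^\infty$-truncation of Lemma~\ref{truncation}) to replace $g_0$ by $\frac{1}{t}g(t\cdot)$ with $t=\rho/r$, and use Lemma~\ref{estimate f finfty} to replace $f^\infty$ by $\frac1t f(t\cdot)$; the error terms are controlled by $\lambda(2\rho M_\eta)$ and $\rho^\alpha$ and vanish as $\rho\to0+$. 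The key subtlety here, compared with the volume and surface steps, is that the natural affine datum $\ell_{a\otimes\nu}$ does \emph{not} correctly approximate $u$ near $\partial Q_\rho(x)$ in the Cantor blow-up (the blow-up of a Cantor measure is highly non-smooth), so one cannot directly glue. This is where the Neumann formula \eqref{h-hom-2} is essential: by only imposing the boundary datum on $\partial^\perp_\nu Q^{\nu,k}_\rho(x)$ (the faces orthogonal to $\nu$) and leaving the faces parallel to $\nu$ free, one can patch the rescaled competitor to the actual recovery sequence $u_\e$ along the $\nu$-parallel faces at negligible surface cost, exploiting that the Cantor blow-up of $u$ at $x$ is, in an averaged sense, a one-dimensional profile in the direction $\nu$. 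I expect the careful matching of $u_\e$ to the test function on $\partial^\parallel_\nu$, using the slicing/Fubini argument already developed in the proof of \eqref{h-hom-2} to choose a good parallel face, to be the main obstacle.

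For the lower bound $\frac{d\widehat E(u,\cdot)}{d|C(u)|}(x)\ge f^\infty_{\rm hom}(A(x))$ I would mirror Step~2: take a recovery sequence $u_\e\to u$ with $E_\e(u_\e,A)\to\widehat E(u,A)$, restrict to a cube $Q_\rho(x)\subset\subset A$ chosen (using that $\widehat E(u,\cdot)$ is a Radon measure) so that $\widehat E(u,\partial Q_\rho(x))=0$, blow up $u_\e$ at scale $\rho$, apply the Fundamental Estimate \cite[Proposition 3.1]{BDfV} to enforce the boundary condition $\ell_{a\otimes\nu}$ only on the $\nu$-orthogonal faces (so as to land in the admissible class $\mathcal U^{\nu,k}_{a,r}(x)$ of \eqref{min-neumann} rather than the stricter Dirichlet class), truncate via Lemma~\ref{truncation}, rescale to scale $\e$, and use Lemmas~\ref{estimate g g0} and \ref{estimate f finfty} to pass from $(f,g)$ to $(f^\infty,g_0)$ with errors $O(\rho^\alpha)+O(\lambda(2\rho M_\eta))$. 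Then dividing by $|C(u)|(Q_\rho(x))$ and using that for $|C(u)|$-a.e.\ $x$ the rescaled measures $\frac{1}{|C(u)|(Q_\rho(x))}Du(x+\rho\,\cdot)$ converge (after normalisation) to $A(x)\,|C(u)|$-limit profiles supported on $\nu$-hyperplanes, one gets
\begin{equation*}
\liminf_{\rho\to0+}\frac{\widehat E(u,Q_\rho(x))}{|C(u)|(Q_\rho(x))}\ \ge\ \lim_{k\to+\infty}\liminf_{r\to+\infty}\frac{\widetilde m^{f^\infty,g_0}(\ell_{a\otimes\nu},Q^{\nu,k}_r(rx))}{k^{n-1}r^n}\ =\ f^\infty_{\rm hom}(a\otimes\nu),
\end{equation*}
where the last equality is \eqref{h-hom-2}. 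Throughout one uses the rank-one structure $A(x)=a(x)\otimes\nu(x)$ to ensure $f^\infty_{\rm hom}(A(x))$ is exactly the quantity in \eqref{h-hom-2}. Finally, combining this proposition with Propositions~\ref{p:homo-vol} and \ref{p:homo-sur} and with the decomposition $Du=D^a u + D^j u + C(u)$ identifies $\widehat E=E_{\rm hom}$, which concludes the proof of Theorem~\ref{T:det-hom}.
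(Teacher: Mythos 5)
Your overall architecture (rank-one theorem, truncation, Lemmas \ref{estimate g g0} and \ref{estimate f finfty}, the fundamental estimate, and the partial-boundary formula \eqref{h-hom-2}) matches the paper's, but there is a genuine gap at the very start: the representation formula you propose for the Cantor derivative is wrong. With the fixed-slope datum $\ell_{A(x)}(\cdot-x)$ and normalisation by $|C(u)|(Q_\rho(x))$, the numerator is at most $\widehat E(\ell_{A(x)}(\cdot-x),Q_\rho(x))\le (c_3|A(x)|+c_4)\rho^n$, while $|C(u)|(Q_\rho(x))/\rho^n\to+\infty$ for $|C(u)|$-a.e.\ $x$ (cf.\ \eqref{(ii)}), so your limit is $0$, not $f^\infty_{\rm hom}(A(x))$. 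The correct starting point (from \cite[Lemma 3.9]{BFM}, used in \eqref{C-deriv-form}) involves a doubly indexed amplitude $t_{\rho,k}$ with $t_{\rho,k}\to+\infty$ and $\rho\,t_{\rho,k}\to0$, elongated rectangles $Q^{\nu,k}_\rho(x)$, the blown-up datum $\ell_{t_{\rho,k}a\otimes\nu}$, and normalisation $k^{n-1}\rho^n t_{\rho,k}$. This amplitude blow-up is not a technicality: it is precisely what makes the recession function appear, and the two conditions $t_{\rho,k}\to+\infty$ and $\rho\,t_{\rho,k}\to0$ are exactly what control the error terms when you pass from $(f,g)$ to $(f^\infty,g_0)$ via Lemma \ref{estimate f finfty} (which needs $t\to+\infty$) and Lemma \ref{estimate g g0} (which needs $t\|[\cdot]\|_\infty$ small). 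Without introducing $t_{\rho,k}$ your error estimates $O(\rho^\alpha)+O(\lambda(2\rho M_\eta))$ have no analogue, and neither bound closes.

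You have also swapped the roles of the Dirichlet and Neumann cell formulas. In the upper bound the boundary datum appearing in $m_{\widehat E}(\ell_{t_{\rho,k}a\otimes\nu},Q^{\nu,k}_\rho(x))$ is itself affine, so the rescaled near-minimisers of the full Dirichlet problem \eqref{asa} are directly admissible; no patching to the recovery sequence $u_\e$ occurs there (that is a lower-bound operation), and \eqref{h-hom-2} is not needed. It is in the lower bound that \eqref{h-hom-2} is essential: after blowing up the recovery sequence with the normalisation $t_{\rho,k}=|Du|(Q^{\nu,k}_\rho(x))/(k^{n-1}\rho^n)$, the limit profile is, by \cite[Theorem 2.3]{ADM} and \cite[Lemma 5.1]{Larsen}, a one-dimensional monotone profile $\psi^k(y\cdot\nu)a$, which can be matched by an affine function $\ell^k$ only on the faces orthogonal to $\nu$ (where $\psi^k$ takes its extreme values); the faces parallel to $\nu$ must be left free, whence the class $\mathcal U^{\nu,k}_{a,r}(x)$ and \eqref{h-hom-2}. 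Your vague appeal to ``rescaled measures converging to profiles supported on $\nu$-hyperplanes'' does not substitute for this $L^1$-compactness and structure result, which is what legitimises the fundamental-estimate gluing to $\ell^k$ and hence the comparison with $\widetilde m^{f^\infty,g_0}$.
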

\begin{proof} Let us fix $A\in \A$ and $u\in L^1_{\rm loc}(\R^n,\R^m)$, with $ u|_A\in BV(A,\R^m)$.
We divide the proof into two steps.

\medskip

\textit{Step 1:} We claim that
\begin{equation}\label{aqccc}
\frac{d\widehat E(u,\cdot)}{d|C(u)|}(x)\leq f^\infty_{\rm hom}\bigg(\frac{dC(u)}{d|C(u)|}(x)\bigg) \quad \textrm{for } |C(u)|\textrm{-a.e. } x\in A.
\end{equation}
By Alberti's  rank-one theorem  \cite{Al93} we know that  for $|C(u)|$-a.e.\ $x\in A$ we have 
\begin{equation}\label{C-rank-one}
\frac{dC(u)}{d|C(u)|}(x) = a(x) \otimes \nu(x)
\end{equation}
for a suitable pair $(a(x),\nu(x)) \in \R^m \times \Sph^{n-1}$. 
Moreover, by (a)-(e) of Theorem \ref{thm:Gamma-conv} and by \cite[Lemma 3.9]{BFM} we have that for $|C(u)|$-a.e.\ $x\in A$ there exists a 
doubly indexed positive sequence $( t_{\rho,k} )$, with $\rho>0$ and $k\in \N$, such that
\begin{gather}
\text{for every }k\in\N\quad  t_{\rho,k}  \to +\infty\quad\text{and}\quad \rho\,  t_{\rho,k}  \to {0+} \quad \text{as } \rho \to {0+},
\label{t-rho}
\\
\frac{d\widehat E(u,\cdot)}{d|C(u)|}(x)=\lim_{k \to +\infty} \limsup_{\rho \to 0+}\frac{m_{\widehat E}(\ell_{ t_{\rho,k} a(x)\otimes \nu(x)}, Q_\rho^{\nu(x),k}(x))}{k^{n-1}\rho^n\,  t_{\rho,k}}. 
\label{C-deriv-form}
\end{gather}
Let $ x \in A$ be fixed such that  \eqref{C-rank-one}-\eqref{C-deriv-form}  hold true and set $a:=a( x )$ and $\nu:=\nu( x )$. 

For every $\rho>0$ and every $k\in \N$ we have 
\begin{equation}\label{ass-h} 
f^\infty_{\rm hom}(a\otimes\nu)=\lim_{r\to +\infty} \frac{m^{f^\infty,g_0}(\ell_{a\otimes \nu},Q^{\nu,k}_r(\tfrac{r x }{\rho}))}{k^{n-1}r^{n}} ,
\end{equation}
since the above identity directly follows from \eqref{asa} by replacing $x$ with $\frac{ x}{\rho}$. 

Let  us fix $\eta\in(0,\frac12)$. By \eqref{m-phipsi} for every $k\in\N$, $\rho\in (0,1)$ and  $r>0$ there exists a function  $v^{\rho,k}_r\in SBV\big(Q^{\nu,k}_r\big(\frac{r x }{\rho}\big),\R^m\big)$ with $v^{\rho,k}_r= \ell_{a\otimes \nu}$  near  $\partial Q^{\nu,k}_r(\frac{r x }{\rho})$ and such that
\begin{equation}\label{quasi-min-h}
 E^{f^\infty,g_0}(v^{\rho,k}_r, Q^{\nu,k}_r(\tfrac{r x }{\rho}) )
 \leq m^{f^\infty,g_0}(\ell_{a\otimes \nu},Q^{\nu,k}_r(\tfrac{r x }{\rho})) +\eta\, k^{n-1} r^n  \leq (c_3|a|+1) k^{n-1} r^n,
\end{equation}
where we used the fact that $f^{\infty} (y, \xi) \leq c_3 |\xi|$. We extend $ v^{\rho,k}_r$ to $\R^n$ by setting
$ v^{\rho,k}_r(y)= \ell_{a\otimes \nu}(y)$ for every $y\in \R^n\setminus Q^{\nu,k}_r(\tfrac{r  x}\rho)$.

For every $y\in \R^n$ we define $w^{\rho,k}_r(y):=\frac1r v^{\rho,k}_r(\frac{rx}\rho + ry) -\ell_{a\otimes\nu}(\frac{x}\rho)$. Clearly $ w^{\rho,k}_r\in SBV_{ \rm loc}(\R^n,\R^m)$ and $ w^{\rho,k}_r =\ell_{a\otimes\nu}$ near $\partial Q^{\nu,k}$ and in $\R^n\setminus Q^{\nu,k}$, where $Q^{\nu,k}:=Q^{\nu,k}_1(0)$. Moreover, by a change of variables, using the $1$-homogeneity of $g_0$ in the second variable we have 
\begin{align}\label{scaling-hh}
E^{f^\infty_{r, \rho}, g_{0}^{r, \rho}} (w^{\rho,k}_r, Q^{\nu,k}) 
=  \frac{1}{r^n}E^{f^\infty,g_0}(v^{\rho,k}_r, Q^{\nu,k}_r(\tfrac{r x }{\rho}) ), 
\end{align}
 where $E^{f^\infty_{r, \rho}, g_{0}^{r, \rho}}$ is the functional with integrands $ f^\infty_{r, \rho} (y,\xi):=f^\infty(\tfrac{r x}{\rho} +r y ,\xi)$ and $g_{0}^{r, \rho} (y,\zeta,\nu):=
g_0(\tfrac{r x}{\rho} + r y,  \zeta,\nu)$\ie
 $$
E^{f^\infty_{r, \rho}, g_{0}^{r, \rho}} (w, Q^{\nu,k})=  \int_{Q^{\nu,k}}f^\infty(\tfrac{r x}{\rho} +r y ,\nabla w(y))\dy +\int_{S_{w}\cap Q^{\nu,k}}g_0(\tfrac{r x}{\rho} + r y,[w](y),\nu_{w}(y))\,d\mathcal H^{n-1}(y) \nonumber
$$
for every $w\in SBV(Q^{\nu,k},\R^m)$. Note that $f^\infty_{r, \rho} \in \mathcal{F}$ and $g_{0}^{r, \rho} \in \mathcal{G}$.  
By the lower bounds  $(f3)$ and $(g3)$, from \eqref{quasi-min-h} and \eqref{scaling-hh}, using also
Poincar\'e's inequality, we deduce that $\|w^{\rho,k}_r\|_{L^1(Q^{\nu,k},\R^m)}+|Dw^{\rho,k}_r|(Q^{\nu,k})\le C k^{n-1}$ for every $\rho\in (0,1)$ and $r>0$, with $C:=2(c_3|a|+1)/c_2+2|a|$.

By Lemma \ref{truncation} there exist a constant $ M_{\eta,k} >0$, depending also on $|a|$ and $C$, such that for every $\rho\in(0,1)$ and $r>0$ there exists $\tilde w^{\rho,k}_r\in SBV(Q^{\nu,k},\R^m)\cap L^\infty(Q^{\nu,k},\R^m)$ with the following properties: $\tilde w^{\rho,k}_r=\ell_{a\otimes \nu}$ near $\partial Q^{\nu,k}$,  $\|\tilde w^{\rho,k}_r\|_{L^\infty(Q^{\nu,k},\R^m)}\le M_{\eta,k} $, and
\begin{equation}\label{8.300}
E^{f^\infty_{r, \rho}, g_{0}^{r, \rho}} (\tilde w^{\rho,k}_r, Q^{\nu,k}) \leq E^{f^\infty_{r, \rho}, g_{0}^{r, \rho}} (w^{\rho,k}_r, Q^{\nu,k}) + \eta\, k^{n-1}  
 \leq \frac{m^{f^\infty,g_0}(\ell_{a\otimes \nu},Q^{\nu,k}_r(\tfrac{r x}{\rho}))}{r^n}  +2\eta\, k^{n-1},
\end{equation}
where the last inequality follows from \eqref{ass-h} and \eqref{quasi-min-h}.
Let $\tilde v^{\rho,k}_r\in SBV_{\rm loc}(\R^n,\R^m)$ be defined by 
$\tilde v^{\rho,k}_r(y):=r\tilde w^{\rho,k}_r(\frac{y}{r}-\frac{x}{\rho})+ \ell_{a\otimes \nu}(\frac{rx}\rho)$. 
Then $\tilde v^{\rho,k}_r=\ell_{a\otimes \nu}$ near $\partial Q^{\nu,k}_r( \tfrac{ r  x}\rho)$ and, 
by a change of variables,
\begin{gather}
 \|[\tilde v^{\rho,k}_r]\|_{L^\infty(S_{\smash{\tilde v^{\rho,k}_r}}\cap Q_r(\frac{rx}\rho),\R^m)}\leq 2 M_{\eta,k}\, r,
 \label{101}\\
 \label{222}
E^{f^\infty,g_0} (\tilde v^{\rho,k}_r, Q^{\nu,k}_r( \tfrac{ r  x}\rho)) = r^n E^{f^\infty_{r, \rho}, g_{0}^{r, \rho}} (\tilde w^{\rho,k}_r, Q^{\nu,k}) \leq m^{f^\infty,g_0}(\ell_{a\otimes \nu},Q^{\nu,k}_r(\tfrac{r x}{\rho})) + 2\eta\, k^{n-1} r^n,
\end{gather}
where the last inequality follows from \eqref{8.300}.
Moreover, by combining \eqref{quasi-min-h} and \eqref{222} with the lower bounds $(f3)$ and $(g3)$ we immediately deduce the existence of a constant $C>0$, depending on $|a|$, such that
\begin{gather}
\frac{1}{k^{n-1}r^n}\int_{Q^{\nu,k}_r( \tfrac{ r  x}\rho)} |\nabla \tilde v^{\rho,k}_r|\dy \leq C,\label{111}
\\
\label{121}
\frac{1}{k^{n-1}r^n}\int_{S_{\smash{\tilde v^{\rho,k}_r}}\cap Q^{\nu,k}_r( \tfrac{ r  x}\rho)} |[\tilde v^{\rho,k}_r]| d\mathcal{H}^{n-1}  \leq C.
\end{gather}

Let $\rho\in (0,1)$ and $k \in \N$, with $t_{\rho,k}>1$. By  Lemma \ref{estimate f finfty}, applied with $t=t_{\rho,k}$, and \eqref{111}, recalling that $\alpha \in (0,1)$, we obtain that
\begin{gather}\label{Claim-f-infty}
 \frac{1}{k^{n-1} r^n}\int_{Q^{\nu,k}_r(\frac{r x }{\rho})}\Big|f^\infty( y,\nabla\tilde v_{r}^{\rho,k})-\tfrac{1}{ t_{\rho,k} } 
f( y, t_{\rho,k}\nabla\tilde v_{r}^{\rho,k})\Big|\dy \nonumber\\
 \leq  \frac1{t_{\rho,k}} c_5 (1+ c_4^{1-\alpha})+ \frac{1}{t_{\rho,k}^\alpha}c_5c_3^{1-\alpha} C^{1-\alpha}
 \le \frac{K_1}{t_{\rho,k}^\alpha},
\end{gather}
 where $K_1:= c_5 (1+ c_4^{1-\alpha})+c_5c_3^{1-\alpha} C^{1-\alpha}$.
 
For every $r>0$ and  $\rho\in(0,1)$ we can apply Lemma~\ref{estimate g g0} with $t:= \frac{\rho\, t_{\rho,k}}{r}$. By \eqref{101} and \eqref{121} we obtain
\begin{equation}\label{Claim-g0-2}
 \frac{1}{k^{n-1} r^n}\int_{S_{\smash{\tilde v_{r}^{\rho,k}}}\cap Q^{\nu,k}_r(\frac{r x}{\rho})}
\!\big| g_0( y,[\tilde v_{r}^{\rho,k}],\nu_{\tilde v_{r}^{\rho,k}})-\tfrac{r}{\rho\,  t_{\rho,k} }g(y,\tfrac{\rho\, t_{\rho,k} }{r} [\tilde v_{r}^{\rho,k}],\nu_{\tilde v_{r}^{\rho,k}})\big|\,d\mathcal H^{n-1}
\leq  \lambda (2 M_{\eta,k}\,\rho\,  t_{\rho,k} ) C  
\end{equation}
for every $r>0$ and $\rho\in(0,1)$.

Estimates \eqref{Claim-f-infty} and \eqref{Claim-g0-2}, together with \eqref{ass-h} and \eqref{222}, give 
\begin{gather}\nonumber
 \limsup_{r\to +\infty}\frac{1}{k^{n-1} r^n  t_{\rho,k} } \Big(\int_{Q^{\nu,k}_r(\frac{r x }{\rho})}f( y,  t_{\rho,k}  \nabla\tilde v_{r}^{\rho,k})\dy + \frac{r}{\rho}\int_{S_{\tilde v_{r}^{\rho,k}}\cap Q^{\nu,k}_r(\frac{r x }{\rho})} g( y,\tfrac{\rho\,  t_{\rho,k} }{r} [\tilde v_{r}^{\rho,k}],\nu_{\tilde v_{r}^{\rho,k}})\,d\mathcal H^{n-1}\Big)\\\label{h:stima-1}
 \leq f^\infty_{\rm hom}(a\otimes \nu)+2\eta+  K_1t_{\rho,k}^{-\alpha} +  \lambda (2 M_{\eta,k}\,  \rho\,  t_{\rho,k} ) C.
\end{gather}

Given $k\in \N$ and $\rho \in (0,1)$, for every $\e>0$ and $y\in\R^n$ we define  $ u^{\rho,k}_\e(y):= \e\, t_{\rho,k} \tilde v^{\rho,k}_r (\frac{y}{\e})= \frac{\rho}{r}\, t_{\rho,k}\tilde v^{\rho,k}_r (\frac{r y}{\rho})$, with $r:=\rho/\e$. Then $ u^{\rho,k}_\e\in SBV_{\rm loc}(\R^n,\R^m)$, $ u^{\rho,k}_\e =t_{\rho,k} \ell_{a \otimes \nu}$ near $\partial Q^{\nu,k}_\rho( x  )$ and in $\R^n\setminus Q^{\nu,k}_\rho( x  )$. By a change of variables, from \eqref{h:stima-1} we have 
\begin{equation}\label{scaling-h}
 \limsup_{\e\to 0+}\frac{E_\e(u^{\rho,k}_\e,Q^{\nu,k}_\rho( x ))}{k^{n-1} \rho^n  t_{\rho,k} } \leq f^\infty_{\rm hom}(a\otimes \nu)+2\eta+  K_1t_{\rho,k}^{-\alpha} +  \lambda (2 M_{\eta,k}  \rho\,  t_{\rho,k} ) C.
\end{equation}
Since $ u^{\rho,k}_\e$ coincides with $t_{\rho,k}\ell_{a\otimes \nu}$ in $\R^n\setminus Q^{\nu,k}_\rho(x)$, using
Poincar\'e's inequality and the lower bounds  $(f3)$ and $(g3)$ we deduce from \eqref{scaling-h} that for every $\rho>0$  there exists $\e(\rho)>0$ such that  the functions 
$u^{\rho,k}_\e$ are bounded in $BV_{\rm loc}(\R^n,\R^m)$ uniformly with respect to $\e\in(0,\e(\rho))$. Then there exists a subsequence, not relabelled, of the sequence $(\e_j)$ considered in Theorem \ref{thm:Gamma-conv}, such that 
 $( u^{\rho,k}_{\e_j})$ converges  in $L^1_{\rm loc}(\R^n,\R^m)$  to  some  $ u^{\rho,k} \in BV_{\rm loc}(\R^n,\R^m)$ as $j\to+\infty$  with  $u^{\rho,k}=t_{\rho,k}  \ell_{a \otimes \nu}$  in   
 $Q^{\nu,k}_{(1+\eta)\rho}( x  )\setminus Q^{\nu,k}_{\rho}( x  )$. As a consequence  of the
$\Gamma$-convergence of $E_{\e_j}(\cdot, Q^{\nu,k}_{(1+\eta)\rho}(x))$ to  $\widehat E(\cdot, Q^{\nu,k}_{(1+\eta)\rho}(x))$, from \eqref{scaling-h} we obtain 
\begin{align*}
\frac{m_{\widehat E}(\ell_{t_{\rho,k} a\otimes \nu},Q^{\nu,k}_{(1+\eta)\rho}( x  ))}{k^{n-1}\rho^n\,  t_{\rho,k}} &\leq \frac{\widehat E(u^\rho,Q^{\nu,k}_{(1+\eta)\rho}( x  ))}{k^{n-1}\rho^n\,  t_{\rho,k}}
\leq \limsup_{j\to +\infty}\frac{ E_{\e_j}( u^\rho_{\e_j},Q^{\nu,k}_{(1+\eta)\rho}(x))}{k^{n-1}\rho^n\,  t_{\rho,k}} \\ 
&\leq f^\infty_{\rm hom}(a\otimes \nu)+2\eta+  K_1t_{\rho,k}^{-\alpha} +  \lambda (2 M_{\eta,k}\,  \rho\,  t_{\rho,k} ) C.
\end{align*}
Now, passing to the limit  as $\rho \to 0+$, from \eqref{t-rho} and \eqref{acc} we get
$$
\limsup_{\rho\to 0+}\frac{m_{\widehat E}(\ell_{t_{\rho,k} a\otimes \nu},Q^{\nu,k}_{(1+\eta)\rho}( x  ))}{k^{n-1}\rho^n\,  t_{\rho,k}} \leq
f^\infty_{\rm hom}(a\otimes \nu)+2\eta.
$$
Finally, passing to the limit as $k\to +\infty$, by \eqref{C-deriv-form} we obtain
$$
(1+\eta)^{n}\frac{d\widehat E(u,\cdot)}{d|C(u)|}( x ) \le f^\infty_{\rm hom}(a\otimes \nu) + 2\eta.
$$
Since $a:= a(x)$ and $\nu = \nu(x)$, by \eqref{C-rank-one} this inequality gives \eqref{aqccc} by the arbitrariness of $\eta>0$.

\medskip

\textit{Step 2:} We claim that
\begin{equation}\label{aqscc}
\frac{d\widehat E(u,\cdot)}{d|C(u)|}(x)\geq f^\infty_{\rm hom}\bigg(\frac{dC(u)}{d|C(u)|}(x)\bigg) \quad \textrm{for } |C(u)| \textrm{-a.e. } x\in A.
\end{equation}
We extend $u$ to $\R^n$ by setting $u=0$ on $\R^n\setminus A$. By  $\Gamma$-convergence  there exists $(u_\e)\subset L^1_{\rm loc}(\R^n,\R^m)$,  with ${u_\e}|_A\in SBV(A,\R^m)$,  such that
\begin{equation}\label{h-hom:rec-seq}
u_\e \to u \quad \text{in} \quad L^1_{\rm loc}(\R^n,\R^m) \quad \text{and}\quad \lim_{\e \to 0+}E_\e(u_\e,A)=\widehat E(u,A),
\end{equation}
along the sequence $(\e_j)$ considered in Theorem \ref{thm:Gamma-conv}.

For $|C(u)|$-a.e.\ $x\in A$ there exist $a(x)\in \R^m$ and $\nu(x)\in \Sph^{n-1}$ such that
for every $k\in \N$ we have 
\begin{gather}
\lim_{\rho \to 0+}\frac{Du(Q^{\nu,k}_{\rho}(x))}{|Du|(Q^{\nu,k}_{\rho}(x))}= 
\frac{dC(u)}{d|C(u)|}(x) = a(x)\otimes \nu(x),
\label{(i)}
\\
\lim_{\rho \to 0+}\frac{|Du|(Q^{\nu,k}_{\rho}( x))}{\rho^n}=+\infty,
\label{(ii)}
\\
\lim_{\rho \to 0+}\frac{|Du|(Q^{\nu,k}_{\rho}( x ))}{\rho^{n-1}}=0,
\label{(iii)}
\\
\lim_{\rho \to 0+} \frac{\widehat E(u,Q^{\nu,k}_\rho( x))}{|Du|(Q^{\nu,k}_\rho(x))}=\frac{d \widehat E(u,\cdot)}{d|Du|}(x)<+\infty,
\label{(iv)}
\end{gather}
where \eqref{(i)} follows by \cite[Corollary 3.9]{Al93}, \eqref{(ii)} and \eqref{(iii)} are \EEE consequences \EEE of \cite[Proposition 2.2]{ADM}, while \eqref{(iv)} holds true thanks to a generalised version of the Besicovitch Differentiation Theorem (see \cite{Mor} and \cite[Sections~1.2.1-1.2.2]{FonLeo}).


Let us fix $x\in A$ such that \eqref{(i)}-\eqref{(iv)}  hold true, and set $a:=a(x)$ and $\nu:=\nu(x)$. For $k\in \N$ and $\rho\in (0,1)$ we set 
$$
t_{\rho,k} :=\frac{|Du|(Q^{\nu,k}_\rho( x ))}{k^{n-1}\rho^n}. 
$$
Then 
\begin{equation}\label{aaq}
\frac{d \widehat E(u,\cdot)}{d|C(u)|}(x )= \frac{d \widehat E(u,\cdot)}{d|Du|}( x)
=\lim_{\rho \to 0+} \frac{\widehat E(u,Q^{\nu,k}_\rho(x))}{k^{n-1}\rho^n t_{\rho,k}}.
\end{equation}
Note that, by \eqref{(ii)} and \eqref{(iii)}, the following properies hold: 
\begin{equation}\label{aan}
 \text{for every }k\in\N \quad t_{\rho,k}  \to +\infty\quad\hbox{and}\quad\rho\,  t_{\rho,k} \to {0+} \quad\hbox{as }\rho \to {0+}.
\end{equation}
Recalling \eqref{4.14bis}, we have that
$
\widehat E(u, Q^{\nu,k}_{\rho}(x))\leq c_3|Du|(Q^{\nu,k}_{\rho}(x)) + c_4 k^{n-1}\rho^n
$,
hence by \eqref{aan} there exists $\rho_k\in(0,1)$  such that
\begin{equation}\label{aaa}
Q^{\nu,k}_\rho( x  ) \subset\subset A\quad\hbox{and}\quad\frac{\widehat E(u,Q^{\nu,k}_\rho(x))}{k^{n-1}\rho^n  t_{\rho,k}} \leq  c_3 + 1 \quad  \text{ for every } \rho\in(0,\rho_k).
\end{equation}
Since $\widehat E(u,\cdot)$ is a  Radon measure, there exists a set $ B_k \subset  (0,\rho_k)$, with $(0,\rho_k)\setminus B_k $ at most countable, such that $\widehat E(u,\partial Q^{\nu,k}_\rho( x  ))=0$ for every $\rho\in B_k$. Proceeding as in the proof of \eqref{4.20bis} and \eqref{4.20bis-g}, by \eqref{h-hom:rec-seq}   we can show that for every  $\rho\in B_k$
\begin{equation}\label{4.20bis-h}
\lim_{\e \to 0+}E_\e(u_\e,Q^{\nu,k}_{\rho}( x  ))=\widehat E(u,Q^{\nu,k}_{\rho}( x  )).
\end{equation}
Hence, for every  $\rho\in B_k$ there exists $\e(\rho,k)>0$  such that for every $\e\in (0,\e(\rho,k))$
\begin{equation}\label{aab}
 \frac{E_\e(u_\e,Q^{\nu,k}_\rho( x ))}{k^{n-1}\rho^n \, t_{\rho,k}}\le \frac{\widehat E(u,Q^{\nu, k}_\rho( x))}{k^{n-1}\rho^n \, t_{\rho,k}}+\rho \le c_3
+ 2,
\end{equation}
where in the last inequality we used \eqref{aaa}.

Now, for every $\rho \in B_k $ and $\e\in (0,\e(\rho,k))$ we consider the blow-up functions defined for $y\in Q^{\nu,k}:=Q^{\nu,k}_1(0)$ by  
\begin{align*}
w_\e^{\rho,k}(y)&:=\frac{1}{k^{n-1}\rho\, t_{\rho,k}}\Big(u_\e( x +\rho y)-\frac{1}{k^{n-1}\rho^n}\int_{Q^{\nu,k}_\rho( x)}u_\e(z)\, dz\Big)\\
w^{\rho,k}(y)&:=\frac{1}{k^{n-1}\rho \,  t_{\rho,k} }\Big(u( x +\rho y)-\frac{1}{ k^{n-1}\rho^n}\int_{Q^{\nu,k}_\rho( x )}u( z)\, dz\Big).
\end{align*}
Then $ w_\e^{\rho,k}\in SBV(Q^{\nu,k},\R^m)$ and $w^{\rho,k}\in BV(Q^{\nu,k},\R^m)$. 
Since $u_\e \to u$ in $L^1(Q^{\nu,k}_\rho( x  ), \R^m)$ by \eqref{h-hom:rec-seq}, for every $\rho\in B_k$ 
we obtain 
\begin{equation}\label{h-4.31bis}
w_\e^{\rho,k}  \to w^{\rho,k} \qquad \text{in} \quad L^1(Q^{\nu,k},\R^m)\; \text{ as }\; \e \to {0+}.
\end{equation}
Moreover, recalling \eqref{(i)}, we have that the function $w^{\rho,k}$ satisfies  
$$
\int_{Q^{\nu,k}}w^{\rho,k}(y)\dy=0\quad\hbox{and}\quad
Dw^{\rho,k}(Q^{\nu,k})=\frac{{Du(Q^{\nu,k}_\rho(x))}}{{|Du|(Q^{\nu,k}_\rho( x))}} \;  \to \;\,a \otimes \nu \quad \text{as}\; \rho \to {0+}.
$$
By \cite[Theorem 2.3]{ADM} and \cite[Lemma 5.1]{Larsen} up to a subsequence (not relabelled), 
\begin{equation}\label{aqq-h}
w^{\rho,k} \to w^k \qquad \text{in} \quad L^1(Q^{\nu,k},\R^m)\; \text{ as }\; \rho \to 0+,
\end{equation}
where $w^k\in BV(Q^{\nu,k},\R^m)$ can be represented as
\begin{equation} \label{def:www}
w^k(y)=\psi^k(y \cdot \nu) a,
\end{equation} 
\begin{equation}\label{propr-psi-k0}
\psi^k(\tfrac{1}{2})-\psi^k(-\tfrac{1}{2})=\tfrac{1}{k^{n-1}}\quad \text{and}\quad \int_{-1/2}^{1/2}\psi^k(t)\,dt=0,
\end{equation}
with $\psi_k$ nondecreasing.
By monotonicity these equalities imply that $-\frac1{k^{n-1}}\le \psi^k(-\tfrac{1}{2})\le 0\le \psi^k(\tfrac{1}{2}) \le \frac1{k^{n-1}}$, and so
\begin{equation}\label{propr-psi-k}
|\psi^k(t)|\le \tfrac{1}{k^{n-1}}\quad \text{for every } t\in[-\tfrac{1}{2},\tfrac{1}{2}].
\end{equation}
By  a change of variables we obtain the equality
\begin{equation}
\frac{E_\e(u_\e,Q^{\nu,k}_\rho( x))}{k^{n-1}\rho^n \, t_{\rho,k} } = E_\e^{\rho,k}(w_\e^{\rho,k},Q^{\nu,k}),
\label{aac}
\end{equation}
where $E^{\rho,k}_\e$ is the functional corresponding to the integrands 
$
f_\e^{\rho,k}(y,\xi):=\tfrac{1}{k^{n-1} t_{\rho,k}}f(\tfrac{x +\rho y}{\e}, k^{n-1} t_{\rho,k}\, \xi)$ and $g_\e^{\rho,k}(y,\zeta,\nu):=\tfrac{1}{k^{n-1}\rho\, t_{\rho,k}}g(\tfrac{ x +\rho y}{\e}, k^{n-1}\rho\, t_{\rho,k} \zeta,\nu)
$\ie
\begin{align}\label{def energia rho k}
E_\e^{\rho,k}(w,Q^{\nu,k}):= &\int_{Q^{\nu,k}}\frac{1}{k^{n-1} \,t_{\rho,k} } f(\tfrac{x +\rho y}{\e},k^{n-1} t_{\rho,k} \nabla w(y))\dy \nonumber\\
&+\int_{S_{w}\cap Q^{\nu,k}}\frac{1}{k^{n-1}\rho \, t_{\rho,k} }g(\tfrac{x +\rho y}{\e}, k^{n-1}\rho\, t_{\rho,k}  [w](y),\nu_{w}(y))d\mathcal H^{n-1}(y)
\end{align}
for every $w\in SBV(Q^{\nu,k},\R^m)$. Note that $f_\e^{\rho,k}$ satisfies  $(f3)$ and $(f4)$, the latter with $c_4$ replaced by $c_4/(k^{n-1}t_{\rho,k})$, while  $g_\e^{\rho,k}$ satisfies  $(g3)$ and $(g4)$.

Let $\ell^k$ be the affine function on $\R^n$ which satisfies $\ell^k(y)=\psi^k(\pm\frac12)a$ for
$y\cdot \nu=\pm\frac12$\ie 
\begin{eqnarray}\label{def: function l}
\ell^k(y):= \tfrac{1}{k^{n-1}} \ell_{a \otimes \nu} (y) +\big(\psi^k(\tfrac{1}{2})-\tfrac{1}{2k^{n-1}}\big)a 
= \big(\tfrac{1}{k^{n-1}}{y\cdot \nu} +\psi^{k}(\tfrac{1}{2})-\tfrac{1}{2k^{n-1}}\big)a
\end{eqnarray}
for every $y\in\R^n$. We now want to modify $w_\e^{\rho,k}$ in a way such that it attains the boundary datum
$\ell^k$ near $\partial^\perp_\nu  Q^{\nu,k}$ (see (i) in Section \ref{Notation} and after \eqref{m-phipsi}). This  will be done by applying the Fundamental Estimate \cite[Proposition 3.1]{BDfV} to the functionals $E^{\rho,k}_\e$.
First note that, since by \eqref{aan} $t_{\rho,k} \to +\infty$ as $\rho \to 0+$, we can reduce the value of the constant $\rho_k>0$ introduced in \eqref{aaa} so that $t_{\rho,k}\ge 1$ for every $\rho\in(0,\rho_k)$. 

Therefore,
for $\eta \in (0,\frac12)$ fixed, by slightly modifying the proof of  \cite[Proposition 3.1]{BDfV}  we obtain the following property: for every $k\in \N$, $\rho\in B_k $, and $\e\in(0,\e(\rho,k))$ there exists a cut-off function $\varphi^{\rho,k}_\e \in C^\infty_c(Q^{\nu,k})$, with $0\leq\varphi^{\rho,k}_\e \leq 1$ in $Q^{\nu,k}$, ${\rm supp}(\varphi^{\rho,k}_\e)\subset R^{\nu,k}_{1-\eta}:= R_\nu \big([-\tfrac{k}{2},\tfrac{k}{2}]^{n-1}\times [-\tfrac{1-\eta}{2},\tfrac{1-\eta}{2}]\big)$, and $\varphi^{\rho,k}_\e=1$ in $R^{\nu,k}_{1-2\eta}:=R_\nu \big([-\tfrac{k}{2},\tfrac{k}{2}]^{n-1}\times [-\tfrac{1-2\eta}{2},\tfrac{1-2\eta}{2}]\big)$, such that, setting
$\hat w^{\rho,k}_\e:=\varphi^{\rho,k}_\e w^{\rho,k}_\e+ (1-\varphi^{\rho,k}_\e)\ell^k$ and $S^{\nu,k}_{2\eta}:=Q^{\nu,k}\setminus \overline R^{\nu,k}_{1-2\eta}$, we have 
\begin{equation}\label{aad}
E^{\rho,k}_\e (\hat w^{\rho,k}_\e,Q^{\nu,k}) \leq (1+\eta)\big(E^{\rho,k}_\e ( w_\e^{\rho,k},Q^{\nu,k})
+ E^{\rho,k}_\e (\ell^k, S^{\nu,k}_{2\eta})\big) +  \tfrac{L}{\eta} \| w_\e^{\rho,k}-\ell^k\|_{L^1(S^{\nu,k}_{2\eta},\R^m)},
\end{equation}
where  $L >0$ is independent of $\eta$, $k$, $\rho$, and $\e$. By definition we clearly have $\hat w^\rho_\e=\ell^k$ in  $Q^{\nu,k} \setminus R^{\nu,k}_{1-\eta}$, as desired. Moreover, from 
the bound $f_\e^{\rho,k}( y,\xi)\leq c_3|\xi|+ c_4 /(k^{n-1}t_{\rho,k})$ we obtain the bound 
\begin{align}\label{aqn-h}
E^{\rho,k}_\e (\ell^k, S^{\nu,k}_{2\eta})) &\leq 
\int_{S^{\nu,k}_{2\eta}}(c_3|\nabla\ell^{k}|+\frac{c_4}{k^{n-1} t_{\rho,k}})\dy
\nonumber\\
&= \big( \frac{c_3 |a|}{k^{n-1}}+\frac{c_4}{k^{n-1} t_{\rho,k}} \big)\mathcal L^n(S^{\nu,k}_{2\eta}) 
 =  \big( c_3 |a|+  \frac{c_4}{t_{\rho,k}} \big) 2 \eta.
\end{align}
Hence, from \eqref{aab}, \eqref{aac}, and \eqref{aad}, using also the inequalities $\eta<\frac12$ and $t_{\rho,k}\ge 1$, we obtain 
\begin{align}\label{Energia}
E^{\rho,k}_\e (\hat w^{\rho,k}_\e,Q^{\nu,k})\leq \frac32\big(c_3+2 + c_3 |a|+ c_4\big) +  \tfrac{L}{\eta} \| w_\e^{\rho,k}-\ell^k\|_{L^1( S^{\nu,k}_{2\eta},\R^m)}.
\end{align}
We can estimate the last term in the following way: 
\begin{align} \label{triangolare3}
\|w_\e^{\rho,k}\!-\!\ell^k\|_{L^1(S^{\nu,k}_{2\eta}\!,\R^m)} &\leq \|w_\e^{\rho,k}\!-\!w^{\rho,k}\|_{L^1(Q^{\nu,k}\!,\R^m)}
+\|w^{\rho,k}\!-\!w^{k}\|_{L^1(Q^{\nu,k}\!,\R^m)}+\|w^{ k}\!-\!\ell^k\|_{L^1(S^{\nu,k}_{2\eta}\!,\R^m)}.
\end{align}
By \eqref{def:www} and \eqref{def: function l}, thanks to a change of variables we have
\begin{align*}
 \|w^k-\ell^k\|_{L^1(S^{\nu,k}_{2\eta},\R^m)}
&=  |a|\int_{S^{\nu,k}_{2\eta}}|\psi^k(y \cdot \nu)-  \tfrac{1}{k^{n-1}}y\cdot \nu -\psi^k(\tfrac{1}{2})+\tfrac{1}{2 k^{n-1}}|\dy \nonumber\\
& = |a| k^{n-1}\int_{I_\eta\cup J_\eta} |\psi^k(t)-  \tfrac{1}{k^{n-1}}t -\psi^k(\tfrac{1}{2})+\tfrac{1}{2 k^{n-1}}|\, dt,
\end{align*}
where $I_\eta = (-\frac12, -\frac12+\eta)$ and $J_\eta =  (\frac12-\eta, \frac12)$. 
Hence, by \eqref{propr-psi-k0} and \eqref{propr-psi-k}, by using the continuity of $\psi^k$ at the endpoints, there exists a continuous function $\tau_k:[0,\tfrac12]\to [0, 3]$, with $\tau_k(0)=0$, such that 
$$
k^{n-1} |\psi^k(t)-  \tfrac{1}{k^{n-1}}t -\psi^k(\tfrac{1}{2})+\tfrac{1}{2 k^{n-1}}| \leq \tau_k(\eta) \quad \text{for every }t\in I_\eta\cup J_\eta.
$$
This gives
\begin{equation}\label{wkkkkk}
\|w^k-\ell^k\|_{L^1(S^{\nu,k}_{2\eta},\R^m)} \leq 2|a|\eta \tau_k(\eta).
\end{equation}
In particular, from \eqref{h-4.31bis}, \eqref{aqq-h}, \eqref{triangolare3}, and \eqref{wkkkkk}, possibly reducing the values of the constants $\rho_k>0$ and $\e({\rho,k})>0$, we obtain  
\begin{equation}\label{aee}
 \|w_\e^{\rho,k}-\ell^k\|_{L^1(S^{\nu,k}_{2\eta},\R^m)} \leq  1,
\end{equation}
for every $\rho\in B_k$ and $\e\in (0,\e(\rho,k))$.

Note that $(f3)$, $(g3)$, \eqref{Energia},  and \eqref{aee} imply that  the total variation of 
$\hat w_\e^\rho$ in $Q^{\nu,k}$ is bounded uniformly with respect to $\rho\in B_k $ and 
$\e\in(0,\e(\rho,k))$. Since $\hat w_\e^\rho=\ell^k$ near $\partial^\perp_\nu Q^{\nu,k}$, 
using Poincar\'e's inequality we obtain a uniform bound also for the $L^1$ norm of $\hat w_\e^\rho$ in $Q^{\nu,k}$.
Hence by Lemma \ref{truncation} and Remark \ref{rem:truncation0} there exists a constant $M_{\eta,k}>0$ with the following property: for every  $\rho\in B_k $ and $\e\in(0,\e(\rho,k))$ there exists $\tilde w_\e^{\rho,k}\in SBV(Q^{\nu,k},\R^m)\cap L^\infty(Q^{\nu,k},\R^m)$, with $\tilde w_\e^{\rho,k}=\ell^k$ near $\partial^\perp_\nu  Q^{\nu,k}$, such that 
\begin{equation}\label{aaf}
\|\tilde w_\e^{\rho,k}\|_{L^\infty(Q^{\nu,k},\R^m)}\le M_{\eta,k}\quad\hbox{and}\quad E^{\rho,k}_\e (\tilde w_\e^{\rho,k},Q^{\nu,k})\le E^{\rho,k}_\e (\hat w_\e^{\rho,k},Q^{\nu,k})+\eta.
\end{equation}
We now set $r:=\frac\rho\e$  and
$ v_\e^{\rho,k}( y):=
 r \tilde w_\e^{\rho,k} (\tfrac{y}{r}- \tfrac{x}{\rho})+\tfrac{r}{\rho}\frac{1}{k^{n-1}} \ell_{a \otimes \nu}( x) -r\big(\psi^k(\frac12)-\frac1{2k^{n-1}}\big)a$;  then $v_\e^{\rho,k}\in SBV(Q^{\nu,k}_r(\frac{r x  }{\rho}),\R^m)$, $ v_\e^{\rho,k} = \frac1{k^{n-1}}\ell_{a\otimes\nu}$ near $\partial^\perp_\nu  Q^{\nu,k}_r(\frac{r x  }{\rho})$, and, by a change of variables
\begin{gather}\label{aaah}
\|[v_\e^{\rho,k}]\|_{L^\infty(S_{\smash{v^{\rho,k}_\e}}\cap Q^{\nu,k}_r(\frac{rx}\rho),\R^m)}\le 2 M_{\eta,k}\, r,
\\
\int_{ Q^{\nu,k}_{r}(\tfrac{ rx}{\rho})}f(y,k^{n-1} t_{\rho,k}  \nabla v_\e^{\rho, k})\dy  
+ \frac{1}{\e} \int_{S_{\smash{v_\e^{\rho, k}}}\cap Q^{\nu,k}_{r}(\tfrac{rx}\rho)} g(y, k^{n-1}\e\, t_{\rho,k} [v_\e^{\rho,k}],\nu_{v_\e^{\rho,k}})d\mathcal H^{n-1}
\nonumber\\
=  k^{n-1} r^n t_{\rho,k} E^{\rho,k}_\e (\tilde w_\e^{\rho,k},Q^{\nu,k}).\label{aah}
\end{gather}
Moreover, recalling \eqref{def energia rho k} and combining \eqref{Energia}, \eqref{aee}, \eqref{aaf}, and  \eqref{aah} with the lower bounds $(f3)$ and $(g3)$, we deduce the existence of a constant $C>0$ such that 
\begin{gather}\label{aai} 
\frac{1}{r^n} \int_{Q^{\nu,k}_{r}(\tfrac{ rx}{\rho})}
| \nabla v_\e^{\rho,k}| (y) \, d y \leq  C   \\\label{aaii}
\frac{1}{r^n} \int_{S_{\smash{v_\e^{\rho,k}}}\cap  Q^{\nu,k}_{r}(\tfrac{ rx}{\rho})}|[v_\e^{\rho,k}]| d\mathcal{H}^{n-1} \leq  C.
\end{gather}
for every $\rho\in B_k$ and $\e\in (0,\e({\rho,k}))$. 

By Lemma \ref{estimate f finfty}, applied with $t= k^{n-1} t_{\rho,k}$, using \eqref{aai} and the inequality $t_{\rho,k}\ge 1$ we obtain 
\begin{eqnarray}\label{aaj}\nonumber
& \ds \frac{1}{r^n} \int_{Q^{\nu,k}_{r}(\frac{rx}{\rho})}\Big|f^\infty( y,\nabla v_\e^{\rho,k})-\tfrac{1}{ k^{n-1}t_{\rho,k} } 
f( y,k^{n-1}t_{\rho,k}\nabla v_\e^{\rho,k})\Big|\dy\\
&\ds  \leq  \frac1{t_{\rho,k}} c_5 (1+ c_4^{1-\alpha})+ \frac{1}{t_{\rho,k}^\alpha}c_5c_3^{1-\alpha} C ^{1-\alpha}
 \le \frac{K_1}{t_{\rho,k}^\alpha},
\end{eqnarray}
for every $\rho\in B_k $ and $\e\in (0,\e({\rho,k}))$, where, as in \eqref{Claim-f-infty}, $K_1:= c_5 (1+ c_4^{1-\alpha})+c_5c_3^{1-\alpha}   C ^{1-\alpha}$.

Now note that, since by \eqref{aan} $\rho t_{\rho,k} \to 0+$ as $\rho \to 0+$, we can reduce the value of the constant $\rho_k>0$ introduced in \eqref{aaa} so that $2 M_{\eta,k} k^{n-1} \rho\,t_{\rho,k}\le 1$ for every $\rho\in(0,\rho_k)$.  By Lemma~\ref{estimate g g0}, applied with $t:= k^{n-1}\e \,t_{\rho,k}$, using \eqref{aaah} and \eqref{aaii}, we deduce that 
\begin{eqnarray}
&\ds\frac{1}{r^n}\int_{S_{\smash{v_\e^{\rho,k}}}\cap Q^{\nu,k}_{r}(\tfrac{rx}\rho)}\big| g_0( y,[v_\e^{\rho,k}],\nu_{v_\e^{\rho,k}})-
\tfrac{1}{k^{n-1}\e\,t_{\rho,k} }g(y,k^{n-1}\e\,t_{\rho,k} [v_\e^{\rho,k}],\nu_{v_\e^{\rho,k}})\big|\,d\mathcal H^{n-1}
\nonumber
\\
&\ds\leq \lambda(2 M_{\eta,k} k^{n-1} \rho\,t_{\rho,k}) \frac{1}{r^n}\int_{S_{\smash{v_\e^{\rho,k}}}\cap Q^{\nu,k}_r(\frac{rx }{\rho})}|[v_\e^{\rho,k}]|\,
d\mathcal H^{n-1} \leq \lambda(2 M_{\eta,k} k^{n-1} \rho\,t_{\rho,k}) C, 
\label{aam}
\end{eqnarray}
for every $\rho\in B_k$ and $\e\in (0,\e({\rho,k}))$.  

From \eqref{aac}, \eqref{aad}, \eqref{aqn-h}, \eqref{aee}, \eqref{aaf}, \eqref{aah}, \eqref{aaj}, and \eqref{aam} we obtain 
\begin{align}\label{gg}
 \frac{E^{f^\infty,g_0}(v^{\rho,k}_\e,Q^{\nu,k}_r(\tfrac{r x}\rho))}{r^n}  &\leq (1+\eta) \frac{E_\e(u_\e,Q^{\nu, k}_\rho(x))}{k^{n-1}\rho^n t_{\rho,k}}+  \Big( \frac{3}{2} \big( c_3 |a|+  \frac{2 c_4}{t_{\rho,k}} \big) 
+ 1 \Big)\eta \nonumber \\
 &+  \frac{L}{\eta} \|w_\e^{\rho,k}-\ell^k\|_{L^1(S^{\nu,k}_{2\eta},\R^m)}+ \frac{K_1}{t_{\rho,k}^\alpha}+ \lambda(2 M_{\eta,k} k^{n-1} \rho\,t_{\rho,k}) C.
\end{align}
By the positive $1$-homogeneity 
of $E^{f^{\infty},g_0}$,
thanks to \eqref{min-neumann}, and recalling that $v^{\rho,k}_\e= \frac1{k^{n-1}}\ell_{a\otimes\nu}$ 
near $\partial^\perp_\nu  Q^{\nu,k}_r(\frac{r x  }{\rho})$, 
we have
\begin{equation*} 
\frac{\widetilde m^{f^\infty, g_0}\big(\ell_{a\otimes \nu},Q^{\nu,k}_{ r}\big(\frac{ rx }{\rho}\big)\big)}{k^{n-1}r^n}
\leq \frac{E^{ f^{\infty},g_0}(v^{\rho,k}_\e,Q^{\nu,k}_r(\tfrac{r x}\rho))}{r^n}.
\end{equation*}
Combining this inequality with \eqref{gg}, we obtain
\begin{align*}
\frac{\widetilde m^{f^\infty, g_0}\big(\ell_{a\otimes \nu},Q^{\nu,k}_{ r}\big(\frac{ rx }{\rho}\big)\big)}{k^{n-1}r^n}&\leq (1+\eta) \frac{E_\e(u_\e,Q^{\nu, k}_\rho(x))}{k^{n-1}\rho^n t_{\rho,k}}+ \Big( \frac{3}{2} \big( c_3 |a|+  \frac{2c_4}{t_{\rho,k}}\big) 
+ 1 \Big) \eta\\
&+   \frac{L}{\eta} \|w_\e^{\rho,k}-\ell^k\|_{L^1(S^{\nu,k}_{2\eta},\R^m)}+ \frac{K_1}{t_{\rho,k}^\alpha}+ \lambda(2 M_{\eta,k} k^{n-1} \rho\,t_{\rho,k}) C.
\end{align*}
Passing to the limit as $\e \to 0+$, recalling that $r=\frac{\rho}{\e}$, 
and thanks to \eqref{4.20bis-h} and \eqref{h-4.31bis}, we have 
\begin{align*}
\limsup_{r \to +\infty}
\frac{\widetilde m^{f^\infty, g_0}\big(\ell_{a\otimes \nu},Q^{\nu,k}_{ r}\big(\frac{ rx }{\rho}\big)\big)}{k^{n-1}r^n}
 &\leq (1+\eta) \frac{\widehat E (u ,Q^{\nu, k}_\rho(x))}{k^{n-1}\rho^n t_{\rho,k}}
 +   \Big( \frac{3}{2} \big( c_3 |a|+ \frac{2c_4}{t_{\rho,k}} \big) 
+ 1 \Big)    \eta \nonumber \\
&\hspace{.2cm}+   \frac{L}{\eta}  \|w^{\rho,k}-\ell^k\|_{L^1(S^{\nu,k}_{2\eta},\R^m)}  + \frac{K_1}{t_{\rho,k}^\alpha}+ \lambda(2 M_{\eta,k}\, k^{n-1} \rho\,t_{\rho,k}) C.
\end{align*}
Now, passing to the limit as $\rho \to 0+$, 
by \eqref{acc}, \eqref{aaq}, and \eqref{aan}, and \eqref{aqq-h},
\begin{align*}
\limsup_{r \to +\infty}
\frac{\widetilde m^{f^\infty, g_0}\big(\ell_{a\otimes \nu},Q^{\nu,k}_{ r}\big(\frac{ rx }{\rho}\big)\big)}{k^{n-1}r^n}
&\leq (1+\eta) \frac{d \widehat E(u,\cdot)}{d|C(u)|}(x )
 +    \Big(\frac{3}{2} c_3 |a|+  1 \Big)  \eta 
+  \frac{L}{\eta}  \|w^{k}-\ell^k\|_{L^1(S^{\nu,k}_{2\eta},\R^m)}.
\end{align*}
Passing to the limit as $\eta \to 0+$, by \eqref{wkkkkk} we deduce that 
\begin{align*}
\limsup_{r \to +\infty}
\frac{\widetilde m^{f^\infty, g_0}\big(\ell_{a\otimes \nu},Q^{\nu,k}_{ r}\big(\frac{ rx }{\rho}\big)\big)}{k^{n-1}r^n}
&\leq  \frac{d \widehat E(u,\cdot)}{d|C(u)|}(x ).
\end{align*}
Finally, taking the limit as $k \to +\infty$, by \eqref{h-hom-2} we deduce that 
\begin{align*}
 f^\infty_{\rm hom}(a\otimes \nu) = \lim_{k\to+\infty}
\limsup_{r \to +\infty}
\frac{\widetilde m^{f^\infty, g_0}\big(\ell_{a\otimes \nu},Q^{\nu,k}_{ r}\big(\frac{ rx }{\rho}\big)\big)}{k^{n-1}r^n}
& \leq  \frac{d \widehat E(u,\cdot)}{d|C(u)|}(x ).
\end{align*}
By \eqref{(i)}, this concludes the proof of \eqref{aqscc}, since $a=a(x)$ and $\nu = \nu(x)$.
\end{proof}

We are now in a position to prove the deterministic homogenisation theorem.

\begin{proof}[Proof of Theorem \ref{T:det-hom}]
By Lemma \ref{l:f-hom} the function $f_{\rm hom}$ defined by \eqref{f-hom} belongs to $\mathcal F$ and by Lemma
 \ref{l:g-hom} the function $g_{\rm hom}$ defined by \eqref{g-hom} belongs to $\mathcal G$. By 
 Theorem \ref{thm:Gamma-conv} for  every  sequence  of positive numbers converging to zero, 
 there exist a subsequence $(\e_{j})$ and a functional 
 $\widehat E \colon L^{1}_{\rm loc}(\R^n,\R^m)\times \mathscr{A} \longrightarrow [0,+\infty]$
such that for every $A\in\A$ the functionals $E_{\e_{j}}(\cdot,A)$ $\Gamma$-converge to 
$\widehat E(\cdot,A)$ in $L^1_{\rm loc}(\R^n,\R^m)$, as $j\to +\infty$. 

Let us fix $A\in \mathscr{A}$ and $u\in L^{1}_{\rm loc}(\R^n,\R^m)$. If $u|_A\in BV(A,\R^m)$, then
by Theorem \ref{thm:Gamma-conv}(c) and (d) the
function $\widehat E(u,\cdot)$ is a nonnegative bounded Radon measure on $\B(A)$, which satisfies the inequality
$\widehat E(u,\cdot)\le c_3|Du|+c_4\mathcal L^n$. By the decomposition of the gradient of a $BV$ function (see (f) in 
Section \ref{Notation}), the measure $\widehat E(u,\cdot)$ is absolutely continuous with respect to the measure 
$\mathcal L^n+ \mathcal H^{n-1}\LLL{S_u}+|C(u)|$. Since $\mathcal L^n$, $\mathcal H^{n-1}\LLL{S_u}$, 
and $|C(u)|$ are carried by disjoint Borel sets, by the properties of the Radon-Nikodym derivatives
mentioned in (j) of Section \ref{Notation} we have
\begin{equation*}
\widehat E(u,A)=\int_A  \frac{d \widehat E(u,\cdot)}{d\mathcal L^n} dx +
 \int_{S_u\cap A} \frac{d \widehat E(u,\cdot)}{d\mathcal H^{n-1}\LLL{S_u}} d\mathcal H^{n-1}+ 
 \int_A\frac{d \widehat E(u,\cdot)}{d|C(u)|}d|C(u)|.
\end{equation*}
Using Propositions \ref{p:homo-vol}, \ref{p:homo-sur}, and \ref{p:homo-Can} we obtain
$$
\widehat E(u,A)=\int_A f_{\mathrm{hom}}(\nabla u)\dx + \int_{S_u\cap A}g_{\mathrm{hom}}([u],\nu_u)d \mathcal{H}^{n-1}+\int_A  f_{\rm hom}^\infty\Big(\frac{dC(u)}{d|C(u)|}\Big)\,d|C(u)|.
$$
If $u|_A\notin BV(A,\R^m)$, we have $\widehat E(u,A)=+\infty$ by Theorem \ref{thm:Gamma-conv}(c).
Therefore,
$$
\widehat E(u,A)= E_{\rm hom}(u,A)\quad\text{for every } u\in L^{1}_{\rm loc}(\R^n,\R^m) \text{ and every }A\in \mathscr{A},
$$
where $E_{\rm hom}$ is the functional defined in \eqref{det-Glim}. Since the limit does not depend on the subsequence, by the Urysohn property of $\Gamma$-convergence in $L^1_{\rm loc}(\R^n,\R^m)$ (see \cite[Proposition 8.3]{DM93}) the functionals $E_\e(\cdot,A)$ $\Gamma$-converge to 
$E_{\rm hom}(\cdot,A)$ in $L^1_{\rm loc}(\R^n,\R^m)$, as $\e\to0+$.
\end{proof}


\section{Stochastic homogenisation}\label{section:stoc-hom}

In this section we prove Theorems~\ref{en-density_vs} and \ref{G-convE} concerning stationary random integrands, according to Definition~\ref{def:stationary}.
We adopt the shorthand notation introduced in~\eqref{inf for fixed omega}.

\medskip

We start  by  proving the existence of the  limits which define  the the homogenised random volume integrand $f_{\rm hom}$. 

\begin{prop}[Homogenised random volume integrand]\label{random-f-hom}
Let $f$ be a stationary random volume integrand and let $g$ be a stationary random surface integrand  with respect to a group $(\tau_z)_{z\in \Z^n}$ of $P$-preserving transformations on $(\Om,\T,P)$.
Then there exists $\Om'\in \T$, with $P(\Om')=1$,
such that for every $\om\in \Om'$, $x\in \R^{n}$, $\xi \in \R^{m\times n}$, $\nu\in \Sph^{n-1}$, $k\in \N$, and $\rho>0$ the limit  
\begin{equation}\label{lim-f-hom}
\lim_{r\to +\infty} \frac{m^{f,g_0}_{\om}(\ell_\xi,Q^{\nu,k}_{\rho r}(rx))}{k^{n-1}\rho^n r^{n}}
\end{equation}
exists and is independent of $x$, $\nu$, $k$, and $\rho$. More precisely, there  exists  a random volume integrand $f_{\mathrm{hom}} \colon \Om\times \R^{m\times n} \to [0,+\infty)$ such that for every $\om\in\Om'$, $x \in \mathbb{R}^n$, $\xi \in \mathbb{R}^{m\times n}$, $\nu\in \Sph^{n-1}$, $k\in \N$, and $\rho>0$ 
\begin{equation}\label{lim-f-hom-eq}
f_{\mathrm{hom}}(\om,\xi)=\lim_{r\to +\infty} \frac{m^{f,g_0}_{\om}(\ell_\xi,Q^{\nu,k}_{\rho r}(rx))}{k^{n-1}\rho^n r^n}=  \lim_{ r \to +\infty} \frac{m^{f,g_0}_{\om}(\ell_\xi, Q_r(0))}{r^n}.
\end{equation}
If, in addition, $(\tau_z)_{z\in \Z^n}$ is ergodic, then $f_{\mathrm{hom}}$ is independent of $\om$ and
\begin{equation*}
f_{\mathrm{hom}}(\xi)=\lim_{r\to +\infty}\, \frac{1}{r^n} {\int_\Om m^{f,g_0}_{\om}(\ell_\xi, Q_r(0))\,dP(\om)}.
\end{equation*} 
\end{prop}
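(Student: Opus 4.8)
The plan is to apply the Subadditive Ergodic Theorem (Theorem~\ref{ergodic}) to a suitable $n$-dimensional subadditive process built from the minimisation problems $m^{f,g_0}_\om$, and then to upgrade the almost sure convergence along cubes to the full statement with the $x$-, $\nu$-, $k$-, and $\rho$-independence. First I would fix $\xi\in\R^{m\times n}$ and define, for $\om\in\Om$ and $A=[a,b)\in\mI_n$, the set function $\mu_\xi(\om,A):=m^{f(\om),g_0(\om)}(\ell_\xi,A)$, using the convention (explained after \eqref{m-phipsi}) that $m^{f,g}(\cdot,A)=m^{f,g}(\cdot,\mathrm{int}\,A)$ for non-open bounded sets. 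I must check the four defining properties of Definition~\ref{Def:subadditive}: measurability of $\om\mapsto\mu_\xi(\om,A)$ is the delicate point and will be quoted from Proposition~\ref{measurability} (proved in the Appendix); covariance $\mu_\xi(\om,A+z)=\mu_\xi(\tau_z\om,A)$ for $z\in\Z^n$ follows from stationarity of $f$ and $g$ (hence of $g_0$, by Remark~\ref{DM-rem}) together with the translation behaviour of the class of competitors, since $u\mapsto u(\cdot-z)$ maps admissible functions for $A+z$ with boundary datum $\ell_\xi$ to admissible functions for $A$ with boundary datum $\ell_\xi(\cdot-z)$, and $\ell_\xi(\cdot-z)$ differs from $\ell_\xi$ by the constant $-\xi z$, which does not affect the energy by the translation invariance in $u$; subadditivity is obtained by glueing competitors across a finite partition of $A$ into subrectangles (the boundary datum $\ell_\xi$ is attained on each piece, so the pasted function is admissible on $A$, and its energy is the sum since the interfaces between adjacent pieces carry the datum $\ell_\xi$ which is continuous, hence no extra jump is created); boundedness $0\le\mu_\xi(\om,A)\le c\,\mathcal L^n(A)$ follows from $(f3)$, $(g3)$ for the lower bound (indeed $\mu_\xi\ge 0$) and from testing with $\ell_\xi$ itself and using $(f4)$, $g_0(\cdot,0,\cdot)=0$, giving $\mu_\xi(\om,A)\le(c_3|\xi|+c_4)\mathcal L^n(A)$.

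Once $\mu_\xi$ is recognised as an $n$-dimensional subadditive process with respect to $(\tau_z)_{z\in\Z^n}$, Theorem~\ref{ergodic} furnishes a $\T$-measurable $\varphi_\xi\colon\Om\to[0,+\infty)$ and a set $\Om_\xi\in\T$ of full probability such that $\mu_\xi(\om,A_r)/\mathcal L^n(A_r)\to\varphi_\xi(\om)$ for every regular family $(A_r)$ with $\lim_{r\to+\infty}A_r=\R^n$ and every $\om\in\Om_\xi$. Applying this to $A_r=Q_r(0)$ (which is trivially regular with constant $1$ and exhausts $\R^n$) yields the second equality in \eqref{lim-f-hom-eq} with $f_{\rm hom}(\om,\xi):=\varphi_\xi(\om)$. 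To obtain the first equality, i.e. the same limit along the translated, anisotropically dilated cubes $Q^{\nu,k}_{\rho r}(rx)$, I would show that for each fixed $x,\nu,k,\rho$ the family $(Q^{\nu,k}_{\rho r}(rx))_{r>0}$ is regular and exhausts $\R^n$: regularity is seen by enclosing $Q^{\nu,k}_{\rho r}(rx)$ in a coordinate cube $Q_{Kr}(0)$ of comparable volume (the constant $K$ depending on $x,\nu,k,\rho$ but not $r$) and noting the nesting in $r$ after a harmless monotone reparametrisation; exhaustion follows since the enclosing cubes grow to $\R^n$. Because the limit $\varphi_\xi(\om)$ produced by Theorem~\ref{ergodic} is, by that theorem, \emph{the same} for all regular exhausting families, we immediately get $m^{f,g_0}_\om(\ell_\xi,Q^{\nu,k}_{\rho r}(rx))/(k^{n-1}\rho^n r^n)\to f_{\rm hom}(\om,\xi)$, independently of $x,\nu,k,\rho$. (One point to handle with care: $Q^{\nu,k}_{\rho r}(rx)$ is a general rectangle, not an axis-parallel one, so $\mu_\xi$ must be evaluated on elements of $\mI_n$ approximating it; this is exactly the reason the statement of Theorem~\ref{ergodic} is phrased for arbitrary regular families and why the convention $m^{f,g}(\cdot,A)=m^{f,g}(\cdot,\mathrm{int}\,A)$ was introduced.) Intersecting the countably many full-measure sets $\Om_\xi$ over $\xi\in\Q^{m\times n}$ gives a single $\Om'\in\T$ with $P(\Om')=1$ on which convergence holds for all rational $\xi$; the continuity estimate $(f2)$, which passes to the limit as in Lemma~\ref{l:f-hom}, extends the conclusion and the measurability of $\xi\mapsto f_{\rm hom}(\om,\xi)$ to all $\xi\in\R^{m\times n}$, and shows $f_{\rm hom}(\om,\cdot)\in\mathcal F$ via Lemma~\ref{l:f-hom}; joint measurability of $(\om,\xi)\mapsto f_{\rm hom}(\om,\xi)$ follows from $\T$-measurability in $\om$ for fixed rational $\xi$ plus continuity in $\xi$, so $f_{\rm hom}$ is a random volume integrand.

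For the ergodic case, I would invoke the last sentence of Theorem~\ref{ergodic}: if $(\tau_z)_{z\in\Z^n}$ is ergodic then each $\varphi_\xi$ is $P$-a.e.\ constant, hence $f_{\rm hom}(\om,\xi)$ does not depend on $\om$ for rational $\xi$, and then for all $\xi$ by continuity; thus $E_{\rm hom}$ is deterministic. The integral representation $f_{\rm hom}(\xi)=\lim_{r\to+\infty}r^{-n}\int_\Om m^{f,g_0}_\om(\ell_\xi,Q_r(0))\,dP(\om)$ is obtained by integrating the pointwise convergence $m^{f,g_0}_\om(\ell_\xi,Q_r(0))/r^n\to f_{\rm hom}(\xi)$ over $\Om$ and passing to the limit under the integral sign, which is justified by the uniform bound $0\le m^{f,g_0}_\om(\ell_\xi,Q_r(0))/r^n\le c_3|\xi|+c_4$ from property (d) of the process and the Dominated Convergence Theorem (using $P(\Om)=1$). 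I expect the main obstacle to be the verification of covariance and subadditivity in full rigour for sets that are only \emph{bounded} (via the interior convention) rather than open rectangles, and the bookkeeping needed to confirm that pasting competitors across a partition genuinely produces an admissible competitor whose energy is additive with no spurious interfacial contribution — this rests on the fact that the common boundary datum $\ell_\xi$ (or $u_{0,\zeta,\nu}$ in the parallel surface statement, treated analogously with an $(n-1)$-dimensional process) is matched from both sides, so the glued function lies in $SBV$ without extra jump on the shared faces up to an $\mathcal H^{n-1}$-null set.
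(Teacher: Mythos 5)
Your overall strategy (subadditive process built from $m^{f,g_0}_\om$, Akcoglu--Krengel theorem, rational $\xi$ plus the $(f2)$-continuity, dominated convergence in the ergodic case) is the right skeleton, but there is a genuine gap at the central point of the statement: the directions $\nu$. The Subadditive Ergodic Theorem~\ref{ergodic} gives convergence only along regular families of sets belonging to $\mathcal{I}_n$, i.e.\ \emph{coordinate} half-open rectangles; a rotated rectangle $Q^{\nu,k}_{\rho r}(rx)$ with $\nu$ not parallel to a coordinate axis is not in $\mathcal{I}_n$, so the claim that ``the limit is the same for all regular exhausting families'' simply does not reach these sets, and your parenthetical remark that one can ``evaluate $\mu_\xi$ on elements of $\mathcal{I}_n$ approximating it'' is not an argument: comparing the minimum value on a rotated rectangle with minimum values on approximating coordinate sets requires constructing competitors and estimating boundary-layer errors, which is a substantive step. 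The paper deals with this by a different device: for each \emph{rational} direction $\nu\in\Sph^{n-1}\cap\Q^n$ it defines a separate process $\mu_{\xi,\nu}(\om,A):=M_\nu^{-n}\,m^{f,g_0}_\om(\ell_\xi,M_\nu R_\nu A)$ on $\mathcal{I}_n$, using the assumption in (h) of Section~\ref{Notation} that $R_\nu\in O(n)\cap\Q^{n\times n}$ so that $M_\nu R_\nu\in\Z^{n\times n}$; covariance then holds with respect to the relabelled group $(\tau_{M_\nu R_\nu z})_{z\in\Z^n}$, and the ergodic theorem applied to coordinate rectangles yields the limit along the rotated rectangles. The extension to arbitrary $\nu\in\Sph^{n-1}$ is then obtained by a continuity-in-$\nu$ argument on the half-spheres $\widehat\Sph^{n-1}_\pm$ (nesting $Q^{\nu_j,k}_{(1\pm\delta)\rho r}$ and $Q^{\nu,k}_{\rho r}$ and using $(f4)$), and the independence of $\nu$ is proved separately by a covering/subadditivity argument comparing $Q^\nu_r$ with unions of coordinate cubes $Q_{\rho_i r}(rx_i)$ and vice versa (Step~3 of the paper's proof). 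None of these three ingredients appears in your proposal, and without them neither the existence of the limit in \eqref{lim-f-hom} for irrational $\nu$ nor its $\nu$-independence is established.

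Two smaller points. First, Proposition~\ref{measurability} only gives $\widehat\T$-measurability of $\om\mapsto m^{f,g_0}_\om(\ell_\xi,A)$ (measurability with respect to the completion), so the process must be set up on $(\Om,\widehat\T,\widehat P)$ and one must afterwards use the properties of the completion to recover a $\T$-measurable representative of the limit and a full-measure set $\Om'\in\T$, as the paper does; your proposal treats the process as $\T$-measurable from the start. Second, for coordinate rectangles your remaining checks (covariance via translation by the constant $\xi z$, subadditivity by pasting competitors with the common datum $\ell_\xi$, the bound $(c_3|\xi|+c_4)\mathcal L^n(A)$, the $x$- and $\rho$-independence from the version of Theorem~\ref{ergodic} valid for all regular exhausting families in $\mathcal{I}_n$, and the ergodic case via dominated convergence) do match the paper's argument; the gap is confined to, but is essential for, the treatment of general $\nu$.
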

\begin{proof}
We divide the proof into four main steps. 

\medskip

\noindent
\emph{Step 1: Existence of the limit in \eqref{lim-f-hom} 
for $\xi\in \Q^{m\times n}$ and $\nu\in\Sph^{n-1}\cap \Q^n$ fixed}.

\smallskip

Let $(\Om, \widehat \T, \widehat P)$ denote the completion of the probability space $(\Om, \T, P)$. 
Let $\xi\in \Q^{m\times n}$ and $\nu\in\Sph^{n-1}\cap \Q^n$ be fixed. 
For every $\om\in \Om$ and $A\in \mI_{n}$ (see \eqref{int:int}) we set
\begin{equation}\label{volume-pro}
\mu_{\xi,\nu}(\om,A):=\frac{1}{{M}^n_\nu} m^{f,g_0}_{\om}(\ell_\xi, M_\nu R_\nu A),
\end{equation} 
where $R_\nu$ is the orthogonal $n\times n$ matrix defined in (h) in Section \ref{Notation}, and $M_\nu$ 
is a positive integer such that $M_\nu R_\nu \in \Z^{n\times n}$.

We now claim that the map $\mu_{\xi, \nu} \colon \Om\times \mI_n \to \R$ defines an $n$-dimensional subadditive process on $(\Om, \widehat \T, \widehat P)$, according to Definition~\ref{Def:subadditive}. 

We start observing that the $\widehat \T$-measurability of $\om \mapsto \mu_{\xi, \nu}(\om,A)$ 
follows from the $\widehat \T$-measurability of $\om \mapsto m^{f,g_0}_{\om}(\ell_\xi,A)$ for every $A\in \A$,  which  is ensured by Proposition \ref{measurability}, taking into account Remark~\ref{DM-rem}. We are now going to prove that $\mu_{\xi, \nu}$ is covariant; that is, we show that there exists a group $(\tau^\nu_{z})_{z\in \Z^n}$ of $\widehat  P$-preserving transformations on $(\Om,\widehat \T, \widehat  P)$ such that
$$
\mu_{\xi, \nu}(\om,A+z)=\mu_{\xi, \nu}(\tau^\nu_z(\om),A), \quad \text{for every $\om\in \Om$, $z\in \Z^n$, and $A\in \mI_n$.}
$$ 
We have 
$$
M_\nu R_\nu (A+z)=M_\nu R_\nu A+M_\nu R_\nu z=M_\nu R_\nu A+ z^\nu,
$$ 
where $z^\nu:=M_\nu R_\nu z \in \Z^n$. Then by \eqref{volume-pro} we get
\begin{equation*}
\mu_{\xi, \nu}(\om,A+z)=\frac{1}{{M}^n_\nu} m^{f,g_0}_{\om}(\ell_\xi, M_\nu R_\nu A+z^\nu).
\end{equation*}
Given $u\in SBV(\mathrm{int} ( M_\nu R_\nu A+z^\nu),\R^m)$ with $u = \ell_{\xi}$ near $\partial (M_\nu R_\nu A + z^{\nu})$,
let $v \in SBV(\mathrm{int}( M_\nu R_\nu A),\R^m)$  be defined as 
$v(x):=u(x+z^\nu) - \xi z^{\nu}$ for every $x\in \R^n$. By a change of variables, using the stationarity of $f$ and 
$g_0$ we obtain 
\begin{align*}
&\int_{M_\nu R_\nu A+ z^\nu}f(\om,x,\nabla u)\dx+ \int_{S_u\cap (M_\nu R_\nu A+ z^\nu)}g_0(\om,x,[u],\nu_u)\,d\mathcal H^{n-1}
\\
&=\int_{M_\nu R_\nu A}f(\om,x+z_\nu,\nabla v)\dx+ \int_{S_v\cap (M_\nu R_\nu A)}g_0(\om,x+z_\nu,[ v],\nu_{ v})\,d\mathcal H^{n-1}
\\
&= \int_{M_\nu R_\nu A}f(\tau_{z^\nu}(\om),x,\nabla v)\dx+ \int_{S_v\cap (M_\nu R_\nu A)}g_0(\tau_{z^\nu}(\om),x,[v],\nu_v)\,d\mathcal H^{n-1}.
\end{align*}
Since we have $v = \ell_\xi $ near $\partial (M_\nu R_\nu A)$, we deduce that 
$$
m^{f,g_0}_{\om}(\ell_\xi, M_\nu R_\nu A+z^\nu)=m^{f,g_0}_{\tau_{z^\nu}(\om)}(\ell_\xi, M_\nu R_\nu A),
$$
and hence the covariance of $\mu_{\xi,\nu}$ with respect to the group of $\widehat P$-preserving transformations $(\tau^\nu_z)_{z\in \Z^n}:=(\tau_{z^\nu})_{z\in \Z^n}$. 

We now show that $\mu_{\xi,\nu}$ is subadditive. To this end let $A\in \mI_n$ and let $(A_i)_{i=1,\ldots, N} \subset \mI_n$ be a finite family of pairwise disjoint sets such that $A=\bigcup_{i=1}^N A_i$. For fixed $\eta>0$ and $i=1,\ldots, N$, let $u_i \in SBV(\mathrm{int}(M_\nu R_\nu A_i),\R^m)$,  with $u_i=\ell_\xi$ near $\partial(M_\nu R_\nu A_i)$,  be such that 
$$
\int_{M_\nu R_\nu A_i}f(\om,x,\nabla u_i)\dx+ \int_{S_{u_i}\cap (M_\nu R_\nu A_i)}g_0(\om,x,[u_i],\nu_{u_i})\,d\mathcal H^{n-1} \leq m^{f,g_0}_{\om}(\ell_\xi, M_\nu R_\nu A_i)+\eta
$$
and on $M_\nu R_\nu A$ define $u(x):=u_i(x)$ if $x\in M_\nu R_\nu A_i$ for $i=1,\ldots, N$. By construction we have that $u$ is a  competitor  for $m^{f,g_0}_{\om}(\ell_\xi, M_\nu R_\nu A)$, since 
$u\in SBV(M_\nu R_\nu A,\R^m)$ and $u=\ell_\xi$ near $\partial(M_\nu R_\nu A)$. Moreover $S_u \cap \partial(M_\nu R_\nu A_i)=\emptyset$  for every $i=1,\ldots, N$. Therefore it holds
\begin{eqnarray*}
 m^{f,g_0}_{\om}(\ell_\xi, M_\nu R_\nu A)&\leq& \int_{M_\nu R_\nu A}f(\om,x,\nabla u)\dx+ \int_{S_{u}\cap (M_\nu R_\nu A)}g_0(\om,x,[u],\nu_{u})\,d\mathcal H^{n-1}
\\
&=& \sum_{i=1}^N \Big(\int_{M_\nu R_\nu A_i}f(\om,x,\nabla u_i)\dx+ \int_{S_{u_i}\cap (M_\nu R_\nu A_i)}g_0(\om,x,[u_i],\nu_{u_i})\,d\mathcal H^{n-1}\Big)
\\
&\leq& \sum_{i=1}^N m^{f,g_0}_{\om}(\ell_\xi, M_\nu R_\nu A_i)+ N \eta,
\end{eqnarray*}
thus the subadditivity of $\mu_{\xi,\nu}$ follows from \eqref{volume-pro}, by the arbitrariness of $\eta>0$.
Note that the same proof shows that 
\begin{equation} \label{aaabbb}
m^{f,g_0}_{\om} \Big(\ell_\xi, \bigcup_{i=1}^N A_i \Big)
\leq \sum_{i=1}^N m^{f,g_0}_{\om} (\ell_\xi,  A_i )
\quad \text{ for every finite disjoint family }  (A_i)_{i=1,\ldots, N} \subset \mI_n,
\end{equation}
even when $\bigcup_{i=1}^N A_i \notin \mI_n$. 

Eventually, in view of $(f4)$ we have
\begin{equation}\label{DM-2315}
\mu_{\xi,\nu}(\om,A) = \frac{1}{M_\nu^n}m^{f,g_0}_{\om}(\ell_\xi, M_\nu R_\nu A)
\leq \frac{1}{M_\nu^n} \int_{M_\nu R_\nu A}f(\om,x,\xi)\dx
\leq (c_3|\xi|+c_4)\mathcal L^n(A),
\end{equation}
for  every $\om \in \Om$. 

We note now that for every $x\in \R^n$, $k\in \N$, and $\rho > 0$, 
we have that $(  Q_{ \rho r}^{ e_n,k}(r x) )_{r>0}$ is a regular family in $\mI_n$ (cf.\ Definition \ref{reg-fam}). 
Therefore for every fixed $\xi\in \Q^n$ and $\nu\in \Sph^{n-1}\cap \Q^n$ we can apply Theorem~\ref{ergodic} to the subadditive process $\mu_{\xi,\nu}$ on $(\Om, \widehat \T, \widehat P)$ to deduce the existence of a $\widehat \T$-measurable function $\varphi_{\xi,\nu} \colon \Om \to [0,+\infty)$ and a set $\widehat \Om_{\xi, \nu}\subset \Om$, with $\widehat \Om_{\xi, \nu}\in \widehat \T$ and $P(\widehat \Om_{\xi, \nu}) =1$ such that
\begin{equation}\label{ergodic-xi-nu}
\lim_{r \to +\infty} \frac{\mu_{\xi,\nu}(\om, Q^{e_n,k}_{\rho r}(rx))}{k^{n-1}\rho^n r^n}
=\varphi_{\xi,\nu}(\om),
\end{equation}
for every $\om \in \widehat\Om_{\xi,\nu}$, $x\in \R^n$, $k\in\N$, and $\rho>0$. Then, by the properties of the completion there exist a set $\Om_{\xi, \nu}\in \T$, with $P(\Om_{\xi, \nu}) =1$, and a $\T$-measurable function, which we still denote by $\varphi_{\xi,\nu}$, such that \eqref{ergodic-xi-nu} holds for every $\om \in \Om_{\xi, \nu}$.
Thus choosing in \eqref{ergodic-xi-nu} $x=0$, $k=1$, and $\rho=1$,  thanks to \eqref{volume-pro} we get \begin{equation*}
\varphi_{\xi,\nu}(\om)= \lim_{r \to +\infty} \frac{\mu_{\xi,\nu}(\om, Q_r(0))}{r^n}= \lim_{r \to +\infty} \frac{m^{f,g_0}_{\om}(\ell_\xi, Q^\nu_r(0))}{r^n}.
\end{equation*}
Furthermore, if $(\tau_z)_{z\in \Z^n}$ is ergodic,  then Theorem \ref{ergodic} ensures that $\varphi_{\xi,\nu}$ is independent of $\om$.

\medskip

\noindent
\emph{Step 2: Existence of the limit in \eqref{lim-f-hom} 
for every $\xi\in \R^{m\times n}$ and $\nu\in\Sph^{n-1}$}.

\smallskip

Let $\widetilde \Om$ denote the intersection of the sets $\Om_{\xi, \nu}$ for $\xi\in \Q^n$ and $\nu\in \Sph^{n-1}\cap \Q^n$; clearly $\widetilde \Om \in \T$ and $P(\widetilde \Om)=1$.
 For every $k\in\N$ and $\rho>0$ we  now introduce the auxiliary functions $\underline f{}_{\rho, k} , \overline f{}_{\rho, k} \colon \widetilde \Om\times \R^n \times \R^{m\times n} \times \Sph^{n-1}\to [0,+\infty)$ defined as
\begin{equation*}
\underline f{}_{\rho, k}(\om, x ,\xi,\nu):= \liminf_{r \to +\infty} \frac{m^{f,g_0}_{\om}(\ell_\xi, Q^{\nu,k}_{\rho r}( rx))}{k^{n-1}\rho^n r^n}
\end{equation*}
\begin{equation*}
\overline f{}_{\rho, k}(\om, x ,\xi,\nu):= \limsup_{r \to +\infty} \frac{m^{f,g_0}_{\om}(\ell_\xi, Q^{\nu,k}_{\rho r}( rx))}{ k^{n-1} \rho^n r^n}.
\end{equation*}
We notice that 
\begin{equation}\label{DM-2317}
\underline f{}_{\rho, k}(\om, x ,\xi,\nu)=\overline f{}_{\rho, k}(\om, x ,\xi,\nu)=\varphi_{\xi,\nu}(\om)
\end{equation}
for every $\om \in \widetilde \Om$, $\xi\in \Q^{m\times n}$, $\nu \in \Sph^{n-1}\cap \Q^n$, $k\in\N$, and $\rho > 0$. The proof of
property \eqref{aea} in Lemma \ref{l:f-hom} can be adapted to the rectangles $Q^{\nu,k}_{\rho r}(rx)$, obtaining that
  for every $\om \in \widetilde \Om$, $x\in\R^n$,  $\nu \in \Sph^{n-1}$, $k\in \N$, and $\rho>0$, the functions $\xi \mapsto \underline f{}_{\rho, k}(\om, x ,\xi,\nu)$ and $\xi \mapsto \overline f{}_{\rho, k}(\om, x ,\xi,\nu)$ are continuous on $\R^{m\times n}$, and their modulus of continuity does not depend on $\om$, $x$,  $\nu$, $k$, and $\rho$. By \eqref{DM-2317} this implies that  for every $\xi\in \R^{m \times n}$ and $\nu \in \Sph^{n-1}\cap \Q^n$ there exists a
 $\T$-measurable function, which we still denote by $\varphi_{\xi,\nu}$, such that  
\begin{equation}\label{aha}
\underline f{}_{\rho, k}(\om, x ,\xi,\nu)= \overline f{}_{\rho, k}(\om, x ,\xi,\nu)=\varphi_{\xi,\nu}(\om)
\end{equation}
 for every $\om \in \widetilde \Om$,  $x\in\R^n$,  $k\in\N$, and $\rho>0$.

We now show that, for every $\om \in \widetilde \Om$, $x\in\R^n$,  $\xi \in \R^{m \times n}$, $k\in\N$, and $\rho>0$, 
the functions $\nu \mapsto \underline f{}_{\rho, k}(\om, x ,\xi,\nu)$ and $\nu \mapsto \overline f{}_{\rho, k}(\om, x ,\xi,\nu)$, restricted to  $\widehat{\Sph}^{n-1}_+$ and $\widehat{\Sph}^{n-1}_-$, are continuous.
We will only prove  this property for  $\underline f{}_{\rho, k}$ and $\widehat{\Sph}^{n-1}_+$, 
the other proofs being analogous. 
To this end, let $\om \in \widetilde \Om$, $x\in\R^n$,  $\xi \in \R^{m \times n}$, $k\in \N$, and $\rho>0$ be fixed. Let $\nu \in \widehat{\Sph}^{n-1}_+$ and 
let $(\nu_j) \subset \widehat{\Sph}^{n-1}_+$  be such that $\nu_j \to \nu$ as $j \to +\infty$.
Since $\nu \mapsto R_{\nu}$ is continuous in $\widehat{\Sph}^{n-1}_+$, for every $\delta \in (0, 1/2)$
there exists an integer $\hat\jmath$, depending on $\rho$, $k$, and $\delta$, such that 
\[
Q^{\nu_j,k}_{(1 - \delta)\rho r} ( rx) \subset \subset Q^{\nu,k} _{\rho r}( rx) 
\subset \subset Q^{\nu_j,k} _{(1 + \delta)\rho r}( rx)
\]
for every $j \geq\hat\jmath$ and $r > 0$. Given $r > 0$, $j \geq \hat\jmath$,  and $\eta > 0$, let 
 $u \in SBV (Q^{\nu,k}_{\rho r} ( rx), \R^m)$ be such that $u = \ell_{\xi}$ near $\partial Q^{\nu,k}_{\rho r} ( rx)$
and 
$$
\int_{Q^{\nu,k}_{\rho r}( rx)}f(\om, y,\nabla u)\dy+ \int_{S_{u}\cap Q^{\nu,k}_{\rho r} ( rx)}
g_0(\om, y,[u],\nu_{u})\,d\mathcal H^{n-1} \leq m^{f,g_0}_{\om}(\ell_\xi, Q^{\nu,k}_{\rho r} ( rx)) + \eta k^{n-1} \rho^n r^n.  
$$
We define now $v \in SBV (Q^{\nu_j,k}_{(1 + \delta)\rho r} ( rx), \R^m)$ as
\[
v (y) =
\begin{cases}
u (y) & \text{ if } y \in Q^{\nu,k}_{\rho r} ( rx), \\
\ell_{\xi} (y) & \text{ if } y \in Q^{\nu_j,k}_{(1 + \delta)\rho r} ( rx) \setminus Q^{\nu,k}_{\rho r} ( rx).
\end{cases}
\]
Note that $v = \ell_{\xi}$ near $\partial Q^{\nu_j,k}_{(1 + \delta)\rho r} ( rx)$
and that $\partial Q^{\nu,k}_{\rho r} ( rx) \cap S_v = \emptyset$.
Therefore, 
\begin{align*}
&m^{f,g_0}_{\om}(\ell_\xi, Q^{\nu_j,k}_{(1 + \delta)\rho r} ( rx)) \\
&\leq \int_{Q^{\nu_j,k}_{(1 + \delta)\rho r} ( rx) } f(\om, y,\nabla v)  \dy 
+ \int_{S_{v}\cap Q^{\nu_j,k}_{(1 + \delta)\rho r} ( rx)}
g_0(\om, y,[v],\nu_{v})\,d\mathcal H^{n-1} \\
&\leq \int_{ Q^{\nu,k}_{\rho r}( rx)}f(\om, y,\nabla u)\dx+ \int_{S_{u}\cap Q^{\nu,k}_{\rho r} ( rx)}
g_0(\om, y,[u],\nu_{u})\,d\mathcal H^{n-1}
+ (c_3|\xi|+c_4) \big( (1 + \delta)^n  - 1 \big) k^{n-1}\rho^n r^n \\
&\leq m^{f,g_0}_{\om}(\ell_\xi,  Q^{\nu,k}_{\rho r}( rx)) + \eta k^{n-1} \rho^n r^n
+ (c_3|\xi|+c_4) \big( (1 + \delta)^n  - 1 \big) k^{n-1}\rho^n r^n.
\end{align*}
Dividing by $ k^{n-1}\rho^n r^n$ and passing to the liminf as $r \to +\infty$ we obtain 
\begin{align*}
(1 + \delta)^n \underline f{}_{\rho, k}(\om, x ,\xi,\nu_j) 
\leq \underline f{}_{\rho, k}(\om, x ,\xi,\nu) + \eta +  (c_3|\xi|+c_4) \big( (1 + \delta)^n  - 1 \big).
\end{align*}
Passing to the limsup  first  as $j \to +\infty$,  then as $\delta\to0+$ and $\eta\to 0+$ we get  
\[
\limsup_{j \to +\infty} \underline f{}_{\rho, k}(\om, x ,\xi,\nu_j) 
\leq \underline f{}_{\rho, k}(\om, x ,\xi,\nu). 
\]
A similar argument, using the  cubes  $Q^{\nu_j,k}_{(1 - \delta)\rho r} ( rx)$, gives
\[
\underline f{}_{\rho, k}(\om, x ,\xi,\nu) \leq \liminf_{j \to +\infty} \underline f(\om, x ,\xi,\nu_j),
\]
and so the continuity of $\nu \mapsto \underline f{}_{\rho, k}(\om, x ,\xi,\nu)$ follows.

It is known that $\mathbb{Q}^n\cap \Sph^{n-1}$ is dense in $\Sph^{n-1}$(see, e.g., \cite[Remark A.2]{CDMSZ}). Arguing as in the proof of \cite[Theorem 5.1]{CDMSZ} it is easy  to  show that 
$\mathbb{Q}^n\cap \widehat\Sph^{n-1}_\pm$ is dense in $\widehat\Sph^{n-1}_\pm$. 
Therefore, from  the continuity property proved above and from \eqref{aha} we deduce  
 that for every $\xi\in \R^{m \times n}$ and $\nu \in \Sph^{n-1}$ there exists a
 $\T$-measurable function, which we still denote by $\varphi_{\xi,\nu}$, such that 
\begin{equation} \label{agc}
\underline f{}_{\rho, k}(\om, x ,\xi,\nu)= \overline f{}_{\rho, k}(\om, x ,\xi,\nu)=\varphi_{\xi,\nu}(\om)
\end{equation}
 for every $\om \in \widetilde \Om$,  $x\in\R^n$,  $\xi\in \R^{m\times n}$, $\nu \in \Sph^{n-1}$, $k\in\N$, and $\rho>0$. This implies  that
 \begin{equation} \label{this one}
 \varphi_{\xi, \nu} (\omega)= \lim_{r \to +\infty} \frac{m^{f,g_0}_{\om}(\ell_\xi, Q^{\nu,k}_{\rho r}( rx))}{ k^{n-1} \rho^n r^n}
\end{equation}
for every $\om \in \widetilde \Om$,  $x\in\R^n$,  $\xi\in \R^{m\times n}$, $\nu \in \Sph^{n-1}$, $k\in\N$, and $\rho>0$,
concluding the proof of  Step~2.

\medskip

\noindent
\emph{Step 3: The limit in \eqref{lim-f-hom} 
is independent of $\nu$}. 

\smallskip

We now show that $\varphi_{\xi, \nu} (\omega)$ does not depend on $\nu$\ie 
we show that 
 \begin{equation} \label{agb}
\varphi_{\xi, \nu} (\omega) = \varphi_{\xi, e_n} (\omega)
\quad \text{ for every } \omega \in \widetilde \Om,\; \xi \in \R^{m \times n},\; \nu \in \Sph^{n-1}.
\end{equation}
For every $r>0$ let $Q^\nu_r:=Q^\nu_r(0)$ and let $\eta > 0$ be fixed. 
Let $( Q_{\rho_i} (x_i) )$ be a family of pairwise disjoint cubes, 
with $\rho_i\in (0,1)$, $i = 1, \ldots, N_{\eta}$,
with faces parallel to the coordinate axes, such that
\[
\bigcup_{i= 1}^{N_{\eta}} Q_{\rho_i} (x_i) \subset Q^\nu_r \quad
\text{ and } \quad 
\mathcal{L}^n \bigg( Q^{\nu}_{1} \setminus \bigcup_{i= 1}^{N_{\eta}} Q_{\rho_i} (x_i) \bigg)
= 1 - \sum_{i= 1}^{N_{\eta}} \rho_i^n <  \eta.
\]
By using arguments similar to those used to prove the subadditivity in Step 1, we can prove that
\[
m^{f,g_0}_{\om}(\ell_\xi, Q^{\nu}_{ r} )
\leq m^{f,g_0}_{\om}\bigg(\ell_\xi, \bigcup_{i= 1}^{N_{\eta}}  Q_{\rho_i r} ( r x_i)\bigg)
+ \eta (c_3 |\xi| + c_4 ) r^n.
\]
Then, by \eqref{aaabbb}, for every $r>0$ we have 
\begin{align*}
\frac{m^{f,g_0}_{\om}(\ell_\xi, Q^{\nu}_{ r})}{r^n}
&\leq \sum_{i = 1}^{N_{\eta}} \frac{m^{f,g_0}_{\om}(\ell_\xi,  Q_{\rho_i r} ( r x_i))}{r^n} 
+ \eta (c_3 |\xi| + c_4 )\\
&=  \sum_{i = 1}^{N_{\eta}} \frac{m^{f,g_0}_{\om}(\ell_\xi, Q_{\rho_i r} ( r x_i))}{\rho_i^n r^n } \rho_i^n + \eta (c_3 |\xi| + c_4 ).
\end{align*}
Hence, passing to the limit as $r \to +\infty$ and using \eqref{this one} we obtain
\begin{align*}
\varphi_{\xi, \nu} (\omega)
\leq  \varphi_{\xi, e_n} (\om) \sum_{i = 1}^{N_{\eta}} \rho_i^n
+ \eta (c_3 |\xi| + c_4 )
\leq \varphi_{\xi, e_n} (\om) + \eta (c_3 |\xi| + c_4 ),
\end{align*}
thus taking the limit as $\eta \to 0+$ we get $\varphi_{\xi, \nu} (\omega) \leq \varphi_{\xi, e_n} (\om)$.
By repeating a similar argument, now using coverings of $Q_{1}(0)$ 
by cubes of the form $Q^\nu_{\rho_i}(x_i)$, we obtain the opposite inequality, 
and eventually the claim.

\medskip

\noindent
\emph{Step 4: Definition and properties of $f_{\rm hom}$}.

\smallskip

For every $\om \in \Om$  and $\xi \in \R^{m \times n}$ we define
\begin{equation*}
f_{\rm hom}(\om, \xi):=\begin{cases}
\varphi_{\xi, e_n} (\omega) & \text{if}\; \om\in \widetilde \Om,  
\cr
c_2|\xi| & \text{if}\; \om\in \Om \setminus \widetilde \Om. 
\end{cases}
\end{equation*}
Then \eqref{lim-f-hom-eq} follows from \eqref{this one} and \eqref{agb}. From the measurability of
$\varphi_{\xi, e_n}$, proved in Step 2, we obtain that $f_{\rm hom}(\cdot, \xi)$
 is $\T$-measurable in $\Om$ for every $\xi\in \R^{m \times n}$. Moreover, since the function $\xi \mapsto \underline f{}_{\rho, k}(\om, x ,\xi,\nu)$
is continuous on $\R^{m\times n}$, from \eqref{agc} we deduce that $f_{\rm hom}(\om,\cdot)$ 
is continuous in $\R^{m \times n}$ for every $\om \in\Om$, 
and this implies the $\T \otimes \B^{m\times n}$-measurability of $f_{\rm hom}$ on $\Om\times \R^{m \times n}$. Finally, Lemma \ref{l:f-hom} allows us to conclude that $f_{\rm hom}(\om, \cdot) \in \mathcal F$ for every $\om \in \Om$. 
Therefore, $f_{\mathrm{hom}}$ is a random volume integrand according to Definition~\ref{ri}. 
\end{proof}

The following result is a direct consequence of Propositions~\ref{p:h} and~\ref{random-f-hom}. 
In the ergodic case, \eqref{DM-hhh} can be obtained by integrating \eqref{hhh} and observing that,
thanks to \eqref{DM-2315}, we can apply the Dominated Convergence Theorem.


\begin{prop}[Homogenised random Cantor integrand]\label{random-f-hom-infty} 
Under the assumptions of Proposition  \ref{random-f-hom}, for  every $\om\in \Om'$ and $\xi \in \R^{m\times n}$  let 
$$
f^\infty_{\rm hom}(\omega,\xi):=\lim_{t\to+\infty}\frac{f_{\rm hom}(\omega,t\xi)}{t}
$$
(since $f_{\mathrm{hom}}(\om,\cdot)\in \mathcal{F}$, the existence of the limit is guaranteed by $(f5)$). 
Then  $f^\infty_{\rm hom}$ is a random volume integrand and  for every 
$\om\in \Om'$, $x\in \R^{n}$, $\xi \in \R^{m\times n}$, $\nu\in \Sph^{n-1}$, and $k\in \N$ we have 
\begin{equation}\label{hhh}
f^\infty_{\rm hom}(\omega,\xi)=\lim_{r\to +\infty} \frac{m^{f^\infty,g_0}_{\om}(\ell_\xi,Q^{\nu,k}_r(rx))}{k^{n-1}r^{n}}=\lim_{r\to +\infty} \frac{m^{f^\infty,g_0}_{\om}(\ell_\xi, Q_r)}{r^{n}},
\end{equation}
  where $Q_r:=Q_r (0)$. 
If, in addition, $(\tau_z)_{z\in \Z^n}$ is ergodic, then $f^\infty_{\rm hom}$ is independent of $\om$ and 
\begin{equation}\label{DM-hhh}
f^\infty_{\mathrm{hom}}(\xi)=\lim_{r\to +\infty}\, \frac{1}{r^n} {\int_\Om m^{f^\infty,g_0}_{\om}(\ell_\xi,  Q_r  )\,dP(\om)}.
\end{equation} 
\end{prop}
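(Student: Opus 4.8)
The plan is to derive Proposition~\ref{random-f-hom-infty} by combining the purely deterministic identity of Proposition~\ref{p:h} with the already-established stochastic convergence of $f_{\rm hom}$ in Proposition~\ref{random-f-hom}. First I would fix $\om\in\Om'$ (equivalently $\om\in\widetilde\Om$, the set on which all the limits in Proposition~\ref{random-f-hom} hold) and note that, since $f(\om,\cdot,\cdot)\in\F$ and $g(\om,\cdot,\cdot)\in\G$, the pair $(f(\om,\cdot,\cdot),g(\om,\cdot,\cdot))$ satisfies hypothesis (a) of Theorem~\ref{T:det-hom} --- this is exactly the content of \eqref{lim-f-hom-eq}, which gives the existence and the $x$-, $\nu$-, $k$-independence of the limit defining $f_{\rm hom}(\om,\cdot)$. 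Therefore Proposition~\ref{p:h}, applied $\om$ by $\om$ to the integrands $f(\om,\cdot,\cdot)$ and $g(\om,\cdot,\cdot)$, yields precisely \eqref{hhh}: the recession function $f^\infty_{\rm hom}(\om,\cdot)$ of $f_{\rm hom}(\om,\cdot)$ (which exists because $f_{\rm hom}(\om,\cdot)\in\F$ by Lemma~\ref{l:f-hom}) equals the rescaled limit of $m^{f^\infty,g_0}_\om(\ell_\xi,Q^{\nu,k}_r(rx))$, independently of $x$, $\nu$, $k$. Up to the measurability statement, this is the whole non-ergodic part of the proposition.

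Next I would verify that $f^\infty_{\rm hom}$ is a random volume integrand in the sense of Definition~\ref{ri}. Measurability of $(\om,\xi)\mapsto f^\infty_{\rm hom}(\om,\xi)$ follows from \eqref{DM-abe}: for each fixed $t>0$ the map $(\om,\xi)\mapsto \tfrac1t f_{\rm hom}(\om,t\xi)$ is $\T\otimes\B^{m\times n}$-measurable (since $f_{\rm hom}$ is a random volume integrand by Proposition~\ref{random-f-hom}), hence so is the pointwise limit as $t\to+\infty$; and $f^\infty_{\rm hom}(\om,\cdot)\in\F$ for every $\om$ by Remark~\ref{DM-rem} (or directly, since $f_{\rm hom}(\om,\cdot)\in\F$). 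One also has $f^\infty_{\rm hom}(\om,\cdot)=c_2|\cdot|$ on $\Om\setminus\widetilde\Om$ if one adopts the same convention as in the proof of Proposition~\ref{random-f-hom}, so nothing degenerates off $\Om'$.

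Finally, for the ergodic case I would argue as the remark preceding the statement already indicates: when $(\tau_z)_{z\in\Z^n}$ is ergodic, Proposition~\ref{random-f-hom} gives that $f_{\rm hom}$, and hence $f^\infty_{\rm hom}$, is $\om$-independent. To obtain the integral formula \eqref{DM-hhh}, integrate \eqref{hhh} over $\Om$ against $dP$; the integrand $\om\mapsto \tfrac1{r^n}m^{f^\infty,g_0}_\om(\ell_\xi,Q_r)$ is dominated uniformly in $r$ by $(c_3|\xi|+c_4)$ --- in fact by $c_3|\xi|$ since $f^\infty$ satisfies $(f4)$ with $c_4=0$ --- via the estimate \eqref{DM-2315} applied to $f^\infty$ in place of $f$, so the Dominated Convergence Theorem lets us pass the limit $r\to+\infty$ under the integral, yielding $f^\infty_{\rm hom}(\xi)=\lim_{r\to+\infty}\tfrac1{r^n}\int_\Om m^{f^\infty,g_0}_\om(\ell_\xi,Q_r)\,dP(\om)$. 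The only genuinely delicate point in the whole argument is the measurability of $\om\mapsto m^{f^\infty,g_0}_\om(\ell_\xi,A)$, which is needed implicitly to make sense of the integral in \eqref{DM-hhh}; but this is established (for $f$, $g$ and their variants $f^\infty$, $g_0$, using Remark~\ref{DM-rem}) in Proposition~\ref{measurability} of the Appendix, so it may be invoked directly. Thus the proof is essentially a short assembly of Propositions~\ref{p:h} and~\ref{random-f-hom} plus a routine dominated-convergence argument, and I do not expect a substantial obstacle.
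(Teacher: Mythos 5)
Your proposal is correct and follows essentially the same route as the paper: the paper treats this proposition as a direct consequence of Proposition~\ref{p:h} (applied $\om$ by $\om$ on $\Om'$, where hypothesis (a) of Theorem~\ref{T:det-hom} holds by Proposition~\ref{random-f-hom}) together with Proposition~\ref{random-f-hom} itself, and in the ergodic case obtains \eqref{DM-hhh} by integrating \eqref{hhh} and applying the Dominated Convergence Theorem via the bound \eqref{DM-2315} (with $c_4=0$ for $f^\infty$), exactly as you do. Your additional remarks on the measurability of $f^\infty_{\rm hom}$ (pointwise limit, as in Remark~\ref{DM-rem}) and of $\om\mapsto m^{f^\infty,g_0}_\om(\ell_\xi,Q_r)$ (Proposition~\ref{measurability}) are consistent with the paper's treatment.
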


%

The following proposition establishes the existence of the random surface integrand $g_{\rm hom}$. 

\begin{prop}[Homogenised random surface integrand]\label{random-g-hom}
Let $f$ be a stationary random volume integrand and let $g$ be a stationary random surface integrand  with respect to a group $(\tau_z)_{z\in \Z^n}$ of $P$-preserving transformations on $(\Om,\T,P)$.
Then there exists $\Om'\in \T$, with $P(\Om')=1$,
such that for every $\om\in \Om'$, $x\in \R^{n}$, $\zeta \in \R^{m}$, $\nu\in \Sph^{n-1}$, the limit  
\begin{equation}\label{lim-g-hom}
\lim_{r\to +\infty} \frac{m^{f^{\infty},g}_{\om}(u_{ r x , \zeta, \nu},Q^{\nu}_r(rx))}{r^{n-1}}
\end{equation}
exists and is independent of $x$. More precisely, there exists a random volume integrand 
$g_{\mathrm{hom}} \colon \Om\times \R^{m} \times \Sph^{n-1} \to [0,+\infty)$ 
such that for every $\om\in\Om'$, $x \in \mathbb{R}^n$, 
$\zeta \in \mathbb{R}^{m}$, and $\nu\in \Sph^{n-1}$ 
\begin{equation*}
g_{\mathrm{hom}}(\om,\zeta, \nu)
=\lim_{r\to +\infty} 
\frac{m^{f^{\infty},g}_{\om}(u_{ r x , \zeta, \nu},Q^{\nu}_r(rx))}{r^{n-1}}
=\lim_{r\to +\infty} 
\frac{m^{f^{\infty},g}_{\om}(u_{ 0, \zeta, \nu},  Q^{\nu}_r  )}{r^{n-1}},
\end{equation*}
 where $Q^{\nu}_r:=Q^{\nu}_r (0)$.
If, in addition, $(\tau_z)_{z\in \Z^n}$ is ergodic, then $g_{\mathrm{hom}}$ is independent of $\om$ and
\begin{equation*}
g_{\mathrm{hom}}(\zeta, \nu) =\lim_{r\to +\infty}\, \frac{1}{r^{n-1}} 
{\int_\Om m^{f^{\infty},g}_{\om}(u_{ 0, \zeta, \nu},  Q^{\nu}_r  ) \,dP(\om)}.
\end{equation*} 
\end{prop}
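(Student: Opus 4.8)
\textbf{Overall strategy.} The proof will mirror that of Proposition~\ref{random-f-hom}, replacing the $n$-dimensional subadditive process attached to the bulk formula by an $(n-1)$-dimensional one attached to the surface formula, following the scheme used for pure surface energies in \cite{Alciru, BP} and for free-discontinuity energies with superlinear growth in \cite{CDMSZ-stoc}. First I would fix $\zeta\in\Q^m$ and $\nu\in\Sph^{n-1}\cap\Q^n$, pass to the completion of $(\Om,\T,P)$ (as in Proposition~\ref{random-f-hom}), and set, for $A\in\mI_{n-1}$,
\[
\mu_{\zeta,\nu}(\om,A):=\frac{1}{M_\nu^{n-1}}\,m^{f^\infty,g}_\om\big(u_{0,\zeta,\nu},R_\nu\big(M_\nu A\times[-\tfrac12,\tfrac12)\big)\big),
\]
where $M_\nu\in\N$ satisfies $M_\nu R_\nu\in\Z^{n\times n}$. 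I would then check that $\mu_{\zeta,\nu}$ is an $(n-1)$-dimensional subadditive process (Definition~\ref{Def:subadditive}) with respect to the sublattice $z\mapsto\tau_{M_\nu R_\nu(z,0)}$, $z\in\Z^{n-1}$: measurability in $\om$ follows from Proposition~\ref{measurability} and Remark~\ref{DM-rem}; covariance holds because a $\Z^{n-1}$-translation of $A$ moves the box by an integer vector orthogonal to $\nu$, which leaves $u_{0,\zeta,\nu}$ unchanged, so the stationarity of $f^\infty$ and $g$ applies; subadditivity is obtained by gluing near-optimal competitors across a partition of $A$, whose interfaces, after the rotation $R_\nu$, are parallel to $\nu$ and hence carry no new jump ($u_{0,\zeta,\nu}$ being the common boundary datum and the interface of $u_{0,\zeta,\nu}$ inside the box over $A$ having $\mathcal H^{n-1}$-measure $M_\nu^{n-1}\mathcal L^{n-1}(A)$); boundedness follows from $(g4)$ together with $f^\infty(x,0)=0$.

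\textbf{Ergodic theorem and the dimension mismatch.} Applying Theorem~\ref{ergodic} with $d=n-1$ yields, for each rational $\zeta,\nu$, a full-measure set and a measurable $\varphi_{\zeta,\nu}$ such that $\mu_{\zeta,\nu}(\om,A_t)/\mathcal L^{n-1}(A_t)\to\varphi_{\zeta,\nu}(\om)$ for every regular family $(A_t)$ of $(n-1)$-cubes; in particular the limit is the same for cubes anchored at any point of $\R^{n-1}$, which already removes the dependence on the components of the anchor orthogonal to $\nu$. It then remains to identify $\varphi_{\zeta,\nu}(\om)$ with the limit of $m^{f^\infty,g}_\om(u_{0,\zeta,\nu},Q^\nu_r)/r^{n-1}$ over genuine cubes of side $r$. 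One inequality is immediate: extending a competitor on the fixed-height box by the locally constant function $u_{0,\zeta,\nu}$ adds no bulk energy (since $f^\infty(\cdot,0)=0$) and no jump, which gives $m^{f^\infty,g}_\om(u_{0,\zeta,\nu},Q^\nu_r)\le M_\nu^{n-1}\mu_{\zeta,\nu}(\om,A_r)$ for the cube $A_r$ with $M_\nu A_r=(-\tfrac r2,\tfrac r2)^{n-1}$, whence $\limsup_r(\cdot)\le\varphi_{\zeta,\nu}(\om)$. For the reverse inequality I would argue as in the proof of \eqref{h-hom-2}, passing through the stretched rectangles $Q^{\nu,k}_r$ and taking a double limit ($r\to+\infty$, then $k\to+\infty$): a Fubini slicing in the direction $\nu$ selects a level at which a near-optimal competitor can be truncated and completed by $u_{0,\zeta,\nu}$, the extra surface cost being of boundary-layer order $O(k^{n-2}r^{n-1})$ and hence negligible after division by $k^{n-1}r^{n-1}$ and $k\to+\infty$. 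This yields $\lim_r m^{f^\infty,g}_\om(u_{0,\zeta,\nu},Q^\nu_r)/r^{n-1}=\varphi_{\zeta,\nu}(\om)$ for rational $\zeta,\nu$ on a full-measure set.

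\textbf{Extension in $(\zeta,\nu)$ and construction of $g_{\rm hom}$.} The continuity estimate \eqref{aek} — proved via $(g2)$, with modulus independent of $\om$ — shows that the $\liminf$ and $\limsup$ in $r$ of $m^{f^\infty,g}_\om(u_{0,\zeta,\nu},Q^\nu_r)/r^{n-1}$ are uniformly continuous in $\zeta$, so the almost sure limit extends from $\Q^m$ to all of $\R^m$. Continuity in $\nu$ on each hemisphere $\widehat\Sph^{n-1}_\pm$ is obtained by repeating, for each fixed $\om$, the comparison with $Q^{\nu_j}_{(1\pm\delta)r}$ carried out in Lemma~\ref{l:g-hom}; since $\Q^n\cap\widehat\Sph^{n-1}_\pm$ is dense in $\widehat\Sph^{n-1}_\pm$ (see Proposition~\ref{random-f-hom}, Step~2), the limit extends to all $\nu\in\Sph^{n-1}$. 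Intersecting countably many full-measure sets produces $\Om'$; on $\Om\setminus\Om'$ one sets $g_{\rm hom}(\om,\cdot,\cdot)$ equal to a fixed element of $\mathcal G$. The function $g_{\rm hom}(\cdot,\zeta,\nu)$ is then $\T$-measurable, $g_{\rm hom}(\om,\cdot,\cdot)$ is continuous on $\R^m\times\widehat\Sph^{n-1}_\pm$ (hence jointly measurable), and Lemma~\ref{l:g-hom} gives $g_{\rm hom}(\om,\cdot,\cdot)\in\mathcal G$, so $g_{\rm hom}$ is a random surface integrand.

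\textbf{Independence of $x$ — the main obstacle.} Finally one must show $m^{f^\infty,g}_\om(u_{rx,\zeta,\nu},Q^\nu_r(rx))/r^{n-1}\to g_{\rm hom}(\om,\zeta,\nu)$ for every $x\in\R^n$. Since $(Q^\nu_r(rx),u_{rx,\zeta,\nu})$ is a translate of $(Q^\nu_r,u_{0,\zeta,\nu})$, writing $rx=z_r+t_r$ with $z_r\in\Z^n$, $t_r\in[0,1)^n$, and using stationarity reduces the quantity to $m^{f^\infty(\cdot+t_r),g(\cdot+t_r)}_{\tau_{z_r}(\om)}(u_{0,\zeta,\nu},Q^\nu_r)/r^{n-1}$; the bounded shift $t_r$ is absorbed by sandwiching $Q^\nu_r$ between $Q^\nu_{r-2}$ and $Q^\nu_{r+2}$, but the base point $\tau_{z_r}(\om)$ still varies with $r$. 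This is precisely the delicate point flagged in the introduction (the $x$-dependent boundary datum), and I expect it, rather than the previous steps, to be the main difficulty. I would resolve it by proving that $g_{\rm hom}$ is invariant under the whole group $(\tau_z)_{z\in\Z^n}$: the Subadditive Ergodic Theorem already gives invariance of $\varphi_{\zeta,\nu}$ under the cross-section sublattice, and the missing translations are recovered by a covering/sandwich argument in the spirit of Step~3 of Proposition~\ref{random-f-hom} (using that for $\nu\in\Q^n\cap\Sph^{n-1}$ one has $p_\nu\nu\in\Z^n$ for some $p_\nu\in\N$); combined with $|z_r-rx|\le\sqrt n$ and the uniform-in-$\om$ moduli of continuity from the previous step, this gives the claim, and the second equality in the statement is the case $x=0$. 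In the ergodic case $\varphi_{\zeta,\nu}$ is $P$-a.e.\ constant, so $g_{\rm hom}$ is deterministic, and the integral formula follows by integrating the convergence over $\Om$ and invoking the Dominated Convergence Theorem, the bound $g(\om,x,\zeta,\nu)\le c_3|\zeta|$ providing the integrable majorant.
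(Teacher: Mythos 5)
Your outline reproduces several genuine ingredients of the paper's argument (the $(n-1)$-dimensional subadditive process on a sublattice adapted to $R_\nu$, measurability via Proposition~\ref{measurability}, the extension from rational $(\zeta,\nu)$ by the uniform continuity estimate \eqref{aek} and the hemisphere comparison of Lemma~\ref{l:g-hom}), but two steps do not work as proposed. First, your process is built on boxes of \emph{fixed} height, $R_\nu(M_\nu A\times[-\tfrac12,\tfrac12))$, and you then try to identify its ergodic limit $\varphi_{\zeta,\nu}$ with $\lim_r m^{f^\infty,g}_\om(u_{0,\zeta,\nu},Q^\nu_r)/r^{n-1}$. Only the inequality $\limsup_r m^{f^\infty,g}_\om(u_{0,\zeta,\nu},Q^\nu_r)/r^{n-1}\le\varphi_{\zeta,\nu}$ is immediate; the reverse identification is in general \emph{false}, not merely hard: already for periodic (hence stationary) integrands with $f^\infty(y,\xi)=c_3|\xi|$ and $g$ equal to $c_3|\zeta|$ except on the planes $\{y\cdot\nu\in[0.7,0.8]+\Z\}$, where it equals $c_2|\zeta|$ with $c_2<c_3$, the cube problem can jump on a cheap plane at distance $\approx 0.75$ from $\Pi^\nu_0$ at a lateral cost $O(r^{n-2})$, so the cube limit is $c_2|\zeta|$, while a box of height $1$ cannot reach any cheap plane and its limit is $c_3|\zeta|$. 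Your proposed fix (Fubini slicing ``as in the proof of \eqref{h-hom-2}'') does not apply: in that lemma the cut is made in the directions \emph{parallel} to $\nu$, over a range of lateral widths of length $h$, and the Poincar\'e inequality on strips anchored on $\partial^\perp_\nu$ gives the $1/h$ gain; here you would have to cut at height $O(1)$ in the $\nu$ direction, where a near-optimal cube competitor need not be close to $u_{0,\zeta,\nu}$ (with linear growth the optimal transition may be diffuse or displaced), and the cut cost is of order $r^{n-1}$, the same order as the energy. The paper avoids the issue altogether by defining the process on the boxes $T_\nu(A')$ in \eqref{An}, whose height is proportional to the largest side of $A'$, so that on cubes the process \emph{is} the cube minimum and no identification step is needed; subadditivity still holds because the hyperplane over $A'$ is covered by the sub-boxes and $f^\infty(\cdot,0)=0$.

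Second, the independence of $x$, which you correctly single out as the main obstacle, is not resolved by your argument. Writing $rx=z_r+t_r$ and proving $g_{\rm hom}(\tau_z(\om),\zeta,\nu)=g_{\rm hom}(\om,\zeta,\nu)$ for every fixed $z\in\Z^n$ (which the paper does prove, in Step~4 of Proposition~\ref{random-g-hom-0}) gives, for each fixed $z$, convergence of $m^{f^\infty,g}_{\tau_z(\om)}(u_{0,\zeta,\nu},Q^\nu_r)/r^{n-1}$ as $r\to+\infty$; it says nothing about the diagonal sequence in which $z=z_r$ with $|z_r|\approx r|x|\to+\infty$, and there is no ``uniform-in-$\om$ modulus of continuity'' to invoke, since the moduli you established are in $(\zeta,\nu)$, not in $\om$. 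The paper treats this point by a genuinely different argument: Proposition~\ref{random-g-hom-x} first obtains \eqref{lim-in-x} for rational directions by the scheme of \cite[Theorem 6.1]{CDMSZ-stoc} (a separate application of the ergodic machinery to the translated problems, not a consequence of group-invariance of the limit), and then passes to arbitrary $\nu$ via the sandwich estimates \eqref{number 1}--\eqref{number 2}. Relatedly, in the ergodic case the a.s.\ constancy of $\varphi_{\zeta,\nu}$ does not follow directly from Theorem~\ref{ergodic}, because ergodicity of $(\tau_z)_{z\in\Z^n}$ does not imply ergodicity of the $(n-1)$-dimensional sublattice $(\tau_{M_\nu R_\nu(z',0)})_{z'\in\Z^{n-1}}$; the paper instead uses the full-group invariance of $g_{\rm hom}$ together with \cite[Corollary 6.3]{CDMSZ-stoc}, and only then obtains the integral formula by dominated convergence, as you indicate.
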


The proof of Proposition \ref{random-g-hom} follows immediately from Propositions~\ref{random-g-hom-0}
and~\ref{random-g-hom-x} below. In the first one we  prove 
the existence of the limit in \eqref{lim-g-hom} for $x=0$, while in the second one we consider the 
general case
$x\neq 0$ and prove that the limit is independent of~$x$.

\begin{prop}\label{random-g-hom-0}
Let $f$ be a stationary random volume integrand and let $g$ be a stationary random surface integrand  with respect to a group $(\tau_z)_{z\in \Z^n}$ of $P$-preserving transformations on $(\Om,\T,P)$.
Then there exist $\widetilde\Om\in \T$, with $P(\widetilde\Om)=1$, and a random surface integrand $g_{\rm hom}\colon \Om \times \R^m \times \Sph^{n-1} \to \R$ such that 
\begin{equation}\label{lim-in-zero}
g_{\rm hom}(\om,\zeta,\nu)=\lim_{r\to +\infty} 
\frac{m^{f^{\infty},g}_{\om}(u_{ 0, \zeta, \nu}, Q^\nu_r)}{r^{n-1}},
\end{equation}
for every $\om \in \widetilde \Om$, $\zeta\in \R^m$, and $\nu\in \Sph^{n-1}$, where $Q^{\nu}_r:=Q^{\nu}_r (0)$.  
If, in addition, $(\tau_z)_{z\in \Z^n}$ is ergodic, then $g_{\mathrm{hom}}$ is independent of $\om$ and
\begin{equation}\label{g-hom-det}
g_{\mathrm{hom}}(\zeta, \nu) =\lim_{r\to +\infty}\, \frac{1}{r^{n-1}} 
{\int_\Om m^{f^{\infty},g}_{\om}(u_{ 0, \zeta, \nu}, Q^\nu_r) \,dP(\om)}.
\end{equation} 

\end{prop}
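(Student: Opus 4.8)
The strategy mirrors Step~1 of Proposition~\ref{random-f-hom}, but with a fundamental adaptation: the natural rescaling of the surface minimisation problem $m^{f^\infty,g}_\om(u_{0,\zeta,\nu},Q^\nu_r)$ is by $r^{n-1}$, so the associated stochastic process must be taken to be $(n{-}1)$-dimensional, while the competitor functions live on the $n$-dimensional cube $Q^\nu_r$. To reconcile this, I would fix $\zeta\in\Q^m$ and $\nu\in\Sph^{n-1}\cap\Q^n$, and for $A\in\mI_{n-1}$ define a process $\mu_{\zeta,\nu}(\om,A)$ by taking $m^{f^\infty,g}_\om(u_{0,\zeta,\nu},\cdot)$ on the $n$-dimensional cylinder $R_\nu(M_\nu A\times(-\tfrac{M_\nu}{2},\tfrac{M_\nu}{2}))$ (rotated and dilated so that $M_\nu R_\nu\in\Z^{n\times n}$), divided by $M_\nu^{n-1}$; here one uses a ``box'' $Q^{\nu,k}$-type slab of fixed thickness in the $\nu$-direction, as in \cite{Alciru, BP, CDMSZ-stoc}, to turn an $n$-dimensional geometric object into an $(n{-}1)$-dimensional index set. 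The boundary datum $u_{0,\zeta,\nu}$ is compatible with this since it is constant along $\nu$-hyperplanes.

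\textbf{Key steps.} First I would verify that $\mu_{\zeta,\nu}$ is a subadditive process on the completion $(\Om,\widehat\T,\widehat P)$: measurability in $\om$ from Proposition~\ref{measurability} together with Remark~\ref{DM-rem}; covariance with respect to the group $(\tau_{z^\nu})_{z\in\Z^{n-1}}$, where $z^\nu=M_\nu R_\nu\binom{z}{0}\in\Z^n$, proved by the change of variables $v(x)=u(x+z^\nu)$ and the stationarity of $f^\infty$ and $g$ (note $u_{0,\zeta,\nu}$ is invariant under translations by vectors in the hyperplane $\Pi^\nu_0$, which is why only the ``horizontal'' lattice $\Z^{n-1}$ acts); subadditivity by gluing competitors across a partition, exactly as in the volume case, using that the glued function picks up no extra jump on the shared faces (which are parallel to $\nu$, hence $f^\infty$, being $1$-homogeneous with $f^\infty(x,0)=0$, contributes nothing there — actually here one must be slightly careful since a shared face \emph{parallel} to $\nu$ can carry jump, but the standard argument handles this via $\hs^{n-1}$-negligibility of $S_u$ on a.e.\ parallel face, cf.\ Lemma~\ref{h-hom-2}'s proof); and the boundedness $0\le\mu_{\zeta,\nu}(\om,A)\le c_3|\zeta|\,\mathcal L^{n-1}(A)$ from $(g4)$ and $f^\infty(\cdot,0)=0$, testing with $u_{0,\zeta,\nu}$ itself. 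Then the Subadditive Ergodic Theorem~\ref{ergodic} in dimension $d=n-1$, applied to the regular family $(Q^{e_n,1}_r(0))$ cylinders (intersected appropriately with $\R^{n-1}$), gives a $\widehat\T$-measurable $\varphi_{\zeta,\nu}$ and a full-measure set $\Om_{\zeta,\nu}$ on which \eqref{lim-in-zero} holds; passing to $\T$-measurable representatives via the completion. Next, intersecting over $\zeta\in\Q^m$, $\nu\in\Sph^{n-1}\cap\Q^n$ gives $\widetilde\Om$; I extend in $\zeta$ by the continuity estimate \eqref{aek} (whose proof adapts verbatim to give a modulus of continuity independent of $\om$), and in $\nu$ by the comparison-of-cubes argument from Lemma~\ref{l:g-hom}'s proof of $(g1)$ (inclusions $Q^{\nu_j}_{(1-\delta)r}\subset\subset Q^\nu_r\subset\subset Q^{\nu_j}_{(1+\delta)r}$, the surface error $\varsigma(\delta)r^{n-1}$ controlled by $(g4)$), using density of $\Q^n\cap\widehat\Sph^{n-1}_\pm$ in $\widehat\Sph^{n-1}_\pm$. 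Finally, define $g_{\rm hom}(\om,\zeta,\nu):=\varphi_{\zeta,\nu}(\om)$ on $\widetilde\Om$ and, say, $c_2|\zeta|$ off it; the $\T\otimes\B^m\otimes\B^n_S$-measurability follows from $\T$-measurability in $\om$ plus joint continuity in $(\zeta,\nu)$ on each $\widehat\Sph^{n-1}_\pm$, and Lemma~\ref{l:g-hom} gives $g_{\rm hom}(\om,\cdot,\cdot)\in\G$. The ergodic statement \eqref{g-hom-det} follows by integrating \eqref{lim-in-zero} and invoking the Dominated Convergence Theorem, using the uniform bound $\mu_{\zeta,\nu}(\om,A)\le c_3|\zeta|\mathcal L^{n-1}(A)$.

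\textbf{Main obstacle.} The delicate point is the correct definition of the $(n{-}1)$-dimensional process and the verification of \emph{subadditivity} for it: one must ensure that when competitors on $n$-dimensional slabs over disjoint $(n{-}1)$-dimensional boxes are glued, the interfaces (which are now ``vertical'' faces parallel to $\nu$, not ``horizontal'' ones) do not create an uncontrolled surface contribution. Unlike the volume case, the glued function is generically discontinuous across these shared faces, so one cannot simply invoke $f^\infty(x,0)=0$; instead, as in \cite{CDMSZ-stoc, BP}, the fix is to allow a small tube around each box where the function is reset to the boundary datum $u_{0,\zeta,\nu}$, at the cost of a surface error proportional to the \emph{perimeter} of the box times the slab thickness — which, because the thickness is \emph{fixed} (independent of $r$), is a lower-order term after dividing by $r^{n-1}$ and can be absorbed in the $\eta$. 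Making this quantitative, and checking it is compatible with the covariance structure, is where the real work lies; everything else is a routine transcription of the volume arguments and of Lemma~\ref{l:g-hom}.
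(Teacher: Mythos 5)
Your overall architecture (an $(n{-}1)$-dimensional subadditive process on $\mI_{n-1}$ over the completion, measurability via Proposition~\ref{measurability} and Remark~\ref{DM-rem}, covariance under the horizontal lattice $(\tau_{z'_\nu})_{z'\in\Z^{n-1}}$, extension from $\zeta\in\Q^m$, $\nu\in\Sph^{n-1}\cap\Q^n$ by the uniform continuity estimate \eqref{aek} and the cube-comparison argument, then the definition and measurability of $g_{\rm hom}$) matches the paper. But your choice of a slab of \emph{fixed} thickness $M_\nu$ creates a genuine gap: the Subadditive Ergodic Theorem applied to your process along the family $rQ'$, $Q'=[-\tfrac12,\tfrac12)^{n-1}$, yields the a.s.\ limit of $m^{f^\infty,g}_\om(u_{0,\zeta,\nu},\,\cdot\,)$ on thin slabs of base $\sim r$ and fixed height, whereas \eqref{lim-in-zero} concerns the cubes $Q^\nu_r$. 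Only one comparison is immediate (extending a slab competitor by $u_{0,\zeta,\nu}$ shows that the cube minimum is $\le$ the slab minimum); the converse would require cutting an almost-minimiser on $Q^\nu_r$ at the horizontal faces of the slab, and the jump created there is controlled only by the $L^1$-distance of the traces from the datum, which is not $o(r^{n-1})$ a priori in this linear-growth setting. The paper avoids this identification problem altogether by defining the cylinder $T_\nu(A')$ in \eqref{An} with half-height $c=\tfrac12\max_j(b_j-a_j)$ \emph{depending on the base}, so that $T_\nu(rQ')$ is exactly the cube $Q^\nu_{M_\nu r}$ and the ergodic limit is directly the cube limit. (Incidentally, the obstacle you flag — jumps on shared vertical faces and the need for ``reset tubes'' — is not actually an issue: since $m^{f^\infty,g}$ imposes the datum near the \emph{whole} boundary of each cylinder, glued competitors agree with $u_{0,\zeta,\nu}$ on both sides of every shared face, and in the paper's variable-height construction the leftover region $T_\nu(A')\setminus\bigcup_i T_\nu(A'_i)$ does not meet $\Pi^\nu_0$, so the datum has no jump there; subadditivity holds with no error term.)

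The second gap is the ergodic case. Independence of $\om$ does \emph{not} follow from the last statement of Theorem~\ref{ergodic}, because your process is covariant only with respect to the subgroup $(\tau_{z'_\nu})_{z'\in\Z^{n-1}}$ of translations parallel to $\Pi^\nu_0$, and ergodicity of the full group $(\tau_z)_{z\in\Z^n}$ does not imply ergodicity of a subgroup action (e.g.\ an ergodic $\Z^n$-action can act trivially in the directions orthogonal to $\nu$... more precisely, in $n-1$ of the directions). The paper's proof has a separate Step~4: it shows $g_{\rm hom}(\tau_z(\om),\zeta,\nu)=g_{\rm hom}(\om,\zeta,\nu)$ for \emph{every} $z\in\Z^n$ — the translations with a component along $\nu$ require comparing $m^{f^\infty,g}_\om(u_{0,\zeta,\nu},Q^\nu_r)$ with $m^{f^\infty,g}_\om(u_{z,\zeta,\nu},Q^\nu_{r+3|z|}(z))$, with an extra surface error of order $r^{n-2}$ coming from the sets $\Sigma_1,\Sigma_2$ — and then invokes \cite[Corollary 6.3]{CDMSZ-stoc} to conclude that $g_{\rm hom}$ is deterministic. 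Only after this does your ``integrate and apply dominated convergence'' step legitimately yield \eqref{g-hom-det}. As written, your proposal proves neither the cube-limit statement \eqref{lim-in-zero} nor the $\om$-independence in the ergodic case.
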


\begin{proof} We adapt the proof of \cite[Theorem 5.1]{CDMSZ-stoc}. The main difference is that now the functional to be minimised depends also on $f^\infty$, while in  \cite[Theorem 5.1]{CDMSZ-stoc} it depends only on  $g$. Since this requires some changes, for completeness we prefer to give the whole proof in detail.
We divide it into four steps. 

\medskip

\noindent
\emph{Step 1: Existence of the limit in \eqref{lim-in-zero} 
for  fixed $\zeta \in \Q^{m}$ and $\nu\in\Sph^{n-1}\cap \Q^n$}. 

\smallskip

Let $\nu \in \Sph^{n-1}\cap \mathbb Q^{n-1}$ and $\zeta \in \Q^{m}$ be fixed, 
let $R_\nu \in O(n) \cap \mathbb{Q}^{n \times n}$ 
be the orthogonal $n\times n$ matrix as in  (h) in Section \ref{Notation}, and 
let $M_\nu$ be a positive integer such that $M_\nu R_\nu \in \Z^{n\times n}$. 
Note that, in particular, for every $z'\in \mathbb{Z}^{n-1}$ we have that 
$M_\nu R_\nu(z',0)\in \Pi^\nu_0 \cap \mathbb{Z}^n$, namely $M_\nu R_\nu$ maps integer vectors perpendicular to $e_n$ into integer vectors perpendicular to $\nu$.

Given $A'  = [a_1,b_1)\times\dots\times[a_{n-1},b_{n-1}) \in \mathcal I_{n-1}$ 
(see \eqref{int:int}), we define the (rotated) $n$-dimensional interval $T_\nu(A')$ as 
\begin{equation}\label{An}
T_\nu(A'):= M_\nu R_\nu \big(A'\times[-c, c)\big), \quad \hbox{with } c:=\frac12 \max_{1\leq j\leq n-1}(b_j-a_j).
\end{equation}
For every $\om\in \Om$ and $A' \in \mI_{n-1}$ we set
\begin{equation}\label{surface-pro}
\mu_{\zeta,\nu}(\om,A'):=\frac{1}{{M}^{n-1}_{\nu}} m^{f^{\infty},g}_{\om}(u_{0, \zeta, \nu}, T_\nu(A')).
\end{equation} 

Now let $(\Om, \widehat \T, \widehat P)$ denote the completion of the probability space $(\Om, \T, P)$.
We claim that the function $\mu_{\zeta,\nu} \colon \Om\times \mI_{n -1} \to \R$ as in \eqref{surface-pro} defines an $(n-1)$-dimensional subadditive 
process on $(\Om, \widehat \T, \widehat P)$. 
Indeed, thanks to Remark~\ref{DM-rem} and Proposition~\ref{measurability}, for every $A\in \A$ the function  
$\om \mapsto m^{f^{\infty},g}_{\om}(u_{0, \zeta, \nu}, A)$ is $\widehat \T$-measurable.
From this, it follows that the function   $\om \mapsto \mu_{\zeta,\nu} (\om,A')$ is $\widehat \T$-measurable 
too.

We are now going to prove that $\mu_{\zeta,\nu}$ is covariant; that is, we show that there exists a group 
$(\tau^\nu_{z'})_{z' \in \Z^{n-1}}$ of $\widehat P$-preserving transformations on $(\Om,\widehat \T,\widehat P)$ such that
$$
\mu_{\zeta,\nu} (\om,A'+z')=\mu_{\zeta,\nu}(\tau^\nu_{z'}(\om),A'), \quad \text{for every $\om\in \Om$, 
$z' \in \Z^{n-1}$, and $A' \in \mI_{n-1}$.}
$$ 
To this end fix ${z'}\in \mathbb{Z}^{n-1}$ and $A'\in \mathcal{I}_{n-1}$. Note that, by \eqref{An},
\begin{align*}
T_\nu(A'+{z'}) &= M_\nu R_{\nu}((A'+{z'})\times [-c, c)) = M_\nu R_\nu (A'\times [-c, c)) + M_\nu R_\nu({z'},0)= T_\nu(A') +z'_\nu,
\end{align*}
where $z'_\nu:= M_\nu R_\nu({z'},0)\in \Pi^\nu_0 \cap \Z^n$. 
Then, by \eqref{surface-pro}
\begin{equation}\label{eq:mu zeta nu}
\mu_{\zeta,\nu} (\omega,A'+{z'}) = 
\frac{1}{{M}^{n-1}_{\nu}} m^{f^{\infty},g}_{\om}(u_{0, \zeta, \nu}, T_\nu(A' + z'))
= \frac{1}{{M}^{n-1}_{\nu}} m^{f^{\infty},g}_{\om}(u_{0, \zeta, \nu}, T_\nu(A') + z'_{\nu}).
\end{equation}

Given $u\in SBV (\mathrm{int} (T_\nu(A')+z'_\nu),\R^m)$ with $u = u_{0,\zeta,\nu}$ near $\partial (T_\nu(A')+z'_\nu)$, 
let  $v \in SBV (\mathrm{int}(T_\nu(A')),\R^m)$  be defined as $v(x):=u(x+ z'_\nu)$ for every $x\in \R^n$. By a change of variables, using the stationarity of $f^{\infty}$ and $g$ we obtain 
\begin{align*}
&\int_{T_\nu(A')+z'_\nu}f^{\infty}(\om,x,\nabla u)\dx+ \int_{S_u\cap (T_\nu(A')+z'_\nu)}g(\om,x,[u],\nu_u)\,d\mathcal H^{n-1}
\\
&=\int_{T_\nu(A')}f^{\infty}(\om,x+z'_\nu,\nabla v)\dx+ \int_{S_{ v}\cap T_\nu(A')}g(\om,x+z'_\nu,[ v],\nu_{ v})\,d\mathcal H^{n-1}
\\
&= \int_{T_\nu(A')}f^{\infty}(\tau_{z'_\nu}(\om),x,\nabla v)\dx+ \int_{S_v\cap T_\nu(A')}g(\tau_{z'_\nu}(\om),x,[v],\nu_v)\,d\mathcal H^{n-1}.
\end{align*}
Since $z'_\nu$ is perpendicular to $\nu$, we have $u_{0,\zeta,\nu}(x)=u_{0,\zeta,\nu}(x+ z'_\nu)$ for every $x\in \R^n$.
Therefore, from \eqref{surface-pro} and \eqref{eq:mu zeta nu} we obtain that
\[
\mu_{\zeta,\nu} (\omega, A'+{z'}) = \mu_{\zeta,\nu} (\tau^{\nu}_{z'}(\omega), A'),
\]
where we set
\begin{equation*}
\big(\tau_{z'}^{\nu}\big)_{{z'}\in \Z^{n-1}}:=(\tau_{z'_\nu})_{{z'}\in \Z^{n-1}}.
\end{equation*}

We now show that $\mu_{\zeta,\nu}$ is subadditive in $\mathcal{I}_{n-1}$. To this end let $A' \in\mathcal{I}_{n-1}$ and let $(A'_i)_{1\le i\le N} \subset  \mathcal{I}_{n-1}$ be a finite family of pairwise disjoint sets such that $A' =  \bigcup_{i} A'_i$. For fixed $\eta>0$ and $i=1,\dots, N$, let  
$u_i\in SBV ( \mathrm{int}( T_\nu(A'_i)), \R^m)$ be such that $u_i=u_{0,\zeta,\nu}$ in a neighbourhood of $\partial T_\nu(A'_i)$ and
\begin{eqnarray}\label{q:min:Gi}
\int_{T_\nu(A'_i)}f^{\infty}(\om,x,\nabla u_i)\dx+ \int_{S_u\cap T_\nu(A'_i)}g(\om,x,[u_i],\nu_{u_i})
\,d\mathcal H^{n-1}
\leq m^{f^{\infty},g}_{\om}(u_{0, \zeta, \nu}, T_\nu(A'_i)) + \eta.
\end{eqnarray}
Note that $T_\nu(A')$ can differ from $\bigcup_{i}T_\nu(A'_i)$ 
(see for instance \cite[Figure~2]{CDMSZ-stoc}),  but, by construction, 
we always have $\bigcup_{i} T_\nu(A'_i) \subset T_\nu(A')$.


Now we define
$$
u(x):=\begin{cases}
u_i(x) & \text{if}\; x\in T_\nu(A'_i),\ i=1,\dots,N,
\cr
u_{0,\zeta,\nu}(x) & \text{if}\; x\in T_\nu(A') \setminus \bigcup_{i}T_\nu(A'_i);
\end{cases}
$$
then $u\in SBV (T_\nu(A'), \R^m)$  
and $u=u_{0,\zeta,\nu}$ near $\partial T_\nu(A')$. 
Note that we also have x
\[
S_u\cap T_\nu(A') = \bigcup_{i=1}^N (S_{u_i}\cap T_\nu(A'_i)).
\]
Indeed,  $S_u \cap T_\nu(A') \cap \partial T_\nu(A'_i) = \emptyset$  for every $i=1,\dots,N$. 
Moreover, $u=u_{0,\zeta,\nu}$ in $T_\nu(A') \setminus \bigcup_{i}T_\nu(A'_i)$, hence
$\nabla u=0$ a.e.\ in this set. 
Therefore, recalling that $f^{\infty} (\omega, \cdot, 0) \equiv 0$, we obtain
\begin{align*}
&\int_{T_\nu(A')}f^{\infty}(\om,x,\nabla u)\dx+ \int_{S_u\cap T_\nu(A')}g(\om,x,[u],\nu_{u})
\,d\mathcal H^{n-1}\\
& = \sum_{i=1}^N
\bigg(\int_{T_\nu(A'_i)}f^{\infty}(\om,x,\nabla u_i)\dx+ \int_{S_{ u_i}\cap T_\nu(A'_i)}g(\om,x,[u_i],\nu_{u_i}) 
\,d\mathcal H^{n-1}\bigg). 
\end{align*}
As a consequence, by \eqref{q:min:Gi},
$$
m^{f^{\infty},g}_{\om}(u_{0, \zeta, \nu}, T_\nu(A'))
\leq \sum_{i=1}^N m^{f^{\infty},g}_{\om}(u_{0, \zeta, \nu}, T_\nu(A'_i)) + N \eta,
$$
thus the subadditivity of $\mu_{\zeta,\nu}$ follows from \eqref{surface-pro}, by the arbitrariness of $\eta>0$.
 
Finally, in view of $(g4)$ for every $A'\in \mathcal I_{n-1}$ and for $\widehat P$-a.e.\ $\om \in \Om$ we have  
\begin{align}\nonumber
\mu_{\zeta,\nu}(\omega,A') &
= \frac{1}{M_\nu^{n-1}} m^{f^{\infty},g}_{\om}(u_{0, \zeta, \nu}, T_\nu(A')) 
\leq \frac{1}{M_\nu^{n-1}}
\int_{S_{u_{0, \zeta, \nu}} \cap T_\nu(A')} g(\om,x,\zeta,\nu)
\,d\mathcal H^{n-1} \\\label{integrabilita-1}
&\leq \frac{c_3 |\zeta|}{M_\nu^{n-1}} \hs^{n-1}(\Pi^\nu_0\cap T_\nu(A'))
 =  c_3 |\zeta| \mathcal{L}^{n-1}(A'),
\end{align}
where we used again the fact that $f^{\infty} (\omega, \cdot, 0) \equiv 0$.
This concludes the proof of the fact that $\mu_{\zeta,\nu}$ is an $(n-1)$-dimensional subadditive process.

We can now apply Theorem \ref{ergodic} to the subadditive process $\mu_{\zeta,\nu}$, defined on $(\Om, \widehat T, \widehat P)$ by \eqref{surface-pro},  to deduce the existence of a $\widehat \T$-measurable function 
$\psi_{\zeta,\nu} \colon \Om \to [0,+\infty)$ and a set $\widehat \Om_{\zeta, \nu}\subset \Om$, 
with $\widehat \Om_{\zeta, \nu}\in \widehat \T$ and $P(\widehat \Om_{\zeta, \nu}) =1$ such that
\begin{equation}\label{aka}
\lim_{r \to +\infty} \frac{\mu_{\zeta,\nu}(\om, rQ')}{r^{n-1}}
=\psi_{\zeta,\nu}(\om)
\end{equation}
for every $\om \in \widehat \Om_{\zeta,\nu}$, where $Q':=[-\frac{1}{2},\frac{1}{2})^{n-1}$. Then, by the properties of the completion, there exist a set 
$\Om_{\zeta, \nu} \in \T$, with $P(\Om_{\zeta, \nu}) =1$, and a $\T$-measurable function, which we still denote by $\psi_{\zeta,\nu}$, 
such that \eqref{aka}  holds for every $\om \in \Om_{\zeta, \nu}$. 
Using the definition of $\mu_{\zeta,\nu}$ we then have
\begin{equation}\label{ergodic-zeta-nu}
\psi_{\zeta,\nu}(\om)=  \lim_{r \to +\infty} \frac{m^{f^\infty,g}_{\om}(u_{0,\zeta,\nu}, Q^\nu_r)}{r^{n-1}}
\end{equation}
for every $\om \in \Om_{\zeta, \nu}$.
\medskip

\noindent
\emph{Step 2: Existence of the limit in \eqref{lim-in-zero} 
for every $\zeta \in \R^{m}$ and $\nu\in\Sph^{n-1}$}. 

\smallskip

Let $\widetilde \Om$ denote the intersection of the sets $\Om_{\zeta, \nu}$ for $\zeta \in \Q^m$ and $\nu\in \Sph^{n-1}\cap \Q^n$; clearly $\widetilde \Om \in \T$ and $P(\widetilde \Om)=1$. 
Let $\underline g , \overline g \colon \widetilde \Om \times \R^{m} \times \Sph^{n-1}\to [0,+\infty]$ 
be the functions defined 
as
\begin{align}\label{C:g-ubar}
&\underline{g}(\om,\zeta,\nu)
:= \liminf_{r\to +\infty} \frac{m_\om^{f^{\infty},g} (u_{ 0 , \zeta, \nu}, Q^\nu_r)}{r^{n-1}},
\\
\label{C:g-bar}
&\overline{g}(\om,\zeta,\nu)
:= \limsup_{r\to +\infty} \frac{m_\om^{f^{\infty},g} (u_{ 0 , \zeta, \nu}, Q^\nu_r)}{r^{n-1}}.
\end{align}
By \eqref{ergodic-zeta-nu} we have 
\begin{equation}\label{DM-7142}
\underline g(\om,\zeta,\nu)=\overline g(\om,\zeta,\nu)=\psi_{\zeta,\nu}(\om)
\quad\hbox{for every }\om \in \widetilde \Om, \, \zeta\in \Q^{m}\hbox{, and }\nu \in \Sph^{n-1}\cap \Q^n.
\end{equation}

By Lemma \ref{l:g-hom} (property \eqref{aek}), for every $\om \in \widetilde \Om$ and $\nu \in \Sph^{n-1}$ 
the functions $\zeta \mapsto \underline g(\om,\zeta,\nu)$ and $\zeta \mapsto \overline g(\om,\zeta,\nu)$ 
are continuous on $\R^{m}$ and their modulus of continuity does not depend on $\om$ and $\nu$.  
More precisely, recalling $(g4)$, for every $\om \in \widetilde \Om$ and $\nu \in \Sph^{n-1}$ we have
\begin{equation} \label{uniform continuity}
\begin{aligned}
&| \underline g(\om,\zeta_1,\nu) - \underline g(\om,\zeta_2,\nu) | 
\leq c_3 \, \sigma_2(|\zeta_1 - \zeta_2|)(| \zeta_1| + | \zeta_2 |), \vspace{.2cm}\\
&| \overline{g}(\om,\zeta_1,\nu) - \overline{g}(\om,\zeta_2,\nu) | 
\leq c_3 \, \sigma_2(|\zeta_1 - \zeta_2|)(| \zeta_1| + | \zeta_2 |),
\end{aligned} 
\quad \text{ for every } \zeta_1, \zeta_2\in \R^m.
\end{equation} 
From these inequalities and from \eqref{DM-7142}
we deduce  
 that for every $\zeta\in \R^m$ and $\nu \in \Sph^{n-1}\cap \Q^n$ there exists a
 $\T$-measurable function, which we still denote by $\psi_{\zeta, \nu}$, such that 
\begin{equation} \label{equality for nu rational}
\underline g(\om,\zeta,\nu)= \overline g(\om,\zeta,\nu)=\psi_{\zeta,\nu}(\om)
\quad\hbox{for every }\om \in \widetilde \Om.
\end{equation}

We now claim that for every $\om \in \widetilde \Om$ and every $\zeta\in\R^m$ the restrictions of the functions $\nu\mapsto \underline{g}(\om,\zeta,\nu)$ and $\nu\mapsto \overline{g}(\om,\zeta,\nu)$ to the sets $\widehat\Sph^{n-1}_+$ and $\widehat\Sph^{n-1}_-$ are continuous.
We only prove this property for  $\underline{g}$ and  $\widehat\Sph^{n-1}_+$, 
the other proofs being analogous.
To this end, let us fix $\zeta\in\R^m$, $\nu\in \widehat\Sph^{n-1}_+$, 
and  a sequence  $(\nu_j) \subset \widehat\Sph^{n-1}_+$  such that $\nu_j \to \nu$ as $j\to +\infty$. 
Since the restriction of the function $\nu\mapsto R_\nu$ to $\widehat\Sph^{n-1}_+$ is continuous, 
for every $\delta \in(0,\frac12)$ there exists an integer $j_\delta$ such that
\begin{equation}\label{cubes-0}
 Q^{\nu_j}_{(1-\delta)r}   \subset\subset   Q^\nu_r   \subset\subset   Q^{\nu_j}_{(1+\delta)r},
\end{equation}
for every $j\geq j_\delta$ and every $r>0$.
Fix $j\geq j_\delta$, $r>0$, and $\eta>0$. 
Let $u\in SBV (Q^\nu_r,\R^m)$  be such that $u=u_{0,\zeta,\nu}$ near  
$\partial Q^\nu_r$ and
\begin{align*}
&\int_{Q^\nu_r}f^{\infty}(\om, x,\nabla u)\dx+ \int_{S_{u}\cap Q^\nu_r}
g(\om, x,[u],\nu_{u})\,d\mathcal H^{n-1} \leq m^{f^{\infty},g}_\om (u_{ 0 , \zeta, \nu}, Q^\nu_r) + \eta r^{n-1}.  
\end{align*}
We define  $v \in SBV ( Q^{\nu_j}_{(1+\delta)r},\R^m)$  as
$$
v( x) := 
\begin{cases}
u(x) \,\, &\textrm{if} \;  x\in Q^\nu_r ,\\
u_{0,\zeta,\nu_j}(x) \,\, &\textrm{if} \;  x\in  Q^{\nu_j}_{(1+\delta)r}  \setminus Q^\nu_r.
\end{cases} 
$$
Then, $v=u_{0,\zeta,\nu}$ near $\partial Q^{\nu_j}_{(1+\delta)r}$
and $S_{v}\subset S_{u}\cup\Sigma$, where 
$$
 \Sigma:=\big\{x\in \partial Q^\nu_r: (x \cdot \nu) ( x \cdot \nu_j) < 0
  \big\}  \cup \big( \Pi^{\nu_j}_0 \cap( Q^{\nu_j}_{(1+\delta)r} \setminus Q^\nu_r) \big).
$$
Moreover $|[v]|\le |\zeta|$ $\mathcal H^{n-1}$-a.e.\ on $\Sigma$. 
By \eqref{cubes-0} there exists $\varsigma_{j} (\delta)>0$, independent of $r$, 
with $\varsigma_j(\delta)\to (1+ \delta)^{n-1} - 1$  as $j \to + \infty$, such that $\hs^{n-1}(\Sigma)\le  \varsigma_j(\delta)r^{n-1}$. Thanks to $(g4)$ we then have 
\begin{align*}
m^{f^{\infty},g}_\om (u_{ 0 , \zeta, \nu}, Q^{\nu_j}_{(1+\delta)r} )
&\leq \int_{Q^{\nu_j}_{(1+\delta)r} }f^{\infty}(\om, x,\nabla v)\dx
+ \int_{S_{v}\cap Q^{\nu_j}_{(1+\delta)r} }
g(\om, x,[v],\nu_{v})\,d\mathcal H^{n-1} \\
&\leq \int_{Q^\nu_r}f^{\infty}(\om, x,\nabla u)\dx+ \int_{S_{u}\cap Q^\nu_r}
g(\om, x,[u],\nu_{u})\,d\mathcal H^{n-1} 
+ c_3  |\zeta|  \varsigma_j(\delta)  r^{n-1} \\
&\leq m^{f^{\infty},g}_\om (u_{ 0 , \zeta, \nu}, Q^\nu_r) + \eta r^{n-1}
+ c_3  |\zeta|  \varsigma_j(\delta)  r^{n-1},
\end{align*}
where we used the fact that $f^{\infty} (\omega, \cdot, 0) \equiv 0$.
Recalling definition \eqref{C:g-ubar}, 
dividing by $r^{n-1}$,  and passing to the liminf as $r\to +\infty$, we obtain
\begin{align}\label{stima-errore-zeta}
\underline {g}(\om, \zeta,\nu_j)(1+\delta)^{n-1} \leq \underline{g}(\om, \zeta,\nu) + \eta+ c_3  |\zeta|  \varsigma_j(\delta).
\end{align}
Letting $j\to +\infty$, then $\delta \to 0+$, and then $\eta\to 0+$, we  deduce that 
\begin{equation*}
\limsup_{j \to +\infty} \underline {g}(\om, \zeta,\nu_j) \leq \underline{g}(\om, \zeta,\nu).
\end{equation*}
An analogous argument,  now using the cubes $Q^{\nu_j}_{(1-\delta)r}$, shows that  
\begin{equation*}
\underline{g}(\om, \zeta,\nu) \leq \liminf_{j \to +\infty} \underline {g}(\om, \zeta,\nu_j),
\end{equation*}
hence the claim follows.  
Note that, together with \eqref{uniform continuity}, this implies that 
for every $\om \in\widetilde \Om$ the restriction of the function 
$(\zeta,\nu) \mapsto \underline g(\om,\zeta,\nu)$
 to $\R^{m}\times \widehat\Sph^{n-1}_\pm$ is continuous, and the same holds true 
 for $(\zeta,\nu) \mapsto \overline{g}(\om,\zeta,\nu)$. 

As we already observed in the proof of Step 2 of Proposition \ref{random-f-hom}, the set 
$\widehat\Sph^{n-1}_\pm\cap \Q^n$ is dense in $\widehat\Sph^{n-1}_\pm$. 
Therefore, from  the continuity property proved above and from \eqref{equality for nu rational} we deduce  
 that for every $\zeta\in \R^m$ and $\nu \in \Sph^{n-1}$ there exists a
 $\T$-measurable function, which we still denote by $\psi_{\zeta, \nu}$, such that
 \begin{equation} \label{akc}
\underline g(\om,\zeta,\nu)= \overline g(\om,\zeta,\nu)=\psi_{\zeta, \nu} (\omega) 
 \quad\hbox{for every }\om \in \widetilde \Om.
\end{equation}
 By \eqref{C:g-ubar} and \eqref{C:g-bar} this  implies  that
 \begin{equation} \label{akb}
 \psi_{\zeta, \nu} (\omega)= \lim_{r \to +\infty} \frac{m^{f^\infty,g}_{\om}(u_{0,\zeta,\nu}, Q^\nu_r)}{r^{n-1}}
\end{equation}
for every $\om \in \widetilde \Om$,  $\zeta\in\R^m$, and $\nu \in \Sph^{n-1}$, concluding the proof of Step~2. 

\medskip

\noindent
\emph{Step 3: Definition and properties of $g_{\rm hom}$}.

\smallskip

For every $\om \in \Om$  and $\zeta \in \R^{m}$, and $\nu\in \Sph^{n-1}$ we define
\begin{equation*}
g_{\rm hom}(\om, \zeta,\nu):=\begin{cases}
\psi_{\zeta, \nu} (\omega) & \text{if}\; \om\in \widetilde \Om,  
\cr
c_2|\zeta| & \text{if}\; \om\in \Om \setminus \widetilde \Om. 
\end{cases}
\end{equation*}
Then \eqref{lim-in-zero} follows from \eqref{akb}.
From the measurability of
$\psi_{\zeta, \nu}$, proved in Step 2, we obtain that $g_{\rm hom}(\cdot, \zeta,\nu)$
 is $\T$-measurable in $\Om$  for every $\zeta\in \R^{m}$ and $\nu \in \Sph^{n-1}$.
 Moreover, since for every $\om \in\widetilde \Om$ the restriction of the function 
$(\zeta,\nu) \mapsto \underline g(\om,\zeta,\nu)$
 to $\R^{m}\times \widehat\Sph^{n-1}_\pm$ is continuous, from \eqref{akc} we deduce that
for every $\om\in\Om$ the restriction of $g_{\rm hom}(\om, \cdot,\cdot)$ to $\R^m\times \widehat\Sph^{n-1}_\pm$  is continuous 
and this implies the $\T \otimes \B^{m\times n} \otimes \B^n_S$-measurability of $g_{\rm hom}$ on $\Om\times \R^{m}\times \Sph^{n-1}$. Finally, Lemma \ref{l:g-hom} allows us to conclude that $g_{\rm hom}(\om, \cdot, \cdot) \in \mathcal G$ for every $\om \in \Om$.  

\medskip

\noindent
\emph{Step 4: In the ergodic case $g_{\rm hom}$ is deterministic}.

\smallskip

Set
$
\widehat{\Omega}:= \bigcap_{z\in \mathbb{Z}^n} \tau_z (\widetilde{\Omega});
$
we clearly have that $\widehat \Om\in \T$, $\widehat \Om\subset \widetilde \Om$, and $\tau_z(\widehat{\Omega})=\widehat{\Omega}$ for every $z\in\Z^n$;  moreover, since $\tau_z$ is a $P$-preserving transformation and $P(\widetilde \Om)=1$, we have $P(\widehat{\Omega})=1$.
We claim that
\begin{equation}\label{trans-inv-g_hom}
g_{\rm hom}(\tau_{z}(\om),\zeta,\nu)= g_{\rm hom}(\om,\zeta,\nu),
\end{equation}
for every $z\in \Z^n$, $\om \in \widehat \Om$, $\zeta \in \R^m$, and $\nu\in \Sph^{n-1}$. 

We start noting that to prove \eqref{trans-inv-g_hom} it is enough to show that 
\begin{equation}\label{c:invariance-enough}
g_{\mathrm{hom}}(\tau_z(\om),\zeta,\nu)\leq g_{\mathrm{hom}}(\om,\zeta,\nu)
\end{equation}
for every $z\in \Z^n$, $\omega\in \widehat{\Omega}$, $\zeta\in \R^m$, and $\nu\in \Sph^{n-1}$.
Indeed, the opposite inequality is obtained by applying  \eqref{c:invariance-enough} with $\om$ replaced by $\tau_z(\om)$ and $z$ replaced by $-z$.

Let  $z\in \mathbb{Z}^n$, $\om\in\widehat{\Omega}$,  $\zeta\in \R^m$,   and  $\nu\in \Sph^{n-1}$ be fixed. 
 For every $r> 3|z|$, let $u_r \in SBV ( Q^\nu_r ,\R^m)$  be such that $u_r =u_{0,\zeta,\nu}$ near $\partial Q^\nu_r$, and
\begin{equation}\label{quasi min t}
\int_{ Q^\nu_r }f^{\infty}(\om,x,\nabla u_r)\dx+ \int_{S_{u_r}\cap Q^\nu_r}g(\om,x,[u_r],\nu_{u_r})\,d\mathcal H^{n-1}
\leq m^{f^{\infty},g}_{\om}(u_{0,\zeta,\nu}, Q^\nu_r)+ 1.
\end{equation}
By the stationarity of $f^\infty$ and $g$, a change of variables gives
\begin{equation}\label{trasl:1}
m^{f^{\infty},g}_{\tau_z (\om)}(u_{0,\zeta,\nu}, Q^\nu_r )=m^{f^{\infty},g}_{\om}(u_{z,\zeta,\nu},Q_{r}^\nu(z)). 
\end{equation}
We now modify $u_r$ to obtain a competitor  for a minimisation problem related to  the right-hand side of \eqref{trasl:1}. 
Noting that $ Q^\nu_r  \subset\subset  Q^\nu_{r+3 |z|}(z)$ we define
$$
v_r (x):= 
\begin{cases}
u_r ( x) &\textrm{if }\; x\in Q^\nu_r ,
\cr
u_{z,\zeta,\nu}( x) &\textrm{if }\;  x\in Q^\nu_{r+3 |z|}(z) \setminus Q^\nu_r.  
\end{cases} 
$$
Clearly $v_r \in SBV (Q^\nu_{r+3 |z|}(z),\R^m)$  and $v_r =u_{z,\zeta,\nu}$ near $\partial Q^\nu_{r+3 |z|}(z)$.
Moreover we notice that $S_{v_r}= S_{u_r}\cup \Sigma_1\cup \Sigma_2$, where 
$$
\Sigma_1:=\big\{ x\in \partial Q^\nu_r: \big( x {\,\cdot\,}\nu\big)\big(( x-z){\,\cdot\,}\nu\big)<0\big\}\quad \text{and}  \quad \Sigma_2:=\Pi^\nu_{z}\cap(Q^{\nu}_{r+3 |z|}(z)\setminus Q^\nu_r).
$$%
Moreover $|[v_r]|=|\zeta|$ $\hs^{n-1}$-a.e.\ on $\Sigma_1\cup \Sigma_2$. Since $3 |z|<r$, we
have $\hs^{n-1}(\Sigma_1)=2 (n-1) |z\cdot \nu| \,r^{n-2}$ and $\hs^{n-1}(\Sigma_2)=(r+3 |z|)^{n-1}-r^{n-1}\le
 3(n-1)|z|(r+3 |z|)^{n-2}< 2^{ n }(n-1) |z|\,r^{n-2}$.
Therefore, using the fact that $f^{\infty} (\om, \cdot, 0) \equiv 0$, thanks to $(g4)$ we have
\begin{multline*}
\int_{Q^{\nu}_{r+3 |z|}(z)}f^{\infty}(\om,x,\nabla v_r)\dx+ \int_{S_{v_r}\cap Q^{\nu}_{r+3 |z|}(z)}g(\om,x,[v_r],\nu_{v_r})\,d\mathcal H^{n-1}
\\
\leq 
\int_{Q^\nu_r}f^{\infty}(\om,x,\nabla u_r)\dx+ \int_{S_{u_r}\cap Q^\nu_r}g(\om,x,[u_r],\nu_{u_r})\,d\mathcal H^{n-1} + M_{\zeta,z}\,r^{n-2},
\end{multline*}
where $M_{\zeta,z}:=c_3 (n-1)  (2+2^{n})  |z||\zeta|$.
This inequality, combined with \eqref{quasi min t} yields 
\begin{equation}\label{c:nun}
m^{f^{\infty},g}_{\om}(u_{z,\zeta,\nu},Q_{r+3 |z|}^\nu(z))\leq m^{f^{\infty},g}_{\om}(u_{0,\zeta,\nu}, Q^\nu_r)+  1  +  M_{\zeta,z}\,r^{n-2}.
\end{equation}
 Recalling that  $\tau_z(\omega)\in\widehat{\Omega}\subset \widetilde\Omega$, by \eqref{lim-in-zero} and \eqref{trasl:1} we get
\begin{align*}
g_{\mathrm{hom}}(\tau_z(\omega),\zeta,\nu) &= \lim_{r \to +\infty} \frac{m^{f^{\infty},g}_{\tau_z (\om)}(u_{0,\zeta,\nu},Q^\nu_r))}{r^{n-1}}
= \lim_{r\to +\infty} \frac{m^{f^{\infty},g}_{\om}(u_{z,\zeta,\nu},Q_{r}^\nu(z))}{r^{n-1}}
\\
&= \lim_{r\to +\infty} \frac{m^{f^{\infty},g}_{\om}(u_{z,\zeta,\nu},Q^\nu_{r+3 |z|}(z))}{r^{n-1}},
\end{align*}
where in the last equality we have used the fact that $r^{n-1}/(r+3 |z|)^{n-1}  \to  1$ as $r\to+\infty$. 
Therefore, dividing all terms of \eqref{c:nun} by $r^{n-1}$ and passing to the limit as $r\to+\infty$,  from  \eqref{lim-in-zero} we obtain the inequality 
$$
g_{\mathrm{hom}}(\tau_z(\omega),\zeta,\nu)\leq g_{\mathrm{hom}}(\omega,\zeta,\nu),
$$
 which proves  \eqref{c:invariance-enough} and hence the claim.
 
If $(\tau_z)_{z\in \Z^n}$ is ergodic we can invoke \cite[Corollary 6.3]{CDMSZ-stoc} to deduce that $g_{\rm hom}$ does not depend on $\om$
and hence is deterministic. 
In this case, \eqref{g-hom-det} can be obtained by integrating \eqref{lim-in-zero} over $\Om$ and
observing that, thanks to \eqref{integrabilita-1}, we can apply the Dominated Convergence Theorem.
\end{proof}

We now prove that the limit \eqref{lim-g-hom} that defines $g_{\rm hom}$
is independent of $x$. More precisely we prove the following result. 
\begin{prop}\label{random-g-hom-x}
Let $f$ be a stationary random volume integrand and let $g$ be a stationary random surface integrand  with respect to a group $(\tau_z)_{z\in \Z^n}$ of $P$-preserving transformations on $(\Om,\T,P)$.
Then there exist $\Om'\in \T$, with $P(\Om')=1$, and a random surface integrand $g_{\rm hom}\colon \Om \times \R^m \times \Sph^{n-1} \to \R$, independent of $x$, such that
\begin{equation}\label{lim-in-x}
g_{\rm hom}(\om,\zeta,\nu)=\lim_{r\to +\infty}\frac{m^{f^{\infty},g}_{\om}(u_{ r x , \zeta, \nu},Q^{\nu}_{r}(rx))}{{r}^{n-1}},
\end{equation}
for every $\om \in \Om'$, $x\in \R^n$, $\zeta \in \R^m$, $\nu\in \Sph^{n-1}$.  
\end{prop}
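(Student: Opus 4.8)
The plan is to follow the strategy of \cite[Section~5]{CDMSZ-stoc}, where the analogue of this statement is proved for a purely surface free-discontinuity functional; the extra volume density $f^\infty$ appearing here is harmless, since $f^\infty(\om,\cdot,0)\equiv 0$ and $\nabla u=0$ on the subdomains on which the competitors are cut.

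Let $\widetilde\Om$, $\psi_{\zeta,\nu}$, $g_{\rm hom}$ be as in Proposition~\ref{random-g-hom-0} and let $\widehat\Om\subset\widetilde\Om$ be the translation-invariant full-probability set from Step~4 of its proof, on which $g_{\rm hom}(\tau_z(\om),\zeta,\nu)=g_{\rm hom}(\om,\zeta,\nu)$ for all $z\in\Z^n$; I would take $\Om':=\widehat\Om$. For $\om\in\Om'$ write $\overline g_x(\om,\zeta,\nu)$ and $\underline g_x(\om,\zeta,\nu)$ for the $\limsup$ and $\liminf$ in \eqref{lim-in-x}; the goal is $\overline g_x=\underline g_x=g_{\rm hom}$. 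Exactly as in Steps~1--2 of Proposition~\ref{random-g-hom-0} (using \eqref{aek} for the $\zeta$-variable and the comparison of rotated cubes for the $\nu$-variable), $\overline g_x$ and $\underline g_x$ are continuous on $\R^m\times\widehat\Sph^{n-1}_\pm$ with modulus independent of $\om$ and $x$; a parallel comparison, now between cubes with a common orientation and slightly displaced centres, shows the same functions are uniformly continuous in $x$. Hence it suffices to prove the claim for $\om\in\Om'$ and for $\zeta\in\Q^m$, $\nu\in\Sph^{n-1}\cap\Q^n$, $x\in\Q^n$.

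Fix such data and decompose $x=x^\parallel+x^\perp$ with $x^\parallel:=(x\cdot\nu)\nu$ and $x^\perp:=x-x^\parallel$; both are rational, and I pick $N\in\N$ with $Nx,Nx^\parallel,Nx^\perp\in\Z^n$, so that $rx,rx^\parallel,rx^\perp\in\Z^n$ for $r\in N\N$. The basic remark is that $u_{w,\zeta,\nu}$ depends on $w$ only through $w\cdot\nu$, so $u_{w,\zeta,\nu}=u_{0,\zeta,\nu}$ whenever $w\in\Pi^\nu_0$. When $x\perp\nu$ the argument is transparent: $u_{rx,\zeta,\nu}=u_{0,\zeta,\nu}$, and writing $Q^\nu_r(rx)=T_\nu(A'_r)$, where $A'_r\in\mI_{n-1}$ is the $(n-1)$-cube of side $r/M_\nu$ centred at an integer point of order $r$ determined by $x^\perp$, one gets $m^{f^\infty,g}_\om(u_{rx,\zeta,\nu},Q^\nu_r(rx))=M_\nu^{n-1}\mu_{\zeta,\nu}(\om,A'_r)$ for $r\in N\N$. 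The family $(A'_r)_{r\in N\N}$ is regular in the sense of Definition~\ref{reg-fam} with $\lim_r A'_r=\R^{n-1}$, and since the exceptional set in Theorem~\ref{ergodic} does not depend on the regular family, applying it to $\mu_{\zeta,\nu}$ (with the realisation held fixed) yields $r^{1-n}m^{f^\infty,g}_\om(u_{rx,\zeta,\nu},Q^\nu_r(rx))\to\psi_{\zeta,\nu}(\om)=g_{\rm hom}(\om,\zeta,\nu)$ along $r\in N\N$, hence along all $r\to+\infty$ after a bounded-re-centering gluing estimate that costs only $O(r^{n-2})$. For general rational $x$, the stationarity of $f^\infty$ and $g$ and the identity $u_{rx-rx^\parallel,\zeta,\nu}=u_{rx^\perp,\zeta,\nu}=u_{0,\zeta,\nu}$ give, for $r\in N\N$,
\[
m^{f^\infty,g}_\om(u_{rx,\zeta,\nu},Q^\nu_r(rx))=m^{f^\infty,g}_{\tau_{rx^\parallel}(\om)}\big(u_{0,\zeta,\nu},Q^\nu_r(rx^\perp)\big)=M_\nu^{n-1}\,\mu_{\zeta,\nu}\big(\tau_{rx^\parallel}(\om),A'_r\big),
\]
with the same drifting regular family $(A'_r)$. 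The realisation is now translated by the $r$-dependent integer vector $rx^\parallel$, a multiple of $\nu$, so Theorem~\ref{ergodic} cannot be invoked directly; this is the one genuinely delicate step, and I would dispose of the $\nu$-parallel drift exactly as in \cite[Section~5]{CDMSZ-stoc}, obtaining that the limit exists and still equals $\psi_{\zeta,\nu}(\om)=g_{\rm hom}(\om,\zeta,\nu)$, independently of $x$. Removing the constraint $r\in N\N$ by the gluing estimate as above and using the continuity reductions proves \eqref{lim-in-x} for all $x,\zeta,\nu$ with $\Om'=\widehat\Om$; the measurability of $g_{\rm hom}$ and the membership $g_{\rm hom}(\om,\cdot,\cdot)\in\mathcal G$ are already contained in Proposition~\ref{random-g-hom-0}.

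The main obstacle is thus the treatment of the parallel component $x^\parallel$. Because the datum $u_{rx,\zeta,\nu}$ depends on $x$, the cube $Q^\nu_r(rx)$ escapes to infinity, and covariance replaces $\om$ by the $r$-dependent translate $\tau_{z_r}(\om)$; the pointwise limit of Proposition~\ref{random-g-hom-0}, whose rate depends on the realisation, is then useless along such a sequence. The orthogonal part of the drift is absorbed because $u_{w,\zeta,\nu}=u_{0,\zeta,\nu}$ for $w\perp\nu$ and because it corresponds to a genuine translation of the regular family $(A'_r)$, for which Theorem~\ref{ergodic} is designed; the parallel part, by contrast, must be handled by the more careful covering argument of \cite{CDMSZ-stoc}, which again rests on the single-exceptional-set form of the Subadditive Ergodic Theorem, all the regular families involved sharing the limit $\psi_{\zeta,\nu}(\om)$.
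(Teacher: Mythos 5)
Your overall architecture (reduce to rational data, absorb the component of the drift orthogonal to $\nu$ via the Subadditive Ergodic Theorem with a drifting regular family and a single exceptional set, use stationarity to convert $rx^\parallel\in\Z^n$ into a translated realisation, and defer that parallel drift to the companion paper) is close in spirit to the paper, which itself delegates the core statement --- independence of $x$ for \emph{all} $x\in\R^n$ and \emph{rational} $\nu$ --- to \cite[Theorem 6.1]{CDMSZ-stoc} (Section~6 there, not Section~5), and then only works out the extension from rational to arbitrary $\nu$. Your treatment of the orthogonal drift, including the remark that the exceptional set in Theorem~\ref{ergodic} does not depend on the regular family and the $O(r^{n-2})$ gluing to remove the constraint $r\in N\N$, is sound.

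The genuine gap is the reduction to rational $x$. You claim that $\underline g_x$ and $\overline g_x$ are ``uniformly continuous in $x$'' by ``a parallel comparison between cubes with a common orientation and slightly displaced centres'', but this comparison cannot produce an estimate between the values at $x$ and at a nearby $x'$: the admissible cubes for the parameter $x'$ are only those of the form $Q^\nu_s(sx')$, and enclosing $Q^\nu_r(rx)$ in such a cube forces $x'$ to be a point on the ray through $x$ (namely $x'\approx x\,r/s$), since the centres $rx$ and $sx'$ must differ by at most $O(s-r)$. This is exactly why the paper's inequalities \eqref{number 1}--\eqref{number 2} compare $\underline g(\om,x,\zeta,\nu)$ with values of $\underline g$, $\overline g$ at the \emph{recentred} points $x/(1\pm\delta)$ rather than at $x$ itself; that sandwich is only useful because the recentred values, taken at rational directions $\nu_j$, are already known to be independent of the centre. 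At fixed $r$ a perturbation $x\to x'$ translates the environment by $r|x-x'|\to+\infty$, so there is no direct continuity of the liminf/limsup in $x$ before constancy in $x$ is established; the same objection applies to your claim of continuity in $\nu$ ``with modulus independent of $x$''. As a consequence your scheme does not close: the extension to irrational $\nu$ (whether by your continuity claim or by the paper's recentring sandwich) requires $x$-independence at rational $\nu$ for \emph{all real} $x$ (the recentred points $x/(1\pm\delta)$ are generically irrational), whereas your ergodic-theorem argument plus the deferred parallel-drift step only deliver it for rational $x$, and the bridge from rational to real $x$ is precisely the unjustified continuity-in-$x$ claim. To repair it you would have to run the deferred \cite[Theorem 6.1]{CDMSZ-stoc}-type argument so that it yields all real $x$ at rational $\nu$ (as the paper does, checking that the extra volume term is harmless since $f^\infty(\om,\cdot,0)\equiv 0$), and then conclude with the recentring estimates \eqref{number 1}--\eqref{number 2} rather than with continuity in $x$ or in $\nu$ at fixed $x$.
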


\begin{proof} The proof closely follows that of \cite[Theorem 6.1]{CDMSZ-stoc}, therefore here we only discuss the main differences with respect to \cite{CDMSZ-stoc}.  

Let $g_{\rm hom}$ be the random surface integrand introduced in Proposition \ref{random-g-hom-0}. Arguing as in the proof of \cite[Theorem 6.1]{CDMSZ-stoc},  we can prove the existence of $\Om'\in \T$, with $P(\Om')=1$, such that \eqref{lim-in-x} holds for every $\om\in \Om'$, $x\in \R^n$, $\zeta \in \R^m$, and $\nu \in \Sph^{n-1} \cap \Q^{n-1}$. Hence, to conclude it remains to show than \eqref{lim-in-x} holds true for every $\nu \in \Sph^{n-1}$.  

To this end, for fixed $\omega \in \Omega'$,  $x \in \R^n$, $\zeta \in \R^m$, 
and $\nu \in \Sph^{n-1}$, we introduce the auxiliary functions
\begin{align}\label{C:g-ubar-x-davvero}
\underline{g}(\omega, x, \zeta,\nu)
&:= \liminf_{r\to +\infty} \frac{m^{f^{\infty},g}_{\omega} (u_{ rx , \zeta, \nu},Q^{\nu}_r(r x))}{r^{n-1}},
\\
\label{C:g-bar-x-davvero}
\overline{g}(\omega, x, \zeta,\nu)&
:= \limsup_{r\to +\infty} \frac{m^{f^{\infty},g}_{\omega} (u_{ rx , \zeta, \nu},Q^{\nu}_r(r x))}{r^{n-1}}.
\end{align}

Let $\nu \in \widehat\Sph^{n-1}_+$ be fixed.  As we already observed in the proof of Step 2 of Proposition \ref{random-f-hom}, the set
$\widehat\Sph^{n-1}_+ \cap \Q^n$ is dense in $\widehat\Sph^{n-1}_+$, hence there exists a sequence 
$( \nu_j ) \subset \widehat\Sph^{n-1}_+ \cap \Q^{n-1}$ such that
$\nu_j \to \nu$ as $j \to \infty$.
We claim that for every $\delta \in (0, 1/2)$ there exists $j_{\delta} \in \mathbb{N}$
such that 
\begin{align} \label{number 1}
(1+\delta)^{n-1} \underline{g} (\omega, \tfrac{x}{1+ \delta}, \zeta,\nu_j ) 
\leq \underline{g}(\omega, x, \zeta,\nu) +   c_3  |\zeta|  \varsigma_j (\delta),
\\
 \label{number 2}
\overline{g}(\omega, x, \zeta,\nu) 
\leq (1- \delta)^{n-1} \overline{g} (
\omega, \tfrac{x}{1 - \delta}, \zeta,\nu_j ) +  c_3  |\zeta|  \varsigma_j (\delta),
\end{align}
for every $j \geq j_{\delta}$, where $\varsigma_j(\delta)$ is such that 
$\varsigma_j(\delta) \to (1+ \delta)^{n-1} - 1$ as $j \to + \infty$.

The proof of \eqref{number 1} and \eqref{number 2}  is similar to that of \eqref{stima-errore-zeta} in Proposition~\ref{random-g-hom-0}.
Thanks to the continuity of the restriction of $\nu\mapsto R_\nu$ to $\widehat\Sph^{n-1}_+$, for every $\delta \in(0,\frac12)$ 
there exists an integer $j_\delta$ such that
\begin{equation}\label{cubes}
Q^{\nu_j}_{(1-\delta)r}(rx)  \subset\subset  Q^\nu_{r}(rx)  \subset\subset  Q^{\nu_j}_{(1+\delta)r}(rx),
\end{equation}
for every $j\geq j_\delta$ and every $r>0$.
Fix $j\geq j_\delta$, $r>0$, and $\eta>0$. 
Let $u\in SBV(Q^\nu_{r}(rx),\R^m)$ be such that $u=u_{rx,\zeta,\nu}$ near  
$\partial Q^\nu_{r}(rx)$ and
$$
\int_{Q^{\nu}_{r} (rx)}f^{\infty}(\om, y,\nabla u)\dy+ \int_{S_{u}\cap Q^{\nu}_{r} (rx)}
g(\om, y,[u],\nu_{u})\,d\mathcal H^{n-1} \leq m^{f^{\infty},g}_{\omega} (u_{ rx , \zeta, \nu},Q^{\nu}_r(rx)) + \eta r^{n-1}.  
$$
We define $v \in SBV(Q^{\nu_j}_{(1+\delta)r}(r x),\R^m)$ as
$$
v(y) := 
\begin{cases}
u(y) \,\, &\textrm{if} \; y\in Q^\nu_{r}(rx),\\
u_{rx,\zeta,\nu_j}(y) \,\, &\textrm{if} \; y\in Q^{\nu_j}_{(1+\delta)r}(r x) \setminus Q^\nu_{r}(rx).
\end{cases} 
$$
Then, $v=u_{r x,\zeta,\nu_j}$ near $\partial Q^{\nu_j}_{(1+\delta)r}(r x)$, 
and $S_{v}\subset S_{u}\cup\Sigma$, where 
$$
 \Sigma:=\big\{y\in \partial Q^\nu_{r}(rx ): ( (y - rx ) \cdot \nu) ( (y - rx) \cdot \nu_j) < 0
  \big\}  \cup \big( \Pi^{\nu_j}_{rx} \cap(Q^{\nu_j}_{(1+\delta)r}(rx )\setminus Q^\nu_{r}(rx))  \big) .
$$
Moreover $|[v]|\le |\zeta|$ $\mathcal H^{n-1}$-a.e.\ on $\Sigma$. 
By \eqref{cubes} there exists $\varsigma_j(\delta)>0$, independent of $r$, 
with $\varsigma_j(\delta)\to (1+ \delta)^{n-1} - 1$  as $j \to + \infty$, such that $\hs^{n-1}(\Sigma)\le  \varsigma_j(\delta)r^{n-1}$. Thanks to $(g4)$ we then have 
\begin{align*}
&m^{f^{\infty},g}_{\omega} (u_{ rx  , \zeta, \nu_j },Q^{\nu_j}_{(1 + \delta) r} (rx)) \\
&\hspace{1cm}\leq \int_{Q^{\nu_j}_{(1 + \delta) r} (rx)}f^{\infty}(\om, x,\nabla v)\dx
+ \int_{S_{v}\cap Q^{\nu_j}_{(1 + \delta) r} (rx)}
g(\om, x,[v],\nu_{v})\,d\mathcal H^{n-1} \\
&\hspace{1cm}\leq \int_{Q^{\nu}_{r} (rx)}f^{\infty}(\om, x,\nabla u)\dx+ \int_{S_{u}\cap Q^{\nu}_{r} (rx)}
g(\om, x,[u],\nu_{u})\,d\mathcal H^{n-1} 
+ c_3  |\zeta|  \varsigma_j(\delta)  r^{n-1} \\
&\hspace{1cm} \leq m^{f^{\infty},g}_{\omega} (u_{ rx , \zeta, \nu},Q^{\nu}_r(rx)) + \eta r^{n-1}
+ c_3  |\zeta|  \varsigma_j(\delta)  r^{n-1},
\end{align*}
where we used the fact that $f^{\infty} (\omega, \cdot, 0) \equiv 0$.
Recalling definition \eqref{C:g-ubar-x-davvero}, 
dividing by $r^{n-1}$, and passing to the liminf as $r\to +\infty$, we obtain 
\begin{align*}
(1+\delta)^{n-1} \underline{g}(\omega, \tfrac{x}{1 + \delta}, \zeta,\nu_j ) 
\leq \underline{g}(\omega, x, \zeta,\nu) + \eta+ c_3  |\zeta|  \varsigma_j (\delta),
\end{align*}
which gives \eqref{number 1} by the arbitrariness of $\eta$. The proof of \eqref{number 2} is analogous.

From \eqref{number 1} and \eqref{number 2} we get
$$
(1+\delta)^{n-1} \underline{g} (\omega, \tfrac{x}{1+ \delta}, \zeta,\nu_j) -c_3  |\zeta| \varsigma_j(\delta)
\leq \underline{g}(\omega, x, \zeta,\nu)
\leq \overline{g}(\omega, x, \zeta,\nu) \leq(1- \delta)^{n-1} \overline{g} (
\omega, \tfrac{x}{1 - \delta}, \zeta,\nu_j) +  c_3  |\zeta| \varsigma_j (\delta)
$$
for every $j \geq j_{\delta}$. Since $\nu_j \in \Sph^{n-1}\cap \Q^n$, and  
\eqref{lim-in-x} holds true for rational directions, we have 
$$
\underline{g} (\omega, \tfrac{x}{1+ \delta}, \zeta,\nu_j ) 
= \overline{g} (\omega, \tfrac{x}{1- \delta}, \zeta,\nu_j ) 
= g_{\rm hom} (\omega, \zeta,\nu_j ).
$$
This, together with the previous inequality, yields 
$$
(1+\delta)^{n-1} g_{\rm hom} (\omega, \zeta,\nu_j ) -c_3  |\zeta| \varsigma_j(\delta)
\leq \underline{g}(\omega, x, \zeta,\nu)
\leq \overline{g}(\omega, x, \zeta,\nu)
\leq(1- \delta)^{n-1} g_{\rm hom} (\omega, \zeta,\nu_j ) + c_3  |\zeta| \varsigma_j (\delta)
$$
for every $j \geq j_{\delta}$.
Hence, taking the liminf as $j \to +\infty$ and then the limit as $\delta \to 0+$, we obtain 
\[
\liminf_{j \to +\infty} g_{\rm hom} (\omega, \zeta,\nu_j )
\leq \underline{g}(\omega, x, \zeta,\nu)
\leq \overline{g}(\omega, x, \zeta,\nu)
\leq \liminf_{j \to +\infty} g_{\rm hom} (\omega, \zeta,\nu_j )
\]
and hence 
\[
\underline{g}(\omega, x, \zeta,\nu)
= \overline{g}(\omega, x, \zeta,\nu)=\liminf_{j \to +\infty} g_{\rm hom} (\omega, \zeta,\nu_j ).
\]
Note that, in particular, all the terms in the above chain of equalities do not depend on $x$.
Then, in view of the definition of $\underline g$ and $\overline g$ (see \eqref{C:g-ubar-x-davvero} and \eqref{C:g-bar-x-davvero}) 
we get that the limit
\[
\lim_{r\to +\infty} \frac{m^{f^{\infty},g}_{\omega} (u_{ rx , \zeta, \nu},Q^{\nu}_r(r x))}{r^{n-1}}
\]
exists and is independent of $x$. Therefore we obtain
\begin{align*}
\lim_{r\to +\infty} \frac{m^{f^{\infty},g}_{\omega} (u_{ x , \zeta, \nu},Q^{\nu}_r(r x))}{r^{n-1}}
= \lim_{r\to +\infty} \frac{m^{f^{\infty},g}_{\omega} (u_{ 0 , \zeta, \nu},Q^{\nu}_r(0))}{r^{n-1}}
= g_{\rm hom}(\om,\zeta,\nu),
\end{align*}
for every $\omega \in \Omega'$, $x\in \R^n$, $\zeta \in \R^m$, and $\nu \in \widehat\Sph^{n-1}_+$. Since the same property holds for
$\nu \in \widehat\Sph^{n-1}_-$, this concludes the proof. 
\end{proof}

We are now in a position to prove the theorem concerning the existence, for $P$-almost every $\om\in\Om$, of the limits which 
define the homogenised integrands.

\begin{proof}[Proof of Theorem~\ref{en-density_vs}]
Property (a), \eqref{psi0}, and \eqref{DM-8401} are proved in Proposition~\ref{random-f-hom}, while property (b),
 \eqref{phi0}, and \eqref{DM-8402} are proved in Proposition~\ref{random-g-hom}.
Equalities \eqref{hhhrrr} and \eqref{DM-8403} coincide with \eqref{hhh} and \eqref{DM-hhh}, which are proved in Proposition~\ref{random-f-hom-infty}.
\end{proof}

We now prove the main result of the paper.

\begin{proof}[Proof of Theorem~\ref{G-convE}] It is enough to apply Theorem~\ref{en-density_vs} together
with the deterministic homogenisation result
in Theorem~\ref{T:det-hom}, applied for fixed $\om\in\Om'$. 
\end{proof}



\section*{Appendix. Measurability issues}
\setcounter{equation}0
\setcounter{thm}0
\renewcommand{\theequation}{A.\arabic{equation}}
\renewcommand{\thethm}{A.\arabic{thm}}

The purpose of this section is to prove the measurability
of the functions defined in \eqref{inf for fixed omega}. This will be done in  
Proposition~\ref{measurability}, which requires some
preliminary results.
\medskip

We  start by introducing  some notation that will be used throughout the proofs. For every $A\in\A$ let $\mathcal{M}_b(A,\R^{m\times n})$
be the Banach space of all $ \R^{m\times n}$-valued 
bounded Radon measures on $A$. This space is identified with the dual of the space $C_0(A,\R^{m\times n})$
of all $\R^{m\times n}$-valued continuous functions on $\overline A$ vanishing on $\partial A$. 
For every $R>0$ we set
\begin{equation*}
\mathcal M_{R,A}^{m\times n}:=\{\mu \in \mathcal M_b(A,\R^{m\times n}) \colon |\mu|(A)\leq R\}, 
\end{equation*}
where $|\mu|$ denotes the variation of $\mu$ with respect to the Euclidean norm in $\R^{m\times n}$. 
On $\mathcal M_{R,A}^{m\times n}$ we consider the topology induced by the weak$^*$ 
topology of $\mathcal{M}_b(A,\R^{m\times n})$, which will be called
the weak$^*$ 
topology on $\mathcal M_{R,A}^{m\times n}$. Since $ \mathcal M_b(A,\R^{m\times n})$
is the dual of a separable 
Banach space, there exists a distance $d_{R,A}^{m\times n}$ on $\mathcal M_{R,A}^{m\times n}$ which induces 
the weak$^*$ 
topology on $\mathcal M_{R,A}^{m\times n}$ (see \cite[Theorem V.5.1]{Dun-Sch}).
Moreover, the metric space  $(\mathcal M_{R,A}^{m\times n}, d_{R,A}^{m\times n})$ is compact by the Banach-Alaoglu Theorem.

\smallskip

For every $\mu \in \mathcal M_b(A,\R^{m\times n})$ the absolutely continuous part of $\mu$ with respect to the
Lebesgue measure $\mathcal L^n$ is denoted with $\mu^a$.  Note that, if 
$\mu\in \mathcal M_{R,A}^{m\times n} $, then 
$\mu^a\in \mathcal M_{R,A}^{m\times n}$.

The following lemma concerns the mesurability properties of the density of $\mu^a$ with respect to
$\mathcal L^n$. 

\begin{lem} \label{gamma}
Let $A\in \A$ and $R>0$.  Then there exists a $\B (A)  \otimes \B (\mathcal M_{R,A}^{m\times n})$-measurable function 
$\gamma\colon A \times \mathcal M_{R,A}^{m\times n} \to \R^{m \times n}$ such that
\begin{gather}
\gamma (\cdot, \mu) \in L^1 (A, \R^{m \times n}) \quad \text{ for every } \mu \in \mathcal M_{R,A}^{m\times n},
\nonumber
\\
\label{l:mua}
\mu^a (B)=\int_B \gamma (x,\mu)\dx \quad \text{for every } \mu \in \mathcal M_{R,A}^{m\times n} \, \text{and }  B\in \B(A). 
\end{gather}
\end{lem}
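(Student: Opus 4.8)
\textbf{Plan for the proof of Lemma~\ref{gamma}.}
The plan is to construct $\gamma$ by a discretisation-and-limit argument that is joint-measurable by construction, using the fact that the weak$^*$ topology on $\mathcal M_{R,A}^{m\times n}$ is metrisable (hence $\B(\mathcal M_{R,A}^{m\times n})$ is well-behaved) and that for a \emph{fixed} measure the Besicovitch differentiation theorem recovers the density of $\mu^a$ via averages over small cubes. First I would fix a countable dense sequence of dyadic-type partitions of $A$: for each $j\in\N$ let $\{Q^j_1,\dots,Q^j_{N_j}\}$ be the (finitely many) half-open dyadic cubes of side $2^{-j}$ that are contained in $A$, and define the ``conditional expectation'' approximations
\begin{equation*}
\gamma_j(x,\mu):=\sum_{i=1}^{N_j}\frac{\mu(Q^j_i)}{\mathcal L^n(Q^j_i)}\,\mathds{1}_{Q^j_i}(x).
\end{equation*}
For fixed $j$ and $i$ the map $\mu\mapsto\mu(Q^j_i)$ need not be weak$^*$ continuous because $\partial Q^j_i$ may carry mass, but it \emph{is} Borel measurable: one can write $\mu(Q^j_i)=\lim_{k}\int_A \varphi_k\,d\mu$ for a bounded sequence $\varphi_k\in C_0(A,\R^{m\times n})$ decreasing to $\mathds{1}_{Q^j_i}$ pointwise (this uses $\mu\in\mathcal M_{R,A}^{m\times n}$ so the total variations are uniformly bounded and dominated convergence applies), and each $\mu\mapsto\int_A\varphi_k\,d\mu$ is weak$^*$ continuous, hence Borel; a pointwise limit of Borel functions is Borel. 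Therefore each $\gamma_j$ is $\B(A)\otimes\B(\mathcal M_{R,A}^{m\times n})$-measurable, being a finite sum of products of a Borel function of $x$ (an indicator of a cube) with a Borel function of $\mu$.

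Next I would pass to the limit in $j$. For a \emph{fixed} $\mu$, the classical martingale/Besicovitch convergence theorem gives $\gamma_j(\cdot,\mu)\to \dfrac{d\mu^a}{d\mathcal L^n}(\cdot)$ in $L^1(A,\R^{m\times n})$ and $\mathcal L^n$-a.e.\ as $j\to\infty$ (the singular part of $\mu$ contributes nothing in the $L^1$ limit because the dyadic averages of a singular measure tend to $0$ a.e.). To obtain a genuine function rather than an a.e.-equivalence class, I would define
\begin{equation*}
\gamma(x,\mu):=\limsup_{j\to\infty}\gamma_j(x,\mu)\quad\text{if this }\limsup\text{ is finite (componentwise), and }\ \gamma(x,\mu):=0\ \text{otherwise.}
\end{equation*}
Since each $\gamma_j$ is jointly Borel, the $\limsup$ and the set where it is finite are jointly Borel, so $\gamma$ is $\B(A)\otimes\B(\mathcal M_{R,A}^{m\times n})$-measurable. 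For each fixed $\mu$, the a.e.\ convergence of $\gamma_j(\cdot,\mu)$ shows $\gamma(\cdot,\mu)=\dfrac{d\mu^a}{d\mathcal L^n}$ $\mathcal L^n$-a.e., hence $\gamma(\cdot,\mu)\in L^1(A,\R^{m\times n})$ and \eqref{l:mua} holds for every $B\in\B(A)$ by definition of $\mu^a$ and its Radon--Nikodym density.

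\textbf{Main obstacle.} The only genuinely delicate point is the joint measurability of $\mu\mapsto\mu(Q^j_i)$ for the half-open cube $Q^j_i$ (whose boundary can carry $\mu$-mass), since weak$^*$ continuity only gives measurability of $\mu\mapsto\int\varphi\,d\mu$ for $\varphi$ continuous. The approximation of $\mathds{1}_{Q^j_i}$ from outside (or inside) by a bounded sequence of continuous functions, combined with the uniform bound $|\mu|(A)\le R$ to invoke dominated convergence, resolves this; an equivalent route is to choose, for each $j$, the grid offsets in a countable dense set so that the relevant cube boundaries are $\mu$-null — but since the final limit is $\mathcal L^n$-a.e.\ anyway, the crude boundary bound suffices and the offset bookkeeping is unnecessary. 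Everything else (finiteness of the partitions because $A$ is bounded, Borel measurability of $\limsup$, the Besicovitch/martingale a.e.\ convergence for fixed $\mu$) is standard.
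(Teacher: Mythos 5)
Your strategy is essentially the paper's: there, $\gamma(x,\mu)$ is defined as the limit of the centred ball averages $\mu(B_\rho(x)\cap A)/(\omega_n\rho^n)$ (set to $0$ when the limit does not exist), joint measurability of $(x,\mu)\mapsto\mu(B_\rho(x)\cap A)$ is obtained by approximating the indicator of the open ball \emph{from below} by continuous functions $\varphi_j(y-x)$ and passing to the monotone limit, and the identification of $\gamma(\cdot,\mu)$ with the density of $\mu^a$ for fixed $\mu$ comes from the differentiation theorem for measures; your dyadic-average scheme with a $\limsup$ and a default value is the same construction with the centred-ball basis replaced by the dyadic basis and Besicovitch replaced by dyadic (martingale) differentiation. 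Two details in your write-up need repair, though neither is fatal. First, the indicator of a half-open cube is not upper semicontinuous, so it is \emph{not} a decreasing pointwise limit of continuous functions; either work with open dyadic cubes (the grid boundaries are $\mathcal L^n$-null, so the a.e.\ differentiation for fixed $\mu$ is unaffected) and approximate the indicator from below, using the bound $|\mu|(A)\le R$ and monotone/dominated convergence exactly as the paper does for open balls, or deduce Borel measurability of $\mu\mapsto\mu(B)$ for every $B\in\B(A)$ from the open-set case by a monotone class argument. Second, the claimed $L^1(A)$ convergence of $\gamma_j(\cdot,\mu)$ is false whenever $\mu$ has a nontrivial singular part (the $L^1$ norms of the averages of $\mu^s$ do not vanish, even though they tend to $0$ a.e.); this is harmless, since only the $\mathcal L^n$-a.e.\ convergence is needed to conclude that $\gamma(\cdot,\mu)=d\mu^a/d\mathcal L^n$ a.e., its integrability being guaranteed by the Radon--Nikodym theorem.
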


\begin{proof}
For every $(x,\mu)\in A\times \mathcal M_{R,A}^{m\times n}$ let $\gamma (x,\mu) \in \R^{m \times n}$ be defined as 
\begin{equation*}
\gamma (x,\mu):=
\begin{cases}
\displaystyle \lim_{\rho\to0+}
\frac{\mu(B_{\rho}(x)\cap A)}{\om_n {\rho}^n} 
&\text{ if the limit exists in $\R^{m \times n}$,}  
\\
0 & \text{ otherwise,}
\end{cases}
\end{equation*}
where $\omega_n$ denotes the volume of the unit ball of $\R^n$. From the theory of differentiation of measures (see, e.g., \cite[Theorem 1.155]{FonLeo}), 
for every $\mu \in \mathcal M_{R,A}^{m\times n}$ we have that 
 $\gamma(\cdot, \mu)\in L^1(A, \R^{m\times n})$ and
\begin{equation*}
\mu^a (B)=\int_B \gamma (x,\mu)\dx \quad \text{for every} \quad B\in \B(A),
\end{equation*}
which proves \eqref{l:mua}.

To prove the measurability of the function $\gamma$  it suffices to show that for every $\rho>0$ the function
\begin{equation}\label{l:prima-mappa}
(x,\mu) \mapsto  \mu (B_{\rho} (x)\cap A)
\end{equation}
from $A\times \mathcal M_{R,A}^{m\times n}$ to $\R^{m \times n}$ is $\B(A)\otimes \B(\mathcal M_{R,A}^{m\times n})$-measurable. To this end, for a fixed $\rho>0$ we introduce  an increasing sequence of nonnegative functions 
$(\varphi_j) \subset C_c (\R^n)$ pointwise converging to the characteristic function 
of the open ball $B_{\rho} (0)$, and we observe that
$$
\mu (B_{\rho}(x)\cap A)=\lim_{j \to +\infty}\int_A \varphi_j (y-x)\,d\mu (y),
$$  
by the Monotone Convergence Theorem. 

Let $A_{\rho}:=\{x \in A \colon {\rm dist}(x,\partial A)> \rho\}$. Since for every $j\in\N$ 
the function 
$$
(x,\mu) \mapsto \int_A \varphi_j(y-x)\,d\mu (y)
$$
is continuous on $A_{\rho}\times \mathcal M_{R,A}^{m\times n}$  
(considering on $\mathcal M_{R,A}^{m\times n}$ the weak$^*$ topology), the function 
\eqref{l:prima-mappa} from $A_{\rho}\times \mathcal M_{R,A}^{m\times n}$ to $\R^{m \times n}$
is $\B(A_{\rho})\otimes \B(\mathcal M_{R,A}^{m\times n})$-measurable. 
By the arbitrariness of $\rho>0$ we obtain that the same function considered on 
$A\times \mathcal M_{R,A}^{m\times n}$ is
$\B(A)\otimes \B(\mathcal M_{R,A}^{m\times n})$-measurable.
\end{proof}

To prove the measurability of the map $\mu \mapsto \mu^a$,  
from $\mathcal M_{R,A}^{m\times n}$ to $\mathcal M_{R,A}^{m\times n}$,
we need the following lemma.
\begin{lem}\label{lemma-astratto}
Let $A\in\A$, let $R>0$, let $(Y, \mathcal E)$ be a measurable space, and let $h \colon A\times Y \to \R^{m \times n}$ be a $\B(A)\otimes \mathcal E$-measurable function such that 
$$
\int_A | h (x,y)|\dx \leq R \quad \text{for every $y\in Y$}.
$$
For every $y\in Y$, we define  the $\R^{m \times n}$-valued measure 
$\lambda_y\in \mathcal M_{R,A}^{m\times n}$ as
$$
\lambda_y(B):=\int_B h(x,y)\dx\quad \text{for every}\quad B\in \B(A).
$$
Then the map $y\mapsto \lambda_y$ is measurable from $(Y,\mathcal E)$ to $(\mathcal M_{R,A}^{m\times n},\B(\mathcal M_{R,A}^{m\times n}))$. 
\end{lem}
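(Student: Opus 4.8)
The strategy is to exhibit the map $y \mapsto \lambda_y$ as a measurable function into the compact metric space $(\mathcal M_{R,A}^{m\times n}, d_{R,A}^{m\times n})$ by testing against a countable family of continuous functionals. Recall that the weak$^*$ topology on $\mathcal M_{R,A}^{m\times n}$ is metrisable, hence the Borel $\sigma$-algebra $\B(\mathcal M_{R,A}^{m\times n})$ is generated by the weak$^*$-open sets. Since $C_0(A,\R^{m\times n})$ is separable, fix a countable dense set $\{\varphi_k\}_{k\in\N} \subset C_0(A,\R^{m\times n})$. The key observation is that a sequence in $\mathcal M_{R,A}^{m\times n}$ converges weak$^*$ if and only if it converges when tested against every $\varphi_k$; consequently the Borel $\sigma$-algebra of the compact metric space coincides with the $\sigma$-algebra generated by the evaluation maps $\mu \mapsto \langle \mu, \varphi_k\rangle = \int_A \varphi_k \, d\mu$. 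Therefore it suffices to prove that for each fixed $k$ the scalar function
$$
y \longmapsto \int_A \varphi_k(x)\cdot h(x,y)\dx
$$
is $\mathcal E$-measurable, where the dot denotes the Euclidean scalar product in $\R^{m\times n}$.

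First I would make precise the reduction just sketched: the map $y\mapsto \lambda_y$ is $(\mathcal E, \B(\mathcal M_{R,A}^{m\times n}))$-measurable if and only if $y \mapsto \langle \lambda_y, \varphi\rangle$ is $\mathcal E$-measurable for every $\varphi \in C_0(A,\R^{m\times n})$; and by density of $\{\varphi_k\}$ together with the uniform bound $|\lambda_y|(A)\le R$ (which gives $|\langle\lambda_y,\varphi\rangle - \langle\lambda_y,\varphi_k\rangle|\le R\|\varphi-\varphi_k\|_\infty$), it is enough to check this for the countably many $\varphi_k$. This is a standard fact about weak$^*$-measurability into duals of separable Banach spaces, but I would spell it out since the paper is being careful about exactly these points.

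Next I would establish the measurability of $y \mapsto \int_A \varphi_k(x)\cdot h(x,y)\dx$ for fixed $k$. The integrand $(x,y)\mapsto \varphi_k(x)\cdot h(x,y)$ is $\B(A)\otimes\mathcal E$-measurable (product of a $\B(A)$-measurable function and a $\B(A)\otimes\mathcal E$-measurable function), and it is dominated in absolute value by $\|\varphi_k\|_\infty |h(\cdot,\cdot)|$, which for each fixed $y$ is integrable over $A$ with $\int_A |h(x,y)|\dx \le R$. Hence by the Fubini--Tonelli theorem for product-measurable functions (applied on the product of the finite measure space $(A,\B(A),\mathcal L^n\!\restriction_A)$ — note $A$ is bounded so $\mathcal L^n(A)<\infty$ — with $(Y,\mathcal E)$ equipped with any $\sigma$-finite measure, or more simply by the section-measurability part of Fubini, which does not require a measure on $Y$) the function $y\mapsto \int_A \varphi_k(x)\cdot h(x,y)\dx$ is $\mathcal E$-measurable. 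Combining this with the reduction of the previous paragraph yields the claim: $y \mapsto \lambda_y$ is measurable.

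\textbf{Main obstacle.} The only genuinely delicate point is the first reduction — confirming that the Borel $\sigma$-algebra on $\mathcal M_{R,A}^{m\times n}$ is generated by the countable family of evaluation maps $\mu\mapsto\langle\mu,\varphi_k\rangle$, rather than the full (uncountable) family indexed by all of $C_0(A,\R^{m\times n})$. This hinges on metrisability of the weak$^*$ topology on the bounded set $\mathcal M_{R,A}^{m\times n}$ (already recorded in the excerpt via \cite[Theorem V.5.1]{Dun-Sch}) together with separability of $C_0(A,\R^{m\times n})$, so that weak$^*$ convergence is detected by countably many test functions and the topology, hence its Borel structure, is generated by those countably many evaluations. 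Everything else is a routine application of product measurability and Fubini's theorem.
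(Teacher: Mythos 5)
Your proposal is correct and follows essentially the same route as the paper's proof: measurability of the scalar maps $y\mapsto\int_A\varphi\cdot h(\cdot,y)\,dx$ via Fubini, then passage to Borel measurability of $y\mapsto\lambda_y$ using that the weak$^*$ topology on $\mathcal M_{R,A}^{m\times n}$ is metrisable with a countable family of test functions (the paper phrases this via a countable basis of weak$^*$-basic open sets, you via the $\sigma$-algebra generated by countably many evaluations, which amounts to the same thing).
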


\begin{proof}
We start by observing that for every $\varphi \in C_c (A,\R^{m\times n})$ the 
scalar function 
\begin{equation} \label{function E measurable}
y \mapsto \int_A \varphi (x){\cdot} d \lambda_y (x) \quad \text{ is $\mathcal E$-measurable,}
\end{equation}
where $\cdot$ denotes the Euclidean scalar product between matrices.
Indeed, by definition we have 
$$
 \int_A \varphi (x) {\cdot} d \lambda_y (x) = 
  \int_A \varphi (x){\cdot} h (x, y) \, d x,
$$
and the measurability with respect to $y$ follows from the Fubini Theorem.

Note now that a basis for the open sets of the space $\mathcal M_{R,A}^{m\times n}$
(endowed with the weak$^*$ topology) is given by the collection of sets
$$
\Big\{ \lambda \in \mathcal M_{R,A}^{m\times n}: \Big| \int_A \varphi_i (x) {\cdot} d \lambda (x) 
- \int_A \varphi_i (x) {\cdot} d \hat\lambda (x) \Big| < \eta 
 \hbox{ for }i=1,\dots l \Big\},
$$
with $\eta > 0$, $\hat\lambda \in \mathcal M_{R,A}^{m\times n}$, $l \in \mathbb{N}$, 
and
$\varphi_1, \ldots, \varphi_l \in C_c(A,\R^{m\times n})$.
By \eqref{function E measurable}, the pre-image of these sets under the function 
$y \mapsto \lambda_y$ belongs to $\mathcal E$.
This implies that this function is measurable from $(Y,\mathcal E)$ to $(\mathcal M_{R,A}^{m\times n},\B(\mathcal M_{R,A}^{m\times n}))$,
since the weak$^*$ topology in $ \mathcal M_{R,A}^{m\times n}$ has a countable basis.
\end{proof}

The following lemma shows the measurable dependence of $\mu^a$ on $\mu$. 
\begin{lem}\label{l:meas-ac-part}
The map $\mu \mapsto \mu^a$ is measurable from $(\mathcal M_{R,A}^{m\times n}, \B(\mathcal M_{R,A}^{m\times n}))$ to $(\mathcal M_{R,A}^{m\times n}, \B(\mathcal M_{R,A}^{m\times n}))$.  
\end{lem}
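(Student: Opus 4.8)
\textbf{Proof strategy for Lemma~\ref{l:meas-ac-part}.} The plan is to deduce the measurability of $\mu\mapsto\mu^a$ directly from the two preparatory lemmas just proved. First I would invoke Lemma~\ref{gamma} to obtain the $\B(A)\otimes\B(\mathcal M_{R,A}^{m\times n})$-measurable density function $\gamma\colon A\times\mathcal M_{R,A}^{m\times n}\to\R^{m\times n}$ satisfying $\mu^a(B)=\int_B\gamma(x,\mu)\,dx$ for every $\mu\in\mathcal M_{R,A}^{m\times n}$ and every $B\in\B(A)$. Since $|\mu^a|(A)\le|\mu|(A)\le R$, we have $\int_A|\gamma(x,\mu)|\,dx\le R$ for every $\mu\in\mathcal M_{R,A}^{m\times n}$, so $\gamma$ satisfies exactly the uniform integrability bound required in the hypothesis of Lemma~\ref{lemma-astratto}.

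Then I would apply Lemma~\ref{lemma-astratto} with the measurable space $(Y,\mathcal E):=(\mathcal M_{R,A}^{m\times n},\B(\mathcal M_{R,A}^{m\times n}))$ and the function $h(x,\mu):=\gamma(x,\mu)$. The lemma produces, for each $\mu\in\mathcal M_{R,A}^{m\times n}$, the measure $\lambda_\mu\in\mathcal M_{R,A}^{m\times n}$ defined by $\lambda_\mu(B):=\int_B\gamma(x,\mu)\,dx$ for $B\in\B(A)$, and asserts that $\mu\mapsto\lambda_\mu$ is measurable from $(\mathcal M_{R,A}^{m\times n},\B(\mathcal M_{R,A}^{m\times n}))$ to $(\mathcal M_{R,A}^{m\times n},\B(\mathcal M_{R,A}^{m\times n}))$. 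By the defining property \eqref{l:mua} in Lemma~\ref{gamma}, we have $\lambda_\mu=\mu^a$ as measures on $\B(A)$, so the map $\mu\mapsto\mu^a$ coincides with $\mu\mapsto\lambda_\mu$ and is therefore measurable, which is the claim.

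This proof is essentially a bookkeeping step that glues together the two previous lemmas, so I do not expect any genuine obstacle; the only point requiring a moment of care is checking that the bound $\int_A|\gamma(x,\mu)|\,dx\le R$ genuinely holds uniformly in $\mu$, which follows since $\|\mu^a\|=|\mu^a|(A)=\int_A|\gamma(x,\mu)|\,dx$ and $|\mu^a|\le|\mu|$ as measures, whence $|\mu^a|(A)\le|\mu|(A)\le R$. With that in hand the application of Lemma~\ref{lemma-astratto} is immediate.

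\begin{proof}
By Lemma~\ref{gamma} there exists a $\B(A)\otimes\B(\mathcal M_{R,A}^{m\times n})$-measurable function $\gamma\colon A\times\mathcal M_{R,A}^{m\times n}\to\R^{m\times n}$ such that $\gamma(\cdot,\mu)\in L^1(A,\R^{m\times n})$ for every $\mu\in\mathcal M_{R,A}^{m\times n}$ and
$$
\mu^a(B)=\int_B\gamma(x,\mu)\,dx\qquad\text{for every }\mu\in\mathcal M_{R,A}^{m\times n}\text{ and }B\in\B(A).
$$
Since $|\mu^a|\le|\mu|$ as measures, for every $\mu\in\mathcal M_{R,A}^{m\times n}$ we have
$$
\int_A|\gamma(x,\mu)|\,dx=|\mu^a|(A)\le|\mu|(A)\le R.
$$
We now apply Lemma~\ref{lemma-astratto} with $(Y,\mathcal E):=(\mathcal M_{R,A}^{m\times n},\B(\mathcal M_{R,A}^{m\times n}))$ and $h(x,\mu):=\gamma(x,\mu)$. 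The lemma yields that the map $\mu\mapsto\lambda_\mu$, where $\lambda_\mu(B):=\int_B\gamma(x,\mu)\,dx$ for $B\in\B(A)$, is measurable from $(\mathcal M_{R,A}^{m\times n},\B(\mathcal M_{R,A}^{m\times n}))$ to $(\mathcal M_{R,A}^{m\times n},\B(\mathcal M_{R,A}^{m\times n}))$. By \eqref{l:mua} we have $\lambda_\mu=\mu^a$ for every $\mu\in\mathcal M_{R,A}^{m\times n}$, and therefore the map $\mu\mapsto\mu^a$ is measurable from $(\mathcal M_{R,A}^{m\times n},\B(\mathcal M_{R,A}^{m\times n}))$ to $(\mathcal M_{R,A}^{m\times n},\B(\mathcal M_{R,A}^{m\times n}))$.
\end{proof}
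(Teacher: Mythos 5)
Your proof is correct and follows exactly the paper's argument: the paper also proves this lemma by applying Lemma~\ref{lemma-astratto} with $(Y,\mathcal E)=(\mathcal M_{R,A}^{m\times n},\B(\mathcal M_{R,A}^{m\times n}))$ and $h=\gamma$, using \eqref{l:mua}. Your explicit check of the uniform bound $\int_A|\gamma(x,\mu)|\,dx=|\mu^a|(A)\le|\mu|(A)\le R$ is a correct detail the paper leaves implicit.
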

\begin{proof}
Thanks to \eqref{l:mua}, the conclusion follows from Lemma \ref{lemma-astratto} 
with $(Y,\mathcal E)=(\mathcal M_{R,A}^{m\times n},\B(\mathcal M_{R,A}^{m\times n}))$ and $h = \gamma$.
\end{proof}

Given $A\in\A$, we set 
\begin{equation}\label{DM-3335}
BV^m_{R,A} := \{u \in BV (A,\R^m): 
\|u\|_{L^1(A,\R^m)}\le R
\text{ and } |Du| (A) \leq R \}.
\end{equation}
On $BV^m_{R,A}$ we consider the topology induced by the distance $d^m_{R,A}$ defined by
\begin{equation*}
d^m_{R,A}(u,v):=\|u-v\|_{L^1(A,\R^m)}+ d^{m\times n}_{R,A}(Du,Dv),
\end{equation*}
where $d^{m\times n}_{R,A}$ is the distance on $\mathcal M_{R,A}^{m\times n}$ that metrizes the weak$^*$ topology.

Note that $BV (A, \R^m)$ is the dual of a separable space, and that, when $A$ has Lipschitz boundary, the topology just defined coincides with the
topology induced on $BV^m_{R,A}$
by the weak$^*$ topology of $BV (A, \R^m)$ (see \cite[Remark~3.12]{AFP}). 

The following lemma will be crucial in the proof of Proposition~\ref{measurability}.

\begin{lem}\label{DM-3312}
Assume that $A\in\A$ has Lipschitz boundary. Then
the metric space $(BV^m_{R,A},d^{m\times n}_{R,A})$ is compact.
\end{lem}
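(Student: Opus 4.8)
The goal is to show that $(BV^m_{R,A}, d^m_{R,A})$ is compact when $A$ has Lipschitz boundary. Since the topology is metric, it suffices to prove sequential compactness: given a sequence $(u_j) \subset BV^m_{R,A}$, I would extract a subsequence converging in $d^m_{R,A}$ to some limit that still lies in $BV^m_{R,A}$. The plan is to combine the $BV$ compactness theorem with the weak$^*$ compactness of the measure space $\mathcal{M}^{m\times n}_{R,A}$, and then check that the limit inherits the two bounds in \eqref{DM-3335}.

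First I would invoke the standard compactness theorem in $BV$ (see \cite[Theorem 3.23]{AFP}): since $A$ is a bounded open set with Lipschitz boundary and $\|u_j\|_{L^1(A,\R^m)} + |Du_j|(A) \le 2R$ uniformly, there exist a subsequence (not relabelled) and a function $u \in BV(A,\R^m)$ such that $u_j \to u$ strongly in $L^1(A,\R^m)$ and $Du_j \weakst Du$ weakly$^*$ in $\mathcal{M}_b(A,\R^{m\times n})$. The $L^1$ convergence gives $\|u\|_{L^1(A,\R^m)} \le R$ by passing to the limit in the norm bound, while the weak$^*$ lower semicontinuity of the total variation gives $|Du|(A) \le \liminf_j |Du_j|(A) \le R$. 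Hence $u \in BV^m_{R,A}$, and moreover $Du_j, Du \in \mathcal{M}^{m\times n}_{R,A}$, so the convergence $Du_j \weakst Du$ takes place within the compact metric space $(\mathcal{M}^{m\times n}_{R,A}, d^{m\times n}_{R,A})$ and is therefore convergence with respect to $d^{m\times n}_{R,A}$. Combining with the $L^1$ convergence we conclude $d^m_{R,A}(u_j, u) = \|u_j - u\|_{L^1(A,\R^m)} + d^{m\times n}_{R,A}(Du_j, Du) \to 0$, which proves sequential compactness.

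The only genuinely delicate point is ensuring that weak$^*$ convergence of $(Du_j)$ in $\mathcal{M}_b(A,\R^{m\times n})$ coincides with convergence in the metric $d^{m\times n}_{R,A}$; this is immediate once one notes that $d^{m\times n}_{R,A}$ metrizes the weak$^*$ topology on the \emph{bounded} set $\mathcal{M}^{m\times n}_{R,A}$, and all the measures $Du_j$, $Du$ sit inside this set because of the uniform bound $|Du_j|(A) \le R$ and the lower semicontinuity estimate for $|Du|(A)$. One also needs the limit measure produced by the $BV$ compactness theorem to be exactly $Du$ (not merely some weak$^*$ limit of $Du_j$), which follows from the $L^1$ convergence $u_j \to u$: distributional derivatives are continuous under $L^1$ convergence in the sense of distributions, so $Du_j \to Du$ distributionally, and by uniqueness of weak$^*$ limits this identifies the limit. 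No further obstacle arises; the Lipschitz regularity of $\partial A$ is used solely to have the embedding $BV(A,\R^m) \hookrightarrow L^1(A,\R^m)$ be compact, which is what powers the extraction of the $L^1$-convergent subsequence.
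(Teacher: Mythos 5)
Your argument is correct and matches the paper's proof: both use the compact embedding of $BV(A,\R^m)$ into $L^1(A,\R^m)$ (where the Lipschitz boundary enters), the weak$^*$ compactness of $\mathcal M_{R,A}^{m\times n}$, and lower semicontinuity of the total variation together with the $L^1$ bound to place the limit in $BV^m_{R,A}$. Your extra remarks on identifying the weak$^*$ limit with $Du$ and on metrizability of the weak$^*$ topology on the bounded set just spell out what the paper leaves as "easy to see".
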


\begin{proof}
Let $(u_k)$ be a sequence in $BV^m_{R,A}$.
By \eqref{DM-3335} this sequence is bounded in $BV(A,\R^m)$.
Recalling the compact embedding of $BV(A,\R^m)$ into $L^1(A,\R^m)$ and the compactness of 
$\mathcal M_{R,A}^{m\times n}$,
there exist a subsequence, not relabelled, and a function $u\in BV(A,\R^m)$ such that $u_k\to u$ strongly in
$L^1(A,\R^m)$ and $Du_k\rightharpoonup Du$ weakly$^*$ in $\mathcal{M}_b(A,\R^{m\times n})$. It is easy
to see that $u\in BV^m_{R,A}$ and that $d^{m\times n}_{R,A}(u_k,u)\to0$.
\end{proof}

We now prove the measurability with respect to $(\omega, u)$ of the integral functional corresponding to
a random volume integrand. 

\begin{lem}\label{meas:volume}
Let $A\in\A$ with Lipschitz boundary, 
let $R>0$, and let $f$ be a random volume integrand  as in Definition \ref{ri}. 
Then, the function 
$$
(\omega, u) \longmapsto \int_A f(\omega, x,\nabla u)\dx
$$
from $\Omega \times BV^m_{R,A}$ to $\R$ is $\mathcal{T} \otimes \B (BV^m_{R,A})$-measurable.
\end{lem}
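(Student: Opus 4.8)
The plan is to reduce the problem to the measurability of a Carath\'eodory-type integrand composed with a measurable map into a suitable Polish space, and then invoke a standard approximation argument. First I would note that the map $u\mapsto \nabla u$, sending $BV^m_{R,A}$ into $L^1(A,\R^{m\times n})$, is measurable: indeed, by Lemma~\ref{l:meas-ac-part} the map $\mu\mapsto\mu^a$ is Borel measurable on $\mathcal M^{m\times n}_{R,A}$, and by Lemma~\ref{gamma} the map $\mu\mapsto \gamma(\cdot,\mu)\in L^1(A,\R^{m\times n})$ is measurable (being the composition of the measurable map $\mu\mapsto\mu^a$ with the density map, which by \eqref{l:mua} is exactly $\gamma(\cdot,\cdot)$ read off pointwise); since $Du\in\mathcal M^{m\times n}_{R,A}$ for $u\in BV^m_{R,A}$ and $\nabla u$ is precisely the $L^1$-density of $(Du)^a$, the composition $u\mapsto Du\mapsto (Du)^a\mapsto \gamma(\cdot,Du)=\nabla u$ is measurable from $(BV^m_{R,A},d^m_{R,A})$ into $L^1(A,\R^{m\times n})$. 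Here I use Lemma~\ref{DM-3312} only insofar as it guarantees $BV^m_{R,A}$ is a compact (hence Polish) metric space, so that its Borel $\sigma$-algebra behaves well under these compositions.

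Next I would address the measurability of $(\omega,V)\mapsto\int_A f(\omega,x,V(x))\,dx$ for $V$ ranging in $L^1(A,\R^{m\times n})$. By (a1) in Definition~\ref{ri}, $f$ is $\T\otimes\B^n\otimes\B^{m\times n}$-measurable, and by $(f2)$--$(f4)$ it is continuous in the last variable with a linear upper bound. The strategy is a by-now-classical approximation: first treat simple functions $V=\sum_j \xi_j \mathbf 1_{B_j}$ with $\xi_j\in\R^{m\times n}$ and $B_j\in\B(A)$ pairwise disjoint, for which $\int_A f(\omega,x,V(x))\,dx=\sum_j\int_{B_j} f(\omega,x,\xi_j)\,dx$, and the $\omega$-measurability of each term $\int_{B_j}f(\omega,x,\xi_j)\,dx$ follows from the Fubini--Tonelli theorem applied to the $\T\otimes\B^n$-measurable, nonnegative function $(\omega,x)\mapsto f(\omega,x,\xi_j)$. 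Joint measurability in $(\omega,V)$ on the (countable, when $\xi_j,B_j$ are restricted to suitable countable families, or via a direct product-measurability argument) class of simple functions is then routine. Finally, given an arbitrary $V\in L^1(A,\R^{m\times n})$ one approximates $V$ in $L^1$ and a.e.\ by simple functions $V_k$; by $(f2)$--$(f4)$ and the generalized dominated convergence theorem (dominating $f(\omega,x,V_k(x))$ by $c_3|V_k(x)|+c_4$ with $|V_k|\to|V|$ in $L^1$) one gets $\int_A f(\omega,x,V_k(x))\,dx\to\int_A f(\omega,x,V(x))\,dx$ for every $\omega$, so the limit is $\T\otimes\B(L^1)$-measurable as a pointwise limit of measurable functions, provided the approximation $V\mapsto(V_k)$ can be chosen measurably in $V$ --- which it can, e.g.\ by a fixed dyadic truncation-and-conditional-expectation scheme on a countable partition of $A$, yielding $V_k$ depending continuously (hence measurably) on $V\in L^1$.

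Composing the two measurable maps then finishes the proof: $(\omega,u)\mapsto(\omega,\nabla u)\mapsto\int_A f(\omega,x,\nabla u)\,dx$ is $\T\otimes\B(BV^m_{R,A})$-measurable. I expect the \textbf{main obstacle} to be the second paragraph, specifically arranging the approximation $V\mapsto V_k$ of a generic $L^1$ function by simple functions in a way that is jointly measurable in the pair $(\omega,V)$ (or rather, measurable in $V$ uniformly, so that composition with the measurable $f$-integral on simple functions is legitimate); the cleanest route is to fix once and for all an increasing sequence of finite $\B(A)$-partitions $\mathcal P_k$ generating $\B(A)$ and set $V_k$ to be the $\mathcal P_k$-conditional expectation of $V$ composed with a truncation at level $k$ --- then $V\mapsto V_k$ is a bounded linear (hence continuous, hence Borel) operator on $L^1(A,\R^{m\times n})$, $V_k\to V$ in $L^1$ and, along a subsequence, a.e., and $\int_A f(\omega,x,V_k)\,dx$ is manifestly measurable in $(\omega,V)$ for each $k$. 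A secondary point requiring care is the application of the generalized dominated convergence theorem with varying dominating functions, which is exactly the hypothesis under which it holds once $c_3|V_k|+c_4\to c_3|V|+c_4$ in $L^1(A)$.
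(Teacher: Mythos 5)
Your argument is workable, but it is not the route the paper takes, and it is considerably longer. The paper's proof never factors through $L^1$: it observes that $\nabla u(x)=\gamma(x,Du)$ for $\mathcal L^n$-a.e.\ $x$, that $(x,u)\mapsto (x,Du)$ is continuous from $A\times BV^m_{R,A}$ into $A\times \mathcal M_{R,A}^{m\times n}$, hence, composing with the jointly measurable $\gamma$ of Lemma~\ref{gamma}, that $(\omega,x,u)\mapsto f(\omega,x,\gamma(x,Du))$ is $\T\otimes\B(A)\otimes\B(BV^m_{R,A})$-measurable, and then concludes by Fubini in the $x$-variable. In particular neither Lemma~\ref{l:meas-ac-part} nor Lemma~\ref{DM-3312} is needed for this lemma, and the continuity of $f(\omega,x,\cdot)$ and the bound $(f4)$, which drive your approximation and generalized dominated convergence step, are not used at all: joint measurability of $f$ together with measurability of the inner map already gives everything. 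What your detour buys is a self-contained statement about the superposition functional $(\omega,V)\mapsto\int_A f(\omega,x,V(x))\,dx$ on $\Omega\times L^1(A,\R^{m\times n})$, but for the lemma at hand this is extra machinery.

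The step you assert rather than prove is precisely the pivot of your strategy: Borel measurability of $u\mapsto\nabla u$ from $(BV^m_{R,A},d^m_{R,A})$ into $L^1(A,\R^{m\times n})$ with its norm topology. Lemma~\ref{gamma} gives joint measurability of $(x,\mu)\mapsto\gamma(x,\mu)$ together with \eqref{l:mua}; it does not give measurability of $\mu\mapsto\gamma(\cdot,\mu)$ as an $L^1$-valued map, and since $\mathcal M_{R,A}^{m\times n}$ carries the weak$^*$ topology, ``reading off the density pointwise'' is certainly not continuous, so its measurability needs an argument. It can be repaired: for every $h\in L^\infty(A,\R^{m\times n})$ the scalar map $\mu\mapsto\int_A h(x)\cdot\gamma(x,\mu)\,dx$ is Borel by joint measurability of $\gamma$ and Fubini, so the $L^1$-valued map is weakly measurable, and separability of $L^1$ (Pettis-type argument: for separable Banach spaces the norm-Borel $\sigma$-algebra is generated by the continuous linear functionals) upgrades this to Borel measurability. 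With that inserted, and with the a.e.\ convergence of your conditional expectations taken from martingale convergence for the full sequence (or, if you only use a $V$-dependent subsequence, with the standard sub-subsequence argument to get convergence of the full sequence of integrals), your proof closes; but the paper's two-line composition-plus-Fubini argument avoids all of this.
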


\begin{proof}
Let $\gamma$ be the function introduced in Lemma~\ref{gamma}.
We observe that for every $u \in BV^m_{R,A}$
$$
\gamma (x, Du) = \nabla u (x) \quad \text{ for }\mathcal{L}^n\text{-a.e.\ } x \in A.
$$
Therefore, 
$$
\int_A f (\omega, x,\nabla u)\dx = \int_A  f(\omega, x,\gamma (x, Du))\dx.
$$
We claim that the function $(x, u) \mapsto \gamma (x, Du)$ from 
$A \times BV^m_{R,A}$ to $\R^{m \times n}$ is $\B (A) \otimes \B (BV^m_{R,A})$-measurable.
Indeed, it is the composition of the functions $(x, u) \mapsto (x, Du)$, 
which is continuous from $A \times BV^m_{R,A} $ to $A \times \mathcal M_{R,A}^{m\times n}$, 
and the function $(x, \mu) \mapsto \gamma (x, \mu)$ from 
$A \times \mathcal M_{R,A}^{m\times n}$ to $\R^{m \times n}$, 
which is $\B (A) \otimes \B (BV^m_{R,A})$-measurable, by Lemma~\ref{gamma}.
Therefore, the function  $(\omega, x, u) \mapsto  (\omega, x,\gamma (x, Du))$ is measurable from
$(\Om\times A\times BV^m_{R,A}, \mathcal{T} \otimes \B (A)\otimes \B (BV^m_{R,A}))$ to 
$(\Om\times \R^n\times \R^{m\times n}, \mathcal{T} \otimes \B^n \otimes \B^{m\times n})$.
By the $\mathcal{T} \otimes \B^n \otimes \B^{m\times n}$-measurability of $f$ we deduce that
the function  $(\omega, x, u) \mapsto  f(\omega, x,\gamma (x, Du))$
from $\Omega \times A\times BV^m_{R,A}$ to $\R$ is 
$\mathcal{T} \otimes \B (A) \otimes \B (BV^m_{R,A})$-measurable.
The conclusion then follows from Fubini's Theorem.
\end{proof}

The following two lemmas are used to prove the measurable dependence on $u$ of the
surface integral functional corresponding to a continuous surface integrand.

For every $A\in\A$, $\mu\in \mathcal M_b(A,\R^{m\times n})$, $x\in A$, and $\rho>0$ we set
\begin{equation}\label{DM-theta}
\theta_{A,\rho}(\mu,x):=\frac{\mu(B_\rho(x)\cap A)}{\om_{n-1}\rho^{n-1}},
\end{equation}
where $\omega_{n-1}$ denotes the volume of the unit ball of $\R^{n-1}$.

\begin{lem}\label{l:BV-jump}
Let $A\in\A$ and $u\in BV(A,\R^m)$. Then 
\begin{equation}\label{DM-6100}
\lim_{\rho \to 0+}\theta_{A,\rho}(Du,x)=  \big(  [u](x)\otimes \nu_u(x)  \big)  \chi_{S_u}(x) \quad \text{for $\mathcal H^{n-1}$-a.e.\ $x\in A$},
\end{equation}
where $\chi_{S_u}(x)=1$ if $x\in S_u$ and $\chi_{S_u}(x)=0$ if $x\in A\setminus S_u$.
\end{lem}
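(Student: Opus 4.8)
\textbf{Proof plan for Lemma \ref{l:BV-jump}.}
The plan is to decompose $Du = D^a u + D^j u + C(u)$ and show that each of the three pieces has a well-defined limit of the rescaled quantity $\theta_{A,\rho}(\cdot,x)$ for $\mathcal H^{n-1}$-a.e.\ $x\in A$, with only the jump part contributing. First I would recall the standard fact (see \cite[Section 3.6]{AFP}) that $\mathcal H^{n-1}$-almost every point of $A$ falls into exactly one of three classes: the approximate continuity set $A\setminus S_u$ (a set of full $\mathcal H^{n-1}$-measure outside $S_u$, since $S_u$ is $(n-1)$-rectifiable and $\mathcal H^{n-1}$-$\sigma$-finite), and within $S_u$ the points where $u$ has one-sided approximate limits $u^\pm(x)$ with approximate normal $\nu_u(x)$. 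So it suffices to analyze $\theta_{A,\rho}(Du,x)$ separately on these classes.

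The key steps, in order: (1) For the absolutely continuous part $D^a u$, since $|D^a u| \ll \mathcal L^n$ one has $|D^a u|(B_\rho(x)\cap A) = o(\rho^n) = o(\rho^{n-1})$ for $\mathcal L^n$-a.e.\ $x$, hence (as $\mathcal L^n$-null sets are $\mathcal H^{n-1}$-null on an $(n-1)$-rectifiable set only after a density argument — more precisely, one uses that $|D^a u|(B) = \int_B |\nabla u|\,dx$ and the density estimate $|D^a u|(B_\rho(x)) = o(\rho^{n-1})$ holds $\mathcal H^{n-1}$-a.e.\ because $\mathcal H^{n-1}\LLL{S_u}$ and $\mathcal L^n$ are mutually singular and a.e.\ point of $S_u$ has $\mathcal L^n$-density of the $L^1$ function $|\nabla u|$ controlled). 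For the Cantor part $C(u)$, by definition $|C(u)|$ vanishes on sets of finite $\mathcal H^{n-1}$-measure, so by a Radon--Nikodym/density argument (e.g.\ via \cite[Theorem 2.56 or Lemma 2.46]{AFP}) one gets $|C(u)|(B_\rho(x)\cap A) = o(\rho^{n-1})$ for $\mathcal H^{n-1}$-a.e.\ $x\in A$. Thus $\theta_{A,\rho}(D^a u + C(u), x)\to 0$ $\mathcal H^{n-1}$-a.e. (2) For the jump part, recall $D^j u = ([u]\otimes\nu_u)\,\mathcal H^{n-1}\LLL{S_u}$. Using that $\mathcal H^{n-1}\LLL{S_u}$ has $(n-1)$-density equal to $1$ at $\mathcal H^{n-1}$-a.e.\ point of $S_u$ and density $0$ at $\mathcal H^{n-1}$-a.e.\ point of $A\setminus S_u$, together with the Besicovitch-type differentiation of the $L^1(\mathcal H^{n-1}\LLL{S_u})$ vector field $[u]\otimes\nu_u$, one obtains
$$
\lim_{\rho\to 0+}\frac{D^j u(B_\rho(x)\cap A)}{\omega_{n-1}\rho^{n-1}} = \big([u](x)\otimes\nu_u(x)\big)\chi_{S_u}(x)
$$
for $\mathcal H^{n-1}$-a.e.\ $x\in A$; here one must check that the normalization constant $\omega_{n-1}$ matches the $(n-1)$-dimensional density of $\mathcal H^{n-1}$ restricted to an $(n-1)$-plane, which is exactly $\omega_{n-1}\rho^{n-1}$ for a ball of radius $\rho$. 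Combining (1) and (2) gives \eqref{DM-6100}.

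The main obstacle I expect is step (1) for the Cantor part: one needs the delicate fact that $|C(u)|(B_\rho(x)) = o(\rho^{n-1})$ for $\mathcal H^{n-1}$-a.e.\ $x$, which does not follow merely from $|C(u)|$ vanishing on $\mathcal H^{n-1}$-finite sets but requires a covering/density argument (of the type in \cite[Lemma 2.46, Theorem 2.56]{AFP} or the generalized Besicovitch differentiation theorem cited in the paper, see \cite{Mor} and \cite[Sections 1.2.1--1.2.2]{FonLeo}) showing that a measure singular with respect to $\mathcal H^{n-1}\LLL{S_u}$ and with no atoms on $\mathcal H^{n-1}$-finite sets has vanishing upper $(n-1)$-density $\mathcal H^{n-1}\LLL{S_u}$-a.e.; the analogous statement for the $\mathcal L^n$-part off $S_u$ is comparatively routine. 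Everything else is a bookkeeping exercise combining the structure theorem for $BV$ functions with standard differentiation of measures.
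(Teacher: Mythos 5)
Your plan is correct and follows essentially the same route as the paper's proof: decompose $Du=D^au+D^ju+C(u)$, show the upper $(n-1)$-densities of $|D^au|$ and $|C(u)|$ vanish $\mathcal H^{n-1}$-a.e.\ via the density-comparison theorems (the paper invokes Federer 2.10.19, 2.10.6 and 2.10.48 where you cite AFP Theorem 2.56/Lemma 2.46 and the generalised Besicovitch theorem, with the Cantor part handled exactly as you indicate, by reducing to compact sets of finite $\mathcal H^{n-1}$-measure), and identify the jump contribution by Besicovitch differentiation on $S_u$ together with the zero-density of $\mathcal H^{n-1}\LLL{S_u}$ off $S_u$. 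The only caution is that the vanishing-density claims for $|D^au|$ and $|C(u)|$ must be established $\mathcal H^{n-1}$-a.e.\ on all of $A$, not merely $\mathcal H^{n-1}\LLL{S_u}$-a.e.\ as a couple of your phrasings suggest; the comparison argument you outline delivers precisely this, as in the paper.
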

\begin{proof}
\emph{Step $1$.} We claim that 
\begin{equation}\label{l:claim-1}
\lim_{\rho \to 0+}\theta_{A,\rho}(Du^a,x)=0 \quad \text{for $\mathcal H^{n-1}$-a.e.\ $x\in A$}.
\end{equation} 
We now recall that, for a positive Radon measure $\mu$ in $A$ and for $d\in \N$, the $d$-dimensional upper density of $\mu$ at $x\in A$ is defined as 
$$
\Theta^{\ast,d}(\mu, x)= \limsup_{\rho\to 0+} \frac{\mu(B_\rho(x)\cap A)}{\omega_d \rho^d},
$$
where $\omega_{d}$ denotes the volume of the unit ball of $\R^{d}$ (see, e.g., \cite[Definition 2.55]{AFP}). To prove \eqref{l:claim-1} it is then sufficient to show that 
\begin{equation}\label{DM-6120}
\Theta^{\ast,n-1}(|D^a u|, x)=0 \quad \text{for $\mathcal H^{n-1}$-a.e.\ $x\in A$}.
\end{equation}
To do so, for any $t>0$ we define the set 
$$
E_t:= \{x\in A: \Theta^{\ast,n-1}(|D^a u|, x)>t\};
$$
note that 
$$
E_t\subset \{x\in A:  \Theta^{\ast,n}(|D^a u|, x) = +\infty\}.
$$
By the Lebesgue Differentiation Theorem we have $\mathcal{L}^n(E_t)=0$
and, since  $|D^a u| << \mathcal{L}^n$, we have $|D^a u| (E_t) = 0$. 

Since $|D^a u|$ is a finite Radon measure, for every $k \in \mathbb{N}$
there exists an open set $A_k \subset A$ with $E_t \subset A_k$ such that
\[
|D^a u| (A_k) < \tfrac{1}{k}.
\]
Thanks to \cite[Section~2.10.19(3) and Section~2.10.6]{Federer} 
this implies that  
$$
t \mathcal{H}^{n-1}(E_t)\leq |D^a u| (A_k) < \tfrac{1}{k} \quad \text{ for every } k \in \mathbb{N}.
$$
Taking the limit as $k \to \infty$, we obtain that
\[
\mathcal{H}^{n-1}(E_t) = 0 \quad \text{ for every  } t > 0.
\]
From this, it follows that
\[
\mathcal{H}^{n-1} \big(  \{x\in A: \Theta^{\ast,n-1}(|D^a u|, x)> 0 \} \big) = 0
\]
and this proves \eqref{DM-6120}, which gives \eqref{l:claim-1}.

\medskip

\emph{Step $2$.} We claim that 
\begin{equation}\label{l:claim-2}
\lim_{\rho \to 0+}\theta_{A,\rho}(C(u),x)=0 \quad \text{for $\mathcal H^{n-1}$-a.e.\  $x\in A$}.
\end{equation} 
As before, it is sufficient to show that 
\begin{equation} \label{intermediate equality n-1}
\Theta^{\ast,n-1}(|C(u)|,x)=0 \quad \text{for $\mathcal H^{n-1}$-a.e.\ $x\in A$}.
\end{equation}
To do so, for any $t>0$ we define the set 
$$
E_t:= \{x\in A: \Theta^{\ast,n-1}(| C(u)|, x)>t\}.
$$
Now, let $K\subset E_t$ be a compact set with  $\mathcal{H}^{n-1}(K)<+\infty$
so that, in particular, $|C( u)|(K)=0$.
Then, by \cite[Section 2.10.19(3) and Section~2.10.6]{Federer} we have that 
$$
t\mathcal{H}^{n-1}(K)\leq | C( u)|(V) \qquad \text{ for every open set $V$ containing } K. 
$$
Since $C( u)$ is a finite Radon measure, taking the infimum of the above inequality
over all open sets $V$ containing $K$ we obtain that 
\[
t\mathcal{H}^{n-1}(K)\leq | C( u)|(K).
\]
Since $|C( u)|(K)=0$, from the above inequality it follows that $\mathcal{H}^{n-1}(K) = 0$.
Using the fact that $E_t$ is a Borel (and hence Suslin) set, by \cite[Corollary 2.10.48]{Federer} we have that 
$$
\mathcal{H}^{n-1}(E_t) = 
\sup\{\mathcal{H}^{n-1}(K): K \textrm{ compact},\ K\subset E_t,\ \mathcal{H}^{n-1}(K)<+\infty\},
$$
and so $\mathcal{H}^{n-1}(E_t) =0$ for every $t>0$, which implies 
\eqref{intermediate equality n-1} and, in turn, \eqref{l:claim-2}.

\medskip

\emph{Step $3$.} We claim that 
\begin{equation}\label{l:claim-3}
\lim_{\rho \to 0+}\theta_{A,\rho}(D^j u,x)=0 \quad \text{for $\mathcal H^{n-1}$-a.e.\ $x\in A\setminus S_u$}.
\end{equation} 
Observe that $|D^j u|(A\setminus S_u)=0$.  By \cite[Section 2.10.19(4)  and Section~2.10.6]{Federer} we have immediately that 
$$
\Theta^{\ast,n-1}(|D^j u|, x)=0 \quad \text{for $\mathcal H^{n-1}$-a.e.\ $x\in A\setminus S_u$},
$$
which implies \eqref{l:claim-3}.

\medskip

\emph{Step $4$.} By Besicovich Derivation Theorem  (see \cite[Theorems 2.22, 2.83, and 3.78]{AFP})  we have that 
$$
\lim_{\rho \to 0+}\theta_{A,\rho}(D^ju,x)=[u](x)\otimes \nu_u(x) \quad \text{for $\mathcal H^{n-1}$-a.e.\ $x\in S_u$}.
$$
Together with the previous steps, this gives \eqref{DM-6100}. 
\end{proof}

\begin{lem}\label{Lemma66}
Let $A\in\A$ and let $g  \colon A\times \R^{m\times n}\to \R$ be a continuous function.  
Assume that there exists $a>0$ such that 
\begin{equation}\label{bounds:tilde}
|g (x,\xi)|\leq a|\xi|
\end{equation}
for every $(x,\xi)\in A\times \R^{m\times n}$. Then for every $u\in BV(A,\R^m)$ 
\begin{equation*}
\lim_{\eta\to 0+}\lim_{\rho \to 0+}\int_A \frac{ g (x,\theta_{A,\rho}(Du,x))}{|\theta_{A,\rho}(Du,x)| \vee \eta}\,d|Du|(x)=
\int_{A\cap S_u} g (x,[u](x)\otimes \nu_u(x))\,d\mathcal{H}^{n-1}(x),
\end{equation*}
where $\theta_{A,\rho}$ is defined in \eqref{DM-theta}.
\end{lem}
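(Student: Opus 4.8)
The plan is to decompose $Du = D^au + C(u) + D^ju$ and show that only the jump part survives in the limit. Since $g$ is continuous on $A \times \R^{m\times n}$, satisfies the linear bound \eqref{bounds:tilde}, and $\theta_{A,\rho}(Du,\cdot)$ is uniformly bounded in $\R^{m\times n}$ by $R/(\om_{n-1}\rho^{n-1})\cdot(\ldots)$ only pointwise (but the integrand $g(x,\theta)/(|\theta|\vee\eta)$ is bounded by $a|\theta|/(|\theta|\vee\eta) \le a$ uniformly in $x$, $\rho$, and $\eta$), we can invoke dominated convergence with respect to the finite measure $|Du|$ once we have pointwise convergence $|Du|$-a.e. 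The key input is Lemma~\ref{l:BV-jump}: for $\mathcal{H}^{n-1}$-a.e.\ $x \in A$ we have $\theta_{A,\rho}(Du,x) \to ([u](x)\otimes\nu_u(x))\chi_{S_u}(x)$ as $\rho\to 0+$. Because $|Du| = |D^au| + |C(u)| + \mathcal{H}^{n-1}\LLL{S_u}$ and the first two parts vanish on sets of finite $\mathcal{H}^{n-1}$-measure in the relevant way, the set of $x$ where the convergence in \eqref{DM-6100} fails is $|Du|$-negligible. First I would record this, splitting $A$ into $A\cap S_u$ and $A\setminus S_u$.

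\textbf{On $A \setminus S_u$.} Here $\theta_{A,\rho}(Du,x) \to 0$ for $\mathcal{H}^{n-1}$-a.e.\ (hence $(|D^au|+|C(u)|)$-a.e., since these measures are carried by $A\setminus S_u$ modulo $\mathcal{H}^{n-1}$-null sets) $x$. Since $g$ is continuous and $g(x,0)=0$ (forced by \eqref{bounds:tilde}), the integrand $g(x,\theta_{A,\rho}(Du,x))/(|\theta_{A,\rho}(Du,x)|\vee\eta)$ converges to $g(x,0)/\eta = 0$ pointwise $(|D^au|+|C(u)|)$-a.e.\ as $\rho\to 0+$, for each fixed $\eta>0$. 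On the jump part restricted to $A\setminus S_u$, namely $|D^ju|\LLL(A\setminus S_u)$, we have $|D^ju|(A\setminus S_u)=0$, so this contributes nothing. Dominated convergence (with dominating constant $a$) then gives that the $\rho\to 0+$ limit of the integral over $A\setminus S_u$ is $0$, and this persists as $\eta\to 0+$.

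\textbf{On $A\cap S_u$.} Here $\theta_{A,\rho}(Du,x) \to [u](x)\otimes\nu_u(x) =: \zeta(x)$ for $\mathcal{H}^{n-1}$-a.e.\ $x\in S_u$, and $|Du|\LLL S_u = \mathcal{H}^{n-1}\LLL S_u$ (up to the density $|\zeta(x)|=|[u](x)|$, but it is cleaner to integrate against $\mathcal{H}^{n-1}\LLL S_u$ after noting $|Du|\LLL S_u = |[u]|\,\mathcal{H}^{n-1}\LLL S_u$; alternatively keep $d|Du|$ throughout). By continuity of $g$, $g(x,\theta_{A,\rho}(Du,x)) \to g(x,\zeta(x))$ and $|\theta_{A,\rho}(Du,x)|\vee\eta \to |\zeta(x)|\vee\eta$ for $\mathcal{H}^{n-1}$-a.e.\ $x\in S_u$; the integrand converges pointwise to $g(x,\zeta(x))/(|\zeta(x)|\vee\eta)$, and dominated convergence (constant $a$, finite measure) yields
\[
\lim_{\rho\to 0+}\int_{A\cap S_u}\frac{g(x,\theta_{A,\rho}(Du,x))}{|\theta_{A,\rho}(Du,x)|\vee\eta}\,d|Du|(x) = \int_{A\cap S_u}\frac{g(x,\zeta(x))}{|\zeta(x)|\vee\eta}\,d|Du|(x).
\]
Writing $d|Du| = |\zeta(x)|\,d\mathcal{H}^{n-1}$ on $S_u$, this equals $\int_{A\cap S_u} g(x,\zeta(x))\,\frac{|\zeta(x)|}{|\zeta(x)|\vee\eta}\,d\mathcal{H}^{n-1}$. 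As $\eta\to 0+$ the factor $|\zeta(x)|/(|\zeta(x)|\vee\eta)$ increases to $1$ at every $x$ with $\zeta(x)\ne 0$, i.e.\ $\mathcal{H}^{n-1}$-a.e.\ on $S_u$ (since $[u]\ne 0$ there), and is bounded by $1$; using $|g(x,\zeta(x))| \le a|\zeta(x)|$ which is $\mathcal{H}^{n-1}$-integrable on $S_u$, dominated convergence gives the limit $\int_{A\cap S_u} g(x,[u](x)\otimes\nu_u(x))\,d\mathcal{H}^{n-1}(x)$. Combining the two parts gives the claim.

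\textbf{Main obstacle.} The delicate point is the interchange of the two limits and the justification that the set of ``bad'' points where Lemma~\ref{l:BV-jump} fails is genuinely $|Du|$-negligible (not merely $\mathcal{H}^{n-1}$-negligible) — this uses that $|D^au|$ and $|C(u)|$ both vanish on any set with $\sigma$-finite $\mathcal{H}^{n-1}$-measure together with \eqref{DM-6120} and \eqref{intermediate equality n-1} from Lemma~\ref{l:BV-jump}. The truncation $|\theta|\vee\eta$ is precisely what makes the inner integrand uniformly bounded (by $a$), avoiding any integrability issue before passing $\rho\to 0+$; taking $\eta\to 0+$ afterwards recovers the sharp jump integral via monotone/dominated convergence on $S_u$. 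No uniform control on $\theta_{A,\rho}$ itself is needed, only the pointwise limit plus the crude bound on the ratio.
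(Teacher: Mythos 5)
Your proof is correct and follows essentially the same route as the paper: split $|Du|$ into the part carried by $A\setminus S_u$ (i.e.\ $|D^au+C(u)|$, which contributes nothing by Lemma~\ref{l:BV-jump}, $g(x,0)=0$, and dominated convergence with the uniform bound $a$) and the part on $A\cap S_u$ (where $d|Du|=|[u]|\,d\mathcal H^{n-1}$ and two applications of dominated convergence, first in $\rho$ and then in $\eta$ using $[u]\neq 0$, give the jump integral). No gaps.
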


\begin{proof}

Let $u\in BV(A,\R^m)$ be fixed. 

\emph{Step 1.} Thanks to \eqref{bounds:tilde} and to the bound 
\begin{equation}\label{DM-3892}
\int_{A\cap S_u}|[u](x)|\, d\mathcal{H}^{n-1}(x)<+\infty,
\end{equation}
the function $x\mapsto g (x,[u](x)\otimes \nu_u(x))$ is  $\mathcal{H}^{n-1}$-integrable on $A\cap S_u$.

\medskip

\emph{Step 2.} We claim that for every $\eta>0$
\begin{equation*}
\lim_{\rho \to 0+}\int_A \frac{ g (x,\theta_{A,\rho}(Du,x))}{|\theta_{A,\rho}(Du,x)| 
\vee \eta}\,d|D^a u+  C(u)|(x)=
0. 
\end{equation*}
By Lemma \ref{l:BV-jump} we have that 
$$
\lim_{\rho\to 0+}\theta_{A,\rho}(Du,x)= 0 \quad \text{for $|D^a u+ C(u)|$-a.e.\ $x\in A$},
$$
and by \eqref{bounds:tilde} we have the inequality
\begin{equation}\label{DM-4912}
 \frac{  |g  (x,\theta_{A,\rho}(Du,x))|}{|\theta_{A,\rho}(Du,x)| \vee 
\eta}\le a.
\end{equation}
The claim then follows from the Dominated Convergence Theorem, since $g$ is continuous,
$ g  (x,0)= 0$, and $|D^a u+  C(u)|$ is a bounded measure.

\medskip

\emph{Step 3.} Recalling (f) in Section~\ref{Notation}, to conclude the proof it is sufficient to show that 
\begin{equation*}
\lim_{\eta\to 0+}\lim_{\rho \to 0+}\int_{A\cap S_u} \frac{g (x,\theta_{A,\rho}(Du,x))}{|\theta_{A,\rho}(Du,x)| \vee \eta}|[u](x)\otimes \nu_u(x)|\,d\mathcal{H}^{n-1}(x)=
\int_{A\cap S_u} g (x,[u](x)\otimes \nu_u(x))\,d\mathcal{H}^{n-1}(x) . 
\end{equation*}
By Lemma \ref{l:BV-jump} and by the continuity of $g$ 
 we have that for every $\eta>0$
\begin{align*}
&\lim_{\rho \to 0+}\int_{A\cap S_u} \frac{ g (x,\theta_{A,\rho}(Du,x))}{|\theta_{A,\rho}(Du,x)| \vee \eta}|[u](x)\otimes \nu_u(x)|\,d\mathcal{H}^{n-1}(x)\\
& =\int_{A\cap S_u} g (x,[u](x)\otimes \nu_u(x))\frac{|[u](x)\otimes \nu_u(x)|}{|[u](x)\otimes \nu_u(x)| \vee \eta} d\mathcal{H}^{n-1}(x),
\end{align*}
where we used the Dominated Convergence Theorem, thanks to \eqref{DM-3892} and \eqref{DM-4912}. Note that  for $\mathcal{H}^{n-1}$-almost every $x\in S_u$
$$
\lim_{\eta\to 0+}\frac{|[u](x)\otimes \nu_u(x)|}{|[u](x)\otimes \nu_u(x)|\vee \eta} 
=\sup_{\eta> 0}\frac{|[u](x)\otimes \nu_u(x)|}{|[u](x)\otimes \nu_u(x)|\vee \eta}= 1,
$$
since $[u](x)\neq 0$.
Thanks to \eqref{bounds:tilde} and \eqref{DM-3892} 
we can apply again the Dominated Convergence Theorem and deduce the claim in the limit $\eta\to 0+$.
\end{proof}

We are now in a position to prove the measurable dependence on $u$ of the integral functional corresponding 
to a continuous surface integrand.

\begin{lem}\label{DM-6490}
Let $A\in\A$ with Lipschitz boundary, let $R>0$, and let $g$ be as in Lemma~\ref{Lemma66}.
Then the function 
$$
u \longmapsto \int_{S_u\cap A} g  (x,[u](x)\otimes \nu_u(x) )\, d\mathcal{H}^{n-1}(x)
$$
from $BV^m_{R,A}$ to $\R$ is $\B ( BV^m_{R,A} )$-measurable.
\end{lem}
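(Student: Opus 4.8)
The plan is to express the surface integral functional as a limit of functionals that are already known to be measurable in $u$, using the approximation result from Lemma~\ref{Lemma66}. Specifically, for $u \in BV^m_{R,A}$ we have, by Lemma~\ref{Lemma66},
\begin{equation*}
\int_{S_u\cap A} g(x,[u](x)\otimes \nu_u(x))\,d\mathcal{H}^{n-1}(x)=\lim_{\eta\to 0+}\lim_{\rho \to 0+}\int_A \frac{g(x,\theta_{A,\rho}(Du,x))}{|\theta_{A,\rho}(Du,x)| \vee \eta}\,d|Du|(x),
\end{equation*}
so it suffices to show that, for each fixed $\rho>0$ and $\eta>0$, the map
\begin{equation*}
u \longmapsto \int_A \frac{g(x,\theta_{A,\rho}(Du,x))}{|\theta_{A,\rho}(Du,x)| \vee \eta}\,d|Du|(x)
\end{equation*}
from $BV^m_{R,A}$ to $\R$ is $\B(BV^m_{R,A})$-measurable; a pointwise limit (first in $\rho$ along a sequence $\rho_k\to 0+$, then in $\eta$ along a sequence $\eta_j\to 0+$) of measurable functions is measurable, which then gives the conclusion.

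The key step is therefore the measurability of the $\rho,\eta$-fixed functional. First I would recall that $\theta_{A,\rho}(Du,x)=\mu(B_\rho(x)\cap A)/(\omega_{n-1}\rho^{n-1})$ where $\mu=Du$, and that by (the argument in) Lemma~\ref{gamma} the map $(x,\mu)\mapsto \mu(B_\rho(x)\cap A)$ from $A\times \mathcal M_{R,A}^{m\times n}$ to $\R^{m\times n}$ is $\B(A)\otimes \B(\mathcal M_{R,A}^{m\times n})$-measurable; composing with the continuous map $u\mapsto Du$ from $BV^m_{R,A}$ to $\mathcal M_{R,A}^{m\times n}$ (continuous by the very definition of $d^m_{R,A}$), the map $(x,u)\mapsto \theta_{A,\rho}(Du,x)$ from $A\times BV^m_{R,A}$ to $\R^{m\times n}$ is $\B(A)\otimes \B(BV^m_{R,A})$-measurable. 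Since $g$ is continuous and $t\mapsto t\vee\eta$ is continuous, the integrand $(x,u)\mapsto g(x,\theta_{A,\rho}(Du,x))/(|\theta_{A,\rho}(Du,x)|\vee\eta)$ is $\B(A)\otimes \B(BV^m_{R,A})$-measurable and, by \eqref{bounds:tilde}, bounded by $a$.

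It then remains to integrate against the (varying) measure $|Du|$. Here I would use the fact that the map $u\mapsto Du$ is continuous from $BV^m_{R,A}$ into $\mathcal M_{R,A}^{m\times n}$ with the weak$^*$ topology, hence so is $u\mapsto |Du|$ into the space of nonnegative measures on $A$ with total mass $\le R$, again metrized by weak$^*$ convergence; thus the map $u\mapsto \int_A \varphi \, d|Du|$ is continuous (hence measurable) for each $\varphi\in C_0(A)$, and by approximating the bounded $\B(A)\otimes\B(BV^m_{R,A})$-measurable integrand by a countable combination of products $\psi(x)\chi(u)$ — equivalently, invoking a monotone-class / Fubini-type argument as in Lemma~\ref{lemma-astratto} — one gets $\B(BV^m_{R,A})$-measurability of $u\mapsto \int_A h(x,u)\,d|Du|(x)$ for every bounded $\B(A)\otimes\B(BV^m_{R,A})$-measurable $h$. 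Applying this with $h(x,u)=g(x,\theta_{A,\rho}(Du,x))/(|\theta_{A,\rho}(Du,x)|\vee\eta)$ yields the measurability of the $\rho,\eta$-fixed functional, and passing to the double limit completes the proof.

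The main obstacle I anticipate is the last point: integrating a measurable function of $(x,u)$ against the $u$-dependent measure $|Du|$ and concluding joint/marginal measurability. The subtlety is that $u\mapsto|Du|$ is only weakly$^*$ continuous, so one cannot naively apply Fubini; the clean way is to reduce to integrands of product form $\psi(x)\chi(u)$ with $\psi$ continuous, verify measurability there using weak$^*$ continuity, and then pass to general bounded measurable integrands via a Dynkin $\pi$-$\lambda$ (monotone class) argument, exactly in the spirit of the proof of Lemma~\ref{lemma-astratto}. A secondary technical check is that the double limit in Lemma~\ref{Lemma66} can indeed be realized along fixed countable sequences $\rho_k\to 0+$ and $\eta_j\to 0+$ independent of $u$, which is immediate since for each fixed $u$ the limits exist, so any sequences work.
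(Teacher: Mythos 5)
Your overall route is the same as the paper's: Lemma \ref{Lemma66} reduces the statement to the $\B(BV^m_{R,A})$-measurability, for fixed $\rho,\eta>0$, of $u\mapsto\int_A g(x,\theta_{A,\rho}(Du,x))\,(|\theta_{A,\rho}(Du,x)|\vee\eta)^{-1}\,d|Du|(x)$, and the joint measurability of the integrand is obtained exactly as you do it, from the measurability of \eqref{l:prima-mappa} and the continuity of $g$, together with the bound \eqref{DM-4912}; the double limit can indeed be realised along fixed sequences $\rho_k\to0+$, $\eta_j\to0+$. For the remaining step, integrating a bounded jointly measurable integrand against the variation $|\mu|$, the paper simply invokes \cite[Corollary A.3]{CDMSZ-stoc} on $\mathcal M^{m\times n}_{R,A}$ and then composes with the continuous map $u\mapsto Du$; your monotone-class re-derivation of that fact, in the spirit of Lemma \ref{lemma-astratto} and of the proof of Lemma \ref{lemma:meas}, is an acceptable substitute in principle.

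There is, however, one incorrect assertion in your treatment of that step: $u\mapsto|Du|$ is \emph{not} weak$^*$-continuous, so $u\mapsto\int_A\varphi\,d|Du|$ is in general not continuous for $\varphi\in C_0(A)$. The total variation is only lower semicontinuous along weak$^*$-convergent sequences: taking $u_k(x):=\tfrac1k\sin(kx\cdot e_1)\,v$ for a fixed $v\in\Sph^{m-1}$, one has $Du_k=\cos(kx\cdot e_1)\,v\otimes e_1\,\mathcal L^n\weakst 0$ in $\mathcal M^{m\times n}_{R,A}$, while $|Du_k|=|\cos(kx\cdot e_1)|\,\mathcal L^n\weakst\tfrac2\pi\mathcal L^n\neq0$, so $\int_A\varphi\,d|Du_k|\not\to0$. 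The gap is repairable: for $\varphi\in C_0(A)$ with $\varphi\ge0$ one has $\int_A\varphi\,d|\mu|=\sup\{\int_A\psi\cdot d\mu:\psi\in C_c(A,\R^{m\times n}),\ |\psi|\le\varphi\}$, and restricting the supremum to a countable family shows that $\mu\mapsto\int_A\varphi\,d|\mu|$ is weak$^*$-lower semicontinuous, hence Borel measurable on $\mathcal M^{m\times n}_{R,A}$ (general $\varphi$ by splitting into positive and negative parts). Borel measurability, not continuity, of the product-form functionals $\psi(x)\chi(u)$ is all your monotone-class argument needs, since the class of admissible integrands is closed under uniform limits and under monotone limits of uniformly bounded sequences by dominated convergence (recall $|Du|(A)\le R$). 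With this correction, or by citing \cite[Corollary A.3]{CDMSZ-stoc} directly as the paper does, your proof is complete.
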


\begin{proof} By Lemma \ref{Lemma66} the thesis follows by proving that 
for every $\rho$, $\eta > 0$ the function 
\begin{equation}\label{claim:tilde}
u \mapsto \int_A \frac{g (x,\theta_{A,\rho}(Du,x))}{|\theta_{A,\rho}(Du,x)| \vee \eta}\,d|Du|(x)
\end{equation}
from $BV^m_{R,A}$ to $\R$ is $\B ( BV^m_{R,A} )$-measurable. 
Let $\rho$, $\eta > 0$ be fixed.  First note that, by the  $ \B(A)\otimes \B ( \mathcal M_{R,A}^{m\times n} )$-measurability of \eqref{l:prima-mappa} and the continuity of $g$, the function  
$$
(x,\mu) \mapsto \frac{g  (x,\theta_{A,\rho}(\mu,x))}{|\theta_{A,\rho}(\mu,x)| \vee \eta}
$$
is $\B(A)\otimes \B ( \mathcal M_{R,A}^{m\times n} )$-measurable. Moreover
it is bounded by \eqref{DM-4912}. So by \cite[Corollary A.3]{CDMSZ-stoc}
$$
\mu\mapsto \int_A \frac{g  (x,\theta_{A,\rho}(\mu,x))}{|\theta_{A,\rho}(\mu,x)| \vee \eta}\, d|\mu|(x)
$$
is $\B ( \mathcal M_{R,A}^{m\times n} )$-measurable. Since $u\mapsto Du$ is continuous from $(BV^m_{R,A},d^m_{R,A})$ to $ (\mathcal M_{R,A}^{m\times n},d_{R,A}^{m\times n})$,  
the $\B ( BV^m_{R,A} )$-measurability of \eqref{claim:tilde} follows. 
\end{proof}

We now prove the measurability with respect to $(\omega, u)$ of the integral functional corresponding to
a random surface integrand, with no continuity assumption with respect to~$x$.

\begin{lem}\label{lemma:meas}
Let $A\in\A$ with Lipschitz boundary, let $R>0$, and let $ g\colon \Omega \times A\times \R^{m}\times \Sph^{n-1}\to \R$ be a $\T\otimes \B(A)\otimes \B^m\otimes \B^n_S$-measurable function. Assume that there exists $a>0$ such that
\begin{gather} \label{propr (g6)}
g(\omega,x,\zeta,\nu) = g(\omega,x,- \zeta,- \nu),
\\
 \label{DM-5749}
| g(\omega,x,\zeta,\nu)|\leq a|\zeta|,
\end{gather}
for every $(\omega,x,\zeta,\nu) \in \Omega \times A \times \R^{m} \times \Sph^{n-1}$.
Then the function 
\begin{equation*}
(\omega, u) \longmapsto \int_{S_u\cap A} g (\omega, x,[u](x),\nu_u(x) ) \, d\mathcal{H}^{n-1}(x)
\end{equation*}
from $\Omega \times BV^m_{R,A}$ to $\R$ is $\mathcal{T} \otimes \B (BV^m_{R,A})$-measurable.
\end{lem}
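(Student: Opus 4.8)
The strategy is to reduce the statement, which concerns a general $\T\otimes\B(A)\otimes\B^m\otimes\B^n_S$-measurable integrand $g$ bounded by $a|\zeta|$, to the case of a \emph{continuous} surface integrand already handled in Lemma~\ref{DM-6490}. The natural tool is a measurable selection / monotone class argument: the collection of bounded $\T\otimes\B(A)\otimes\B^m\otimes\B^n_S$-measurable functions $g$ (satisfying the symmetry \eqref{propr (g6)} and the bound \eqref{DM-5749}, with $a$ allowed to vary) for which the conclusion holds is closed under the operations needed to generate all such functions from the continuous ones. First I would dispose of the dependence on $\nu$ and $\zeta$ jointly: since $[u](x)\otimes\nu_u(x)$ determines the pair $([u](x),\nu_u(x))$ up to the sign ambiguity resolved by \eqref{propr (g6)}, one can rewrite, for every $u\in BV(A,\R^m)$,
$$
\int_{S_u\cap A} g(\omega,x,[u](x),\nu_u(x))\,d\mathcal H^{n-1}(x)
=\int_{S_u\cap A} \widetilde g(\omega,x,[u](x)\otimes\nu_u(x))\,d\mathcal H^{n-1}(x),
$$
where $\widetilde g\colon\Omega\times A\times\R^{m\times n}\to\R$ is defined on rank-one matrices $\zeta\otimes\nu$ by $\widetilde g(\omega,x,\zeta\otimes\nu):=g(\omega,x,\zeta,\nu)$ (well-defined on rank-one matrices thanks to \eqref{propr (g6)}) and extended by, say, $0$ off the rank-one cone, or by $1$-homogeneous extension; the measurability of $\widetilde g$ in $(\omega,x,\zeta\otimes\nu)$ follows from that of $g$ together with the continuity of the map $(\zeta,\nu)\mapsto\zeta\otimes\nu$ and measurability of its (two-valued) inverse. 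This puts us in the setting where the integrand is a function of $(\omega,x,\theta)$ with $\theta=[u](x)\otimes\nu_u(x)$, and the target functional of $u$ is exactly the limit object appearing in Lemma~\ref{Lemma66}.

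Next I would run the monotone class argument in the variable $(\omega,x)$. Fix the bound: let $\mathcal{H}$ be the class of bounded $\T\otimes\B(A)$-measurable functions $h\colon\Omega\times A\to\R$ such that for every $\varphi\in C_c(\R^{m\times n})$ with $\varphi(\theta)=\varphi(-\theta)$ the function
$$
(\omega,u)\longmapsto\int_{S_u\cap A} h(\omega,x)\,\varphi\big([u](x)\otimes\nu_u(x)\big)\,|[u](x)|\,d\mathcal H^{n-1}(x)
$$
from $\Omega\times BV^m_{R,A}$ to $\R$ is $\T\otimes\B(BV^m_{R,A})$-measurable. One checks $\mathcal{H}$ is a vector space closed under bounded monotone limits (by dominated convergence, using the $L^1$-bound $\int_{S_u\cap A}|[u]|\,d\mathcal H^{n-1}\le|Du|(A)\le R$, as in \eqref{DM-3892}), and it contains products $h(\omega,x)=\alpha(\omega)\beta(x)$ with $\alpha$ a bounded $\T$-measurable function and $\beta\in C(\overline A)$: indeed for such $h$ the integral factors as $\alpha(\omega)$ times $\int_{S_u\cap A}\beta(x)\varphi(\dots)|[u]|\,d\mathcal H^{n-1}$, and the latter is $\B(BV^m_{R,A})$-measurable in $u$ by (a slight variant of) Lemma~\ref{DM-6490} applied to the continuous surface integrand $(x,\theta)\mapsto\beta(x)\varphi(\theta)|\theta|/(\dots)$ — more precisely one applies the full chain Lemma~\ref{l:BV-jump}$\to$Lemma~\ref{Lemma66}$\to$Lemma~\ref{DM-6490} with $\widetilde g(x,\theta):=\beta(x)\varphi(\theta)$, which is continuous and satisfies $|\widetilde g(x,\theta)|\le a'|\theta|$. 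The functional form of the monotone class theorem then gives $\mathcal{H}\supset$ all bounded $\T\otimes\B(A)$-measurable functions, and a Carathéodory-type separability argument in $\varphi$ (approximating a general continuous compactly supported symmetric function on $\R^{m\times n}$ uniformly by a countable dense family) upgrades this to: for every bounded $\T\otimes\B(A)\otimes\B^{m\times n}$-measurable $\widetilde g$ with $|\widetilde g(\omega,x,\theta)|\le a|\theta|$ and $\widetilde g(\omega,x,\theta)=\widetilde g(\omega,x,-\theta)$, the map $(\omega,u)\mapsto\int_A \widetilde g(\omega,x,\theta_{A,\rho}(Du,x))/(|\theta_{A,\rho}(Du,x)|\vee\eta)\,d|Du|(x)$ is $\T\otimes\B(BV^m_{R,A})$-measurable for each fixed $\rho,\eta>0$ — here one also uses measurability of $(\omega,x,u)\mapsto\theta_{A,\rho}(Du,x)$, which follows as in Lemma~\ref{DM-6490} from the $\B(A)\otimes\B(\mathcal M^{m\times n}_{R,A})$-measurability of \eqref{l:prima-mappa} and continuity of $u\mapsto Du$.

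Finally, I would pass to the limit. By Lemma~\ref{Lemma66}, applied for $\mathcal H^{n-1}$-a.e.\ realisation of the continuous-in-$\theta$ sections $\widetilde g(\omega,x,\cdot)$ — but we do \emph{not} have continuity in $\theta$ here, so the cleaner route is: the bound $|\widetilde g|\le a|\theta|$ is not enough for the Lemma~\ref{Lemma66} limit identity. Therefore I would instead carry out the limit in Lemma~\ref{Lemma66} at the level of a countable dense class: approximate the given (merely measurable in $\theta$) $\widetilde g$ by continuous integrands in the variable $\theta$ only when $g$ itself is continuous in $\theta$ — but the hypothesis gives \emph{no} continuity in $\zeta$. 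Here is the point where care is needed: the statement's hypothesis has no continuity in $(x,\zeta,\nu)$ at all, only Borel measurability. So the reduction must instead be: Lemma~\ref{DM-6490} is invoked for integrands continuous in \emph{both} $x$ and $\theta$, and the monotone class / approximation is performed simultaneously in $(\omega,x)$ \emph{and} $\theta$, with the key input being that a general bounded Borel function of $(\omega,x,\theta)$ with the $a|\theta|$ bound and the evenness symmetry is a bounded monotone limit of (finite linear combinations of) products $\alpha(\omega)\,\widetilde g_0(x,\theta)$ with $\widetilde g_0$ continuous, even, and satisfying a linear bound; this is a product-measurability fact. \textbf{The main obstacle} is precisely making the passage from the regularised functional (the $\rho\to0+$, $\eta\to0+$ double limit of Lemma~\ref{Lemma66}) legitimate without continuity of $g$ in $\theta$: Lemma~\ref{Lemma66} as stated \emph{requires} continuity of the surface integrand, so for discontinuous $g$ one must either (i) first establish the measurability for continuous $g$, then exploit that the $\rho,\eta$-regularised functionals $u\mapsto\int_A \widetilde g(\omega,x,\theta_{A,\rho}(Du,x))/(|\theta_{A,\rho}|\vee\eta)\,d|Du|$ are measurable \emph{for all bounded Borel $\widetilde g$} by the monotone class argument above (no continuity needed there — only Lemma~\ref{DM-6490}'s \emph{proof technique} transported), and then (ii) show that the $\rho\to0+$ limit of these regularised functionals equals the desired surface functional for \emph{every} bounded Borel $\widetilde g$ with the linear bound, by reducing pointwise (in $u$) to Lemma~\ref{Lemma66} applied along a countable dense set of continuous $\widetilde g$'s and a density/dominated-convergence argument using $\int_{S_u\cap A}|[u]|\,d\mathcal H^{n-1}\le R$. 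Identifying which of these limiting identities hold $\mathcal H^{n-1}$-a.e.\ versus in the integrated sense, and handling the non-Lipschitz boundary case by the usual exhaustion $A'\subset\subset A$, are the remaining technical points; the Fubini step to integrate out $\omega$ is then routine.
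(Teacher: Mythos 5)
Your opening reduction is the right one (and is the paper's): use the symmetry \eqref{propr (g6)} to define $\tilde g(\omega,x,\xi)$ on rank-one matrices $\xi=\zeta\otimes\nu$, extend by $0$ off the rank-one cone, and aim a monotone class argument at the class of bounded measurable integrands, anchored at the continuous case of Lemma~\ref{DM-6490}. But the way you propose to run the limit procedure contains a genuine gap. You plan to (i) prove measurability of the $(\rho,\eta)$-regularised functionals $u\mapsto\int_A \tilde g(\omega,x,\theta_{A,\rho}(Du,x))/(|\theta_{A,\rho}|\vee\eta)\,d|Du|$ for \emph{all} bounded Borel $\tilde g$, and then (ii) show that their $\rho\to0+$, $\eta\to0+$ limit equals the surface functional for every such $\tilde g$ with the linear bound, ``by reducing pointwise to Lemma~\ref{Lemma66} along a countable dense set of continuous integrands and a density/dominated-convergence argument.'' Step (ii) does not work: the identity of Lemma~\ref{Lemma66} genuinely needs continuity of the integrand in $(x,\xi)$, since it rests on $\tilde g(x,\theta_{A,\rho}(Du,x))\to\tilde g(x,[u](x)\otimes\nu_u(x))$ along the blow-up, which fails for a merely Borel $\tilde g$ discontinuous at the limit point. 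Nor can you repair this by density: bounded Borel functions are not uniform limits of continuous ones, and any $L^1$-type approximation would have to be with respect to the image of $\mathcal H^{n-1}\llcorner S_u$ under $x\mapsto(x,[u](x)\otimes\nu_u(x))$, a measure that depends on $u$; you would need the approximation to be uniform over all $u\in BV^m_{R,A}$, which is exactly what is not available.

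The correct (and simpler) route, which is the paper's, is to apply the functional Monotone Class Theorem \emph{directly to the final surface functional}, never to the regularised ones for discontinuous integrands. Write $\tilde g(\omega,x,\xi)=\hat g(\omega,x,\xi)\,a|\xi|$ with $|\hat g|\le 1$, and let $\mathcal R$ be the set of bounded $\T\otimes\B(A)\otimes\B^{m\times n}$-measurable $\hat g$ for which $(\omega,u)\mapsto\int_{S_u\cap A}\hat g(\omega,x,[u]\otimes\nu_u)\,a|[u]\otimes\nu_u|\,d\mathcal H^{n-1}$ is jointly measurable. Closure of $\mathcal R$ under uniform convergence and under monotone convergence of uniformly bounded sequences holds by dominated convergence at the level of this integral, since $\int_{S_u\cap A}|[u]|\,d\mathcal H^{n-1}\le|Du|(A)\le R$; the base class of products $\alpha(\omega)\beta(x,\xi)$ with $\alpha$ bounded $\T$-measurable and $\beta$ bounded continuous lies in $\mathcal R$ because for fixed such $\beta$ the $u$-measurability is exactly Lemma~\ref{DM-6490} applied to the continuous integrand $\beta(x,\xi)a|\xi|$. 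The Monotone Class Theorem then yields all bounded measurable $\hat g$, and the regularisation of Lemma~\ref{Lemma66} is only ever invoked where it is valid, namely for continuous integrands inside Lemma~\ref{DM-6490}. (Also, the exhaustion $A'\subset\subset A$ you mention at the end is unnecessary here: the lemma assumes $A$ has Lipschitz boundary; general $A$ is dealt with in Proposition~\ref{measurability}.)
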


\begin{proof} 
We recall that a matrix $\xi\in \R^{m\times n}$ has rank $\le 1$ if and only if $\xi=\zeta\otimes \nu$ for some
$\zeta\in\R^m$ and $\nu\in\mathbb{S}^{n-1}$, and that the pair $(\zeta, \nu)$ is uniquely determinded by $\xi$, 
up to a change of sign of both terms.
Therefore, thanks to \eqref{propr (g6)}, we can define
a $\T\otimes \B(A)\otimes \B^{m\times n}$-measurable function 
$\tilde g \colon \Omega \times A\times \R^{m\times n}\to \R$ by setting for every
$ (\omega,x,\xi)\in \Omega \times A \times \R^{m\times n}$
$$
\tilde g(\omega, x, \xi) :=  
\begin{cases}
 g(\omega, x, \zeta, \nu)
&\hbox{ if }\xi=\zeta\otimes \nu,\hbox{ with }\zeta\in\R^m\hbox{ and }\nu\in \mathbb{S}^{n-1},
\\
0&\hbox{ if  rank}(\xi)>1.
\end{cases}
$$ 
By \eqref{DM-5749} we have $|\tilde g(\omega, x, \xi)|\le a|\xi|$ for every
$(\omega,x,\xi)\in \Omega \times A \times \R^{m\times n}$.

To prove the thesis it is enough to show that 
\begin{equation}\label{claim:monotone}
(\omega, u) \longmapsto \int_{S_u\cap A}\tilde  g(\omega, x,[u](x)\otimes\nu_u(x) ) \, d\mathcal{H}^{n-1} \quad \textrm{is } \mathcal{T} \otimes \B (BV^m_{R,A}) \textrm{-measurable}.
\end{equation}
Note that the function $\tilde g$ can be written as 
\begin{equation}\label{def:psihat}
\tilde g(\omega,x,\xi) = \hat g (\omega,x,\xi) a|\xi|, 
\end{equation}
where $\hat g $ is 
$\T\otimes \B(A)\otimes \B(\R^{m\times n})$-measurable and satisfies $| \hat g | \leq 1$.

Let $\mathcal{R}$ be the set of all bounded 
$\T\otimes \B(A)\otimes \B^{m\times n}$-measurable
functions 
$ \hat g\colon \Omega \times A\times \R^{m \times n}\to \R$ 
such that the function $ \tilde g $ defined as in \eqref{def:psihat} satisfies the claim \eqref{claim:monotone}. 

In order to conclude the proof, we need to show that  $\mathcal{R}$  contains 
 all bounded $\T\otimes \B(A)\otimes \B^{m\times n}$-measurable functions. 
To prove this property,  note that $\mathcal{R}$ is a vector space  of bounded real-valued functions 
that contains the constants and is closed both 
under uniform convergence and under monotone convergence of uniformly bounded sequences. 
Let $\mathcal{C}$ be the set of all functions 
 $\hat g \colon\Om\times A\times\R^{m\times n}\to\R$ that can be written as 
$$
\hat g (\omega, x,\xi)= \alpha(\omega)\beta(x,\xi),
$$
where $\alpha\colon \Omega\to \R$ is bounded and $\T$-measurable,  and $\beta\colon A\times \R^{m\times n}\to \R$ is bounded  and continuous. Note that $\mathcal C$ is stable under multiplication and that the $\sigma$-algebra generated by $\mathcal{C}$ is $\T\otimes \B(A)\otimes \B^{m\times n}$.

By Lemma~\ref{DM-6490} we have $\mathcal C\subset \mathcal{R}$. Hence the functional form 
of the Monotone Class Theorem (see \cite[Chapter I, Theorem 21]{Dellacherie}), implies that $\mathcal{R}$ contains all bounded $\T\otimes \B(A)\otimes \B^{m\times n}$-measurable functions, and this concludes the proof. 
\end{proof}

We now prove the measurability of the map $u\mapsto D^j u$.

\begin{lem}\label{l:meas-j-part}
Let  $A\in\A$ with Lipschitz boundary and let $R>0$. Then the map 
$$
u\mapsto D^j u
$$
is measurable from $(BV^m_{R,A}, \B(BV^m_{R,A}) )$ to 
$(\mathcal M_{R,A}^{m\times n}, \B(\mathcal M_{R,A}^{m\times n}))$.
\end{lem}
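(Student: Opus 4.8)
The statement to prove is that the map $u \mapsto D^j u$ is measurable from $(BV^m_{R,A}, \B(BV^m_{R,A}))$ to $(\mathcal M_{R,A}^{m\times n}, \B(\mathcal M_{R,A}^{m\times n}))$, for $A\in\A$ with Lipschitz boundary and $R>0$. The idea is to reduce this to the measurability results already established for the absolutely continuous part and for the surface integral. Recall the decomposition $Du = D^a u + D^j u + C(u)$ from (f) in Section~\ref{Notation}; equivalently $D^j u = Du - D^a u - C(u)$. We already know from Lemma~\ref{l:meas-ac-part} that $\mu\mapsto\mu^a$ is Borel measurable on $\mathcal M_{R,A}^{m\times n}$, and since $u\mapsto Du$ is continuous from $(BV^m_{R,A}, d^m_{R,A})$ to $(\mathcal M_{R,A}^{m\times n}, d^{m\times n}_{R,A})$ by the very definition of the distance $d^m_{R,A}$, the composition $u\mapsto D^a u = (Du)^a$ is measurable. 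Hence it suffices to prove that $u\mapsto D^j u$ is measurable, or equivalently (subtracting off $D^a u$, and noting that $D^j u + C(u) = D^s u$) that $u\mapsto D^j u$ is measurable; then $u\mapsto C(u) = Du - D^a u - D^j u$ is automatically measurable as well.

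\textbf{Key steps.} First I would observe that $\mathcal M_{R,A}^{m\times n}$, with the weak$^*$ topology metrized by $d^{m\times n}_{R,A}$, is a compact metric space, hence its Borel $\sigma$-algebra is generated by the (countably many, by separability of $C_0(A,\R^{m\times n})$) evaluation maps $\mu\mapsto \int_A \varphi\cdot d\mu$ with $\varphi\in C_0(A,\R^{m\times n})$. Thus it is enough to show that for each fixed $\varphi\in C_0(A,\R^{m\times n})$ the scalar map
$$
u\longmapsto \int_A \varphi\cdot dD^j u = \int_{S_u\cap A}\varphi(x)\cdot\big([u](x)\otimes\nu_u(x)\big)\,d\mathcal H^{n-1}(x)
$$
is $\B(BV^m_{R,A})$-measurable, where I used $D^j u(B) = \int_{B\cap S_u}[u]\otimes\nu_u\,d\mathcal H^{n-1}$. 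Now I would recognize the right-hand side as a surface integral of exactly the type treated in Lemma~\ref{DM-6490}: taking $g(x,\xi):=\varphi(x)\cdot\xi$ for $\xi\in\R^{m\times n}$, this $g$ is continuous on $A\times\R^{m\times n}$ and satisfies $|g(x,\xi)|\le\|\varphi\|_\infty|\xi|$, so $g$ meets the hypotheses of Lemma~\ref{Lemma66} and Lemma~\ref{DM-6490}. Therefore $u\mapsto\int_{S_u\cap A}g(x,[u](x)\otimes\nu_u(x))\,d\mathcal H^{n-1}(x)$ is $\B(BV^m_{R,A})$-measurable, which is precisely what is needed. Since this holds for every $\varphi$ in the countable dense set, $u\mapsto D^j u$ is measurable into $(\mathcal M_{R,A}^{m\times n},\B(\mathcal M_{R,A}^{m\times n}))$.

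\textbf{Main obstacle.} The only genuinely delicate point is making sure the reduction to the generating evaluation maps is legitimate — i.e.\ that Borel measurability into a compact metrizable space can be checked against a countable generating family of continuous linear functionals. This is standard (a map into a second-countable space is Borel iff preimages of a subbasis are measurable, and here the weak$^*$ subbasis is generated by countably many $\varphi_i$ by separability of the predual $C_0(A,\R^{m\times n})$), and in fact the argument is already used verbatim in the proof of Lemma~\ref{lemma-astratto}; I would simply invoke it. Everything else is bookkeeping: the decomposition identity $D^j u = Du - D^a u - C(u)$, the continuity of $u\mapsto Du$ built into the metric $d^m_{R,A}$, and the citation of Lemma~\ref{DM-6490} with the linear choice of $g$. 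A minor check is that the resulting measure indeed lies in $\mathcal M_{R,A}^{m\times n}$, i.e.\ $|D^j u|(A)\le|Du|(A)\le R$, which is immediate.
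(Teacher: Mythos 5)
Your proposal is correct and follows essentially the same route as the paper: reduce measurability of the measure-valued map to that of the scalar evaluations $u\mapsto\int_A\varphi\cdot dD^ju=\int_{S_u\cap A}\varphi(x)\cdot([u](x)\otimes\nu_u(x))\,d\mathcal H^{n-1}$ for test functions $\varphi$, exactly as in Lemma~\ref{lemma-astratto}, and then apply the surface-integral measurability result with the linear integrand. The only cosmetic difference is that the paper invokes Lemma~\ref{lemma:meas} with $g(x,\zeta,\nu)=\varphi(x)\cdot(\zeta\otimes\nu)$, whereas you invoke Lemma~\ref{DM-6490} directly, which is equally valid since your integrand $g(x,\xi)=\varphi(x)\cdot\xi$ is continuous and satisfies the required linear bound.
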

\begin{proof}
As in the proof of Lemma \ref{lemma-astratto}  it is sufficient to show that 
$$
u\longmapsto \int_A \varphi(x){\cdot} d(D^ju)(x) = \int_{A\cap S_u} \varphi(x){\cdot}([u](x)\otimes \nu_u(x) ) \, d\mathcal{H}^{n-1}(x)
$$
from $BV^m_{R,A}$ to $\R$ is $\B(BV^m_{R,A})$-measurable for every $\varphi\in C_c(A, \R^{m\times n})$. To this end we set $ g (x,\zeta,\nu):=\varphi(x){\cdot}(\zeta\otimes \nu)$, and note that $| g(x,\zeta,\nu)|\leq a |\zeta|$, where $a$ is the maximum of $|\varphi|$. Therefore, by Lemma \ref{lemma:meas} it follows that
$$
u \longmapsto \int_{S_u\cap A} g(x,[u](x),\nu_u(x) ) \, d\mathcal{H}^{n-1}(x)
$$
from $BV^m_{R,A}$ to $\R$ is $\B (BV^m_{R,A})$-measurable, and hence the claim.
\end{proof}

The following corollary deals with the Cantor part.

\begin{cor}\label{cor:Cantor}
Let  $A\in\A$ with Lipschitz boundary and let $R>0$. Then  the map 
$$
u\mapsto  C(u)
$$
is measurable from $(BV^m_{R,A}, \B(BV^m_{R,A}) )$ to 
$(\mathcal M_{R,A}^{m\times n}, \B(\mathcal M_{R,A}^{m\times n}))$.
\end{cor}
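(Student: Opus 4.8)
\textbf{Proof proposal for Corollary~\ref{cor:Cantor}.}
The plan is to reduce the measurability of $u\mapsto C(u)$ to the measurability statements already established for the other pieces of the gradient decomposition. Recall from (f) in Section~\ref{Notation} that for $u\in BV(A,\R^m)$ one has the decomposition $Du = D^au + D^ju + C(u)$, where $D^au = \gamma(\cdot,Du)\,\mathcal L^n = (\nabla u)\,\mathcal L^n$ is the absolutely continuous part, $D^ju$ is the jump part, and $C(u)$ is the Cantor part. Hence $C(u) = Du - D^au - D^ju$ as an identity in $\mathcal M_b(A,\R^{m\times n})$. First I would observe that, for $u\in BV^m_{R,A}$, each of the three measures $Du$, $D^au$, $D^ju$ has total variation at most $R$, so all of them (and their difference $C(u)$) lie in $\mathcal M_{R,A}^{m\times n}$; this is what makes it legitimate to view the various maps as taking values in the compact metric space $(\mathcal M_{R,A}^{m\times n}, d_{R,A}^{m\times n})$.

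The key steps, in order, are the following. (i) The map $u\mapsto Du$ is continuous from $(BV^m_{R,A},d^m_{R,A})$ to $(\mathcal M_{R,A}^{m\times n},d^{m\times n}_{R,A})$, directly from the definition of $d^m_{R,A}$; in particular it is $\B(BV^m_{R,A})$-measurable. (ii) The map $u\mapsto D^au$ is $\B(BV^m_{R,A})$-measurable: it factors as the continuous map $u\mapsto Du$ followed by the map $\mu\mapsto\mu^a$, which is measurable from $\mathcal M_{R,A}^{m\times n}$ to itself by Lemma~\ref{l:meas-ac-part}. (iii) The map $u\mapsto D^ju$ is $\B(BV^m_{R,A})$-measurable by Lemma~\ref{l:meas-j-part}. (iv) Finally, subtraction $(\mu_1,\mu_2,\mu_3)\mapsto \mu_1-\mu_2-\mu_3$ is continuous on $\mathcal M_{R,A}^{m\times n}\times\mathcal M_{R,A}^{m\times n}\times\mathcal M_{R,A}^{m\times n}$ with respect to the weak$^*$ topologies (it is continuous for the weak$^*$ topology of $\mathcal M_b(A,\R^{m\times n})$, and the three summands together with their combination all stay in the fixed ball of radius $R$, where the weak$^*$ topology is metrized by $d^{m\times n}_{R,A}$). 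Composing the measurable map $u\mapsto(Du,D^au,D^ju)$ with this continuous subtraction yields that $u\mapsto C(u)=Du-D^au-D^ju$ is $\B(BV^m_{R,A})$-measurable, which is the claim.

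The only mildly delicate point — and the step I would be most careful about — is the continuity of the vector-space operations in (iv): one must check that addition restricted to $\mathcal M_{R,A}^{m\times n}$ is Borel measurable as a map into $\mathcal M_{3R,A}^{m\times n}$ or, after noting that the actual output lands back in $\mathcal M_{R,A}^{m\times n}$, into $\mathcal M_{R,A}^{m\times n}$ itself. This is straightforward: testing against a fixed $\varphi\in C_c(A,\R^{m\times n})$ turns the operation into the continuous map $(\mu_1,\mu_2,\mu_3)\mapsto\int_A\varphi\cdot d\mu_1 - \int_A\varphi\cdot d\mu_2 - \int_A\varphi\cdot d\mu_3$, and since the weak$^*$ topology on $\mathcal M_{R,A}^{m\times n}$ (and on $\mathcal M_{3R,A}^{m\times n}$) has a countable basis generated by such test functionals, joint continuity follows exactly as in the proof of Lemma~\ref{lemma-astratto}. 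Everything else is a direct composition of results proved earlier in this Appendix, so no further obstacle is expected.
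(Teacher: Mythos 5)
Your proposal is correct and follows essentially the same route as the paper: write $C(u)=Du-D^au-D^ju$, use the continuity of $u\mapsto Du$ together with Lemma~\ref{l:meas-ac-part} and Lemma~\ref{l:meas-j-part}, and conclude by measurability of the difference. The extra verification in your step (iv) is fine but not an issue the paper dwells on, since on the bounded ball the weak$^*$ topology is metrizable and testing against functions in $C_c(A,\R^{m\times n})$ handles the linear operations exactly as you indicate.
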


\begin{proof}
Since $ C(u)= Du- D^au-D^ju$, the result follows from the continuity of the map $u\mapsto Du$ from
$(BV^m_{R,A},d^m_{R,A})$ to $(\mathcal M_{R,A}^{m\times n},d_{R,A}^{m\times n})$, using
 Lemmas \ref{l:meas-ac-part} and \ref{l:meas-j-part}.
\end{proof}
We are now ready to prove the main result of the section. 


\begin{prop}\label{measurability} Let $ f$ and $ g$ be random volume and surface integrands, respectively, according to Definition \ref{ri}, and let $(\Om,\widehat\T,\widehat P)$ be the completion of the probability space $(\Om,\T, P)$.
 Let $A\in\A$, let
 $w \in SBV(A,\R^m)$, and for every $\om\in \Om$ let  $m_\om ^{f,g}(w,A)$ be defined as in  \eqref{m-phipsi}  and \eqref{inf for fixed omega}. 
 Then the function $\om\mapsto m_\om ^{f,g}(w,A)$ is $\widehat\T$-measurable.
\end{prop}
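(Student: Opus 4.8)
The plan is to reduce the infimum in \eqref{m-phipsi}, which a priori runs over the non‑separable space $SBV(A,\R^m)$, to an infimum over a suitable \emph{separable} metric space on which $\om$‑measurability can be checked directly, and then to invoke the projection theorem. First I would fix $\om$ and observe, using the bounds $(f3)$, $(f4)$, $(g3)$, $(g4)$ together with $w\in SBV(A,\R^m)$, that the infimum in \eqref{m-phipsi} is unchanged if we restrict to competitors $u$ with $\|u\|_{L^1(A,\R^m)}+|Du|(A)\le R$ for a radius $R=R(f,g,w,A)$ that is \emph{independent of $\om$} (any competitor exceeding an energy threshold is worse than $w$ itself, and the lower bound $c_2|Du|(A)\le E^{f,g}(u,A)$ from Remark~\ref{BV estimate} controls the total variation; Poincar\'e's inequality on a Lipschitz domain, after reducing to such a domain, controls the $L^1$‑norm). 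If $A$ does not have Lipschitz boundary, I would first exhaust $A$ from inside by smooth sets $A_h\subset\subset A$ and use inner regularity of the $\inf$ together with $m^{f,g}(w,A)=m^{f,g}(w,\mathrm{int}\,A)$; so without loss of generality $A$ is Lipschitz. Hence $m_\om^{f,g}(w,A)=\inf\{E^{f,g}(u,A):u\in BV^m_{R,A},\ u|_A\in SBV(A,\R^m),\ u=w\text{ near }\partial A\}$, with $BV^m_{R,A}$ as in \eqref{DM-3335}, a compact metric space by Lemma~\ref{DM-3312}.

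The next step is to show that $(\om,u)\mapsto E^{f,g}(u,A)$ is $\T\otimes\B(BV^m_{R,A})$‑measurable. The volume part $(\om,u)\mapsto\int_A f(\om,x,\nabla u)\dx$ is handled by Lemma~\ref{meas:volume}. For the surface part, I would apply Lemma~\ref{lemma:meas}: the function $g$ of Definition~\ref{ri} satisfies the symmetry $(g6)$, i.e.\ \eqref{propr (g6)}, and the growth bound $g(\om,x,\zeta,\nu)\le c_3|\zeta|$ from $(g4)$, i.e.\ \eqref{DM-5749} with $a=c_3$; therefore $(\om,u)\mapsto\int_{S_u\cap A}g(\om,x,[u],\nu_u)\,d\mathcal H^{n-1}$ is $\T\otimes\B(BV^m_{R,A})$‑measurable. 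Adding the two contributions, $\Phi(\om,u):=E^{f,g}(u,A)$ is $\T\otimes\B(BV^m_{R,A})$‑measurable (as a function with values in $[0,+\infty]$).

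It then remains to deal with the constraint set and with the infimum. I would let $C_{w,A}\subset BV^m_{R,A}$ denote the set of $u$ that lie in $SBV(A,\R^m)$ and equal $w$ near $\partial A$. One must be careful that $C_{w,A}$ need not be Borel in $BV^m_{R,A}$; but this is not an obstacle, because I can absorb the constraint into the functional: set $\widetilde\Phi(\om,u):=\Phi(\om,u)$ if $u\in C_{w,A}$ and $\widetilde\Phi(\om,u):=+\infty$ otherwise, and note that $C_{w,A}$ is a \emph{Borel} subset of $BV^m_{R,A}$. Indeed the constraint ``$u=w$ near $\partial A$'' can be written as a countable union over $h$ of the closed conditions ``$u=w$ $\mathcal L^n$‑a.e.\ in $\{x\in A:\mathrm{dist}(x,\partial A)<1/h\}$'', each of which is Borel because $u\mapsto \int_{\{\mathrm{dist}(x,\partial A)<1/h\}}|u-w|\dx$ is continuous on $(BV^m_{R,A},d^m_{R,A})$; and the condition $u\in SBV$, equivalently $C(u)=0$, is Borel by Corollary~\ref{cor:Cantor} (the preimage of $\{0\}$ under the measurable map $u\mapsto C(u)$). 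Hence $\widetilde\Phi$ is $\T\otimes\B(BV^m_{R,A})$‑measurable. Finally, $m_\om^{f,g}(w,A)=\inf_{u\in BV^m_{R,A}}\widetilde\Phi(\om,u)$, and since $BV^m_{R,A}$ is a compact, hence Polish, metric space and $\widetilde\Phi$ is jointly measurable, the measurable projection / von Neumann–Aumann theorem (applied in the completion $(\Om,\widehat\T,\widehat P)$) gives that $\om\mapsto\inf_{u}\widetilde\Phi(\om,u)$ is $\widehat\T$‑measurable. Passing back through the exhaustion $A_h\uparrow\mathrm{int}\,A$ via $m^{f,g}(w,A)=\sup_h m^{f,g}(w,A_h)$ (a countable sup of $\widehat\T$‑measurable functions) completes the proof.

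The step I expect to be the main obstacle is the \emph{uniform‑in‑$\om$ a priori bound} that legitimises restricting to $BV^m_{R,A}$: one must check that truncation (Lemma~\ref{truncation}) and the coercivity constants $c_2,c_3,c_4$ in $\mathcal F$, $\mathcal G$ are genuinely independent of $\om$, and handle the boundary‑datum issue on a possibly non‑Lipschitz $A$ by the exhaustion argument without losing the identity $m^{f,g}(w,\cdot)=m^{f,g}(w,\mathrm{int}\,\cdot)$. Everything else is an assembly of the measurability lemmas already proved plus a standard application of the projection theorem on a Polish space.
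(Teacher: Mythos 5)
Your strategy is essentially the one used in the paper: restrict the infimum, uniformly in $\om$, to the compact metric space $BV^m_{R,A}$ of Lemma~\ref{DM-3312} via the bounds $(f3)$--$(g4)$ and Poincar\'e, establish joint $\T\otimes\B(BV^m_{R,A})$-measurability of the energy through Lemmas~\ref{meas:volume} and~\ref{lemma:meas} (with \eqref{propr (g6)}, \eqref{DM-5749} supplied by $(g6)$, $(g4)$), encode the constraints in a Borel set (the condition $C(u)=0$ via Corollary~\ref{cor:Cantor}), and conclude with the Projection Theorem in the completion. The only organisational difference is how the boundary condition and a possibly irregular $\partial A$ are handled: the paper fixes a Lipschitz exhaustion $(A_j)$, replaces ``$u=w$ near $\partial A$'' by the \emph{closed} constraint ``$u=w$ in $A_{j+1}\setminus A_j$'', keeps the minimisation on the Lipschitz set $A_{j+1}$ after splitting off the vanishing term $E(\om)(w,A\setminus A_{j+1})$, and passes to the limit in $j$; you instead keep the original constraint, observe it is an $F_\sigma$ in $BV^m_{R,A}$ (correct, using compactness of $\partial A$), and transfer the non-Lipschitz case by an exhaustion of $A$.

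The one point where your write-up is not correct as stated is the exhaustion identity: $m^{f,g}_\om(w,A)=\sup_h m^{f,g}_\om(w,A_h)$ is false in general, because the family $h\mapsto m^{f,g}_\om(w,A_h)$ is not monotone and $m^{f,g}_\om(w,A_h)$ can strictly exceed $m^{f,g}_\om(w,A)$ for fixed $h$ (the Dirichlet datum on $\partial A_h$ may force an expensive transition that the datum on $\partial A$ does not); there is no ``inner regularity'' to invoke here. What is true, and suffices for measurability, is the pointwise limit $m^{f,g}_\om(w,A_h)\to m^{f,g}_\om(w,A)$ as $h\to+\infty$: the inequality $m^{f,g}_\om(w,A)\le m^{f,g}_\om(w,A_h)+E^{f,g}(\om)(w,A\setminus A_h)$ follows by extending competitors by $w$, with $E^{f,g}(\om)(w,A\setminus A_h)\le c_3|Dw|(A\setminus A_h)+c_4\mathcal L^n(A\setminus A_h)\to0$ uniformly in $\om$; conversely, any near-minimiser $u$ for $A$, which equals $w$ on $U\cap A$ for some neighbourhood $U$ of $\partial A$, restricts to an admissible competitor on $A_h$ as soon as $A\setminus \overline A_{h-1}\subset U$ (possible since $A\setminus U$ is compact in $A$), giving $\limsup_h m^{f,g}_\om(w,A_h)\le m^{f,g}_\om(w,A)$. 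With ``$\sup$'' replaced by this limit (and its short two-sided proof), your argument is complete; alternatively, the paper's device of relaxing the constraint to ``$u=w$ in $A\setminus A_j$'' while keeping the energy on all of $A$ avoids this issue altogether.
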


\begin{proof} For every $\om \in \Omega$, $B\in\B(A)$,  and $u\in BV(A,\R^m)$,  we define 
$$
E(\omega)(u,B):= \int_{B}  f(\om,x,\nabla u)\dx+\int_{S_u\cap B} g(\om,x,[u],\nu)\,d\mathcal H^{n-1}.
$$
Let us fix a  sequence $(A_j)$ of open sets with Lipschitz boundary, with $A_j\subset\subset A_{j+1}$ for every $j\in\mathbb N$  and $\cup_jA_j = A$. It follows easily from the definition that 
\begin{gather*}
 m_\om ^{f,g}(w,A)= \lim_{j\to +\infty}  \inf\{ E(\omega)(u,A) : u\in SBV(A,\R^m),\ u=w \; \textrm{in } A\setminus A_j \}
 \\
 = \lim_{j\to +\infty}  \Big(\inf\{ E(\omega)(u,A_{j+1}) : u\in SBV(A_{j+1},\R^m),\ u=w \; \textrm{in } A_{j+1}\setminus A_j \} + E(\omega)(w,A\setminus A_{j+1})\Big)
 \\
= \lim_{j\to +\infty} \inf\{ E(\omega)(u,A_{j+1}) : u\in SBV(A_{j+1},\R^m),\ u=w \; \textrm{in } A_{j+1}\setminus A_j \},
\end{gather*}
where in the last equality we used the fact that $E(\omega)(w,A\setminus  A_{j+1}) \to 0$ as $j \to+\infty$ since, by $(f4)$ and $(g4)$, we have
$E(\omega)(w,A\setminus  A_{j+1})\le c_3|Dw|(A\setminus  A_{j+1})+ c_4\mathcal L^n(A\setminus  A_{j+1})$.

Let us fix $j \in \mathbb{N}$. It is obvious that
\begin{gather*}
\inf\{  E(\omega)(u,A_{j+1})  : u\in SBV(A_{j+1},\R^m),\ u=w \; \textrm{in } A_{j+1}\setminus A_j \} \\
=\inf\{    E(\omega)(u,A_{j+1})  : u\in SBV(A_{j+1},\R^m), \  E(\omega)(u,A_{j+1}) \le  E(\omega)(w,A_{j+1}), \ u=w \; \textrm{in } A_{j+1}\setminus A_j \}.
\end{gather*}
By $(f4)$ and $(g4)$ we have $E(\omega)(w,A_{j+1})\le c_3|Dw|(A_{j+1})+c_4\mathcal{L}^n(A_{j+1})$. Therefore,
 from Remark~\ref{BV estimate} we obtain that there exists $ R_1>0$, depending on $A_{j+1}$ and $w$, such that 
\begin{gather*}
\inf\{ E(\omega)(u,A_{j+1}) : u\in SBV( A_{j+1},\R^m),\ u=w \; \textrm{in } A_{j+1}\setminus A_j \} \\
=\inf\{ E(\omega)(u,A_{j+1}) : u\in SBV( A_{j+1},\R^m),\ |Du|( A_{j+1})\leq R_1, \ u=w \; 
\textrm{in } A_{j+1}\setminus A_j \}.
\end{gather*}
Thanks to Poincar\'e's inequality, there exists $R\ge R_1$, depending on $A_{j+1}$, $w$, and $R_1$,  such that every function
$u\in BV(A_{j+1},\R^m)$, satisfying $|Du|(A_{j+1})\leq R_1$ and $ u=w$ in $A_{j+1}\setminus A_j$, satisfies also
 $\| u \|_{L^1 (A_{j+1}, \R^m)} \leq R$. This implies that
 \begin{gather*}
\inf\left\{E(\omega)(u,A_{j+1}) : u\in SBV(A_{j+1},\R^m),\ u=w \; \textrm{in } 
A_{j+1}\setminus A_j \right\} 
\\
=\inf\{ E(\omega)(u,A_{j+1}) : u\in SBV(A_{j+1},\R^m), \ \| u \|_{L^1 (A_{j+1}, \R^m)} \leq R,\ |Du|(A_{j+1})\leq  R, \ u=w \; \textrm{in } A_{j+1}\setminus A_j \}.
\end{gather*}

Therefore, to prove the proposition it is enough to show that the function  
\begin{equation}\label{uno:mis}
\om \mapsto \inf\{ E(\omega)(u,A_{j+1}) : u\in SBV(A_{j+1},\R^m)\cap BV^m_{R,A_{j+1}}, \ u=w \; \textrm{in } A_{j+1}\setminus A_j \} 
\end{equation}
is $\widehat\T$-measurable.

We define $H  \colon  BV^m_{R,A_{j+1}} \to [0,+\infty]$ as
$$
H  (u):= 
\begin{cases}
0 \quad &\textrm{if } C(u)=0\hbox{ and } u=  w \; \textrm{in } A_{j+1}\setminus A_j,\\
+\infty &\textrm{otherwise},
\end{cases}
$$
where the equality $C(u)=0$ means that $C( u)(B)=0$ for every $B\in \B(A_{j+1})$. By 
\eqref{uno:mis} to conclude the proof it suffices to show that the function
\begin{equation}\label{min:equality}
\om\mapsto \inf \{ E(\omega)(u,A_{j+1})  +  H   (u): u\in  BV^m_{R,A_{j+1}} \}
\end{equation}
is $\widehat\T$-measurable.

To this aim, we apply the Projection Theorem. Note that the function $(\om, u)\mapsto E(\omega)(u,A_{j+1})$ from $\Omega\times  BV^m_{R,A_{j+1}}$ to $\R$ is 
$\T\otimes \B(BV^m_{R,A_{j+1}})$-measurable, by Lemmas \ref{meas:volume} and \ref{lemma:meas}. Moreover, the function $u\mapsto  H (u)$ from $ BV^m_{R,A_{j+1}} $ to $\R$ is 
$\B(BV^m_{R,A_{j+1}})$-measurable, since the set $\{u\in BV^m_{R,A_{j+1}}: C(u)=0\}$ belongs to $\B( BV^m_{R,A_{j+1}})$  by Corollary \ref{cor:Cantor}, while the set 
$\{u\in  BV^m_{R,A_{j+1}} : u=w  \; \textrm{in } A_{j+1}\setminus A_j\}$ is closed in $ (BV^m_{R,A_{j+1}}, d^m_{R,A_{j+1}})$. Hence, for every $t>0$ we have 
\begin{equation}\label{DM-set:proj}
\{(\om,u) \in \Omega\times BV^m_{R,A_{j+1}} :  E(\omega)(u,A_{j+1})  + H  (u)<t \} \in \T\otimes \B(BV^m_{R,A_{j+1}}).
\end{equation}
Since the  metric  space $ (BV^m_{R,A_{j+1}},d^m_{R,A_{j+1}} )$ is compact  thanks to Lemma~\ref{DM-3312}, by the Projection Theorem (see, e.g., \cite[Theorem III.13 and 33(a)]{Dellacherie}) the 
projection onto $\Omega$ of the set above belongs to $\widehat\T$.
On the other hand,  the projection onto $\Omega$ of the set in \eqref{DM-set:proj}
coincides with the set of points $\om\in\Omega$ such that
\begin{equation*}
\inf \{ E(\omega)(u,A_{j+1})  +  H   (u): u\in  BV^m_{R,A_{j+1}} \} <t.
\end{equation*}
Since this set belongs to $\widehat\T$ for every $t > 0$, the function in 
\eqref{min:equality}
is  $\widehat\T$-measurable.
\end{proof}

\section*{Acknowledgments}
F. Cagnetti was supported by the EPSRC under the Grant EP/P007287/1 ``Symmetry of Minimisers in Calculus of Variations''.
The research of G. Dal Maso was partially funded by the European Research Council under
Grant No. 290888 ``Quasistatic and Dynamic Evolution Problems in Plasticity
and Fracture''. G. Dal Maso is a member of the Gruppo Nazionale per l'Analisi Matematica, la Probabilit\`a e le loro Applicazioni (GNAMPA) of the Istituto Nazionale di Alta Matematica (INdAM).
The work of C. I. Zeppieri was supported by the Deutsche Forschungsgemeinschaft (DFG, German Research Foundation) project 3160408400 and under the Germany Excellence Strategy EXC 2044-390685587, Mathematics M\"unster: Dynamics--Geometry--Structure.


}


\end{document}